\colorlet{ggrey}{black!50}
 \theoremstyle{plain}
 \newtheorem{thm1}{Theorem}
 \newtheorem{cor1}[thm1]{Corollary}
\newtheorem{thm}{Theorem}[section]
\newtheorem{lemma}[thm]{Lemma}
\newtheorem{prop}[thm]{Proposition}
\newtheorem{cor}[thm]{Corollary}
\newtheorem{conjecture}[thm]{Conjecture}
\theoremstyle{definition}
\newtheorem{defn}[thm]{Definition}
\newtheorem{remark}[thm]{Remark}
\newtheorem{example}[thm]{Example}
\numberwithin{equation}{section}
\def\sA{\mathsf{A}}
\def\sB{\mathsf{B}}
\def\sC{\mathsf{C}}
\def\sD{\mathsf{D}}
\def\sE{\mathsf{E}}
\def\sF{\mathsf{F}}
\def\PG{\mathsf{PG}}
\def\cC{\mathcal{C}}
\def\cT{\mathcal{T}}
\def\FF{\mathbb{F}}
\def\KK{\mathbb{K}}
\DeclareMathOperator\type{\tau}
\DeclareMathOperator\Type{\mathrm{Typ}}
\DeclareMathOperator\Res{\mathrm{Res}}
\DeclareMathOperator\proj{\mathrm{proj}}
\DeclareMathOperator\Opp{\mathrm{Opp}}
\DeclareMathOperator\disp{\mathrm{disp}}
\DeclareMathOperator\diam{\mathrm{diam}}
\def\<{\langle}
\def\>{\rangle}
\renewcommand{\@makefnmark}{\mbox{\textsuperscript{}}}
\title{Opposition diagrams for automorphisms of small spherical buildings}
\author{James Parkinson 
\and
Hendrik Van Maldeghem}
\date{\today}
\begin{document}

\maketitle

\begin{abstract}
An automorphism $\theta$ of a spherical building $\Delta$ is called \textit{capped} if it satisfies the following property: if there exist both type $J_1$ and $J_2$ simplices of $\Delta$ mapped onto opposite simplices by $\theta$ then there exists a type $J_1\cup J_2$ simplex of $\Delta$ mapped onto an opposite simplex by~$\theta$. In previous work we showed that if $\Delta$ is a thick irreducible spherical building of rank at least $3$ with no Fano plane residues then every automorphism of $\Delta$ is capped. In the present work we consider the spherical buildings with Fano plane residues (the \textit{small buildings}). We show that uncapped automorphisms exist in these buildings and develop an enhanced notion of ``opposition diagrams'' to capture the structure of these automorphisms. Moreover we provide applications to the theory of ``domesticity'' in spherical buildings, including the complete classification of domestic automorphisms of small buildings of types $\sF_4$ and $\sE_6$. 
\end{abstract}


\section*{Introduction}

Let $\theta$ be an automorphism of a thick irreducible spherical building~$\Delta$ of type $(W,S)$. The \textit{opposite geometry} of $\theta$ is the set $\Opp(\theta)$ of all simplices $\sigma$ of $\Delta$ such that $\sigma$ and $\sigma^{\theta}$ are opposite in~$\Delta$. This geometry forms a natural counterpart to the more familiar fixed element geometry $\mathrm{Fix}(\theta)$, however by comparison very little is known about $\Opp(\theta)$. 

This paper is the continuation of~\cite{PVM:17a}, where we initiated a systematic study of $\Opp(\theta)$ for automorphisms of spherical buildings. In particular in \cite{PVM:17a} we showed that if $\Delta$ is a thick irreducible spherical building of rank at least~$3$ containing no Fano plane residues then $\Opp(\theta)$ has the following weak closure property:  if there exist both type $J_1$ and $J_2$ simplices in $\Opp(\theta)$ then there exists a type $J_1\cup J_2$ simplex in $\Opp(\theta)$. Automorphisms with this property are called \textit{capped}, and the thick irreducible spherical buildings of rank at least~$3$ with no Fano plane residues are called \textit{large buildings}. Thus every automorphism of a large building is capped. 

In the present paper we investigate $\Opp(\theta)$ for the thick irreducible spherical buildings of rank at least~$3$ containing a Fano plane residue. These are called the \textit{small buildings}. In particular we show that, in contrast to the case of large buildings, uncapped automorphisms exist for all small buildings (with the possible exception of $\sE_8(2)$ where we provide conjectural examples). 

A key tool in \cite{PVM:17a} was the notion of the \textit{opposition diagram} of an automorphism $\theta$, consisting of the triple $(\Gamma,J,\pi)$, where $\Gamma$ is the Coxeter graph of $(W,S)$, $J$ is the union of all $J'\subseteq S$ such that there exists a type $J'$ simplex in $\Opp(\theta)$, and $\pi$ is the automorphism of $\Gamma$ induced by~$\theta$ (less formally, the opposition diagram is drawn by encircling the nodes $J$ of $\Gamma$). If $\theta$ is capped then this diagram turns out to encode a lot of information about the automorphism, essentially because it completely determines the partially ordered set $\mathcal{T}(\theta)$ of all types of simplices mapped onto opposite simplices by~$\theta$. However for an uncapped automorphism the opposition diagram does not necessarily determine~$\cT(\theta)$. For example in the polar space $\Delta=\sB_3(2)$ there are collineations $\theta_1$, $\theta_2$ and $\theta_3$ each with opposition diagram 
\begin{tikzpicture}[scale=0.5,baseline=-0.5ex]
\node at (0,0.3) {};
\node [inner sep=0.8pt,outer sep=0.8pt] at (-1.5,0) (1) {$\bullet$};
\node [inner sep=0.8pt,outer sep=0.8pt] at (-0.5,0) (2) {$\bullet$};
\node [inner sep=0.8pt,outer sep=0.8pt] at (0.5,0) (3) {$\bullet$};
\draw (-1.5,0)--(-0.5,0);
\draw (-0.5,0.07)--(0.5,0.07);
\draw (-0.5,-0.07)--(0.5,-0.07);
\draw [line width=0.5pt,line cap=round,rounded corners] (1.north west)  rectangle (1.south east);
\draw [line width=0.5pt,line cap=round,rounded corners] (2.north west)  rectangle (2.south east);
\draw [line width=0.5pt,line cap=round,rounded corners] (3.north west)  rectangle (3.south east);
\end{tikzpicture}
(that is, each $\theta_i$ maps a vertex of each type to an opposite vertex) whose partially ordered sets $\mathcal{T}(\theta_i)$, for $i=1,2,3$, are the following (see Theorem~\ref{thm:existenceBn(2)} for explicit examples):
\smallskip

\begin{center}
\begin{tikzpicture}[scale=1,baseline=-0.5ex]
\node at (0,0.3) {};
\node [inner sep=0.8pt,outer sep=0.8pt] at (-2,-1) (1) {$\{1\}$};
\node [inner sep=0.8pt,outer sep=0.8pt] at (0,-1) (2) {$\{3\}$};
\node [inner sep=0.8pt,outer sep=0.8pt] at (2,-1) (3) {$\{2\}$};
\node [inner sep=0.8pt,outer sep=0.8pt] at (-2,0) (4) {$\{1,3\}$};
\node [inner sep=0.8pt,outer sep=0.8pt] at (0,0) (5) {$\{1,2\}$};
\node [inner sep=0.8pt,outer sep=0.8pt] at (2,0) (6) {$\{2,3\}$};
\node [inner sep=0.8pt,outer sep=0.8pt] at (0,1) (7) {$\{1,2,3\}$};
\draw (1)--(4);
\draw (1)--(5);
\draw (2)--(4);
\draw (2)--(6);
\draw (3)--(5);
\draw (3)--(6);
\draw (4)--(7);
\draw (5)--(7);
\draw (6)--(7);
\end{tikzpicture}\quad\begin{tikzpicture}[scale=1,baseline=-0.5ex]
\node at (0,0.3) {};
\node [inner sep=0.8pt,outer sep=0.8pt] at (-2,-1) (1) {$\{1\}$};
\node [inner sep=0.8pt,outer sep=0.8pt] at (0,-1) (2) {$\{3\}$};
\node [inner sep=0.8pt,outer sep=0.8pt] at (2,-1) (3) {$\{2\}$};
\node [inner sep=0.8pt,outer sep=0.8pt] at (-2,0) (4) {$\{1,3\}$};
\node [inner sep=0.8pt,outer sep=0.8pt] at (0,0) (5) {$\{1,2\}$};
\node [inner sep=0.8pt,outer sep=0.8pt] at (2,0) (6) {$\{2,3\}$};
\phantom{\node [inner sep=0.8pt,outer sep=0.8pt] at (0,1) (7) {$\{1,2,3\}$};}
\draw (1)--(4);
\draw (1)--(5);
\draw (2)--(4);
\draw (2)--(6);
\draw (3)--(5);
\draw (3)--(6);
\end{tikzpicture}\quad\begin{tikzpicture}[scale=1,baseline=-0.5ex]
\node at (0,0.3) {};
\node [inner sep=0.8pt,outer sep=0.8pt] at (-2,-1) (1) {$\{1\}$};
\node [inner sep=0.8pt,outer sep=0.8pt] at (0,-1) (2) {$\{3\}$};
\node [inner sep=0.8pt,outer sep=0.8pt] at (2,-1) (3) {$\{2\}$};
\node [inner sep=0.8pt,outer sep=0.8pt] at (-2,0) (4) {$\{1,3\}$};
\phantom{\node [inner sep=0.8pt,outer sep=0.8pt] at (0,0) (5) {$\{1,2\}$};}
\node [inner sep=0.8pt,outer sep=0.8pt] at (2,0) (6) {$\{2,3\}$};
\phantom{\node [inner sep=0.8pt,outer sep=0.8pt] at (0,1) (7) {$\{1,2,3\}$};}
\draw (1)--(4);
\draw (2)--(4);
\draw (2)--(6);
\draw (3)--(6);
\end{tikzpicture}
\end{center}
\smallskip

\noindent Note that only $\theta_1$ is capped (hence, in particular, analogues of $\theta_2$ and $\theta_3$ cannot exist for polar spaces $\sB_3(\mathbb{F})$ with $|\mathbb{F}|>2$ by the main result of \cite{PVM:17a}).

Thus the opposition diagram of an uncapped automorphism needs to be enhanced to properly understand these automorphisms. We achieve this by defining the \textit{decorated opposition diagram} of an uncapped automorphism. 

The full definition is given in Section~\ref{sec:1}, however for the purpose of this introduction consider the following simplified situation. Suppose that $\theta$ is an automorphism with the property that the induced automorphism $\pi$ of the Coxeter graph $\Gamma$ is the opposition automorphism~$w_0$. Then the \textit{decorated opposition diagram} of $\theta$ is the quadruple $(\Gamma,J,K,\pi)$ where $(\Gamma,J,\pi)$ is the opposition diagram, and 
\begin{center}
$K=\{j\in J\mid \text{there exists a type $J\backslash\{j\}$ simplex mapped onto an opposite simplex by $\theta$}\}$.
\end{center}
Less formally, the decorated opposition diagram is drawn by encircling the nodes of $J$, and then shading those nodes of $K$. Thus, for example, the decorated opposition diagrams of the two uncapped automorphisms of $\sB_3(2)$ given above are \begin{tikzpicture}[scale=0.5,baseline=-0.5ex]
\node at (0,0.3) {};
\node [inner sep=0.8pt,outer sep=0.8pt] at (-1.5,0) (1) {$\bullet$};
\node [inner sep=0.8pt,outer sep=0.8pt] at (-0.5,0) (2) {$\bullet$};
\node [inner sep=0.8pt,outer sep=0.8pt] at (0.5,0) (3) {$\bullet$};
\draw [line width=0.5pt,line cap=round,rounded corners,fill=ggrey] (1.north west)  rectangle (1.south east);
\draw [line width=0.5pt,line cap=round,rounded corners,fill=ggrey] (2.north west)  rectangle (2.south east);
\draw [line width=0.5pt,line cap=round,rounded corners,fill=ggrey] (3.north west)  rectangle (3.south east);
\draw (-1.5,0)--(-0.5,0);
\draw (-0.5,0.07)--(0.5,0.07);
\draw (-0.5,-0.07)--(0.5,-0.07);
\node [inner sep=0.8pt,outer sep=0.8pt] at (-1.5,0) (1) {$\bullet$};
\node [inner sep=0.8pt,outer sep=0.8pt] at (-0.5,0) (2) {$\bullet$};
\node [inner sep=0.8pt,outer sep=0.8pt] at (0.5,0) (3) {$\bullet$};
\end{tikzpicture} and \begin{tikzpicture}[scale=0.5,baseline=-0.5ex]
\node at (0,0.3) {};
\node [inner sep=0.8pt,outer sep=0.8pt] at (-1.5,0) (1) {$\bullet$};
\node [inner sep=0.8pt,outer sep=0.8pt] at (-0.5,0) (2) {$\bullet$};
\node [inner sep=0.8pt,outer sep=0.8pt] at (0.5,0) (3) {$\bullet$};
\draw [line width=0.5pt,line cap=round,rounded corners,fill=ggrey] (1.north west)  rectangle (1.south east);
\draw [line width=0.5pt,line cap=round,rounded corners,fill=ggrey] (2.north west)  rectangle (2.south east);
\draw [line width=0.5pt,line cap=round,rounded corners] (3.north west)  rectangle (3.south east);
\draw (-1.5,0)--(-0.5,0);
\draw (-0.5,0.07)--(0.5,0.07);
\draw (-0.5,-0.07)--(0.5,-0.07);
\node [inner sep=0.8pt,outer sep=0.8pt] at (-1.5,0) (1) {$\bullet$};
\node [inner sep=0.8pt,outer sep=0.8pt] at (-0.5,0) (2) {$\bullet$};
\node [inner sep=0.8pt,outer sep=0.8pt] at (0.5,0) (3) {$\bullet$};
\end{tikzpicture}. At an intuitive level, the more encircled nodes that are shaded on the decorated opposition diagram of an uncapped automorphism, the ``closer'' the automorphism is to being capped.

The main theorem of this paper is Theorem~\ref{thm:main*} below. Part~(a) of the theorem shows that the decorated opposition diagram of an uncapped automorphism lies in a small list of diagrams, hence severely restricting the structure of uncapped automorphisms. Part~(b) deals with the existence of uncapped automorphisms, showing that the list provided in part~(a) has no redundancies, with only the $\sE_8(2)$ case remaining open due to the size of the building rendering our computational techniques inadequate. We strongly believe that the two $\sE_8(2)$ diagrams are indeed realised as opposition diagrams; see Conjecture~\ref{conj:2} for details. 

\begin{thm1}\label{thm:main*}\leavevmode
\begin{compactenum}[$(a)$]
\item Let $\theta$ be an uncapped automorphism of a thick irreducible spherical building $\Delta$ of rank at least~$3$. Then the decorated opposition diagram of $\theta$ appears in Table~\ref{table:1} or Table~\ref{table:2}.
\item Let $\Delta$ be a small building. Each diagram appearing in the respective row of Table~\ref{table:1} or Table~\ref{table:2} can be realised as the decorated opposition diagram of some uncapped automorphism of $\Delta$, with the exception perhaps of the two $\sE_8(2)$ diagrams. 
\end{compactenum}
\end{thm1}

\begin{center}
\noindent\begin{tabular}{|c|l|}
\hline
\begin{tabular}{c}
$\Delta$
\end{tabular}&\emph{Diagrams}\\
\hline\hline
\begin{tabular}{l}
$\sA_n(2)$
\end{tabular}&
\begin{tabular}{l}
\begin{tikzpicture}[scale=0.5,baseline=-0.5ex]
\node at (0,0.5) {};
\node at (0,-0.5) {};
\node [inner sep=0.8pt,outer sep=0.8pt] at (-5,0) (-5) {$\bullet$};
\node [inner sep=0.8pt,outer sep=0.8pt] at (-4,0) (-4) {$\bullet$};
\node [inner sep=0.8pt,outer sep=0.8pt] at (-3,0) (-3) {$\bullet$};
\node [inner sep=0.8pt,outer sep=0.8pt] at (-2,0) (-2) {$\bullet$};
\node [inner sep=0.8pt,outer sep=0.8pt] at (-1,0) (-1) {$\bullet$};
\node [inner sep=0.8pt,outer sep=0.8pt] at (1,0) (1) {$\bullet$};
\node [inner sep=0.8pt,outer sep=0.8pt] at (2,0) (2) {$\bullet$};
\node [inner sep=0.8pt,outer sep=0.8pt] at (3,0) (3) {$\bullet$};
\node [inner sep=0.8pt,outer sep=0.8pt] at (4,0) (4) {$\bullet$};
\node [inner sep=0.8pt,outer sep=0.8pt] at (5,0) (5) {$\bullet$};
\draw [style=dashed] (-1,0)--(1,0);
\draw [line width=0.5pt,line cap=round,rounded corners,fill=ggrey] (-4.north west)  rectangle (-4.south east);
\draw [line width=0.5pt,line cap=round,rounded corners,fill=ggrey] (-2.north west)  rectangle (-2.south east);
\draw [line width=0.5pt,line cap=round,rounded corners,fill=ggrey] (2.north west)  rectangle (2.south east);
\draw [line width=0.5pt,line cap=round,rounded corners,fill=ggrey] (4.north west)  rectangle (4.south east);
\draw [line width=0.5pt,line cap=round,rounded corners,fill=ggrey] (-3.north west)  rectangle (-3.south east);
\draw [line width=0.5pt,line cap=round,rounded corners,fill=ggrey] (-1.north west)  rectangle (-1.south east);
\draw [line width=0.5pt,line cap=round,rounded corners,fill=ggrey] (1.north west)  rectangle (1.south east);
\draw [line width=0.5pt,line cap=round,rounded corners,fill=ggrey] (3.north west)  rectangle (3.south east);
\draw [line width=0.5pt,line cap=round,rounded corners,fill=ggrey] (-5.north west)  rectangle (-5.south east);
\draw [line width=0.5pt,line cap=round,rounded corners,fill=ggrey] (5.north west)  rectangle (5.south east);
\draw (-5,0)--(-1,0);
\draw (1,0)--(5,0);
\node [inner sep=0.8pt,outer sep=0.8pt] at (-5,0) (-5) {$\bullet$};
\node [inner sep=0.8pt,outer sep=0.8pt] at (-4,0) (-4) {$\bullet$};
\node [inner sep=0.8pt,outer sep=0.8pt] at (-3,0) (-3) {$\bullet$};
\node [inner sep=0.8pt,outer sep=0.8pt] at (-2,0) (-2) {$\bullet$};
\node [inner sep=0.8pt,outer sep=0.8pt] at (-1,0) (-1) {$\bullet$};
\node [inner sep=0.8pt,outer sep=0.8pt] at (1,0) (1) {$\bullet$};
\node [inner sep=0.8pt,outer sep=0.8pt] at (2,0) (2) {$\bullet$};
\node [inner sep=0.8pt,outer sep=0.8pt] at (3,0) (3) {$\bullet$};
\node [inner sep=0.8pt,outer sep=0.8pt] at (4,0) (4) {$\bullet$};
\node [inner sep=0.8pt,outer sep=0.8pt] at (5,0) (5) {$\bullet$};
\end{tikzpicture}
\end{tabular}\\
\hline
\begin{tabular}{c}
$\sB_n(2)$ \emph{or} $\sB_n(2,4)$,\\
$(3\leq j\leq n)$
\end{tabular}
&
\begin{tabular}{l}
\begin{tikzpicture}[scale=0.5,baseline=-0.5ex]
\node at (0,0.5) {};
\node [inner sep=0.8pt,outer sep=0.8pt] at (-5,0) (-5) {$\bullet$};
\node [inner sep=0.8pt,outer sep=0.8pt] at (-4,0) (-4) {$\bullet$};
\node [inner sep=0.8pt,outer sep=0.8pt] at (-3,0) (-3) {$\bullet$};
\node [inner sep=0.8pt,outer sep=0.8pt] at (-2,0) (-2) {$\bullet$};
\node [inner sep=0.8pt,outer sep=0.8pt] at (0,0) (-1) {$\bullet$};
\node [inner sep=0.8pt,outer sep=0.8pt] at (1,0) (1) {$\bullet$};
\node [inner sep=0.8pt,outer sep=0.8pt] at (2,0) (2) {$\bullet$};
\node [inner sep=0.8pt,outer sep=0.8pt] at (3,0) (3) {$\bullet$};
\node [inner sep=0.8pt,outer sep=0.8pt] at (4,0) (4) {$\bullet$};
\node [inner sep=0.8pt,outer sep=0.8pt] at (5,0) (5) {$\bullet$};
\node at (2,-0.7) {$j$};
\draw [line width=0.5pt,line cap=round,rounded corners,fill=ggrey] (-5.north west)  rectangle (-5.south east);
\draw [line width=0.5pt,line cap=round,rounded corners,fill=ggrey] (-4.north west)  rectangle (-4.south east);
\draw [line width=0.5pt,line cap=round,rounded corners,fill=ggrey] (-3.north west)  rectangle (-3.south east);
\draw [line width=0.5pt,line cap=round,rounded corners,fill=ggrey] (-2.north west)  rectangle (-2.south east);
\draw [line width=0.5pt,line cap=round,rounded corners,fill=ggrey] (-1.north west)  rectangle (-1.south east);
\draw [line width=0.5pt,line cap=round,rounded corners,fill=ggrey] (1.north west)  rectangle (1.south east);
\draw [line width=0.5pt,line cap=round,rounded corners] (2.north west)  rectangle (2.south east);
\draw (-5,0)--(-2,0);
\draw (0,0)--(4,0);
\draw (4,0.07)--(5,0.07);
\draw (4,-0.07)--(5,-0.07);
\draw [style=dashed] (-2,0)--(0,0);
\node [inner sep=0.8pt,outer sep=0.8pt] at (-5,0) (-5) {$\bullet$};
\node [inner sep=0.8pt,outer sep=0.8pt] at (-4,0) (-4) {$\bullet$};
\node [inner sep=0.8pt,outer sep=0.8pt] at (-3,0) (-3) {$\bullet$};
\node [inner sep=0.8pt,outer sep=0.8pt] at (-2,0) (-2) {$\bullet$};
\node [inner sep=0.8pt,outer sep=0.8pt] at (0,0) (-1) {$\bullet$};
\node [inner sep=0.8pt,outer sep=0.8pt] at (1,0) (1) {$\bullet$};
\node [inner sep=0.8pt,outer sep=0.8pt] at (2,0) (2) {$\bullet$};
\node [inner sep=0.8pt,outer sep=0.8pt] at (3,0) (3) {$\bullet$};
\node [inner sep=0.8pt,outer sep=0.8pt] at (4,0) (4) {$\bullet$};
\node [inner sep=0.8pt,outer sep=0.8pt] at (5,0) (5) {$\bullet$};
\end{tikzpicture}\quad
\begin{tikzpicture}[scale=0.5,baseline=-0.5ex]
\node at (0,0.3) {};
\node at (0,-0.5) {};
\node [inner sep=0.8pt,outer sep=0.8pt] at (-5,0) (-5) {$\bullet$};
\node [inner sep=0.8pt,outer sep=0.8pt] at (-4,0) (-4) {$\bullet$};
\node [inner sep=0.8pt,outer sep=0.8pt] at (-3,0) (-3) {$\bullet$};
\node [inner sep=0.8pt,outer sep=0.8pt] at (-2,0) (-2) {$\bullet$};
\node [inner sep=0.8pt,outer sep=0.8pt] at (0,0) (-1) {$\bullet$};
\node [inner sep=0.8pt,outer sep=0.8pt] at (1,0) (1) {$\bullet$};
\node [inner sep=0.8pt,outer sep=0.8pt] at (2,0) (2) {$\bullet$};
\node [inner sep=0.8pt,outer sep=0.8pt] at (3,0) (3) {$\bullet$};
\node [inner sep=0.8pt,outer sep=0.8pt] at (4,0) (4) {$\bullet$};
\node [inner sep=0.8pt,outer sep=0.8pt] at (5,0) (5) {$\bullet$};
\draw [line width=0.5pt,line cap=round,rounded corners,fill=ggrey] (-5.north west)  rectangle (-5.south east);
\draw [line width=0.5pt,line cap=round,rounded corners,fill=ggrey] (-4.north west)  rectangle (-4.south east);
\draw [line width=0.5pt,line cap=round,rounded corners,fill=ggrey] (-3.north west)  rectangle (-3.south east);
\draw [line width=0.5pt,line cap=round,rounded corners,fill=ggrey] (-2.north west)  rectangle (-2.south east);
\draw [line width=0.5pt,line cap=round,rounded corners,fill=ggrey] (-1.north west)  rectangle (-1.south east);
\draw [line width=0.5pt,line cap=round,rounded corners,fill=ggrey] (1.north west)  rectangle (1.south east);
\draw [line width=0.5pt,line cap=round,rounded corners,fill=ggrey] (2.north west)  rectangle (2.south east);
\draw [line width=0.5pt,line cap=round,rounded corners,fill=ggrey] (3.north west)  rectangle (3.south east);
\draw [line width=0.5pt,line cap=round,rounded corners,fill=ggrey] (4.north west)  rectangle (4.south east);
\draw [line width=0.5pt,line cap=round,rounded corners,fill=ggrey] (5.north west)  rectangle (5.south east);
\draw (-5,0)--(-2,0);
\draw (0,0)--(4,0);
\draw (4,0.07)--(5,0.07);
\draw (4,-0.07)--(5,-0.07);
\draw [style=dashed] (-2,0)--(0,0);
\node [inner sep=0.8pt,outer sep=0.8pt] at (-5,0) (-5) {$\bullet$};
\node [inner sep=0.8pt,outer sep=0.8pt] at (-4,0) (-4) {$\bullet$};
\node [inner sep=0.8pt,outer sep=0.8pt] at (-3,0) (-3) {$\bullet$};
\node [inner sep=0.8pt,outer sep=0.8pt] at (-2,0) (-2) {$\bullet$};
\node [inner sep=0.8pt,outer sep=0.8pt] at (0,0) (-1) {$\bullet$};
\node [inner sep=0.8pt,outer sep=0.8pt] at (1,0) (1) {$\bullet$};
\node [inner sep=0.8pt,outer sep=0.8pt] at (2,0) (2) {$\bullet$};
\node [inner sep=0.8pt,outer sep=0.8pt] at (3,0) (3) {$\bullet$};
\node [inner sep=0.8pt,outer sep=0.8pt] at (4,0) (4) {$\bullet$};
\node [inner sep=0.8pt,outer sep=0.8pt] at (5,0) (5) {$\bullet$};
\end{tikzpicture}
\end{tabular}
\\[4ex]
\hline
\begin{tabular}{c}
$\sD_{n}(2)$, $n\geq 4$ \emph{even}\\
$(4\leq 2j\leq n-2)$
\end{tabular}&
\begin{tabular}{l}
\begin{tikzpicture}[scale=0.5,baseline=-0.5ex]
\node at (0,0.8) {};
\node [inner sep=0.8pt,outer sep=0.8pt] at (-5,0) (-5) {$\bullet$};
\node [inner sep=0.8pt,outer sep=0.8pt] at (-4,0) (-4) {$\bullet$};
\node [inner sep=0.8pt,outer sep=0.8pt] at (-3,0) (-3) {$\bullet$};
\node [inner sep=0.8pt,outer sep=0.8pt] at (-2,0) (-2) {$\bullet$};
\node [inner sep=0.8pt,outer sep=0.8pt] at (0,0) (-1) {$\bullet$};
\node [inner sep=0.8pt,outer sep=0.8pt] at (1,0) (1) {$\bullet$};
\node [inner sep=0.8pt,outer sep=0.8pt] at (2,0) (2) {$\bullet$};
\node [inner sep=0.8pt,outer sep=0.8pt] at (3,0) (3) {$\bullet$};
\node [inner sep=0.8pt,outer sep=0.8pt] at (4,0) (4) {$\bullet$};
\node [inner sep=0.8pt,outer sep=0.8pt] at (5,0.5) (5a) {$\bullet$};
\node [inner sep=0.8pt,outer sep=0.8pt] at (5,-0.5) (5b) {$\bullet$};
\draw [line width=0.5pt,line cap=round,rounded corners,fill=ggrey] (-5.north west)  rectangle (-5.south east);
\draw [line width=0.5pt,line cap=round,rounded corners,fill=ggrey] (-4.north west)  rectangle (-4.south east);
\draw [line width=0.5pt,line cap=round,rounded corners,fill=ggrey] (-3.north west)  rectangle (-3.south east);
\draw [line width=0.5pt,line cap=round,rounded corners,fill=ggrey] (-2.north west)  rectangle (-2.south east);
\draw [line width=0.5pt,line cap=round,rounded corners,fill=ggrey] (-1.north west)  rectangle (-1.south east);
\draw [line width=0.5pt,line cap=round,rounded corners,fill=ggrey] (1.north west)  rectangle (1.south east);
\draw [line width=0.5pt,line cap=round,rounded corners] (2.north west)  rectangle (2.south east);
\node [below] at (2,-0.25) {$2j$};
\draw (-5,0)--(-2,0);
\draw (0,0)--(4,0);
\draw (4,0) to (5,0.5);
\draw (4,0) to   (5,-0.5);
\draw [style=dashed] (-2,0)--(0,0);
\node at (2,0) {$\bullet$};
\node [inner sep=0.8pt,outer sep=0.8pt] at (-5,0) (-5) {$\bullet$};
\node [inner sep=0.8pt,outer sep=0.8pt] at (-4,0) (-4) {$\bullet$};
\node [inner sep=0.8pt,outer sep=0.8pt] at (-3,0) (-3) {$\bullet$};
\node [inner sep=0.8pt,outer sep=0.8pt] at (-2,0) (-2) {$\bullet$};
\node [inner sep=0.8pt,outer sep=0.8pt] at (0,0) (-1) {$\bullet$};
\node [inner sep=0.8pt,outer sep=0.8pt] at (1,0) (1) {$\bullet$};
\node [inner sep=0.8pt,outer sep=0.8pt] at (2,0) (2) {$\bullet$};
\node [inner sep=0.8pt,outer sep=0.8pt] at (3,0) (3) {$\bullet$};
\node [inner sep=0.8pt,outer sep=0.8pt] at (4,0) (4) {$\bullet$};
\node [inner sep=0.8pt,outer sep=0.8pt] at (5,0.5) (5a) {$\bullet$};
\node [inner sep=0.8pt,outer sep=0.8pt] at (5,-0.5) (5b) {$\bullet$};
\end{tikzpicture}\quad
\begin{tikzpicture}[scale=0.5,baseline=-0.5ex]
\node at (0,0.8) {};
\node at (0,-0.8) {};
\node [inner sep=0.8pt,outer sep=0.8pt] at (-5,0) (-5) {$\bullet$};
\node [inner sep=0.8pt,outer sep=0.8pt] at (-4,0) (-4) {$\bullet$};
\node [inner sep=0.8pt,outer sep=0.8pt] at (-3,0) (-3) {$\bullet$};
\node [inner sep=0.8pt,outer sep=0.8pt] at (-2,0) (-2) {$\bullet$};
\node [inner sep=0.8pt,outer sep=0.8pt] at (0,0) (-1) {$\bullet$};
\node [inner sep=0.8pt,outer sep=0.8pt] at (1,0) (1) {$\bullet$};
\node [inner sep=0.8pt,outer sep=0.8pt] at (2,0) (2) {$\bullet$};
\node [inner sep=0.8pt,outer sep=0.8pt] at (3,0) (3) {$\bullet$};
\node [inner sep=0.8pt,outer sep=0.8pt] at (4,0) (4) {$\bullet$};
\node [inner sep=0.8pt,outer sep=0.8pt] at (5,0.5) (5a) {$\bullet$};
\node [inner sep=0.8pt,outer sep=0.8pt] at (5,-0.5) (5b) {$\bullet$};
\draw [line width=0.5pt,line cap=round,rounded corners,fill=ggrey] (-4.north west)  rectangle (-4.south east);
\draw [line width=0.5pt,line cap=round,rounded corners,fill=ggrey] (-2.north west)  rectangle (-2.south east);
\draw [line width=0.5pt,line cap=round,rounded corners,fill=ggrey] (2.north west)  rectangle (2.south east);
\draw [line width=0.5pt,line cap=round,rounded corners,fill=ggrey] (4.north west)  rectangle (4.south east);
\draw [line width=0.5pt,line cap=round,rounded corners,fill=ggrey] (5a.north west)  rectangle (5a.south east);
\draw [line width=0.5pt,line cap=round,rounded corners,fill=ggrey] (-5.north west)  rectangle (-5.south east);
\draw [line width=0.5pt,line cap=round,rounded corners,fill=ggrey] (-3.north west)  rectangle (-3.south east);
\draw [line width=0.5pt,line cap=round,rounded corners,fill=ggrey] (-1.north west)  rectangle (-1.south east);
\draw [line width=0.5pt,line cap=round,rounded corners,fill=ggrey] (1.north west)  rectangle (1.south east);
\draw [line width=0.5pt,line cap=round,rounded corners,fill=ggrey] (3.north west)  rectangle (3.south east);
\draw [line width=0.5pt,line cap=round,rounded corners,fill=ggrey] (5b.north west)  rectangle (5b.south east);
\node [inner sep=0.8pt,outer sep=0.8pt] at (-5,0) (-5) {$\bullet$};
\node [inner sep=0.8pt,outer sep=0.8pt] at (-4,0) (-4) {$\bullet$};
\node [inner sep=0.8pt,outer sep=0.8pt] at (-3,0) (-3) {$\bullet$};
\node [inner sep=0.8pt,outer sep=0.8pt] at (-2,0) (-2) {$\bullet$};
\node [inner sep=0.8pt,outer sep=0.8pt] at (0,0) (-1) {$\bullet$};
\node [inner sep=0.8pt,outer sep=0.8pt] at (1,0) (1) {$\bullet$};
\node [inner sep=0.8pt,outer sep=0.8pt] at (2,0) (2) {$\bullet$};
\node [inner sep=0.8pt,outer sep=0.8pt] at (3,0) (3) {$\bullet$};
\node [inner sep=0.8pt,outer sep=0.8pt] at (4,0) (4) {$\bullet$};
\node [inner sep=0.8pt,outer sep=0.8pt] at (5,0.5) (5a) {$\bullet$};
\node [inner sep=0.8pt,outer sep=0.8pt] at (5,-0.5) (5b) {$\bullet$};
\draw (-5,0)--(-2,0);
\draw (0,0)--(4,0);
\draw (4,0) to  (5,0.5);
\draw (4,0) to  (5,-0.5);
\draw [style=dashed] (-2,0)--(0,0);
\end{tikzpicture}\\
\end{tabular}\\ \hline
\begin{tabular}{c}
$\sD_{n}(2)$, $n\geq 4$ \emph{odd}\\
$(4\leq 2j\leq n-3)$
\end{tabular}&
\begin{tabular}{l}
\begin{tikzpicture}[scale=0.5,baseline=-0.5ex]
\node at (0,0.8) {};
\node [inner sep=0.8pt,outer sep=0.8pt] at (-5,0) (-5) {$\bullet$};
\node [inner sep=0.8pt,outer sep=0.8pt] at (-4,0) (-4) {$\bullet$};
\node [inner sep=0.8pt,outer sep=0.8pt] at (-3,0) (-3) {$\bullet$};
\node [inner sep=0.8pt,outer sep=0.8pt] at (-2,0) (-2) {$\bullet$};
\node [inner sep=0.8pt,outer sep=0.8pt] at (0,0) (-1) {$\bullet$};
\node [inner sep=0.8pt,outer sep=0.8pt] at (1,0) (1) {$\bullet$};
\node [inner sep=0.8pt,outer sep=0.8pt] at (2,0) (2) {$\bullet$};
\node [inner sep=0.8pt,outer sep=0.8pt] at (3,0) (3) {$\bullet$};
\node [inner sep=0.8pt,outer sep=0.8pt] at (4,0) (4) {$\bullet$};
\node [inner sep=0.8pt,outer sep=0.8pt] at (5,0.5) (5a) {$\bullet$};
\node [inner sep=0.8pt,outer sep=0.8pt] at (5,-0.5) (5b) {$\bullet$};
\draw [line width=0.5pt,line cap=round,rounded corners,fill=ggrey] (-5.north west)  rectangle (-5.south east);
\draw [line width=0.5pt,line cap=round,rounded corners,fill=ggrey] (-4.north west)  rectangle (-4.south east);
\draw [line width=0.5pt,line cap=round,rounded corners,fill=ggrey] (-3.north west)  rectangle (-3.south east);
\draw [line width=0.5pt,line cap=round,rounded corners,fill=ggrey] (-2.north west)  rectangle (-2.south east);
\draw [line width=0.5pt,line cap=round,rounded corners,fill=ggrey] (-1.north west)  rectangle (-1.south east);
\draw [line width=0.5pt,line cap=round,rounded corners] (1.north west)  rectangle (1.south east);
\node [below] at (1,-0.25) {$2j$};
\draw (-5,0)--(-2,0);
\draw (0,0)--(4,0);
\draw (4,0) to [bend left] (5,0.5);
\draw (4,0) to [bend right=45] (5,-0.5);
\draw [style=dashed] (-2,0)--(0,0);
\node at (1,0) {$\bullet$};
\node [inner sep=0.8pt,outer sep=0.8pt] at (-5,0) (-5) {$\bullet$};
\node [inner sep=0.8pt,outer sep=0.8pt] at (-4,0) (-4) {$\bullet$};
\node [inner sep=0.8pt,outer sep=0.8pt] at (-3,0) (-3) {$\bullet$};
\node [inner sep=0.8pt,outer sep=0.8pt] at (-2,0) (-2) {$\bullet$};
\node [inner sep=0.8pt,outer sep=0.8pt] at (0,0) (-1) {$\bullet$};
\node [inner sep=0.8pt,outer sep=0.8pt] at (1,0) (1) {$\bullet$};
\node [inner sep=0.8pt,outer sep=0.8pt] at (2,0) (2) {$\bullet$};
\node [inner sep=0.8pt,outer sep=0.8pt] at (3,0) (3) {$\bullet$};
\node [inner sep=0.8pt,outer sep=0.8pt] at (4,0) (4) {$\bullet$};
\node [inner sep=0.8pt,outer sep=0.8pt] at (5,0.5) (5a) {$\bullet$};
\node [inner sep=0.8pt,outer sep=0.8pt] at (5,-0.5) (5b) {$\bullet$};
\end{tikzpicture}\quad
\begin{tikzpicture}[scale=0.5,baseline=-0.5ex]
\node at (0,0.8) {};
\node at (0,-0.8) {};
\node [inner sep=0.8pt,outer sep=0.8pt] at (-5,0) (-5) {$\bullet$};
\node [inner sep=0.8pt,outer sep=0.8pt] at (-4,0) (-4) {$\bullet$};
\node [inner sep=0.8pt,outer sep=0.8pt] at (-3,0) (-3) {$\bullet$};
\node [inner sep=0.8pt,outer sep=0.8pt] at (-2,0) (-2) {$\bullet$};
\node [inner sep=0.8pt,outer sep=0.8pt] at (0,0) (-1) {$\bullet$};
\node [inner sep=0.8pt,outer sep=0.8pt] at (1,0) (1) {$\bullet$};
\node [inner sep=0.8pt,outer sep=0.8pt] at (2,0) (2) {$\bullet$};
\node [inner sep=0.8pt,outer sep=0.8pt] at (3,0) (3) {$\bullet$};
\node [inner sep=0.8pt,outer sep=0.8pt] at (4,0) (4) {$\bullet$};
\node [inner sep=0.8pt,outer sep=0.8pt] at (5,0.5) (5a) {$\bullet$};
\node [inner sep=0.8pt,outer sep=0.8pt] at (5,-0.5) (5b) {$\bullet$};
\draw [line width=0.5pt,line cap=round,rounded corners,fill=ggrey] (-4.north west)  rectangle (-4.south east);
\draw [line width=0.5pt,line cap=round,rounded corners,fill=ggrey] (-2.north west)  rectangle (-2.south east);
\draw [line width=0.5pt,line cap=round,rounded corners,fill=ggrey] (2.north west)  rectangle (2.south east);
\draw [line width=0.5pt,line cap=round,rounded corners,fill=ggrey] (4.north west)  rectangle (4.south east);
\draw [line width=0.5pt,line cap=round,rounded corners,fill=ggrey] (-5.north west)  rectangle (-5.south east);
\draw [line width=0.5pt,line cap=round,rounded corners,fill=ggrey] (-3.north west)  rectangle (-3.south east);
\draw [line width=0.5pt,line cap=round,rounded corners,fill=ggrey] (-1.north west)  rectangle (-1.south east);
\draw [line width=0.5pt,line cap=round,rounded corners,fill=ggrey] (1.north west)  rectangle (1.south east);
\draw [line width=0.5pt,line cap=round,rounded corners,fill=ggrey] (3.north west)  rectangle (3.south east);
\draw [line width=0.5pt,line cap=round,rounded corners] (5a.north west)  rectangle (5b.south east);
\draw (-5,0)--(-2,0);
\draw (0,0)--(4,0);
\draw (4,0) to [bend left] (5,0.5);
\draw (4,0) to [bend right=45] (5,-0.5);
\draw [style=dashed] (-2,0)--(0,0);
\node at (5,0.5) {$\bullet$};
\node at (5,-0.5) {$\bullet$};
\node [inner sep=0.8pt,outer sep=0.8pt] at (-5,0) (-5) {$\bullet$};
\node [inner sep=0.8pt,outer sep=0.8pt] at (-4,0) (-4) {$\bullet$};
\node [inner sep=0.8pt,outer sep=0.8pt] at (-3,0) (-3) {$\bullet$};
\node [inner sep=0.8pt,outer sep=0.8pt] at (-2,0) (-2) {$\bullet$};
\node [inner sep=0.8pt,outer sep=0.8pt] at (0,0) (-1) {$\bullet$};
\node [inner sep=0.8pt,outer sep=0.8pt] at (1,0) (1) {$\bullet$};
\node [inner sep=0.8pt,outer sep=0.8pt] at (2,0) (2) {$\bullet$};
\node [inner sep=0.8pt,outer sep=0.8pt] at (3,0) (3) {$\bullet$};
\node [inner sep=0.8pt,outer sep=0.8pt] at (4,0) (4) {$\bullet$};
\node [inner sep=0.8pt,outer sep=0.8pt] at (5,0.5) (5a) {$\bullet$};
\node [inner sep=0.8pt,outer sep=0.8pt] at (5,-0.5) (5b) {$\bullet$};
\end{tikzpicture}
\end{tabular}\\
\hline
\begin{tabular}{c}
$\sD_{n}(2)$, $n\geq 4$ \emph{even}\\
$(3\leq 2j+1\leq n-3)$
\end{tabular}&
\begin{tabular}{l}
\begin{tikzpicture}[scale=0.5,baseline=-0.5ex]
\node at (0,0.8) {};
\node [inner sep=0.8pt,outer sep=0.8pt] at (-5,0) (-5) {$\bullet$};
\node [inner sep=0.8pt,outer sep=0.8pt] at (-4,0) (-4) {$\bullet$};
\node [inner sep=0.8pt,outer sep=0.8pt] at (-3,0) (-3) {$\bullet$};
\node [inner sep=0.8pt,outer sep=0.8pt] at (-2,0) (-2) {$\bullet$};
\node [inner sep=0.8pt,outer sep=0.8pt] at (0,0) (-1) {$\bullet$};
\node [inner sep=0.8pt,outer sep=0.8pt] at (1,0) (1) {$\bullet$};
\node [inner sep=0.8pt,outer sep=0.8pt] at (2,0) (2) {$\bullet$};
\node [inner sep=0.8pt,outer sep=0.8pt] at (3,0) (3) {$\bullet$};
\node [inner sep=0.8pt,outer sep=0.8pt] at (4,0) (4) {$\bullet$};
\node [inner sep=0.8pt,outer sep=0.8pt] at (5,0.5) (5a) {$\bullet$};
\node [inner sep=0.8pt,outer sep=0.8pt] at (5,-0.5) (5b) {$\bullet$};
\draw [line width=0.5pt,line cap=round,rounded corners,fill=ggrey] (-5.north west)  rectangle (-5.south east);
\draw [line width=0.5pt,line cap=round,rounded corners,fill=ggrey] (-4.north west)  rectangle (-4.south east);
\draw [line width=0.5pt,line cap=round,rounded corners,fill=ggrey] (-3.north west)  rectangle (-3.south east);
\draw [line width=0.5pt,line cap=round,rounded corners,fill=ggrey] (-2.north west)  rectangle (-2.south east);
\draw [line width=0.5pt,line cap=round,rounded corners,fill=ggrey] (-1.north west)  rectangle (-1.south east);
\draw [line width=0.5pt,line cap=round,rounded corners] (1.north west)  rectangle (1.south east);
\node [below] at (1,-0.25) {$2j+1$};
\draw (-5,0)--(-2,0);
\draw (0,0)--(4,0);
\draw (4,0) to [bend left] (5,0.5);
\draw (4,0) to [bend right=45] (5,-0.5);
\draw [style=dashed] (-2,0)--(0,0);
\node at (1,0) {$\bullet$};
\node [inner sep=0.8pt,outer sep=0.8pt] at (-5,0) (-5) {$\bullet$};
\node [inner sep=0.8pt,outer sep=0.8pt] at (-4,0) (-4) {$\bullet$};
\node [inner sep=0.8pt,outer sep=0.8pt] at (-3,0) (-3) {$\bullet$};
\node [inner sep=0.8pt,outer sep=0.8pt] at (-2,0) (-2) {$\bullet$};
\node [inner sep=0.8pt,outer sep=0.8pt] at (0,0) (-1) {$\bullet$};
\node [inner sep=0.8pt,outer sep=0.8pt] at (1,0) (1) {$\bullet$};
\node [inner sep=0.8pt,outer sep=0.8pt] at (2,0) (2) {$\bullet$};
\node [inner sep=0.8pt,outer sep=0.8pt] at (3,0) (3) {$\bullet$};
\node [inner sep=0.8pt,outer sep=0.8pt] at (4,0) (4) {$\bullet$};
\node [inner sep=0.8pt,outer sep=0.8pt] at (5,0.5) (5a) {$\bullet$};
\node [inner sep=0.8pt,outer sep=0.8pt] at (5,-0.5) (5b) {$\bullet$};
\end{tikzpicture}\quad
\begin{tikzpicture}[scale=0.5,baseline=-0.5ex]
\node [inner sep=0.8pt,outer sep=0.8pt] at (-5,0) (-5) {$\bullet$};
\node [inner sep=0.8pt,outer sep=0.8pt] at (-4,0) (-4) {$\bullet$};
\node [inner sep=0.8pt,outer sep=0.8pt] at (-3,0) (-3) {$\bullet$};
\node [inner sep=0.8pt,outer sep=0.8pt] at (-2,0) (-2) {$\bullet$};
\node [inner sep=0.8pt,outer sep=0.8pt] at (0,0) (-1) {$\bullet$};
\node [inner sep=0.8pt,outer sep=0.8pt] at (1,0) (1) {$\bullet$};
\node [inner sep=0.8pt,outer sep=0.8pt] at (2,0) (2) {$\bullet$};
\node [inner sep=0.8pt,outer sep=0.8pt] at (3,0) (3) {$\bullet$};
\node [inner sep=0.8pt,outer sep=0.8pt] at (4,0) (4) {$\bullet$};
\node [inner sep=0.8pt,outer sep=0.8pt] at (5,0.5) (5a) {$\bullet$};
\node [inner sep=0.8pt,outer sep=0.8pt] at (5,-0.5) (5b) {$\bullet$};
\draw [line width=0.5pt,line cap=round,rounded corners,fill=ggrey] (-5.north west)  rectangle (-5.south east);
\draw [line width=0.5pt,line cap=round,rounded corners,fill=ggrey] (-4.north west)  rectangle (-4.south east);
\draw [line width=0.5pt,line cap=round,rounded corners,fill=ggrey] (-3.north west)  rectangle (-3.south east);
\draw [line width=0.5pt,line cap=round,rounded corners,fill=ggrey] (-2.north west)  rectangle (-2.south east);
\draw [line width=0.5pt,line cap=round,rounded corners,fill=ggrey] (-1.north west)  rectangle (-1.south east);
\draw [line width=0.5pt,line cap=round,rounded corners,fill=ggrey] (1.north west)  rectangle (1.south east);
\draw [line width=0.5pt,line cap=round,rounded corners,fill=ggrey] (2.north west)  rectangle (2.south east);
\draw [line width=0.5pt,line cap=round,rounded corners,fill=ggrey] (3.north west)  rectangle (3.south east);
\draw [line width=0.5pt,line cap=round,rounded corners,fill=ggrey] (4.north west)  rectangle (4.south east);
\draw [line width=0.5pt,line cap=round,rounded corners] (5a.north west)  rectangle (5b.south east);
\node at (0,-0.8) {};
\draw (-5,0)--(-2,0);
\draw (0,0)--(4,0);
\draw (4,0) to [bend left] (5,0.5);
\draw (4,0) to [bend right=45] (5,-0.5);
\draw [style=dashed] (-2,0)--(0,0);
\node at (5,0.5) {$\bullet$};
\node at (5,-0.5) {$\bullet$};
\node [inner sep=0.8pt,outer sep=0.8pt] at (-5,0) (-5) {$\bullet$};
\node [inner sep=0.8pt,outer sep=0.8pt] at (-4,0) (-4) {$\bullet$};
\node [inner sep=0.8pt,outer sep=0.8pt] at (-3,0) (-3) {$\bullet$};
\node [inner sep=0.8pt,outer sep=0.8pt] at (-2,0) (-2) {$\bullet$};
\node [inner sep=0.8pt,outer sep=0.8pt] at (0,0) (-1) {$\bullet$};
\node [inner sep=0.8pt,outer sep=0.8pt] at (1,0) (1) {$\bullet$};
\node [inner sep=0.8pt,outer sep=0.8pt] at (2,0) (2) {$\bullet$};
\node [inner sep=0.8pt,outer sep=0.8pt] at (3,0) (3) {$\bullet$};
\node [inner sep=0.8pt,outer sep=0.8pt] at (4,0) (4) {$\bullet$};
\node [inner sep=0.8pt,outer sep=0.8pt] at (5,0.5) (5a) {$\bullet$};
\node [inner sep=0.8pt,outer sep=0.8pt] at (5,-0.5) (5b) {$\bullet$};
\end{tikzpicture}
\end{tabular}\\ 
\hline
\begin{tabular}{c}
$\sD_{n}(2)$, $n\geq 4$ \emph{odd}\\
$(3\leq 2j+1\leq n-2)$
\end{tabular}&
\begin{tabular}{l}
\begin{tikzpicture}[scale=0.5,baseline=-0.5ex]
\node at (0,0.8) {};
\node [inner sep=0.8pt,outer sep=0.8pt] at (-5,0) (-5) {$\bullet$};
\node [inner sep=0.8pt,outer sep=0.8pt] at (-4,0) (-4) {$\bullet$};
\node [inner sep=0.8pt,outer sep=0.8pt] at (-3,0) (-3) {$\bullet$};
\node [inner sep=0.8pt,outer sep=0.8pt] at (-2,0) (-2) {$\bullet$};
\node [inner sep=0.8pt,outer sep=0.8pt] at (0,0) (-1) {$\bullet$};
\node [inner sep=0.8pt,outer sep=0.8pt] at (1,0) (1) {$\bullet$};
\node [inner sep=0.8pt,outer sep=0.8pt] at (2,0) (2) {$\bullet$};
\node [inner sep=0.8pt,outer sep=0.8pt] at (3,0) (3) {$\bullet$};
\node [inner sep=0.8pt,outer sep=0.8pt] at (4,0) (4) {$\bullet$};
\node [inner sep=0.8pt,outer sep=0.8pt] at (5,0.5) (5a) {$\bullet$};
\node [inner sep=0.8pt,outer sep=0.8pt] at (5,-0.5) (5b) {$\bullet$};
\draw [line width=0.5pt,line cap=round,rounded corners,fill=ggrey] (-5.north west)  rectangle (-5.south east);
\draw [line width=0.5pt,line cap=round,rounded corners,fill=ggrey] (-4.north west)  rectangle (-4.south east);
\draw [line width=0.5pt,line cap=round,rounded corners,fill=ggrey] (-3.north west)  rectangle (-3.south east);
\draw [line width=0.5pt,line cap=round,rounded corners,fill=ggrey] (-2.north west)  rectangle (-2.south east);
\draw [line width=0.5pt,line cap=round,rounded corners,fill=ggrey] (-1.north west)  rectangle (-1.south east);
\draw [line width=0.5pt,line cap=round,rounded corners,fill=ggrey] (1.north west)  rectangle (1.south east);
\draw [line width=0.5pt,line cap=round,rounded corners] (2.north west)  rectangle (2.south east);
\node [below] at (2,-0.25) {$2j+1$};
\draw (-5,0)--(-2,0);
\draw (0,0)--(4,0);
\draw (4,0) to  (5,0.5);
\draw (4,0) to (5,-0.5);
\draw [style=dashed] (-2,0)--(0,0);
\node at (2,0) {$\bullet$};
\node [inner sep=0.8pt,outer sep=0.8pt] at (-5,0) (-5) {$\bullet$};
\node [inner sep=0.8pt,outer sep=0.8pt] at (-4,0) (-4) {$\bullet$};
\node [inner sep=0.8pt,outer sep=0.8pt] at (-3,0) (-3) {$\bullet$};
\node [inner sep=0.8pt,outer sep=0.8pt] at (-2,0) (-2) {$\bullet$};
\node [inner sep=0.8pt,outer sep=0.8pt] at (0,0) (-1) {$\bullet$};
\node [inner sep=0.8pt,outer sep=0.8pt] at (1,0) (1) {$\bullet$};
\node [inner sep=0.8pt,outer sep=0.8pt] at (2,0) (2) {$\bullet$};
\node [inner sep=0.8pt,outer sep=0.8pt] at (3,0) (3) {$\bullet$};
\node [inner sep=0.8pt,outer sep=0.8pt] at (4,0) (4) {$\bullet$};
\node [inner sep=0.8pt,outer sep=0.8pt] at (5,0.5) (5a) {$\bullet$};
\node [inner sep=0.8pt,outer sep=0.8pt] at (5,-0.5) (5b) {$\bullet$};
\end{tikzpicture}\quad 
\begin{tikzpicture}[scale=0.5,baseline=-0.5ex]
\node [inner sep=0.8pt,outer sep=0.8pt] at (-5,0) (-5) {$\bullet$};
\node [inner sep=0.8pt,outer sep=0.8pt] at (-4,0) (-4) {$\bullet$};
\node [inner sep=0.8pt,outer sep=0.8pt] at (-3,0) (-3) {$\bullet$};
\node [inner sep=0.8pt,outer sep=0.8pt] at (-2,0) (-2) {$\bullet$};
\node [inner sep=0.8pt,outer sep=0.8pt] at (0,0) (-1) {$\bullet$};
\node [inner sep=0.8pt,outer sep=0.8pt] at (1,0) (1) {$\bullet$};
\node [inner sep=0.8pt,outer sep=0.8pt] at (2,0) (2) {$\bullet$};
\node [inner sep=0.8pt,outer sep=0.8pt] at (3,0) (3) {$\bullet$};
\node [inner sep=0.8pt,outer sep=0.8pt] at (4,0) (4) {$\bullet$};
\node [inner sep=0.8pt,outer sep=0.8pt] at (5,0.5) (5a) {$\bullet$};
\node [inner sep=0.8pt,outer sep=0.8pt] at (5,-0.5) (5b) {$\bullet$};
\draw [line width=0.5pt,line cap=round,rounded corners,fill=ggrey] (-5.north west)  rectangle (-5.south east);
\draw [line width=0.5pt,line cap=round,rounded corners,fill=ggrey] (-4.north west)  rectangle (-4.south east);
\draw [line width=0.5pt,line cap=round,rounded corners,fill=ggrey] (-3.north west)  rectangle (-3.south east);
\draw [line width=0.5pt,line cap=round,rounded corners,fill=ggrey] (-2.north west)  rectangle (-2.south east);
\draw [line width=0.5pt,line cap=round,rounded corners,fill=ggrey] (-1.north west)  rectangle (-1.south east);
\draw [line width=0.5pt,line cap=round,rounded corners,fill=ggrey] (1.north west)  rectangle (1.south east);
\draw [line width=0.5pt,line cap=round,rounded corners,fill=ggrey] (2.north west)  rectangle (2.south east);
\draw [line width=0.5pt,line cap=round,rounded corners,fill=ggrey] (3.north west)  rectangle (3.south east);
\draw [line width=0.5pt,line cap=round,rounded corners,fill=ggrey] (4.north west)  rectangle (4.south east);
\draw [line width=0.5pt,line cap=round,rounded corners,fill=ggrey] (5a.north west)  rectangle (5a.south east);
\draw [line width=0.5pt,line cap=round,rounded corners,fill=ggrey] (5b.north west)  rectangle (5b.south east);
\node at (0,-0.8) {};
\node [inner sep=0.8pt,outer sep=0.8pt] at (-5,0) (-5) {$\bullet$};
\node [inner sep=0.8pt,outer sep=0.8pt] at (-4,0) (-4) {$\bullet$};
\node [inner sep=0.8pt,outer sep=0.8pt] at (-3,0) (-3) {$\bullet$};
\node [inner sep=0.8pt,outer sep=0.8pt] at (-2,0) (-2) {$\bullet$};
\node [inner sep=0.8pt,outer sep=0.8pt] at (0,0) (-1) {$\bullet$};
\node [inner sep=0.8pt,outer sep=0.8pt] at (1,0) (1) {$\bullet$};
\node [inner sep=0.8pt,outer sep=0.8pt] at (2,0) (2) {$\bullet$};
\node [inner sep=0.8pt,outer sep=0.8pt] at (3,0) (3) {$\bullet$};
\node [inner sep=0.8pt,outer sep=0.8pt] at (4,0) (4) {$\bullet$};
\node [inner sep=0.8pt,outer sep=0.8pt] at (5,0.5) (5a) {$\bullet$};
\node [inner sep=0.8pt,outer sep=0.8pt] at (5,-0.5) (5b) {$\bullet$};
\draw (-5,0)--(-2,0);
\draw (0,0)--(4,0);
\draw (4,0) to  (5,0.5);
\draw (4,0) to (5,-0.5);
\draw [style=dashed] (-2,0)--(0,0);
\end{tikzpicture}
\end{tabular}\\
\hline
\end{tabular}
\captionof{table}{Decorated opposition diagrams of uncapped automorphisms (classical types)}\label{table:1}
\end{center}

\begin{center}
\noindent\begin{tabular}{|c|l|}
\hline
\begin{tabular}{l}
$\Delta$
\end{tabular}&\emph{Diagrams}\\
\hline\hline
\begin{tabular}{l}
$\sE_6(2)$
\end{tabular}&
\begin{tabular}{l}
\begin{tikzpicture}[scale=0.5,baseline=-0.5ex]
\node at (0,0.3) {};
\node [inner sep=0.8pt,outer sep=0.8pt] at (-2,0) (2) {$\bullet$};
\node [inner sep=0.8pt,outer sep=0.8pt] at (-1,0) (4) {$\bullet$};
\node [inner sep=0.8pt,outer sep=0.8pt] at (0,-0.5) (5) {$\bullet$};
\node [inner sep=0.8pt,outer sep=0.8pt] at (0,0.5) (3) {$\bullet$};
\node [inner sep=0.8pt,outer sep=0.8pt] at (1,-0.5) (6) {$\bullet$};
\node [inner sep=0.8pt,outer sep=0.8pt] at (1,0.5) (1) {$\bullet$};
\draw [line width=0.5pt,line cap=round,rounded corners,fill=ggrey] (2.north west)  rectangle (2.south east);
\draw [line width=0.5pt,line cap=round,rounded corners,fill=ggrey] (4.north west)  rectangle (4.south east);
\draw [line width=0.5pt,line cap=round,rounded corners] (3.north west)  rectangle (5.south east);
\draw [line width=0.5pt,line cap=round,rounded corners] (1.north west)  rectangle (6.south east);
\draw (-2,0)--(-1,0);
\draw (-1,0) to [bend left=45] (0,0.5);
\draw (-1,0) to [bend right=45] (0,-0.5);
\draw (0,0.5)--(1,0.5);
\draw (0,-0.5)--(1,-0.5);
\node at (0,-0.5) {$\bullet$};
\node at (0,0.5) {$\bullet$};
\node at (1,-0.5) {$\bullet$};
\node at (1,0.5) {$\bullet$};
\node [inner sep=0.8pt,outer sep=0.8pt] at (-2,0) (2) {$\bullet$};
\node [inner sep=0.8pt,outer sep=0.8pt] at (-1,0) (4) {$\bullet$};
\node [inner sep=0.8pt,outer sep=0.8pt] at (0,-0.5) (5) {$\bullet$};
\node [inner sep=0.8pt,outer sep=0.8pt] at (0,0.5) (3) {$\bullet$};
\node [inner sep=0.8pt,outer sep=0.8pt] at (1,-0.5) (6) {$\bullet$};
\node [inner sep=0.8pt,outer sep=0.8pt] at (1,0.5) (1) {$\bullet$};
\end{tikzpicture}\qquad \begin{tikzpicture}[scale=0.5,baseline=-2ex]
\node at (0,0.3) {};
\node at (0,-1.3) {};
\node [inner sep=0.8pt,outer sep=0.8pt] at (-2,0) (1) {$\bullet$};
\node [inner sep=0.8pt,outer sep=0.8pt] at (-1,0) (3) {$\bullet$};
\node [inner sep=0.8pt,outer sep=0.8pt] at (0,0) (4) {$\bullet$};
\node [inner sep=0.8pt,outer sep=0.8pt] at (1,0) (5) {$\bullet$};
\node [inner sep=0.8pt,outer sep=0.8pt] at (2,0) (6) {$\bullet$};
\node [inner sep=0.8pt,outer sep=0.8pt] at (0,-1) (2) {$\bullet$};
\draw [line width=0.5pt,line cap=round,rounded corners,fill=ggrey] (1.north west)  rectangle (1.south east);
\draw [line width=0.5pt,line cap=round,rounded corners,fill=ggrey] (2.north west)  rectangle (2.south east);
\draw [line width=0.5pt,line cap=round,rounded corners,fill=ggrey] (3.north west)  rectangle (3.south east);
\draw [line width=0.5pt,line cap=round,rounded corners,fill=ggrey] (4.north west)  rectangle (4.south east);
\draw [line width=0.5pt,line cap=round,rounded corners,fill=ggrey] (5.north west)  rectangle (5.south east);
\draw [line width=0.5pt,line cap=round,rounded corners,fill=ggrey] (6.north west)  rectangle (6.south east);
\draw (-2,0)--(2,0);
\draw (0,0)--(0,-1);
\node [inner sep=0.8pt,outer sep=0.8pt] at (-2,0) (1) {$\bullet$};
\node [inner sep=0.8pt,outer sep=0.8pt] at (-1,0) (3) {$\bullet$};
\node [inner sep=0.8pt,outer sep=0.8pt] at (0,0) (4) {$\bullet$};
\node [inner sep=0.8pt,outer sep=0.8pt] at (1,0) (5) {$\bullet$};
\node [inner sep=0.8pt,outer sep=0.8pt] at (2,0) (6) {$\bullet$};
\node [inner sep=0.8pt,outer sep=0.8pt] at (0,-1) (2) {$\bullet$};
\end{tikzpicture} 
\end{tabular}\\
\hline
\begin{tabular}{l}
$\sE_7(2)$
\end{tabular}&
\begin{tabular}{l}
\begin{tikzpicture}[scale=0.5,baseline=-1.5ex]
\node [inner sep=0.8pt,outer sep=0.8pt] at (-2,0) (1) {$\bullet$};
\node [inner sep=0.8pt,outer sep=0.8pt] at (-1,0) (3) {$\bullet$};
\node [inner sep=0.8pt,outer sep=0.8pt] at (0,0) (4) {$\bullet$};
\node [inner sep=0.8pt,outer sep=0.8pt] at (1,0) (5) {$\bullet$};
\node [inner sep=0.8pt,outer sep=0.8pt] at (2,0) (6) {$\bullet$};
\node [inner sep=0.8pt,outer sep=0.8pt] at (3,0) (7) {$\bullet$};
\node [inner sep=0.8pt,outer sep=0.8pt] at (0,-1) (2) {$\bullet$};
\draw [line width=0.5pt,line cap=round,rounded corners,fill=ggrey] (1.north west)  rectangle (1.south east);
\draw [line width=0.5pt,line cap=round,rounded corners,fill=ggrey] (3.north west)  rectangle (3.south east);
\draw [line width=0.5pt,line cap=round,rounded corners] (4.north west)  rectangle (4.south east);
\draw [line width=0.5pt,line cap=round,rounded corners] (6.north west)  rectangle (6.south east);
\node at (0,0) {$\bullet$};
\node at (2,0) {$\bullet$};
\node at (0,-1.3) {};
\node at (0,0.3) {};
\node [inner sep=0.8pt,outer sep=0.8pt] at (-2,0) (1) {$\bullet$};
\node [inner sep=0.8pt,outer sep=0.8pt] at (-1,0) (3) {$\bullet$};
\node [inner sep=0.8pt,outer sep=0.8pt] at (0,0) (4) {$\bullet$};
\node [inner sep=0.8pt,outer sep=0.8pt] at (1,0) (5) {$\bullet$};
\node [inner sep=0.8pt,outer sep=0.8pt] at (2,0) (6) {$\bullet$};
\node [inner sep=0.8pt,outer sep=0.8pt] at (3,0) (7) {$\bullet$};
\node [inner sep=0.8pt,outer sep=0.8pt] at (0,-1) (2) {$\bullet$};
\draw (-2,0)--(3,0);
\draw (0,0)--(0,-1);
\end{tikzpicture}
\qquad 
\begin{tikzpicture}[scale=0.5,baseline=-1.5ex]
\node [inner sep=0.8pt,outer sep=0.8pt] at (-2,0) (1) {$\bullet$};
\node [inner sep=0.8pt,outer sep=0.8pt] at (-1,0) (3) {$\bullet$};
\node [inner sep=0.8pt,outer sep=0.8pt] at (0,0) (4) {$\bullet$};
\node [inner sep=0.8pt,outer sep=0.8pt] at (1,0) (5) {$\bullet$};
\node [inner sep=0.8pt,outer sep=0.8pt] at (2,0) (6) {$\bullet$};
\node [inner sep=0.8pt,outer sep=0.8pt] at (3,0) (7) {$\bullet$};
\node [inner sep=0.8pt,outer sep=0.8pt] at (0,-1) (2) {$\bullet$};
\draw [line width=0.5pt,line cap=round,rounded corners,fill=ggrey] (1.north west)  rectangle (1.south east);
\draw [line width=0.5pt,line cap=round,rounded corners,fill=ggrey] (3.north west)  rectangle (3.south east);
\draw [line width=0.5pt,line cap=round,rounded corners,fill=ggrey] (4.north west)  rectangle (4.south east);
\draw [line width=0.5pt,line cap=round,rounded corners,fill=ggrey] (6.north west)  rectangle (6.south east);
\draw [line width=0.5pt,line cap=round,rounded corners,fill=ggrey] (2.north west)  rectangle (2.south east);
\draw [line width=0.5pt,line cap=round,rounded corners,fill=ggrey] (5.north west)  rectangle (5.south east);
\draw [line width=0.5pt,line cap=round,rounded corners,fill=ggrey] (7.north west)  rectangle (7.south east);
\node [inner sep=0.8pt,outer sep=0.8pt] at (-2,0) (1) {$\bullet$};
\node [inner sep=0.8pt,outer sep=0.8pt] at (-1,0) (3) {$\bullet$};
\node [inner sep=0.8pt,outer sep=0.8pt] at (0,0) (4) {$\bullet$};
\node [inner sep=0.8pt,outer sep=0.8pt] at (1,0) (5) {$\bullet$};
\node [inner sep=0.8pt,outer sep=0.8pt] at (2,0) (6) {$\bullet$};
\node [inner sep=0.8pt,outer sep=0.8pt] at (3,0) (7) {$\bullet$};
\node [inner sep=0.8pt,outer sep=0.8pt] at (0,-1) (2) {$\bullet$};
\draw (-2,0)--(3,0);
\draw (0,0)--(0,-1);
\end{tikzpicture} 
\end{tabular}\\
\hline
\begin{tabular}{l}
$\sE_8(2)$
\end{tabular}&
\begin{tabular}{l}
\begin{tikzpicture}[scale=0.5,baseline=-1.5ex]
\node at (0,0.3) {};
\node at (0,-1.3) {};
\node [inner sep=0.8pt,outer sep=0.8pt] at (-2,0) (1) {$\bullet$};
\node [inner sep=0.8pt,outer sep=0.8pt] at (-1,0) (3) {$\bullet$};
\node [inner sep=0.8pt,outer sep=0.8pt] at (0,0) (4) {$\bullet$};
\node [inner sep=0.8pt,outer sep=0.8pt] at (1,0) (5) {$\bullet$};
\node [inner sep=0.8pt,outer sep=0.8pt] at (2,0) (6) {$\bullet$};
\node [inner sep=0.8pt,outer sep=0.8pt] at (3,0) (7) {$\bullet$};
\node [inner sep=0.8pt,outer sep=0.8pt] at (4,0) (8) {$\bullet$};
\node [inner sep=0.8pt,outer sep=0.8pt] at (0,-1) (2) {$\bullet$};
\draw [line width=0.5pt,line cap=round,rounded corners] (1.north west)  rectangle (1.south east);
\draw [line width=0.5pt,line cap=round,rounded corners] (6.north west)  rectangle (6.south east);
\draw [line width=0.5pt,line cap=round,rounded corners,fill=ggrey] (7.north west)  rectangle (7.south east);
\draw [line width=0.5pt,line cap=round,rounded corners,fill=ggrey] (8.north west)  rectangle (8.south east);
\draw (-2,0)--(4,0);
\draw (0,0)--(0,-1);
\node at (-2,0) {$\bullet$};
\node at (2,0) {$\bullet$};
\node [inner sep=0.8pt,outer sep=0.8pt] at (-2,0) (1) {$\bullet$};
\node [inner sep=0.8pt,outer sep=0.8pt] at (-1,0) (3) {$\bullet$};
\node [inner sep=0.8pt,outer sep=0.8pt] at (0,0) (4) {$\bullet$};
\node [inner sep=0.8pt,outer sep=0.8pt] at (1,0) (5) {$\bullet$};
\node [inner sep=0.8pt,outer sep=0.8pt] at (2,0) (6) {$\bullet$};
\node [inner sep=0.8pt,outer sep=0.8pt] at (3,0) (7) {$\bullet$};
\node [inner sep=0.8pt,outer sep=0.8pt] at (4,0) (8) {$\bullet$};
\node [inner sep=0.8pt,outer sep=0.8pt] at (0,-1) (2) {$\bullet$};
\end{tikzpicture}\qquad
\begin{tikzpicture}[scale=0.5,baseline=-1.5ex]
\node [inner sep=0.8pt,outer sep=0.8pt] at (-2,0) (1) {$\bullet$};
\node [inner sep=0.8pt,outer sep=0.8pt] at (-1,0) (3) {$\bullet$};
\node [inner sep=0.8pt,outer sep=0.8pt] at (0,0) (4) {$\bullet$};
\node [inner sep=0.8pt,outer sep=0.8pt] at (1,0) (5) {$\bullet$};
\node [inner sep=0.8pt,outer sep=0.8pt] at (2,0) (6) {$\bullet$};
\node [inner sep=0.8pt,outer sep=0.8pt] at (3,0) (7) {$\bullet$};
\node [inner sep=0.8pt,outer sep=0.8pt] at (4,0) (8) {$\bullet$};
\node [inner sep=0.8pt,outer sep=0.8pt] at (0,-1) (2) {$\bullet$};
\draw [line width=0.5pt,line cap=round,rounded corners,fill=ggrey] (8.north west)  rectangle (8.south east);
\draw [line width=0.5pt,line cap=round,rounded corners,fill=ggrey] (7.north west)  rectangle (7.south east);
\draw [line width=0.5pt,line cap=round,rounded corners,fill=ggrey] (6.north west)  rectangle (6.south east);
\draw [line width=0.5pt,line cap=round,rounded corners,fill=ggrey] (5.north west)  rectangle (5.south east);
\draw [line width=0.5pt,line cap=round,rounded corners,fill=ggrey] (4.north west)  rectangle (4.south east);
\draw [line width=0.5pt,line cap=round,rounded corners,fill=ggrey] (3.north west)  rectangle (3.south east);
\draw [line width=0.5pt,line cap=round,rounded corners,fill=ggrey] (2.north west)  rectangle (2.south east);
\draw [line width=0.5pt,line cap=round,rounded corners,fill=ggrey] (1.north west)  rectangle (1.south east);
\draw (-2,0)--(4,0);
\draw (0,0)--(0,-1);
\node [inner sep=0.8pt,outer sep=0.8pt] at (-2,0) (1) {$\bullet$};
\node [inner sep=0.8pt,outer sep=0.8pt] at (-1,0) (3) {$\bullet$};
\node [inner sep=0.8pt,outer sep=0.8pt] at (0,0) (4) {$\bullet$};
\node [inner sep=0.8pt,outer sep=0.8pt] at (1,0) (5) {$\bullet$};
\node [inner sep=0.8pt,outer sep=0.8pt] at (2,0) (6) {$\bullet$};
\node [inner sep=0.8pt,outer sep=0.8pt] at (3,0) (7) {$\bullet$};
\node [inner sep=0.8pt,outer sep=0.8pt] at (4,0) (8) {$\bullet$};
\node [inner sep=0.8pt,outer sep=0.8pt] at (0,-1) (2) {$\bullet$};
\node at (0,0.3) {};
\node at (0,-1.3) {};
\end{tikzpicture} 
\end{tabular}\\
\hline
\begin{tabular}{l}
$\sF_4(2)$
\end{tabular}&
\begin{tabular}{l}
\begin{tikzpicture}[scale=0.5]
\node at (0,0.5) {};
\node at (0,-0.3) {};
\node [inner sep=0.8pt,outer sep=0.8pt] at (-1.5,0) (1) {$\bullet$};
\node [inner sep=0.8pt,outer sep=0.8pt] at (-0.5,0) (2) {$\bullet$};
\node [inner sep=0.8pt,outer sep=0.8pt] at (0.5,0) (3) {$\bullet$};
\node [inner sep=0.8pt,outer sep=0.8pt] at (1.5,0) (4) {$\bullet$};
\draw [line width=0.5pt,line cap=round,rounded corners,fill=ggrey] (1.north west)  rectangle (1.south east);
\draw [line width=0.5pt,line cap=round,rounded corners,fill=ggrey] (2.north west)  rectangle (2.south east);
\draw [line width=0.5pt,line cap=round,rounded corners] (3.north west)  rectangle (3.south east);
\draw [line width=0.5pt,line cap=round,rounded corners] (4.north west)  rectangle (4.south east);
\draw (-1.5,0)--(-0.5,0);
\draw (0.5,0)--(1.5,0);
\draw (-0.5,0.07)--(0.5,0.07);
\draw (-0.5,-0.07)--(0.5,-0.07);
\node [inner sep=0.8pt,outer sep=0.8pt] at (-1.5,0) (1) {$\bullet$};
\node [inner sep=0.8pt,outer sep=0.8pt] at (-0.5,0) (2) {$\bullet$};
\node [inner sep=0.8pt,outer sep=0.8pt] at (0.5,0) (3) {$\bullet$};
\node [inner sep=0.8pt,outer sep=0.8pt] at (1.5,0) (4) {$\bullet$};
\end{tikzpicture}\qquad\begin{tikzpicture}[scale=0.5]
\node at (0,0.5) {};
\node at (0,-0.3) {};
\node [inner sep=0.8pt,outer sep=0.8pt] at (-1.5,0) (1) {$\bullet$};
\node [inner sep=0.8pt,outer sep=0.8pt] at (-0.5,0) (2) {$\bullet$};
\node [inner sep=0.8pt,outer sep=0.8pt] at (0.5,0) (3) {$\bullet$};
\node [inner sep=0.8pt,outer sep=0.8pt] at (1.5,0) (4) {$\bullet$};
\draw [line width=0.5pt,line cap=round,rounded corners] (1.north west)  rectangle (1.south east);
\draw [line width=0.5pt,line cap=round,rounded corners] (2.north west)  rectangle (2.south east);
\draw [line width=0.5pt,line cap=round,rounded corners,fill=ggrey] (3.north west)  rectangle (3.south east);
\draw [line width=0.5pt,line cap=round,rounded corners,fill=ggrey] (4.north west)  rectangle (4.south east);
\draw (-1.5,0)--(-0.5,0);
\draw (0.5,0)--(1.5,0);
\draw (-0.5,0.07)--(0.5,0.07);
\draw (-0.5,-0.07)--(0.5,-0.07);
\node [inner sep=0.8pt,outer sep=0.8pt] at (-1.5,0) (1) {$\bullet$};
\node [inner sep=0.8pt,outer sep=0.8pt] at (-0.5,0) (2) {$\bullet$};
\node [inner sep=0.8pt,outer sep=0.8pt] at (0.5,0) (3) {$\bullet$};
\node [inner sep=0.8pt,outer sep=0.8pt] at (1.5,0) (4) {$\bullet$};
\end{tikzpicture}\qquad
\begin{tikzpicture}[scale=0.5]
\node at (0,0.5) {};
\node at (0,-0.3) {};
\node [inner sep=0.8pt,outer sep=0.8pt] at (-1.5,0) (1) {$\bullet$};
\node [inner sep=0.8pt,outer sep=0.8pt] at (-0.5,0) (2) {$\bullet$};
\node [inner sep=0.8pt,outer sep=0.8pt] at (0.5,0) (3) {$\bullet$};
\node [inner sep=0.8pt,outer sep=0.8pt] at (1.5,0) (4) {$\bullet$};
\draw [line width=0.5pt,line cap=round,rounded corners,fill=ggrey] (1.north west)  rectangle (1.south east);
\draw [line width=0.5pt,line cap=round,rounded corners,fill=ggrey] (2.north west)  rectangle (2.south east);
\draw [line width=0.5pt,line cap=round,rounded corners,fill=ggrey] (3.north west)  rectangle (3.south east);
\draw [line width=0.5pt,line cap=round,rounded corners,fill=ggrey] (4.north west)  rectangle (4.south east);
\draw (-1.5,0)--(-0.5,0);
\draw (0.5,0)--(1.5,0);
\draw (-0.5,0.07)--(0.5,0.07);
\draw (-0.5,-0.07)--(0.5,-0.07);
\node [inner sep=0.8pt,outer sep=0.8pt] at (-1.5,0) (1) {$\bullet$};
\node [inner sep=0.8pt,outer sep=0.8pt] at (-0.5,0) (2) {$\bullet$};
\node [inner sep=0.8pt,outer sep=0.8pt] at (0.5,0) (3) {$\bullet$};
\node [inner sep=0.8pt,outer sep=0.8pt] at (1.5,0) (4) {$\bullet$};
\end{tikzpicture}
\end{tabular}\\
\hline
\begin{tabular}{l}
$\sF_4(2,4)$
\end{tabular}&
\begin{tabular}{l}
\begin{tikzpicture}[scale=0.5]
\node at (0,0.5) {};
\node at (0,-0.3) {};
\node [inner sep=0.8pt,outer sep=0.8pt] at (-1.5,0) (1) {$\bullet$};
\node [inner sep=0.8pt,outer sep=0.8pt] at (-0.5,0) (2) {$\bullet$};
\node [inner sep=0.8pt,outer sep=0.8pt] at (0.5,0) (3) {$\bullet$};
\node [inner sep=0.8pt,outer sep=0.8pt] at (1.5,0) (4) {$\bullet$};
\draw [line width=0.5pt,line cap=round,rounded corners,fill=ggrey] (1.north west)  rectangle (1.south east);
\draw [line width=0.5pt,line cap=round,rounded corners,fill=ggrey] (2.north west)  rectangle (2.south east);
\draw [line width=0.5pt,line cap=round,rounded corners] (3.north west)  rectangle (3.south east);
\draw [line width=0.5pt,line cap=round,rounded corners] (4.north west)  rectangle (4.south east);
\draw (-1.5,0)--(-0.5,0);
\draw (0.5,0)--(1.5,0);
\draw (-0.5,0.07)--(0.5,0.07);
\draw (-0.5,-0.07)--(0.5,-0.07);
\draw (-0.15,0.3) -- (0.08,0) -- (-0.15,-0.3);
\node [inner sep=0.8pt,outer sep=0.8pt] at (-1.5,0) (1) {$\bullet$};
\node [inner sep=0.8pt,outer sep=0.8pt] at (-0.5,0) (2) {$\bullet$};
\node [inner sep=0.8pt,outer sep=0.8pt] at (0.5,0) (3) {$\bullet$};
\node [inner sep=0.8pt,outer sep=0.8pt] at (1.5,0) (4) {$\bullet$};
\end{tikzpicture}
\end{tabular}\\
\hline
\end{tabular}
\captionof{table}{Decorated opposition diagrams of uncapped automorphisms (exceptional types). The arrow in the $\sF_4(2,4)$ diagram indicates that the residues of type $\{1,2\}$ are projective planes of order $2$.
}\label{table:2}
\end{center}
%

Let us briefly describe corollaries to Theorem~\ref{thm:main*}(a) (see Section~\ref{sec:applications} for details and precise statements). Recall that the \textit{displacement} $\disp(\theta)$ of an automorphism $\theta$ is the maximum length of $\delta(C,C^{\theta})$, with $C$ a chamber. 

\begin{cor1}\label{cor:cor1} Let $\theta$ be an automorphism of a thick irreducible spherical building~$\Delta$. 
\begin{compactenum}[$(a)$]
\item If $\theta$ is an involution then $\theta$ is capped. 
\item If $\theta$ is uncapped then $\mathcal{T}(\theta)$ is determined by the decorated opposition diagram of~$\theta$. 
\item If $\theta$ is uncapped then $\disp(\theta)$ is determined by the decorated opposition diagram of~$\theta$. 
\end{compactenum}
\end{cor1}

In particular, if $\Delta$ has type $(W,S)$ and $J=\Type(\theta)$ then Corollary~\ref{cor:cor1}(c) implies that (see Corollary~\ref{cor:disp})
$$
\disp(\theta)=\begin{cases}
\mathrm{diam}(W)-\mathrm{diam}(W_{S\backslash J})&\text{if $\theta$ is capped}\\
\mathrm{diam}(W)-\mathrm{diam}(W_{S\backslash J})-1&\text{if $\theta$ is uncapped.}
\end{cases}
$$ 
To illustrate this in an example, it follows that if $\theta$ is a nontrivial automorphism of a thick $\sE_8$ building then $\mathrm{disp}(\theta)\in\{57,90,107,108,119,120\}$, which is a surprisingly restricted list of possibilities (see Remark~\ref{rem:disp}). Moreover, displacements of $107$ or $119$ can only occur for uncapped automorphisms of the small building~$\sE_8(2)$.
%
%
%

We also provide applications of Theorem~\ref{thm:main*}(a) to the study of \textit{domesticity} in spherical buildings (recall that an automorphism is called \textit{domestic} if it maps no chamber to an opposite chamber). These automorphisms have recently enjoyed extensive investigation, including the series \cite{TTM:11,TTM:12,TTM:12b} where domesticity in projective spaces, polar spaces, and generalised quadrangles is studied, \cite{HVM:12} where symplectic polarities of large $\sE_6$ buildings are classified in terms of domesticity, \cite{HVM:13} where domestic trialities of $\sD_4$ buildings are classified, and \cite{PTM:15} where domesticity in generalised polygons is studied. 

To give one example of our applications to domesticity, suppose that $\Delta$ is a simply laced spherical building, and that $\theta$ is a domestic automorphism inducing opposition on the type set with the property that $\theta$ maps at least one vertex of each type onto an opposite vertex (such automorphisms are called ``exceptional domestic''). Then we show that in fact $\theta$ maps simplices of each type $J\subsetneq S$ onto opposite simplices (such automorphisms are called ``strongly exceptional domestic''). In particular, this implies that $\disp(\theta)=\mathrm{diam}(\Delta)-1$ for exceptional domestic automorphisms.
%
%
%

Theorem~\ref{thm:main*}(b) provides the first known examples of exceptional domestic automorphisms of spherical buildings of rank at least $3$ (examples were previously only known for generalised polygons; see \cite{PTM:15}). In fact Theorem~\ref{thm:main*}(b) shows that, with the possible exception of $\sE_8(2)$, every small building admits a  strongly exceptional domestic automorphism. 

The proof of Theorem~\ref{thm:main*}(b) for the small buildings of exceptional type involves computations using $\mathsf{MAGMA}$~\cite{MAGMA}, and in particular the Groups of Lie Type Package~\cite{CMT:04}.  In fact for the small buildings of type $\sF_4$ and $\sE_6$ we are able to prove a much stronger result and completely classify the domestic automorphisms of these buildings. To perform these calculations we implemented the minimal faithful permutation representations of the $\mathbb{ATLAS}$ groups $\sF_4(2)$, $\sF_4(2).2$, $\sE_6(2)$, $\sE_6(2).2$, ${^2}\sE_6(2^2)$, and ${^2}\sE_6(2^2).2$ (respective permutation degrees $69615$, $139230$, $139503$, $279006$, $3968055$ and $3968055$) into the $\mathsf{MAGMA}$ system. At the time of writing these representations were not readily available in either $\mathsf{MAGMA}$ or $\mathsf{GAP}$, and therefore they are provided on the first author's webpage. 
 
We conclude this introduction with an outline of the structure of the paper. In Section~\ref{sec:1} we provide definitions and background. The proofs of Theorem~\ref{thm:main*}(a) and its corollaries are contained in Section~\ref{sec:2}. The proof of Theorem~\ref{thm:main*}(b) is divided across Section~\ref{sec:classical} for the classical types and Section~\ref{sec:exceptional} for the exceptional types. Moreover, Section~\ref{sec:exceptional} contains the complete classification of domestic automorphisms of the small buildings of types $\sF_4$ and $\sE_6$.

\section{Definitions and background}\label{sec:1}

We refer to \cite{AB:08} for the general theory of buildings. In this section we will briefly recall some notation, mainly from \cite[Section~1]{PVM:17a}. Let $\Delta$ be a spherical building of type $(W,S)$, typically considered as a simplicial complex with type map $\type:\Delta\to 2^S$. Let $\cC$ be the set of chambers (maximal simplices) of $\Delta$, and let $\delta:\cC\times \cC\to W$ be the Weyl distance function.

Chambers $C$ and $D$ of $\Delta$ are \textit{opposite} if and only if they are at maximal distance in the chamber graph (with adjacency given by the union of the $s$-adjacency relations: $C\sim_s D$ if and only $\delta(C,D)=s$). Equivalently, chambers $C,D\in\cC$ are opposite if and only if
$
\delta(C,D)=w_0
$ where $w_0$ is the longest element of~$W$.

If $J\subseteq S$ we write $J^{\mathrm{op}}=J^{w_0}=w_0^{-1}Jw_0$ (the `opposite type' to $J$). The definition of opposition for chambers extends naturally to arbitrary simplices as follows (see~\cite[Lemma~5.107]{AB:08}). 

\begin{defn} Simplices $\alpha,\beta$ of $\Delta$ are \textit{opposite} if $\tau(\beta)=\tau(\alpha)^{\mathrm{op}}$ and there exists a chamber $A$ containing $\alpha$ and a chamber $B$ containing $\beta$ such that $A$ and $B$ are opposite.
\end{defn}

An \textit{automorphism} of $\Delta$ is a simplicial complex automorphism $\theta:\Delta\to\Delta$. Note that $\theta$ does not necessarily preserve types. Indeed each automorphism $\theta:\Delta\to\Delta$ induces a permutation $\pi_{\theta}$ of the type set $S$, given by $\delta(C,D)=s$ if and only if $\delta(C^{\theta},D^{\theta})=s^{\pi_{\theta}}$, and this permutation is a diagram automorphism of the Coxeter graph~$\Gamma$ of $(W,S)$. If $\Delta$ is irreducible, then from the classification of irreducible spherical Coxeter systems we see that $\pi_{\theta}:S\to S$ is either:

\begin{compactenum}[$(1)$]
\item the identity, in which case $\theta$ is called a \textit{collineation} (or \textit{type preserving}),
\item has order $2$, in which case $\theta$ is called a \textit{duality}, or
\item has order $3$, in which case $\theta$ is called a \textit{triality}; this case only occurs in type $\sD_4$.
\end{compactenum}
Automorphisms $\theta:\Delta\to\Delta$ that induce opposition on the type set (that is, $\pi_{\theta}=w_0$, where $w_0$ is the diagram automorphism  given by $s^{w_0}=w_0^{-1}sw_0$) are called \textit{oppomorphisms}. For example, oppomorphisms of an $\sE_6$ building are dualities, and oppomorphisms of an $\sE_7$ building are collineations (see, for example, \cite[Section~5.7.4]{AB:08}).

Let $\theta$ be an automorphism of $\Delta$. The \textit{opposite geometry} of $\theta$ is 
$$
\mathrm{Opp}(\theta)=\{\sigma\in\Delta\mid \sigma\text{ is opposite }\sigma^{\theta}\}.
$$ 
A fundamental result of Leeb~\cite[Section~5]{Lee:00} and Abramenko and Brown~\cite[Proposition~4.2]{AB:09} states that if $\theta$ is a nontrivial automorphism of a thick spherical building then $\mathrm{Opp}(\theta)$ is necessarily nonempty (this result has been generalised to the setting of twin buildings; see~\cite{DPM:13}).

The \textit{type} $\Type(\theta)$ of an automorphism $\theta$ is the union of all subsets $J\subseteq S$ such that there exists a type $J$ simplex in $\Opp(\theta)$. The \textit{opposition diagram} of $\theta$ is the triple $(\Gamma,\Type(\theta),\pi_{\theta})$. Less formally, the opposition diagram of $\theta$ is depicted by drawing $\Gamma$ and encircling the nodes of $\Type(\theta)$, where we encircle nodes in minimal subsets invariant under $w_0\circ \pi_{\theta}$. We draw the diagram `bent' (in the standard way) if $w_0\circ\pi_{\theta}\neq 1$. For example, consider the diagrams
\begin{center}
\begin{tikzpicture}[scale=0.5]
\node at (-4,0) {(a)};
\node at (0,0.3) {};
\node [inner sep=0.8pt,outer sep=0.8pt] at (-2,0) (2) {$\bullet$};
\node [inner sep=0.8pt,outer sep=0.8pt] at (-1,0) (4) {$\bullet$};
\node [inner sep=0.8pt,outer sep=0.8pt] at (0,-0.5) (5) {$\bullet$};
\node [inner sep=0.8pt,outer sep=0.8pt] at (0,0.5) (3) {$\bullet$};
\node [inner sep=0.8pt,outer sep=0.8pt] at (1,-0.5) (6) {$\bullet$};
\node [inner sep=0.8pt,outer sep=0.8pt] at (1,0.5) (1) {$\bullet$};
\draw (-2,0)--(-1,0);
\draw (-1,0) to [bend left=45] (0,0.5);
\draw (-1,0) to [bend right=45] (0,-0.5);
\draw (0,0.5)--(1,0.5);
\draw (0,-0.5)--(1,-0.5);
\draw [line width=0.5pt,line cap=round,rounded corners] (2.north west)  rectangle (2.south east);
\draw [line width=0.5pt,line cap=round,rounded corners] (1.north west)  rectangle (6.south east);
\end{tikzpicture}\qquad\qquad\qquad\qquad\qquad
\begin{tikzpicture}[scale=0.5]
\node at (-4,-0.5) {(b)};
\node at (0,0.3) {};
\node [inner sep=0.8pt,outer sep=0.8pt] at (-2,0) (1) {$\bullet$};
\node [inner sep=0.8pt,outer sep=0.8pt] at (-1,0) (3) {$\bullet$};
\node [inner sep=0.8pt,outer sep=0.8pt] at (0,0) (4) {$\bullet$};
\node [inner sep=0.8pt,outer sep=0.8pt] at (1,0) (5) {$\bullet$};
\node [inner sep=0.8pt,outer sep=0.8pt] at (2,0) (6) {$\bullet$};
\node [inner sep=0.8pt,outer sep=0.8pt] at (0,-1) (2) {$\bullet$};
\draw (-2,0)--(2,0);
\draw (0,0)--(0,-1);
\draw [line width=0.5pt,line cap=round,rounded corners] (1.north west)  rectangle (1.south east);
\draw [line width=0.5pt,line cap=round,rounded corners] (6.north west)  rectangle (6.south east);
\end{tikzpicture} 
\end{center}
Diagram (a) represents a collineation $\theta$ of an $\sE_6$ building with $\Type(\theta)=\{1,2,6\}$, and diagram (b) represents a duality $\theta$ of an $\sE_6$ building with $\Type(\theta)=\{1,6\}$.

We call an opposition diagram \textit{empty} if no nodes are encircled (that is, $\Type(\theta)=\emptyset$), and \textit{full} if all nodes are encircled (that is, $\Type(\theta)=S$).

\begin{defn}
Let $\Delta$ be a spherical building of type $(W,S)$. Let $\theta$ be a nontrivial automorphism of $\Delta$, and let $J\subseteq S$. Then $\theta$ is called:
\begin{compactenum}[$(a)$]
\item \textit{capped} if there exists a type $\Type(\theta)$ simplex in $\Opp(\theta)$, and \textit{uncapped} otherwise.
\item \textit{domestic} if $\Opp(\theta)$ contains no chamber. 
\item \textit{$J$-domestic} if $\Opp(\theta)$ contains no type $J$ simplex (this terminology is reserved for subsets $J$ which are stable under $w_0\circ\pi_{\theta}$). 
\item \textit{exceptional domestic} if $\theta$ is domestic with full opposition diagram.
\item \textit{strongly exceptional domestic} if $\theta$ is domestic, but not $J$-domestic for any strict subset $J$ of $S$ invariant under $w_0\circ\pi_{\theta}$. 
\end{compactenum}
Note that if $\theta$ is a domestic automorphism with $w_0\circ\pi_{\theta}=1$ then $\theta$ is exceptional domestic if and only if there exists a vertex of each type mapped to an opposite vertex, and $\theta$ is strongly exceptional domestic if and only if there exists a panel of each cotype mapped to an opposite panel (recall that a \textit{panel} is a codimension~$1$ simplex). 
\end{defn}

To study uncapped automorphisms $\theta$ we introduce the decorated opposition diagram. Let $\mathcal{J}_{\theta}$ denote the set of subsets $I\subseteq S$ which are minimal with respect to the condition $I^{\pi_{\theta} w_0}=I$. For example, if $\theta$ induces opposition on $\Gamma$ then $\mathcal{J}_{\theta}=\{\{s\}\mid s\in S\}$ is the set of all singleton subsets of $S$.

\begin{defn}
The \textit{decorated opposition diagram} of an uncapped automorphism $\theta$ is the quadruple $(\Gamma,J,K_{\theta},\pi_{\theta})$ where $J=\Type(\theta)$ and $K_{\theta}\subseteq J$ is the union of all $J'\in\mathcal{J}_{\theta}$ such that there exists a type $J\backslash J'$ simplex mapped onto an opposite simplex. 
\end{defn}

Less formally, the decorated opposition diagram is drawn by shading the nodes of $K_{\theta}$ on the opposition diagram. For example, consider the following.
\begin{center}
\begin{tikzpicture}[scale=0.5,baseline=-0.5ex]
\node at (-4,0) {(a)};
\node at (0,0.3) {};
\node [inner sep=0.8pt,outer sep=0.8pt] at (-2,0) (2) {$\bullet$};
\node [inner sep=0.8pt,outer sep=0.8pt] at (-1,0) (4) {$\bullet$};
\node [inner sep=0.8pt,outer sep=0.8pt] at (0,-0.5) (5) {$\bullet$};
\node [inner sep=0.8pt,outer sep=0.8pt] at (0,0.5) (3) {$\bullet$};
\node [inner sep=0.8pt,outer sep=0.8pt] at (1,-0.5) (6) {$\bullet$};
\node [inner sep=0.8pt,outer sep=0.8pt] at (1,0.5) (1) {$\bullet$};
\draw [line width=0.5pt,line cap=round,rounded corners,fill=ggrey] (2.north west)  rectangle (2.south east);
\draw [line width=0.5pt,line cap=round,rounded corners,fill=ggrey] (4.north west)  rectangle (4.south east);
\draw [line width=0.5pt,line cap=round,rounded corners] (3.north west)  rectangle (5.south east);
\draw [line width=0.5pt,line cap=round,rounded corners] (1.north west)  rectangle (6.south east);
\draw (-2,0)--(-1,0);
\draw (-1,0) to [bend left=45] (0,0.5);
\draw (-1,0) to [bend right=45] (0,-0.5);
\draw (0,0.5)--(1,0.5);
\draw (0,-0.5)--(1,-0.5);
\node at (0,-0.5) {$\bullet$};
\node at (0,0.5) {$\bullet$};
\node at (1,-0.5) {$\bullet$};
\node at (1,0.5) {$\bullet$};
\node [inner sep=0.8pt,outer sep=0.8pt] at (-2,0) (2) {$\bullet$};
\node [inner sep=0.8pt,outer sep=0.8pt] at (-1,0) (4) {$\bullet$};
\node [inner sep=0.8pt,outer sep=0.8pt] at (0,-0.5) (5) {$\bullet$};
\node [inner sep=0.8pt,outer sep=0.8pt] at (0,0.5) (3) {$\bullet$};
\node [inner sep=0.8pt,outer sep=0.8pt] at (1,-0.5) (6) {$\bullet$};
\node [inner sep=0.8pt,outer sep=0.8pt] at (1,0.5) (1) {$\bullet$};
\end{tikzpicture}\qquad\qquad\qquad\qquad\qquad \begin{tikzpicture}[scale=0.5,baseline=-2ex]
\node at (-4,-0.5) {(b)};
\node at (0,0.3) {};
\node at (0,-1.3) {};
\node [inner sep=0.8pt,outer sep=0.8pt] at (-2,0) (1) {$\bullet$};
\node [inner sep=0.8pt,outer sep=0.8pt] at (-1,0) (3) {$\bullet$};
\node [inner sep=0.8pt,outer sep=0.8pt] at (0,0) (4) {$\bullet$};
\node [inner sep=0.8pt,outer sep=0.8pt] at (1,0) (5) {$\bullet$};
\node [inner sep=0.8pt,outer sep=0.8pt] at (2,0) (6) {$\bullet$};
\node [inner sep=0.8pt,outer sep=0.8pt] at (0,-1) (2) {$\bullet$};
\draw [line width=0.5pt,line cap=round,rounded corners,fill=ggrey] (1.north west)  rectangle (1.south east);
\draw [line width=0.5pt,line cap=round,rounded corners,fill=ggrey] (2.north west)  rectangle (2.south east);
\draw [line width=0.5pt,line cap=round,rounded corners,fill=ggrey] (3.north west)  rectangle (3.south east);
\draw [line width=0.5pt,line cap=round,rounded corners,fill=ggrey] (4.north west)  rectangle (4.south east);
\draw [line width=0.5pt,line cap=round,rounded corners,fill=ggrey] (5.north west)  rectangle (5.south east);
\draw [line width=0.5pt,line cap=round,rounded corners,fill=ggrey] (6.north west)  rectangle (6.south east);
\draw (-2,0)--(2,0);
\draw (0,0)--(0,-1);
\node [inner sep=0.8pt,outer sep=0.8pt] at (-2,0) (1) {$\bullet$};
\node [inner sep=0.8pt,outer sep=0.8pt] at (-1,0) (3) {$\bullet$};
\node [inner sep=0.8pt,outer sep=0.8pt] at (0,0) (4) {$\bullet$};
\node [inner sep=0.8pt,outer sep=0.8pt] at (1,0) (5) {$\bullet$};
\node [inner sep=0.8pt,outer sep=0.8pt] at (2,0) (6) {$\bullet$};
\node [inner sep=0.8pt,outer sep=0.8pt] at (0,-1) (2) {$\bullet$};
\end{tikzpicture} 
\end{center}
The decorated opposition diagram (a) represents an uncapped collineation of $\sE_6(2)$ with the property that there are simplices of types $S\backslash\{2\}$ and $S\backslash\{4\}$ mapped onto opposite simplices, and no simplices of types $S\backslash\{3,5\}$ nor $S\backslash\{1,6\}$ mapped onto opposite simplices -- this automorphism is exceptional domestic, but not strongly exceptional domestic. The diagram (b) represents an uncapped duality of $\sE_6(2)$ with the property that there are panels of each cotype mapped onto opposite panels -- this automorphism is strongly exceptional domestic. 

Residue arguments are used extensively in the proof of Theorem~\ref{thm:main*}(a), and so we conclude this section with a summary of the techniques. We first briefly define residues and projections (see \cite{AB:08} for details). The \textit{residue} $\Res(\alpha)$ of a simplex $\alpha\in\Delta$ is the set of all simplices of $\Delta$ which contain~$\alpha$, together with the order relation induced by that of~$\Delta$. Then $\Res(\alpha)$ is a building whose diagram is obtained from the diagram of $\Delta$ by removing all nodes which belong to $\tau(\alpha)$. The \textit{projection onto $\alpha$} is the map $\proj_{\alpha}:\Delta\to\Res(\alpha)$ defined as follows. Firstly, if $B$ is a chamber of $\Delta$ then there is a unique chamber $A\in \Res(\alpha)$ such that $\ell(\delta(A,B))<\ell(\delta(A',B))$ for all chambers $A'\in \Res(\alpha)$ with $A'\neq A$, and we define $\proj_{\alpha}(B)=A$. In other words, $\proj_{\alpha}(B)$ is the unique chamber $A$ of $\Res(\alpha)$ with the property that every minimal length gallery from $B$ to $\Res(\alpha)$ ends with the chamber~$A$. Now, if $\beta$ is an arbitrary simplex we define
$$
\proj_{\alpha}(\beta)=\bigcap_{B}\,\proj_{\alpha}(B)
$$
where the intersection is over all chambers $B$ in $\Res(\beta)$. In other words, $\proj_{\alpha}(\beta)$ is the unique simplex $\gamma$ of $\Res(\alpha)$ which is maximal subject to the property that every minimal length gallery from a chamber of $\Res(\beta)$ to $\Res(\alpha)$ ends in a chamber containing~$\gamma$.

Let $\theta$ be an automorphism of $\Delta$, and suppose that $\sigma\in\Opp(\theta)$. It follows from \cite[Theorem~3.28]{Tit:74} that the projection map $\proj_{\sigma}:\Res(\sigma^{\theta})\to\Res(\sigma)$ is an isomorphism. Define
$$
\theta_{\sigma}:\Res(\sigma)\xrightarrow{\sim} \Res(\sigma)\quad \text{by}\quad \theta_{\sigma}=\proj_{\sigma}\circ\,\theta.
$$
The type map induced by $\theta_{\sigma}$ is as follows.

\begin{prop}\label{prop:typemap}
Let $\theta$ be an automorphism of a spherical building $\Delta$ of type $(W,S)$. Suppose that $\sigma\in\Opp(\theta)$ and let $J=\tau(\sigma)$. Then the type map on $S\backslash J$ induced by $\theta_{\sigma}$ is $w_{S\backslash J}\circ w_0\circ \pi_{\theta}$. 
\end{prop}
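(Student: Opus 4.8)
The plan is to factor the type map through the two maps composing $\theta_\sigma=\proj_\sigma\circ\theta$ and compute each separately, using that the type map of a composite is the composite of the type maps. Since $\theta$ by definition induces the diagram automorphism $\pi_\theta$ on all of $S$, its restriction to the isomorphism $\Res(\sigma)\to\Res(\sigma^\theta)$ contributes exactly $\pi_\theta|_{S\backslash J}$; this carries the type set $S\backslash J$ of $\Res(\sigma)$ onto the type set $S\backslash J^{\op}$ of $\Res(\sigma^\theta)$ (here $J^{\pi_\theta}=J^{\op}$ precisely because $\sigma\in\Opp(\theta)$, so that $\tau(\sigma^\theta)=\tau(\sigma)^{\op}$). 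Everything therefore reduces to the purely building-theoretic claim that the projection isomorphism $\proj_\sigma\colon\Res(\sigma^\theta)\to\Res(\sigma)$ induces the diagram automorphism $w_{S\backslash J}\circ w_0$ on type sets; composing with $\pi_\theta$ then yields the stated $w_{S\backslash J}\circ w_0\circ\pi_\theta$.

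To establish this last claim I would first pass to an apartment. Because $\sigma$ and $\sigma^\theta$ are opposite there is an apartment $\Sigma$ containing both, and I would identify $\Sigma$ with the Coxeter complex $\Sigma(W,S)$ so that $\sigma$ is the standard simplex with chamber set $W_K$, where $K:=S\backslash J$, and $\sigma^\theta$ is its opposite, with chamber set $W_Kw_0$ (so that $\tau(\sigma^\theta)=J^{\op}$, as required). The type map induced by an isomorphism of buildings is a single diagram isomorphism of the type sets, hence globally constant, so it may be read off within $\Sigma$: projections preserve the apartments containing the relevant simplices, so $\proj_\sigma$ carries $\Sigma\cap\Res(\sigma^\theta)$ into $\Sigma\cap\Res(\sigma)$, and it suffices to track one pair of adjacent chambers there.

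The key input is a closed form for the projection inside $\Sigma$. Writing $w_{0,K}=w_{S\backslash J}$ for the longest element of $W_K$ and using the length-additive factorisation $W=W_K\cdot{}^{K}W$ together with $w_0=w_{0,K}\,w_0^{K}$ (where $w_0^{K}\in{}^{K}W$ is the longest $K$-reduced element), the gate property should give $\proj_\sigma(\tilde v\,w_0)=\tilde v\,w_{0,K}$ for each $\tilde v\in W_K$. Feeding in two $s$-adjacent chambers of $\Res(\sigma^\theta)$, say $\tilde v w_0$ and $\tilde v r w_0$ with $r=s^{w_0}\in K$ for $s\in K^{w_0}=S\backslash J^{\op}$, and computing the Weyl distance of their images $\tilde v w_{0,K}$ and $\tilde v r w_{0,K}$, the $s$-adjacency is converted into $t$-adjacency with $t=s^{w_0 w_{0,K}}$. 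Thus the type map of $\proj_\sigma$ is conjugation by $w_0 w_{S\backslash J}$, which is exactly the diagram automorphism $w_{S\backslash J}\circ w_0$ (conjugation by a product being the composite of the conjugations, with $w_0$ applied first). Composing with $\pi_\theta$ completes the proof.

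The main obstacle is the bookkeeping inside the apartment: one must verify that the single-apartment calculation genuinely computes the building-level projection (guaranteed by the fact that a building isomorphism has a well-defined, globally constant type map), and one must pin down the left/right coset and conjugation conventions so that the resulting automorphism matches the stated order $w_{S\backslash J}\circ w_0\circ\pi_\theta$ rather than its inverse or a reflected variant. The only genuinely nontrivial ingredient is the length-additivity underlying the closed form $\proj_\sigma(\tilde v\,w_0)=\tilde v\,w_{0,K}$; once that is in hand, the remainder is conjugation arithmetic.
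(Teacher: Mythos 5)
Your proposal is correct, and in substance it coincides with the paper's argument: the paper disposes of this proposition in one line by citing \cite[Corollary~5.116]{AB:08}, which is precisely the statement that the projection between opposite residues induces the diagram automorphism $w_{S\backslash J}\circ w_0$ on cotypes. What you have done is supply a self-contained proof of that cited fact — reducing to a single apartment, using the length-additive decomposition $w_0=w_{S\backslash J}\cdot{}^{S\backslash J}w_0$ and the gate property to get $\proj_\sigma(\tilde v w_0)=\tilde v w_{S\backslash J}$, and then reading off the conjugation $s\mapsto s^{w_0 w_{S\backslash J}}$ — before composing with $\pi_\theta$ exactly as the paper implicitly does. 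The bookkeeping checks out (in particular $\delta(\tilde v w_0,\tilde v r w_0)=s$ for $r=s^{w_0}$, and the resulting composite acts on $S\backslash J$ in the stated order), so the only difference from the paper is that you prove the key input rather than quote it.
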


\begin{proof}
This follows easily from \cite[Corollary~5.116]{AB:08}.
\end{proof}

\begin{example}
We will use Proposition~\ref{prop:typemap}  many times in our residue arguments. For example, consider a duality $\theta$ of an $\sD_n$ building, and suppose that $v\in\Opp(\theta)$ is a type $i$ vertex, with $i\leq n-2$. The residue of $v$ is a building of type $\sA_{i-1}\times \sD_{n-i}$, and the induced automorphism $\theta_v$ of $\Res(v)$ is a duality on the $\sA_{i-1}$ component, and a duality (respectively collineation) on the $\sD_{n-i}$ component if $i$ is even (respectively odd). 
\end{example}

\newpage

From \cite[Proposition~3.29]{Tit:74} we have:

\begin{prop}\label{prop:proj} Let $\theta$ be an automorphism of a spherical building $\Delta$ and let $\alpha\in\Opp(\theta)$. If $\beta\in\Res(\alpha)$ then $\beta$ is opposite $\beta^{\theta}$ in the building $\Delta$ if and only if $\beta$ is opposite $\beta^{\theta_{\alpha}}$ in the building $\Res(\alpha)$. 
\end{prop}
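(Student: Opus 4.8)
The plan is to deduce the statement from the general interaction between projections and opposition in spherical buildings, applied to the opposite pair $(\alpha,\alpha^{\theta})$. First I would record the basic compatibility. Since $\beta\in\Res(\alpha)$ means $\alpha\subseteq\beta$ and $\theta$ is a simplicial automorphism, applying $\theta$ gives $\alpha^{\theta}\subseteq\beta^{\theta}$, so $\beta^{\theta}\in\Res(\alpha^{\theta})$. Because $\alpha\in\Opp(\theta)$, the simplices $\alpha$ and $\alpha^{\theta}$ are opposite, and hence (by \cite[Theorem~3.28]{Tit:74}, recalled above) $\proj_{\alpha}$ restricts to an isomorphism $\Res(\alpha^{\theta})\xrightarrow{\sim}\Res(\alpha)$. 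In particular $\beta^{\theta_{\alpha}}=\proj_{\alpha}(\beta^{\theta})$ is a well-defined simplex of $\Res(\alpha)$, and the assertion becomes the statement that $\beta$ is opposite $\beta^{\theta}$ in $\Delta$ if and only if $\beta$ is opposite $\proj_{\alpha}(\beta^{\theta})$ in $\Res(\alpha)$. Note that the type compatibility of the two notions of opposition is guaranteed by Proposition~\ref{prop:typemap}, so I need only address the geometric condition.

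The core is therefore a purely geometric statement, independent of $\theta$: if $\alpha,\alpha'$ are opposite simplices, $\beta\in\Res(\alpha)$ and $\gamma\in\Res(\alpha')$, then $\beta$ and $\gamma$ are opposite in $\Delta$ if and only if $\beta$ and $\proj_{\alpha}(\gamma)$ are opposite in $\Res(\alpha)$. This is exactly \cite[Proposition~3.29]{Tit:74}, and applying it with $\alpha'=\alpha^{\theta}$ and $\gamma=\beta^{\theta}$ yields the proposition at once; both implications come from the single biconditional. To make this self-contained I would first reduce opposition of simplices to opposition of chambers: writing $J=\tau(\alpha)$, the simplex $\beta$ is opposite $\gamma$ in $\Delta$ if and only if some chambers $C\supseteq\beta$ and $D\supseteq\gamma$ satisfy $\delta(C,D)=w_0$, while $\beta$ is opposite $\proj_{\alpha}(\gamma)$ in $\Res(\alpha)$ if and only if suitable chambers of $\Res(\alpha)$ lie at Weyl distance $w_{S\backslash J}$, the longest element of $W_{S\backslash J}$.

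The key step is then the gate property of $\proj_{\alpha}$ together with a factorisation of $w_0$ relative to the parabolic $W_{S\backslash J}$. For a chamber $D\in\Res(\alpha')$ and a chamber $C\in\Res(\alpha)$, the gate property gives the length-additive factorisation $\delta(C,D)=\delta(C,\proj_{\alpha}(D))\cdot m$ with $m=\delta(\proj_{\alpha}(D),D)$ and $\delta(C,\proj_{\alpha}(D))\in W_{S\backslash J}$. The crucial input, which is where the opposition of $\alpha$ and $\alpha'$ is genuinely used, is that $m$ depends only on the pair $(\alpha,\alpha')$ and equals the minimal-length representative $w_{S\backslash J}\,w_0$ of the coset $W_{S\backslash J}\,w_0$, so that $\ell(w_0)=\ell(w_{S\backslash J})+\ell(m)$. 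Granting this, $\delta(C,D)=w_0$ forces $\delta(C,\proj_{\alpha}(D))$ to be longest in $W_{S\backslash J}$, i.e.\ equal to $w_{S\backslash J}$, and conversely; choosing $C$ through $\beta$ and $D$ through $\gamma$ then transfers opposition in $\Delta$ to opposition in $\Res(\alpha)$ and back. The main obstacle is precisely establishing this identification of $m$ with $w_{S\backslash J}w_0$ (equivalently, that for opposite residues the transverse distance between gate chambers is maximal); I would verify it by passing to a common apartment containing $\alpha$ and $\alpha'$, which exists because they are opposite, and computing in the Coxeter complex, where the claim reduces to the standard fact that $w_{S\backslash J}\,w_0$ is the shortest element of its $W_{S\backslash J}$-coset.
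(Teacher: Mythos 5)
Your proposal matches the paper exactly: the paper offers no argument of its own for this proposition, deriving it directly from \cite[Proposition~3.29]{Tit:74} applied to the opposite pair $(\alpha,\alpha^{\theta})$ with $\gamma=\beta^{\theta}$, which is precisely your first reduction. The additional material you supply (the gate property, the length-additive factorisation $\ell(w_0)=\ell(w_{S\backslash J})+\ell(w_{S\backslash J}w_0)$, and the apartment computation) is a correct sketch of why Tits' result holds, but it is not needed for, and does not diverge from, the paper's one-line citation proof.
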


The following corollary facilitates inductive residue arguments. 

\begin{cor}\label{cor:proj}
Let $\theta:\Delta\to\Delta$ be a domestic automorphism and let $\sigma\in\Opp(\theta)$. Then $\theta_{\sigma}:\Res(\sigma)\to \Res(\sigma)$ is a domestic automorphism of the building~$\Res(\sigma)$.
\end{cor}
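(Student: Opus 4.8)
The plan is to argue by contradiction, reducing everything to Proposition~\ref{prop:proj}. First I would record that $\theta_{\sigma}$ is genuinely an automorphism of $\Res(\sigma)$: this was already established in the paragraph preceding the corollary, where $\proj_{\sigma}\colon\Res(\sigma^{\theta})\xrightarrow{\sim}\Res(\sigma)$ is an isomorphism by \cite[Theorem~3.28]{Tit:74} and $\theta$ restricts to an isomorphism $\Res(\sigma)\to\Res(\sigma^{\theta})$, so $\theta_{\sigma}=\proj_{\sigma}\circ\,\theta$ is an automorphism of the building $\Res(\sigma)$. Thus it makes sense to ask whether $\theta_{\sigma}$ is domestic.

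Next I would suppose, for contradiction, that $\theta_{\sigma}$ is \emph{not} domestic. By definition this means $\Opp(\theta_{\sigma})$ contains a chamber of $\Res(\sigma)$; that is, there is a chamber $C$ of the building $\Res(\sigma)$ with $C$ opposite $C^{\theta_{\sigma}}$ inside $\Res(\sigma)$. The one point requiring a word of justification—and the only place where any care is needed—is the observation that a chamber of $\Res(\sigma)$ is in fact a chamber of $\Delta$. Indeed, $\Res(\sigma)$ consists of all simplices of $\Delta$ containing $\sigma$, and its maximal simplices (its chambers) are exactly the maximal simplices of $\Delta$ containing $\sigma$, i.e.\ genuine chambers of $\Delta$. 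Hence $C\in\cC$.

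Now I would apply Proposition~\ref{prop:proj} with $\alpha=\sigma$ and $\beta=C$ (this is legitimate since $\sigma\in\Opp(\theta)$ and $C\in\Res(\sigma)$). The proposition yields that $C$ is opposite $C^{\theta}$ in $\Delta$ if and only if $C$ is opposite $C^{\theta_{\sigma}}$ in $\Res(\sigma)$. Since the right-hand condition holds by assumption, $C$ is opposite $C^{\theta}$ in $\Delta$. But $C$ is a chamber of $\Delta$, so this exhibits a chamber in $\Opp(\theta)$, contradicting the hypothesis that $\theta$ is domestic. Therefore $\theta_{\sigma}$ is domestic, completing the proof.

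I do not anticipate a genuine obstacle here: the statement is essentially a direct corollary of Proposition~\ref{prop:proj}, and the entire content of the argument is the correspondence between chambers of $\Res(\sigma)$ and chambers of $\Delta$ together with one application of that proposition. The only subtlety worth flagging explicitly is ensuring that the simplex $C$ produced by the failure of domesticity for $\theta_{\sigma}$ is identified as a top-dimensional simplex of $\Delta$, so that the contradiction with domesticity of $\theta$ is valid; this is immediate from the definition of the residue as a subcomplex.
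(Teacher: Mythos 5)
Your argument is correct and is essentially the paper's own proof: both reduce the claim to Proposition~\ref{prop:proj} applied to a chamber witnessing non-domesticity of $\theta_{\sigma}$, the only (cosmetic) difference being that the paper writes that chamber as $\sigma\cup\sigma'$ with $\sigma'$ a chamber of the residue, while you identify it directly as a chamber of $\Delta$ containing $\sigma$, which matches the paper's stated definition of $\Res(\sigma)$. No gaps.
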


\begin{proof}
Let $J=\tau(\sigma)$. If $\theta_{\sigma}$ is not domestic then there is a chamber $\sigma'$ of $\Res(\sigma)$ mapped onto an opposite chamber by $\theta_{\sigma}$. Then $\sigma\cup\sigma'$ is a chamber of $\Delta$, and from Proposition~\ref{prop:proj} this chamber is mapped onto an opposite chamber, a contradiction. 
\end{proof}

\section{Theorem~\ref{thm:main*}(a) and its corollaries}\label{sec:2}

In this section we prove Theorem~\ref{thm:main*}(a) and give applications to determining the partially ordered set $\mathcal{T}(\theta)$, domesticity, cappedness of involutions, and calculating displacement.

\subsection{Proof of Theorem~\ref{thm:main*}(a)}

By \cite[Theorem~1]{PVM:17a} if $\theta$ is an uncapped automorphism of a thick irreducible spherical building $\Delta$ of rank at least $3$ then $\Delta$ is a small building. These are precisely the buildings listed in the first column of Tables~\ref{table:1} and~\ref{table:2}. Moreover, the following proposition from~\cite{PVM:17a} explains why collineations of $\sA_n$, trialities of $\sD_4$, and dualities of $\sF_4$ do not appear in Tables~\ref{table:1} and~\ref{table:2}. 

\begin{prop}\label{prop:1.1}
Every collineation of a thick $\sA_n$ building is capped, every triality of a thick $\sD_4$ building is capped, and every duality of a thick $\sF_4$ building is capped.
\end{prop}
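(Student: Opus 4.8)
The plan is to prove each of the three statements by exploiting the structure of the relevant opposition automorphism $w_0 \circ \pi_\theta$ together with residue-theoretic induction, showing in each case that if $\Type(\theta) = J$ then a type $J$ simplex must actually lie in $\Opp(\theta)$. The unifying strategy is as follows: suppose $\theta$ is a counterexample, so that there exist simplices of types $J_1, J_2 \subseteq J$ in $\Opp(\theta)$ but no simplex of type $J_1 \cup J_2$ (or ultimately type $J$) in $\Opp(\theta)$. I would pick a simplex $\sigma \in \Opp(\theta)$ of maximal type, and analyze the induced automorphism $\theta_\sigma$ on $\Res(\sigma)$ using Proposition~\ref{prop:typemap} and Proposition~\ref{prop:proj}. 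The key leverage is that the three excluded cases each have a very rigid form of the induced diagram automorphism, which forces cappedness.

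For the $\sA_n$ case, a collineation has $\pi_\theta = \id$, so $w_0 \circ \pi_\theta = w_0$ acts on the $\sA_n$ diagram as the flip $i \mapsto n+1-i$. The idea is that the opposite geometry of a type-preserving automorphism of $\sA_n(\FF) = \PP(\FF^{n+1})$ corresponds to pairs of complementary (opposite) subspaces, and the relevant incidence structure is governed by linear-algebraic complementation, which is ``transitive enough'' to combine opposition at two types into opposition at their join. Concretely, if subspaces of dimensions corresponding to $J_1$ and $J_2$ are each sent to opposite (complementary) subspaces, one builds a flag of the combined type in $\Opp(\theta)$ by intersecting and spanning; the essential point is that over any field the relevant genericity holds, so there is no Fano-plane obstruction. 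For the $\sF_4$ case, a duality has $\pi_\theta$ the order-$2$ diagram automorphism swapping the two ends, so $w_0 \circ \pi_\theta$ is this nontrivial flip (recall $w_0 = \id$ on $\sF_4$); the minimal $w_0\pi_\theta$-invariant subsets $\mathcal J_\theta$ are $\{1,4\}$ and $\{2,3\}$, and the claim is that dualities cannot be uncapped because the only way to be uncapped requires splitting one of these invariant pairs, which the duality structure forbids. For the $\sD_4$ triality, $\pi_\theta$ has order $3$ cycling the three outer nodes, and $w_0 = \id$, so $w_0 \circ \pi_\theta$ is an order-$3$ rotation; here $\mathcal J_\theta$ consists of the central node $\{2\}$ and the triple $\{1,3,4\}$, and again the rigidity of the $3$-fold symmetry rules out the partial opposition patterns that an uncapped automorphism would require.

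I expect the main obstacle to be the $\sD_4$ triality case, since trialities are the most delicate automorphisms and the $3$-fold symmetry interacts with the triality geometry in subtle ways (the famous triality of $\sD_4$ mixing points, lines, and two families of solids). The cleanest route is probably to invoke the existing classification of domestic trialities of $\sD_4$ buildings from \cite{HVM:13}, reducing the cappedness statement to inspecting that classification, rather than arguing from scratch. Indeed, I would phrase the whole proposition as a corollary of the results of \cite{PVM:17a} combined with these case-specific structural facts, since the proposition is explicitly attributed to \cite{PVM:17a} in the text (``the following proposition from \cite{PVM:17a}''). Thus the honest plan is: for each of the three cases, either cite the relevant classification or give a short direct residue argument showing that the only $w_0\pi_\theta$-invariant types realizable in $\Opp(\theta)$ force the full type $\Type(\theta)$ to be realizable, using that the combination obstruction detected in \cite{PVM:17a} is exactly the Fano-plane phenomenon, which is absent in the required sense for these three families because the relevant residues or symmetry constraints prevent the partial patterns of Table~\ref{table:1} and Table~\ref{table:2} from arising.
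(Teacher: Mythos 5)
The paper's proof of this proposition is a one-line citation of Corollary~3.9, Theorem~3.17 and Lemma~4.1 of \cite{PVM:17a}, so your closing decision to treat the statement as imported from the predecessor paper and simply cited is exactly the approach taken. Be aware, though, that your auxiliary sketches would not stand in for those citations: for $\sF_4$ the cited result is the stronger fact that no duality of a thick $\sF_4$ building is domestic at all, so cappedness is immediate because $\Opp(\theta)$ contains a chamber --- it is not an argument about the invariant pairs $\{1,4\}$ and $\{2,3\}$; and for $\sA_n$ the appeal to ``genericity over any field'' does not explain why collineations are always capped while dualities of $\sA_n(2)$ need not be (see Table~\ref{table:1}), since the Fano-plane residues are present in the building in both cases --- the relevant difference is the shape of the minimal $w_0\circ\pi_\theta$-invariant type sets, which the cited results exploit.
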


\begin{proof}
See \cite[Corollary~3.9, Theorem~3.17, Lemma~4.1]{PVM:17a}.
\end{proof}

Buildings of type $\sA_n$ play an important role in our proof techniques owing to their prevalence as residues of spherical buildings of arbitrary type. Every thick building of type $\sA_n$ with $n>2$ is a projective space $\mathsf{PG}(n,\KK)$ over a division ring~$\KK$, where the type $i$ vertices of the building are the $(i-1)$-spaces of the projective space. Thus points have type~$1$, lines have type $2$, and so on.

\begin{defn}
Let $\mathbb{F}$ be a field. A duality of $\sA_{2n-1}(\mathbb{F})$ with
$
U^{\theta}=\{v\mid (u,v)=0\text{ for all $u\in U$}\}
$
for some nondegenerate symplectic form $(\cdot,\cdot)$ on $\FF^{2n}$ is called a \textit{symplectic polarity}. 
\end{defn}

Let us recall some useful facts concerning dualities of type $\sA$ buildings. 

\begin{lemma}[{\cite[Lemma~3.2]{TTM:11}}] \label{lem:An} If the projective space $\Delta=\mathsf{PG}(n,\KK)$ admits a duality $\theta$ for which all points are absolute (equivalently no type~$1$ vertex is mapped to an opposite), then $n$ is odd, $\KK$ is a field, and $\theta$ is a symplectic polarity.  
\end{lemma}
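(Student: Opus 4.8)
The plan is to prove Lemma~\ref{lem:An} by analysing the structure of a duality $\theta$ of $\Delta=\PG(n,\KK)$ under the hypothesis that every point is absolute, meaning no type~$1$ vertex is mapped to an opposite hyperplane. A duality of $\PG(n,\KK)$ is a collineation of $\PG(n,\KK)$ onto its dual, hence arises from a $\sigma$-sesquilinear form on $\KK^{n+1}$ for some anti-automorphism $\sigma$ of $\KK$: concretely, $\theta$ sends a subspace $U$ to its orthogonal complement $U^{\perp}=\{v\mid f(u,v)=0 \text{ for all } u\in U\}$ with respect to a nondegenerate form $f$. A point $\langle u\rangle$ is absolute precisely when $u\in \langle u\rangle^{\perp}$, i.e. $f(u,u)=0$. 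So the hypothesis is exactly that $f$ is \emph{totally isotropic on points}, i.e. $f(u,u)=0$ for every vector $u$.

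First I would translate ``every point absolute'' into the algebraic statement $f(u,u)=0$ for all $u\in\KK^{n+1}$, and extract its consequences. Polarising, for all $u,v$ we get $f(u+v,u+v)=0$, which expands to $f(u,u)+f(u,v)+f(v,u)+f(v,v)=0$, hence $f(u,v)=-f(v,u)$ for all $u,v$. This says $f$ is alternating. Next I would pin down $\sigma$: since $f$ is $\sigma$-sesquilinear, say linear in the first slot and $\sigma$-semilinear in the second, the relation $f(u,v)=-f(v,u)$ combined with scaling $f(\lambda u,v)=\lambda f(u,v)$ and $f(u,\lambda v)=f(u,v)\lambda^{\sigma}$ forces a compatibility that makes $\sigma=\id$; the standard computation is to compare $f(\lambda u,v)$ against $-f(v,\lambda u)=-f(v,u)\lambda^{\sigma}=\lambda^{\sigma}f(u,v)$ and conclude $\lambda=\lambda^{\sigma}$ for all $\lambda$, so $\sigma=\id$ and $\KK$ is commutative, i.e.\ a field. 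Thus $f$ is an ordinary alternating bilinear form, that is, a symplectic form, and $\theta$ is a symplectic polarity by definition. Nondegeneracy of $f$ is inherited from the fact that $\theta$ is a genuine duality (a bijective incidence-reversing map), so no point maps to the whole space.

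Finally I would deduce the parity statement. A nondegenerate alternating bilinear form on a finite-dimensional vector space exists only in even dimension, so $n+1$ is even, giving $n$ odd. This is the cleanest route: the symplectic basis/Gram-matrix argument shows a nondegenerate alternating form has even rank, and nondegeneracy forces that rank to equal $\dim\KK^{n+1}=n+1$.

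The main obstacle, and the step needing the most care, is the semilinearity bookkeeping that forces $\sigma=\id$ and hence the commutativity of $\KK$ — one must be careful about which slot carries the anti-automorphism and about the sign/twisting conventions, since a priori a duality of a projective space over a noncommutative division ring need only come from a $\sigma$-sesquilinear form, and it is precisely the ``all points absolute'' (alternating) hypothesis that collapses $\sigma$ to the identity and rules out the genuinely noncommutative and the nonalternating (e.g.\ Hermitian or orthogonal) cases. Since this is a cited result (\cite[Lemma~3.2]{TTM:11}), in the paper itself I would simply invoke the reference, but the above is the self-contained argument one would reconstruct. Everything else (polarisation, the even-dimension conclusion) is routine linear algebra over a field.
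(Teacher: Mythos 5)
The paper offers no proof of this lemma---it is imported verbatim from \cite[Lemma~3.2]{TTM:11}---so there is nothing internal to compare against; your reconstruction is the standard argument and it is correct. For $n\ge 2$ the fundamental theorem of projective geometry represents the duality by a nondegenerate $\sigma$-sesquilinear form $f$, the hypothesis says $f(u,u)=0$ identically, polarisation gives $f(u,v)=-f(v,u)$, and comparing $f(\lambda u,v)=\lambda f(u,v)$ with $-f(v,\lambda u)$ forces $\lambda^{\sigma}=\lambda$ and the centrality of every value of $f$; since nondegeneracy makes $f$ surjective onto $\KK$, this yields $\sigma=\mathrm{id}$ and the commutativity of $\KK$, after which the even rank of a nondegenerate alternating form gives $n+1$ even. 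The one hypothesis worth making explicit is $n\ge 2$ (needed to invoke the fundamental theorem), which is the regime in which the lemma is actually used.
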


\begin{lemma}[{\cite[Lemma~3.4]{PVM:17a}}]\label{lem:sp}
If $\theta$ is a symplectic polarity of an $\sA_{2n-1}$ building $\Delta$ then $\theta$ is $\{i\}$-domestic for each odd~$i$, and each vertex mapped to an opposite vertex is contained in a type $\{2,4,\ldots,2n-2\}$ simplex mapped to an opposite simplex. In particular, symplectic polarities are capped.
\end{lemma}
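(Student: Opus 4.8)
The plan is to translate everything into symplectic linear algebra on $V=\FF^{2n}$ and then read off the three assertions from elementary properties of alternating forms. Throughout I identify a type $i$ vertex with the corresponding $i$-dimensional subspace of $V$, so that $U^{\theta}=U^{\perp}$ and $\theta$ interchanges the types $i$ and $2n-i$. The key reduction I would establish first is an \emph{opposition criterion}: a simplex $\sigma=(U_{j_1}\subset\cdots\subset U_{j_k})$, with $j_1<\cdots<j_k$, lies in $\Opp(\theta)$ if and only if the form restricts nondegenerately to each $U_{j_a}$, equivalently $U_{j_a}\cap U_{j_a}^{\perp}=0$ for all $a$. Since $\theta$ is a duality of $\sA_{2n-1}$ we have $w_0\circ\pi_{\theta}=1$, so the type $\{j_1,\dots,j_k\}$ is automatically stable and $\sigma^{\theta}=(U_{j_1}^{\perp}\supset\cdots\supset U_{j_k}^{\perp})$ has the opposite type. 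Using the characterisation of opposition for simplices (\cite[Lemma~5.107]{AB:08}) together with the fact that two subspaces of complementary dimension are opposite vertices exactly when they are complementary, $\sigma$ and $\sigma^{\theta}$ are opposite precisely when, for each $a$, the dimension $j_a$ member $U_{j_a}$ of $\sigma$ and the dimension $2n-j_a$ member $U_{j_a}^{\perp}$ of $\sigma^{\theta}$ span $V$; as their dimensions sum to $2n$ this is exactly $U_{j_a}\cap U_{j_a}^{\perp}=0$.

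With this criterion in hand the first assertion is immediate: a nondegenerate alternating form cannot exist on an odd-dimensional space, so no odd-dimensional $U$ satisfies $U\cap U^{\perp}=0$, and hence no type $i$ vertex with $i$ odd lies in $\Opp(\theta)$; that is, $\theta$ is $\{i\}$-domestic for every odd $i$. For the second assertion, suppose $U$ is a vertex in $\Opp(\theta)$, necessarily of even dimension $2\ell$ with the form nondegenerate on $U$, so that $V=U\oplus U^{\perp}$ is an orthogonal direct sum of nondegenerate symplectic spaces. Choosing symplectic bases of $U$ and of $U^{\perp}$, I would build chains of nondegenerate subspaces $U_{2}\subset\cdots\subset U_{2\ell}=U$ inside $U$ and $W_{2}\subset\cdots\subset W_{2n-2\ell}=U^{\perp}$ inside $U^{\perp}$ by adjoining one hyperbolic pair at a time. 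Concatenating gives the flag
$$
U_{2}\subset\cdots\subset U_{2\ell}=U\subset U\oplus W_{2}\subset\cdots\subset U\oplus W_{2n-2\ell-2},
$$
whose members are nondegenerate subspaces of all even dimensions $2,4,\dots,2n-2$ (an orthogonal sum of nondegenerate spaces is nondegenerate). By the criterion this is a type $\{2,4,\dots,2n-2\}$ simplex in $\Opp(\theta)$ containing $U$.

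Finally, cappedness follows by combining the two halves. The criterion shows that every simplex in $\Opp(\theta)$ has only even-dimensional members, so $\Type(\theta)\subseteq\{2,4,\dots,2n-2\}$; taking $U$ to be any nondegenerate $2$-space (which exists since $V$ carries a nondegenerate form) and applying the construction above produces a simplex of type exactly $\{2,4,\dots,2n-2\}$ in $\Opp(\theta)$, whence $\Type(\theta)=\{2,4,\dots,2n-2\}$ and this type is realised. Thus $\theta$ is capped. The only genuinely delicate point is the opposition criterion itself --- specifically the direction asserting that componentwise complementarity of a partial flag and its image forces the existence of a pair of opposite chambers extending them; everything else is routine symplectic linear algebra (existence of symplectic bases, the Witt decomposition, and the parity obstruction). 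I would therefore isolate that criterion as a preliminary lemma on dualities of projective spaces, proving it once and reusing it in both parts.
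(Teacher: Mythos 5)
Your proof is correct, and it follows essentially the same route as the source: the paper itself only cites this lemma from \cite[Lemma~3.4]{PVM:17a}, where the argument is likewise to identify a vertex in $\Opp(\theta)$ with a subspace $U$ satisfying $U\cap U^{\perp}=0$, invoke the parity obstruction for nondegenerate alternating forms to get $\{i\}$-domesticity for odd $i$, and extend any nondegenerate subspace to a flag of nondegenerate subspaces of all even dimensions via symplectic bases. Your instinct to isolate the flag-opposition criterion (componentwise complementarity of corresponding members of complementary dimension) as a separate preliminary fact is sound; that criterion is standard for type $\sA$ and is exactly what the cited proof relies on.
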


\begin{thm}[{\cite[Theorems~3.10 and~3.11]{PVM:17a}}]\label{thm:Asmall}
Let $\theta$ be a domestic duality of the small building $\Delta=\sA_n(2)$ with $n\geq 2$. Then either $\theta$ is a strongly exceptional domestic duality or $n$ is odd and $\theta$ is a symplectic polarity. 
\end{thm}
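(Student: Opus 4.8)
The plan is to argue by induction on $n$, organised according to whether $\theta$ is $\{1\}$-domestic. Two general facts will be used throughout. First, since $\theta$ is a simplicial automorphism it preserves opposition, so $\Opp(\theta)$ is $\theta$-invariant: applying $\theta$ to the relation ``$\sigma$ opposite $\sigma^{\theta}$'' shows $\sigma^{\theta}\in\Opp(\theta)$ whenever $\sigma\in\Opp(\theta)$. In particular, if a point $p$ is mapped to an opposite hyperplane then the hyperplane $p^{\theta}$ is itself mapped to an opposite simplex. Second, $\cT(\theta)$ is downward closed among the subsets of $S$ stable under $w_0\circ\pi_{\theta}$: passing from a witnessing simplex of type $J$ to a face of type $J'\subseteq J$ with $(J')^{w_0\pi_{\theta}}=J'$ preserves the property of being mapped to an opposite simplex (extend to opposite chambers and restrict). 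Because a duality of $\sA_n$ has $\pi_{\theta}=w_0$, here $w_0\circ\pi_{\theta}=\mathrm{id}$, every subset is stable, and the maximal proper types are the cotypes $S\setminus\{i\}$. Hence $\theta$ is strongly exceptional domestic as soon as a panel of each cotype lies in $\Opp(\theta)$, the remaining proper types following by downward closure.

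The dichotomy is then as follows. If $\theta$ is $\{1\}$-domestic, every point is absolute and Lemma~\ref{lem:An} forces $n$ odd and $\theta$ a symplectic polarity, giving the second alternative. So I may assume $\theta$ is not $\{1\}$-domestic and aim for strongly exceptional domesticity. The base case $n=2$ is immediate: $n$ is even so no symplectic polarity exists, the non-absolute point $p$ yields (by $\theta$-invariance) a non-absolute line $p^{\theta}$, and a domestic duality of the Fano plane failing both $\{1\}$- and $\{2\}$-domesticity is strongly exceptional domestic (cf.\ \cite{PTM:15}).

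For the inductive step with $n\geq 3$, choose a point $p$ opposite $p^{\theta}$ and put $H=p^{\theta}$, a hyperplane opposite $H^{\theta}$. By Corollary~\ref{cor:proj} the induced maps $\theta_p$ on $\Res(p)\cong\sA_{n-1}(2)$ and $\theta_H$ on $\Res(H)\cong\sA_{n-1}(2)$ are domestic, and by Proposition~\ref{prop:typemap} (using $\pi_{\theta}=w_0$) each induces the flip on its diagram, so each is a duality. By induction each of $\theta_p,\theta_H$ is strongly exceptional domestic or a symplectic polarity. If $\theta_p$ is strongly exceptional domestic then, by Proposition~\ref{prop:proj}, each panel of $\Res(p)$ mapped to an opposite simplex lifts to a $\Delta$-panel of the same cotype through $p$; as these cotypes range over $\{2,\ldots,n\}$ this produces $\Delta$-panels of cotype $2,\ldots,n$. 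Symmetrically, if $\theta_H$ is strongly exceptional domestic it produces $\Delta$-panels of cotype $1,\ldots,n-1$. When both residues are strongly exceptional domestic the two ranges cover $\{1,\ldots,n\}$, and downward closure completes the argument. Note that when $n$ is odd, $n-1$ is even, so by Lemma~\ref{lem:An} neither residue can be a symplectic polarity and this case closes at once.

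The main obstacle is thus the even case, where a residue could a priori be a symplectic polarity; here the conclusion must be strongly exceptional domesticity, so the symplectic alternative has to be excluded outright. A symplectic residue contributes \emph{no} panels (being $\{i\}$-domestic at the relevant extreme node by Lemma~\ref{lem:sp}, its opposite simplices avoid those nodes), so the coverage argument requires \emph{both} $\theta_p$ and $\theta_H$ to be strongly exceptional domestic. The crux is therefore to prove that the residue at any non-absolute vertex is never a symplectic polarity. The route I expect to take is to transport the extremal domesticity of a hypothetical symplectic residue back through Proposition~\ref{prop:proj}, and to combine this with the $\theta$-invariance of $\Opp(\theta)$ applied at both $p$ and $H$, so that the symplectic structure on $\Res(p)$ propagates to make $\theta$ itself $\{1\}$-domestic; since symplectic polarities are $\{1\}$-domestic while our $p$ is non-absolute, this is the desired contradiction. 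Once symplectic residues are ruled out at both $p$ and $H$, both residues are strongly exceptional domestic and the induction closes.
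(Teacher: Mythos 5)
This statement is not proved in the present paper at all: it is imported verbatim from \cite[Theorems~3.10 and~3.11]{PVM:17a}, so there is no in-paper proof to compare against. Judged on its own merits, your proposal sets up a reasonable induction (the $\{1\}$-domestic branch via Lemma~\ref{lem:An}, the residue bookkeeping via Proposition~\ref{prop:proj} and Corollary~\ref{cor:proj}, the cotype-coverage argument from the two residues at $p$ and at $H=p^{\theta}$, and the downward-closure reduction to panels are all correct), but it stops exactly at the point where the theorem becomes nontrivial.

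The genuine gap is the even-$n$ case, which you yourself identify as ``the crux'' and then only sketch with ``the route I expect to take is\dots''. You need to show that for a non-absolute point $p$ the induced duality $\theta_p$ of $\Res(p)\cong\sA_{n-1}(2)$ cannot be a symplectic polarity, and no argument is given. The proposed mechanism --- that a symplectic residue at the single point $p$ ``propagates'' to make $\theta$ globally $\{1\}$-domestic --- has no supporting step: knowing that every line through $p$ is absolute for $\theta_p$ says nothing direct about the absoluteness of points $q\neq p$ for $\theta$, and Proposition~\ref{prop:proj} only transfers information about simplices \emph{containing}~$p$. Worse, the step cannot be purely formal: over $\FF_q$ with $q>2$ and $n$ even, the residue of a domestic, non-point-domestic duality at a non-absolute point genuinely \emph{would} be a symplectic polarity (that is how the large-building case plays out), so whatever excludes this for $q=2$ must use the field size. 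This is precisely where the arguments of \cite{PVM:17a} (and of Theorem~\ref{thm:existenceAn(2)} in this paper) bring in $\FF_2$-specific geometry of the set of absolute points --- e.g.\ that the absolute points of a non-point-domestic domestic duality form a union of at most two hyperplanes, together with counting and parity facts such as Lemma~\ref{lem:dimension}. Until you supply an actual argument ruling out symplectic residues (or an alternative choice of non-absolute vertex whose residue is provably strongly exceptional domestic), the induction does not close for even $n$ and the proof is incomplete.
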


The following proposition shows that the diagrams for uncapped dualities of $\sA_n$ buildings are as claimed in the first row of Table~\ref{table:1}. 

\begin{prop}\label{prop:1.2} Every uncapped duality of $\sA_n(2)$ is a strongly exceptional domestic duality.
\end{prop}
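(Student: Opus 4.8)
The plan is to combine the classification of domestic dualities of $\sA_n(2)$ from Theorem~\ref{thm:Asmall} with the cappedness of symplectic polarities from Lemma~\ref{lem:sp}. The statement concerns \emph{uncapped} dualities, so I would first separate two cases according to whether the duality is domestic. Since a symplectic polarity is capped (Lemma~\ref{lem:sp}) and a strongly exceptional domestic duality has full opposition diagram with every node encircled, the content of the proposition is essentially that the only uncapped dualities are the strongly exceptional domestic ones.

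\textbf{Step 1: Handle the domestic case.} Suppose $\theta$ is an uncapped duality of $\sA_n(2)$. If $\theta$ is domestic, then by Theorem~\ref{thm:Asmall} either $\theta$ is strongly exceptional domestic or $n$ is odd and $\theta$ is a symplectic polarity. But Lemma~\ref{lem:sp} tells us that symplectic polarities are capped, contradicting our assumption that $\theta$ is uncapped. Hence in the domestic case $\theta$ must be strongly exceptional domestic, as desired.

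\textbf{Step 2: Rule out the non-domestic case.} The remaining task is to show that a \emph{non}-domestic uncapped duality of $\sA_n(2)$ cannot exist; that is, every non-domestic duality is capped. Here I would argue that if $\theta$ is not domestic then $\Opp(\theta)$ contains a chamber, which is a simplex of type $S$. By definition $\Type(\theta)=S$, and the existence of a type-$S$ simplex in $\Opp(\theta)$ is precisely the condition that $\theta$ is capped. Thus non-domestic dualities are automatically capped, so an uncapped duality must be domestic, and we are returned to Step~1.

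\textbf{The main obstacle} I anticipate is purely bookkeeping: confirming that ``strongly exceptional domestic'' is indeed the right descriptor to match the diagram claimed in the first row of Table~\ref{table:1} (the fully encircled, fully shaded $\sA_n$ diagram), and in particular that such a duality is genuinely uncapped rather than secretly capped. A strongly exceptional domestic duality is domestic (so no type-$S$ simplex is opposite its image, making it uncapped provided $\Type(\theta)=S$) yet not $J$-domestic for any proper $w_0$-stable $J\subsetneq S$, so $\Type(\theta)=S$ and there are opposite simplices of every cotype $S\setminus J'$ for $J'\in\mathcal{J}_\theta$; this is exactly the condition that all encircled nodes are shaded in the decorated diagram. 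Assembling these observations shows the decorated opposition diagram is the fully-shaded one of Table~\ref{table:1}, completing the identification. The argument is short because all the substantive work has already been done in Theorem~\ref{thm:Asmall} and Lemma~\ref{lem:sp}; this proposition is essentially a corollary repackaging those results in the language of cappedness.
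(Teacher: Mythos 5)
Your argument is correct and follows the paper's proof exactly: uncapped implies domestic (since a non-domestic automorphism has a chamber, hence a type-$S$ simplex, in $\Opp(\theta)$ and is therefore capped), then Theorem~\ref{thm:Asmall} leaves only the symplectic polarity and strongly exceptional domestic cases, and Lemma~\ref{lem:sp} eliminates the former. No differences worth noting.
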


\begin{proof}
If $\theta$ is uncapped then necessarily $\theta$ is domestic, and so by Theorem~\ref{thm:Asmall} $\theta$ is either a symplectic polarity or is strongly exceptional domestic. The first case is eliminated by Lemma~\ref{lem:sp}.
\end{proof}

We now consider the small buildings of types $\sB_n$ and $\sD_n$. We first require some preliminary results. It is convenient at times to use terminology like ``$x$ is domestic for $\theta$'' and ``$x$ is non-domestic for $\theta$'' as short hand for ``$\theta$ does not map $x$ to an opposite'' and ``$x$ is mapped to an opposite by $\theta$''. If the automorphism $\theta$ is clear from context we will simply say ``$x$ is domestic'' or ``$x$ is non-domestic''.

\begin{lemma}\label{lem:type1}
Let $n\geq 4$ and let $\Delta$ be a building of type $\sB_n$ or $\sD_{n+2}$ with thick projective plane residues. Let $\theta$ be an automorphism and let $J=\Type(\theta)$. If there exists $j\in J$ odd with $j\leq n$, then $\{1,2,\ldots,j\}\subseteq J$. 
\end{lemma}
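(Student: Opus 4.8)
The plan is to reduce the claim about $J=\Type(\theta)$ to a claim about single vertices, and then to induct on the odd index $j$ using the residue of a type-$j$ vertex.

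First I would record a \emph{downward} closure property of $\Opp(\theta)$. For a building of type $\sB_n$ or $\sD_{n+2}$ and any $i\le n$, the singleton $\{i\}$ is fixed by $w_0\circ\pi_\theta$: in type $\sB_n$ both $w_0$ and $\pi_\theta$ act trivially on the diagram, while in type $\sD_{n+2}$ the only nontrivial motions permute the two fork nodes $n+1,n+2$, which do not meet $\{1,\dots,n\}$. Using the standard fact that in a pair of opposite chambers each vertex is opposite precisely the vertex of the opposite type in the other chamber (cf.\ \cite{AB:08}), it follows that if $\sigma\in\Opp(\theta)$ contains a type-$i$ vertex $u$ with $i\le n$, then $u\in\Opp(\theta)$. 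Consequently, for $i\le n$ we have $i\in J$ if and only if $\Opp(\theta)$ contains a type-$i$ vertex. In particular the hypothesis $j\in J$ furnishes a type-$j$ vertex $v\in\Opp(\theta)$, and the goal becomes: produce a type-$i$ vertex in $\Opp(\theta)$ for every $i\le j$.

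I would prove this by induction on odd $j$, the case $j=1$ being vacuous. Given $v\in\Opp(\theta)$ of type $j$, consider $\theta_v$ on $\Res(v)$. Deleting node $j$ (with $j\le n$) splits off an $\sA_{j-1}$ factor, namely the thick projective space $\PG(j-1,\KK)$ of subspaces of $v$, and by Proposition~\ref{prop:typemap} the type map of $\theta_v$ on this factor is $w_{\sA_{j-1}}\circ w_0\circ\pi_{\theta}$, which equals the nontrivial (reversal) diagram automorphism since $w_0$ and $\pi_{\theta}$ act trivially on the stem. Thus $\theta_v$ restricts to a duality $\phi$ of $\PG(j-1,\KK)$. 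Because $j$ is odd, $j-1$ is even, so a symplectic polarity of $\PG(j-1,\KK)$ is impossible; hence by Lemma~\ref{lem:An} the duality $\phi$ is not symplectic and therefore has a non-absolute point. This point is a type-$1$ vertex of $\Res(v)$ in $\Opp(\phi)$, which by Proposition~\ref{prop:proj} yields a type-$\{1,j\}$ simplex of $\Delta$ in $\Opp(\theta)$, and hence a type-$1$ vertex in $\Opp(\theta)$ by the downward closure above. Analogously, applying Lemma~\ref{lem:An} to $\phi^{-1}$ produces a hyperplane of $v$ meeting its $\phi$-image trivially, i.e.\ a type-$(j-1)$ vertex in $\Opp(\phi)$; and a further cut produces an opposite subspace of dimension $j-2$ (here non-symplecticity is automatic, as the dimension stays odd). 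Lifting these through Proposition~\ref{prop:proj} gives type-$(j-1)$ and type-$(j-2)$ vertices of $\Delta$ in $\Opp(\theta)$. Feeding the type-$(j-2)$ vertex (with $j-2$ odd, $\le n$) into the induction hypothesis yields all types $\le j-2$, and together with $j-1$ and $j$ this gives $\{1,\dots,j\}\subseteq J$. The case of $\sD_{n+2}$ with $j\le n$ is verbatim, since deleting a stem node again splits off an $\sA_{j-1}$ factor carrying a duality with trivial $w_0\circ\pi_{\theta}$-action.

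The main obstacle is the production, inside the odd-dimensional residue, of opposite subspaces in the required (co)dimensions. Phrased via the nondegenerate sesquilinear form $f$ on $v\cong\KK^{j}$ associated to $\phi$, one needs subspaces $U$ with $U\cap U^{\phi}=0$, equivalently nondegenerate restrictions $f|_U$, in dimensions $1$, $j-1$ and $j-2$. Non-symplecticity guarantees an anisotropic vector (a non-absolute point), after which one peels off anisotropic lines and, whenever a restriction becomes alternating, hyperbolic planes; because the ambient dimension $j$ is odd the top form is never alternating, so the process cannot be trapped in a purely symplectic configuration. The genuinely delicate points are characteristic two and the possibility that $\phi$ is a non-reflexive duality (so that left and right orthogonal complements differ), and these are exactly where the argument must be carried out with care; everything else is bookkeeping with Propositions~\ref{prop:typemap} and~\ref{prop:proj} together with the downward closure.
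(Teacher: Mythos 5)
Your reduction is the paper's: pass to a non-domestic type-$j$ vertex $v$ (your downward-closure observation, which the paper leaves implicit, is correct and is \cite[Lemma~1.3]{PVM:17a}), and use Proposition~\ref{prop:typemap} to see that $\theta_v$ induces a duality $\phi$ of the $\sA_{j-1}$ component, which cannot be a symplectic polarity because the projective dimension $j-1$ is even. From there the routes diverge. The paper finishes in one step: $\phi$ is either non-domestic or domestic, and by Theorem~\ref{thm:Asmall} a domestic duality that is not symplectic is strongly exceptional domestic; in either case $\Opp(\phi)$ contains simplices of every type, so Proposition~\ref{prop:proj} puts all of $1,\dots,j-1$ into $J$ at once, with no induction and no form theory. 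You instead avoid Theorem~\ref{thm:Asmall} and try to manufacture opposite subspaces of vector dimensions $1$, $j-1$ and $j-2$ directly, then induct on $j-2$.

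The dimensions $1$ and $j-1$ are fine (Lemma~\ref{lem:An} applied to $\phi$ and to $\phi^{-1}$, exactly as you say), but the dimension-$(j-2)$ step is a genuine gap, and it is the step your induction cannot do without. You need a subspace $U\subset v$ with $\dim U=j-2$ and $U\cap U^{\phi}=0$, equivalently a nondegenerate restriction $f|_U$ of the associated sesquilinear form; your remark that ``non-symplecticity is automatic, as the dimension stays odd'' is not an existence proof, since oddness of $\dim U$ does not produce such a $U$. The peeling procedure as sketched can stall: if $H$ is your nondegenerate hyperplane and $f|_H$ happens to be alternating, then \emph{every} hyperplane of $H$ is degenerate (an alternating form has even rank), so the $(j-2)$-space cannot be found inside $H$ and must be reassembled, e.g.\ as $\langle v_1\rangle\oplus W$ with $v_1$ anisotropic and $W\subseteq v_1^{\perp}$ nondegenerate of even dimension; and when $\phi$ is non-reflexive the left and right radicals differ, so the Witt-style decomposition into anisotropic lines and hyperbolic planes that you invoke is not available in that form. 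The statement you need is true, but the cleanest proof of it is precisely the dichotomy the paper uses (non-domestic, or domestic and hence strongly exceptional domestic by Theorem~\ref{thm:Asmall}), which yields every intermediate dimension simultaneously and makes both the $(j-2)$-construction and the induction unnecessary.
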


\begin{proof}
Let $v$ be a non-domestic type $j$ vertex. Then $\theta_v$ acts as a duality on the $\sA_{j-1}$ component of the residue of $v$ (by Proposition~\ref{prop:typemap}). Since $j$ is odd, this duality is either non-domestic or is exceptional domestic (see Theorem~\ref{thm:Asmall}), and in either case $1,2,\ldots,j-1\in J$, and hence the result.
\end{proof}

\begin{lemma}\label{lem:1} Let $\Delta$ be a building of type $\sB_n$ or $\sD_{n+2}$ with $n\geq 4$ and thick projective plane residues, and let $\theta$ be a collineation. Let $J=\Type(\theta)$. Suppose that $3\leq j<n$, and that $\{j-1,j\}\subseteq J$ and $j+1\notin J$. Then there exists a type $\{1,j\}$-simplex mapped onto an opposite simplex by $\theta$.
\end{lemma}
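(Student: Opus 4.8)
The plan is to reduce the assertion to a statement about a duality of the small projective space $\sA_{j-1}(2)$ sitting inside a vertex residue, and then to feed that into the classification of domestic dualities in Theorem~\ref{thm:Asmall}. Since $3\le j<n$, deleting node $j$ from the $\sB_n$ (respectively $\sD_{n+2}$) diagram splits off an $\sA_{j-1}$ component carried by the nodes $1,\dots,j-1$, so every type-$j$ vertex $v$ has residue $\Res(v)\cong \sA_{j-1}(2)\times R$ with $R$ the building on the remaining nodes. As $\theta$ is a collineation (so $w_0$ acts trivially on the nodes $\le j$), the condition $j\in\Type(\theta)$ means precisely that some type-$j$ vertex $v$ lies in $\Opp(\theta)$: a face whose type contains $j$ of any opposite simplex is itself opposite. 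Fix such a $v$. By Proposition~\ref{prop:typemap} the induced automorphism $\theta_v$ restricts to a \emph{duality} $\phi$ of the factor $\sA_{j-1}(2)$ (the induced type map on $\{1,\dots,j-1\}$ is the longest element $w_{\{1,\dots,j-1\}}$, which is the nontrivial diagram involution as $j\ge 3$). Finally, by Proposition~\ref{prop:proj}, any type-$1$ vertex that is non-domestic for $\theta_v$ forms, together with $v$, a type-$\{1,j\}$ simplex in $\Opp(\theta)$. Thus the whole problem reduces to showing that $\phi$ is \emph{not} $\{1\}$-domestic.

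For this I would invoke Theorem~\ref{thm:Asmall}. If $\phi$ is non-domestic then a whole chamber of $\sA_{j-1}(2)$ is opposite, so in particular a type-$1$ vertex is; and if $\phi$ is strongly exceptional domestic then by definition it is not $\{1\}$-domestic. The only remaining possibility permitted by Theorem~\ref{thm:Asmall} is that $\phi$ is a symplectic polarity, which by Lemma~\ref{lem:An} forces $\sA_{j-1}$ to have odd index (equivalently $j$ even) and by Lemma~\ref{lem:sp} makes $\phi$ genuinely $\{1\}$-domestic. Hence for $j$ \emph{odd} the argument is already complete, since then $\sA_{j-1}(2)$ has even index and admits no symplectic polarity.

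The main obstacle is the case $j$ \emph{even}, where the type-$j$ residue can fail to supply a type-$1$ vertex. Here I would change the starting vertex and bring in the hypothesis $j-1\in J$: choose instead a non-domestic type-$(j-1)$ vertex $w$ (which exists by the same downward-closure argument, as $j-1\le j<n$). Now $\Res(w)\cong \sA_{j-2}(2)\times R'$, where $R'$ is the factor of type $\sB$ or $\sD$ on the nodes $\ge j$. The decisive parity gain is that $\sA_{j-2}$ has \emph{even} index (because $j-2$ is even), so no symplectic polarity is available, and Theorem~\ref{thm:Asmall} forces the induced duality $\theta_w|_{\sA_{j-2}}$ to be non-$\{1\}$-domestic; this produces a non-domestic type-$1$ vertex $u$ in $\Res(w)$. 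It then suffices to produce a non-domestic type-$j$ vertex $t$ in the complementary factor $R'$ (whose first node is the type $j$ of $\Delta$): since opposition in a product is componentwise for the type-preserving $\theta_w$, the simplex $w\cup u\cup t$ would lie in $\Opp(\theta)$ with type $\{1,j-1,j\}$, and passing to its type-$\{1,j\}$ face gives the claim.

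Everything therefore comes down to analysing the collineation $\chi=\theta_w|_{R'}$, and this is the step I expect to require the most care. The hypothesis $j+1\notin J$ forces $\chi$ to be $\{j+1\}$-domestic on $R'$ (a non-domestic type-$(j+1)$ vertex in $\Res(w)$ would, with $w$, give a type-$\{j-1,j+1\}$ simplex in $\Opp(\theta)$ and hence $j+1\in J$). Applying Lemma~\ref{lem:type1} \emph{inside} $R'$ then promotes any odd-type opposition of $\chi$ down to its first node, i.e.\ to a type-$j$ vertex, delivering $t$. The residual difficulty, and the genuine crux of the even case, is to exclude the scenario in which $\Opp(\chi)$ is supported only on even nodes $\ge j+3$ of $R'$ (where neither $j+1\notin J$ nor Lemma~\ref{lem:type1} bites directly); I would resolve this by a further residue descent inside $R'$ onto such an even-type non-domestic vertex, again using $j+1\notin J$ and the parity bookkeeping for dualities of even-index $\sA$-residues to force opposition back onto an odd node and thereby onto node $j$. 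This final confinement argument is where I anticipate the bulk of the work, with the small-rank instances of $R'$ (where Lemma~\ref{lem:type1} does not apply) checked directly.
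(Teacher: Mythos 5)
Your reduction for $j$ odd is correct and complete, and it is a genuinely shorter route than the paper's for that parity: a non-domestic type $j$ vertex has an $\sA_{j-1}(2)$ residue component of even projective dimension, so Lemma~\ref{lem:An} excludes the symplectic polarity and Theorem~\ref{thm:Asmall} forces a non-domestic point in the residue. The even case, however, contains a genuine gap, and it is not the ``confinement'' step you flag as the hard part. That step actually closes easily: if $\chi=\theta_w|_{R'}$ had a non-domestic vertex of any original type $k\geq j+1$, then the induced duality on the $\sA_{k-j}(2)$ component of its residue inside $R'$ would map a vertex of original type $j+1$ to an opposite in every case permitted by Theorem~\ref{thm:Asmall} (non-domestic and strongly exceptional domestic dualities move every type, and a symplectic polarity moves the type $2$ vertices of its projective space by Lemma~\ref{lem:sp}), contradicting $j+1\notin J$; so $\Opp(\chi)$, \emph{if it contains a vertex at all}, contains a type $j$ vertex. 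The real gap is upstream: nothing in your argument guarantees that $\Opp(\chi)$ is nonempty for some non-domestic type $(j-1)$ vertex $w$. The automorphism $\theta_w$ could act on the factor $R'$ without any oppositions (e.g.\ trivially) for \emph{every} such $w$ --- this is exactly the statement that $\theta$ is $\{j-1,j\}$-domestic, and knowing separately that $j-1\in J$ and $j\in J$ does not rule it out.

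This is precisely where the paper's proof begins. It shows $\theta$ is not $\{j-1,j\}$-domestic by combining the hypothesis $j+1\notin J$ (so $\theta$ is $\{j-1,j+1\}$-domestic) with \cite[Lemma~3.25]{PVM:17a}, which asserts that a collineation of such a polar space that is both $\{j-1,j\}$-domestic and $\{j-1,j+1\}$-domestic must already be $\{j-1\}$-domestic or $\{j\}$-domestic --- a substantive geometric fact about polar spaces with no counterpart in your proposal. Once a non-domestic type $\{j-1,j\}$ simplex is in hand, the paper finishes uniformly in the parity of $j$: the induced duality on the $\sA_{j-1}(2)$ residue of its type $j$ vertex maps a hyperplane to an opposite, hence cannot be a symplectic polarity (these are hyperplane-domestic by Lemma~\ref{lem:sp}), hence is non-domestic or strongly exceptional domestic and therefore moves a point to an opposite. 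To repair your even case you would need to import \cite[Lemma~3.25]{PVM:17a} or prove an equivalent statement.
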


\begin{proof}
We first show that $\theta$ is not $\{j-1,j\}$-domestic. For if $\theta$ is $\{j-1,j\}$-domestic, then since $\theta$ is also $\{j-1,j+1\}$-domestic it follows from \cite[Lemma~3.25]{PVM:17a} that either $\theta$ is $\{j-1\}$-domestic or $\{j\}$-domestic, a contradiction. Thus there exists a type $\{j-1,j\}$ simplex $\sigma$ mapped onto an opposite. If $v$ is the type $j$ vertex of this simplex then $\theta_v$ acts as a duality on the $\sA_{j-1}$ component (Proposition~\ref{prop:typemap}) mapping a hyperplane to an opposite (by Proposition~\ref{prop:proj}). Thus $\theta_v$ is either non-domestic or strongly exceptional domestic on the $\sA_{j-1}$ component, and in either case there exists a non-domestic type $\{1,j\}$ simplex (note that $j-1\geq 2$). 
\end{proof}

\begin{lemma}\label{lem:2} Let $\Delta$ be a small building of type $\sB_n$ or $\sD_{n+1}$, and let $j<n$. Suppose that $\theta$ is an uncapped collineation of type $J=\{1,2,3,\ldots,j\}$. Then $\theta$ is $\{1,2,3,\ldots,j-1\}$-domestic.
\end{lemma}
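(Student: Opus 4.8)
The plan is to argue by contradiction and to peel off a residue, reducing the statement to a purely geometric fact about automorphisms of small polar spaces of rank at least $3$. First I would suppose $\theta$ is not $\{1,\dots,j-1\}$-domestic and choose $\sigma\in\Opp(\theta)$ of type $\{1,\dots,j-1\}$. Since $\theta$ is a collineation and the nodes $1,\dots,j$ are fixed by $w_0\circ\pi_{\theta}$ (as $j<n$), Proposition~\ref{prop:typemap} shows that $\theta_{\sigma}$ is an automorphism of the small polar space $\Res(\sigma)$ (of type $\sB_{n-j+1}$ or $\sD_{n-j+2}$). Any non-domestic simplex of $\Res(\sigma)$ of type $I\subseteq\{j,\dots,n\}$ gives, by Proposition~\ref{prop:proj}, a non-domestic simplex of $\Delta$ of type $I\cup\{1,\dots,j-1\}$, which must lie inside $\Type(\theta)=\{1,\dots,j\}$; hence $\Type(\theta_{\sigma})\subseteq\{j\}$. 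If $j\in\Type(\theta_{\sigma})$ there is a non-domestic type-$j$ vertex in $\Res(\sigma)$, and Proposition~\ref{prop:proj} then yields a non-domestic type-$\{1,\dots,j\}$ simplex of $\Delta$, contradicting that $\theta$ is uncapped. Thus $\Opp(\theta_{\sigma})=\emptyset$, and since $\Res(\sigma)$ is thick the Leeb / Abramenko--Brown theorem forces $\theta_{\sigma}=\id$.

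Next I would reinterpret this conclusion one rank lower. Write $\sigma=\sigma^{-}\cup\{u\}$, where $u$ is the type-$(j-1)$ vertex of $\sigma$ and $\sigma^{-}$ is its type-$\{1,\dots,j-2\}$ face (with $\sigma^{-}=\emptyset$ when $j=2$). Faces of non-domestic simplices are non-domestic, so $\sigma^{-}\in\Opp(\theta)$ and, exactly as above, $\theta_{\sigma^{-}}$ is an automorphism of the thick polar space $\Res(\sigma^{-})$, which has \emph{rank} $n-j+2\ge 3$ precisely because $j<n$. In this polar space $u$ is a point mapped onto an opposite point, and since $\Res(u)$ computed inside $\Res(\sigma^{-})$ is exactly $\Res(\sigma)$, we have $(\theta_{\sigma^{-}})_{u}=\theta_{\sigma}=\id$. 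The proof is then completed by the following statement: \emph{in a thick polar space of rank at least $3$, an automorphism mapping a point $u$ onto an opposite point must map some line through $u$ onto an opposite line.} Indeed such a line $\ell\ni u$ would, by Proposition~\ref{prop:proj} applied at $u$, be a point of $\Res(u)$ opposite its image under $(\theta_{\sigma^{-}})_{u}$, forcing $(\theta_{\sigma^{-}})_{u}\neq\id$ and contradicting the previous paragraph.

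The hard part will be establishing this last polar-space fact, which genuinely requires $\mathrm{rank}\ge 3$: it fails for generalised quadrangles, which is exactly why the hypothesis $j<n$ is imposed, as it keeps $\Res(\sigma^{-})$ of rank at least $3$. To prove it I would exploit that, for opposite points $u,u^{\theta}$, the common perp $u^{\perp}\cap(u^{\theta})^{\perp}$ is a nonempty thick polar space of rank $\ge 2$; choosing a point $x$ in it and setting $\ell=\langle u,x\rangle$, one checks that $x$ can be selected so that no point of $\ell$ lies in the perp of $\ell^{\theta}$ (using $u\notin (u^{\theta})^{\perp}$ together with $\mathrm{rank}\ge 3$), which is precisely the condition for $\ell$ and $\ell^{\theta}$ to be opposite. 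Everything outside this geometric step is a routine type computation via Propositions~\ref{prop:typemap} and~\ref{prop:proj} together with the uncappedness hypothesis, so I expect the verification that a line through $u$ can be chosen opposite to its image to be the crux of the argument.
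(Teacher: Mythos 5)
Your first paragraph is correct and coincides with the first step of the paper's proof: uncappedness together with $\Type(\theta)=\{1,\ldots,j\}$ does force $\theta_{\sigma}$ to be trivial on the rank-$(n-j+1)$ (or $(n-j+2)$) polar space $\Res(\sigma)$, via Leeb/Abramenko--Brown. The gap is in your final step: the polar-space fact you reduce to is false. A symplectic transvection $\theta:X\mapsto X+(X,e)e$ of $\sC_m(2)$, $m\geq 3$, maps every point $u=\langle X\rangle$ with $(X,e)\neq 0$ to an opposite point, yet maps no line through $u$ to an opposite line: any totally isotropic line $L$ through $u$ contains a nonzero $Z\in L\cap e^{\perp}$, and then $Z\in(L^{\theta})^{\perp}$, so $L$ is not opposite $L^{\theta}$; in fact one computes $\theta_u=\id$ on $\Res(u)$. (Compare the collineation $\theta_1=x_{\varphi}(1)$ of Theorem~\ref{thm:F4}, whose opposition diagram has only the node $1$ encircled.) So ``$u$ non-domestic and $\theta_u=\id$'' is a perfectly consistent local configuration, and no contradiction can be extracted from the data at the single vertex $u$. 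Your sketched proof of the claim fails at exactly this point: every point $x$ of $u^{\perp}\cap(u^{\theta})^{\perp}$ may be absolute (in the transvection example all such $x$ are fixed), and then $x\in(\ell^{\theta})^{\perp}$ for $\ell=\langle u,x\rangle$, so no admissible choice of $x$ exists.

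The contradiction has to come from the hypothesis you never use, namely $j\in\Type(\theta)$. This is what the paper does: having shown that $\theta_v$ is trivial on the upper residue for the top vertex $v$ of \emph{every} non-domestic type $\{1,\ldots,j-1\}$ flag, it invokes the global statement \cite[Lemma~3.28]{PVM:17a} (with $i=j-2$, $\ell=j-3$) to conclude that every $(j-1)$-space (projective dimension) of the polar space contains a fixed point, whence no type-$j$ vertex lies in $\Opp(\theta)$ --- contradicting $j\in J$. In other words the contradiction lives at the node $j$, not at the node $j-1$, and some argument quantifying over all the relevant residues (rather than a single one) is unavoidable.
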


\begin{proof}
Suppose that there is a non-domestic type $\{1,2,\ldots,j-1\}$ simplex, and let $v$ be the type $j-1$ vertex this simplex. If $\theta$ is uncapped then necessarily $\theta_v$ acts as the identity on the ``upper'' residue of type $\sB_{n-j+1}$ or $\sD_{n-j+2}$ (by Proposition~\ref{prop:proj}). Thus \cite[Lemma~3.28]{PVM:17a} with $i=j-2$ and $\ell=j-3$ (note the index shift due to the fact that we used projective dimension in~\cite{PVM:17a}) implies that every $(j-1)$-space in the polar space of $\Delta$ has a fixed point. Thus no type $j$ vertex of $\Delta$ is mapped onto an opposite vertex, contradicting the fact that $j\in J$.
\end{proof}

We can now complete the proof of Theorem~\ref{thm:main*}(a) for buildings of type $\sB_n$. We allow the additional generality of thin cotype~$n$ panels in the following proposition in order to facilitate our later arguments for type~$\sD_n$.

\begin{prop}\label{prop:B}
Let $\Delta$ be a (possibly non-thick) building of type $\sB_n$ with Fano plane residues and $n\geq 3$, and let $\theta$ be a collineation of $\Delta$. If $\theta$ is uncapped, then the decorated opposition diagram of $\theta$ is one of the diagrams in Table~\ref{table:1}.
\end{prop}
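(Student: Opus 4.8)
The plan is to analyse $\theta$ through the residue of a non-domestic vertex of \emph{maximal} type, thereby reducing the $\sB_n$ question to the behaviour of a duality on an $\sA$-residue, where Theorem~\ref{thm:Asmall} supplies a complete dichotomy. Write $J=\Type(\theta)$; since $\theta$ is a collineation of $\sB_n$ and $w_0$ acts trivially on the diagram, $\cJ_\theta$ is the set of singletons, so that $K_\theta=\{s\in J\mid \text{some type }J\setminus\{s\}\text{ simplex is non-domestic}\}$.

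First I would pin down the shape of $J$. Set $j=\max J$ (which exists, as $\theta\neq\id$ forces $\Opp(\theta)\neq\emptyset$ by the result of Leeb and Abramenko--Brown \cite{Lee:00,AB:09}), and fix a non-domestic type-$j$ vertex $v$. By Proposition~\ref{prop:typemap} the induced automorphism $\theta_v$ of $\Res(v)$, a building of type $\sA_{j-1}\times\sB_{n-j}$, splits as $\theta_v^{\sA}\times\theta_v^{\sB}$ with $\theta_v^{\sA}$ a duality (when $j\geq 3$) and $\theta_v^{\sB}$ a collineation. Since $j$ is maximal, any non-domestic simplex in the $\sB_{n-j}$ factor would, via Proposition~\ref{prop:proj}, give a simplex of $\Opp(\theta)$ meeting a type $>j$, a contradiction; hence $\theta_v^{\sB}$ has empty opposite geometry and so $\theta_v^{\sB}=\id$. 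Moreover $\theta_v^{\sA}$ must be domestic: otherwise a non-domestic chamber of $\Res(v)$'s $\sA_{j-1}$ factor yields (Proposition~\ref{prop:proj}) a type-$\{1,\dots,j\}$ simplex in $\Opp(\theta)$, and since $\max J=j$ gives $J\subseteq\{1,\dots,j\}$, this would force $J=\{1,\dots,j\}$ and make $\theta$ capped. Theorem~\ref{thm:Asmall} then leaves exactly two possibilities: $\theta_v^{\sA}$ is strongly exceptional domestic, or (only if $j$ is even) a symplectic polarity.

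In the strongly exceptional domestic case the conclusion follows quickly. Because $w_0\circ\pi_{\theta_v^{\sA}}=\id$ on the $\sA_{j-1}$ diagram, for each $t\in\{1,\dots,j-1\}$ there is a non-domestic panel of cotype $t$, which together with $v$ produces a type-$(\{1,\dots,j\}\setminus\{t\})$ simplex in $\Opp(\theta)$; the union of these types gives $\{1,\dots,j\}\subseteq J$, so $J=\{1,\dots,j\}$, and the same simplices show $\{1,\dots,j-1\}\subseteq K_\theta$. The values $j\leq 2$ are impossible here ($J=\{1\}$ is capped, and $J=\{1,2\}$ contradicts $1\in J$ by Lemma~\ref{lem:2}), so $3\leq j\leq n$. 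If $j<n$, Lemma~\ref{lem:2} shows $\theta$ is $\{1,\dots,j-1\}$-domestic, i.e.\ $j\notin K_\theta$, whence $K_\theta=\{1,\dots,j-1\}$: this is the first $\sB_n$ diagram. If $j=n$ then $J=S$, one has $\{1,\dots,n-1\}\subseteq K_\theta$, and $K_\theta$ is either $\{1,\dots,n-1\}$ or $S$ according to whether a type-$\{1,\dots,n-1\}$ simplex is non-domestic; these are precisely the two full diagrams.

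The hard part is ruling out the symplectic polarity case, which is the only mechanism by which an ``even gap'' could enter $J$ and which I must show cannot arise for an uncapped $\theta$. Here $j$ is even and, by Lemma~\ref{lem:sp}, $\theta_v^{\sA}$ is $\{t\}$-domestic for all odd $t$ and carries a non-domestic type-$\{2,4,\dots,j-2\}$ simplex, producing a type-$\{2,4,\dots,j\}$ simplex in $\Opp(\theta)$. It therefore suffices to prove that in this configuration $J=\{2,4,\dots,j\}$, for then that simplex has type exactly $J$ and $\theta$ is capped, a contradiction. The crux is excluding odd types from $J$: if some odd $o$ lay in $J$ then Lemma~\ref{lem:type1} forces $\{1,\dots,o\}\subseteq J$, and I would combine the residue of a non-domestic odd-type vertex (whose $\sA$-component has even rank and so, by Theorem~\ref{thm:Asmall}, is strongly exceptional domestic rather than symplectic) with an induction on $n$ --- passing to the $\sB_{n-1}$ collineation $\theta_p$ on $\Res(p)$ for a non-domestic point $p$, and using the closure property of \cite{PVM:17a} together with Lemmas~\ref{lem:type1} and~\ref{lem:1} --- to drive a type-$J$ simplex into $\Opp(\theta)$. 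Making this elimination precise, and maintaining the index bookkeeping through the induction (with base case the explicit $\sB_3(2)$ analysis of the introduction), is the technically delicate step; the strongly exceptional domestic case above is comparatively routine.
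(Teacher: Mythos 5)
Your framework --- reduce to the $\sA_{j-1}$ residue of a non-domestic vertex of maximal type $j$ and invoke the dichotomy of Theorem~\ref{thm:Asmall} --- is exactly the paper's, and your strongly-exceptional-domestic branch is essentially correct: it reproduces cases (1) and (2) of the paper's proof, with $K_\theta\supseteq\{1,\dots,j-1\}$ coming from the cotype-$t$ panels of the residue and $j\notin K_\theta$ (for $j<n$) from Lemma~\ref{lem:2}. (Two small caveats there: your elimination of $J=\{1,2\}$ via Lemma~\ref{lem:2} at $j=2$ is degenerate --- its proof invokes \cite[Lemma~3.28]{PVM:17a} with negative indices --- and the paper instead cites \cite[Fact~3.21]{PVM:17a}; and your dichotomy is stated for one fixed $v$, whereas different non-domestic type-$j$ vertices may land in different branches, so the split should be ``some $v$ has strongly exceptional domestic residue'' versus ``all $v$ have symplectic polarity residues.'')

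The genuine gap is the symplectic-polarity branch, which you explicitly leave as ``the technically delicate step.'' This is not a routine verification: it is precisely the content that excludes type sets with even gaps such as $\{1,2,4,6\}$, i.e.\ it is the heart of the proposition. Your proposed route --- induction on $n$ through the point-residue $\theta_p$, with base case ``the explicit $\sB_3(2)$ analysis of the introduction'' --- is not carried out, and as stated it is doubtful: the introduction contains no such analysis (only a display of three posets), and it is not clear that the hypotheses you need (maximal odd node, symplectic residues at the top) descend in a controlled way to $\Res(p)$. The paper closes this case without any induction on $n$: writing $k$ for the maximal odd node of $J$, Lemma~\ref{lem:type1} gives $\{1,\dots,k\}\subseteq J$, Lemma~\ref{lem:1} applied at $k+1$ (using $\{k,k+1\}\subseteq J$, $k+2\notin J$) produces a non-domestic type $\{1,k+1\}$ simplex; if $k+1=j$ one is back in your easy case, and if $k+1<j$ then \cite[Lemma~3.29]{PVM:17a} upgrades this to a non-domestic type $\{1,k+3\}$ simplex, whose $\sA_{k+2}$ residue forces $k+2\in J$ and contradicts the maximality of $k$. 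Without this (or an actually executed substitute), your argument does not rule out uncapped collineations whose type set properly contains $\{2,4,\dots,j\}$ together with a proper initial segment of odd nodes, so the proof is incomplete.
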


\begin{proof}
Suppose that $\theta$ is uncapped. Let $J=\Type(\theta)$, and let $j=\max J$. Then $j\geq 3$, for if $j=1$ then $\theta$ is capped, and if $j=2$ then either $J=\{2\}$ and $\theta$ is capped, or $J=\{1,2\}$ in which case \cite[Fact 3.21]{PVM:17a} implies that $\theta$ is capped.

We claim that $J$ contains an odd element. For if every element of $J$ is even then for each non-domestic type $j$-vertex $v$ the induced automorphism $\theta_v$ is a point domestic duality of an $\sA_{j-1}$ building (by Propositions~\ref{prop:typemap} and~\ref{prop:proj}). Thus $\theta_v$ is a symplectic polarity (Lemma~\ref{lem:An}), and so there exists a type $\{2,4,\ldots,j-2\}$ simplex of the residue mapped to an opposite (Lemma~\ref{lem:sp}). Hence by Proposition~\ref{prop:proj} there is a type $\{2,4,\ldots,j-2,j\}=J$ simplex of $\Delta$ mapped onto an opposite and so $\theta$ is capped, a contradiction. 

Let $k\in J$ be the maximal odd node. By Lemma~\ref{lem:type1} we have $\{1,2,\ldots,k\}\subseteq J$. Consider the following cases. 
\begin{compactenum}[$(1)$]
\item If $j=n$ then by \cite[Proposition~3.12(2)]{PVM:17a} there is a non-domestic type $\{1,n\}$ simplex. In the $\sA_{n-1}$ residue of the type $n$ vertex of this simplex we have a strongly exceptional domestic duality of $\sA_{n-1}$ (since it is domestic and maps a point to an opposite), and hence there are panels of each cotype $1,2,\ldots,n-1$ mapped onto opposites in $\Delta$. Thus $\theta$ has either the first diagram listed in Table~\ref{table:1} (with $j=n$) or the second diagram listed in Table~\ref{table:1} (strongly exceptional domestic). 
\item If $k=j<n$ then $J=\{1,2,\ldots,j\}$, and by Lemma~\ref{lem:1} there exists a non-domestic type $\{1,j\}$~simplex. Considering the type $\sA_{j-1}$ residue of the type $j$ vertex of this simplex, and noting that $j-1$ is even, we see that in $\Delta$ there are non-domestic simplices of each type $J\backslash\{j'\}$ with $j'=1,2,\ldots,j-1$ (using Theorem~\ref{thm:Asmall}), and hence the diagram of $\theta$ is either 
\begin{align}\label{eq:diags}
\begin{tikzpicture}[scale=0.5,baseline=-0.5ex]
\node at (0,0.3) {};
\node [inner sep=0.8pt,outer sep=0.8pt] at (-5,0) (-5) {$\bullet$};
\node [inner sep=0.8pt,outer sep=0.8pt] at (-4,0) (-4) {$\bullet$};
\node [inner sep=0.8pt,outer sep=0.8pt] at (-3,0) (-3) {$\bullet$};
\node [inner sep=0.8pt,outer sep=0.8pt] at (-2,0) (-2) {$\bullet$};
\node [inner sep=0.8pt,outer sep=0.8pt] at (0,0) (-1) {$\bullet$};
\node [inner sep=0.8pt,outer sep=0.8pt] at (1,0) (1) {$\bullet$};
\node [inner sep=0.8pt,outer sep=0.8pt] at (2,0) (2) {$\bullet$};
\node [inner sep=0.8pt,outer sep=0.8pt] at (3,0) (3) {$\bullet$};
\node [inner sep=0.8pt,outer sep=0.8pt] at (4,0) (4) {$\bullet$};
\node [inner sep=0.8pt,outer sep=0.8pt] at (5,0) (5) {$\bullet$};
\node at (2,-0.7) {$j$};
\draw [line width=0.5pt,line cap=round,rounded corners,fill=ggrey] (-5.north west)  rectangle (-5.south east);
\draw [line width=0.5pt,line cap=round,rounded corners,fill=ggrey] (-4.north west)  rectangle (-4.south east);
\draw [line width=0.5pt,line cap=round,rounded corners,fill=ggrey] (-3.north west)  rectangle (-3.south east);
\draw [line width=0.5pt,line cap=round,rounded corners,fill=ggrey] (-2.north west)  rectangle (-2.south east);
\draw [line width=0.5pt,line cap=round,rounded corners,fill=ggrey] (-1.north west)  rectangle (-1.south east);
\draw [line width=0.5pt,line cap=round,rounded corners,fill=ggrey] (1.north west)  rectangle (1.south east);
\draw [line width=0.5pt,line cap=round,rounded corners,fill=ggrey] (2.north west)  rectangle (2.south east);
\draw (-5,0)--(-2,0);
\draw (0,0)--(4,0);
\draw (4,0.07)--(5,0.07);
\draw (4,-0.07)--(5,-0.07);
\draw [style=dashed] (-2,0)--(0,0);
\node [inner sep=0.8pt,outer sep=0.8pt] at (-5,0) (-5) {$\bullet$};
\node [inner sep=0.8pt,outer sep=0.8pt] at (-4,0) (-4) {$\bullet$};
\node [inner sep=0.8pt,outer sep=0.8pt] at (-3,0) (-3) {$\bullet$};
\node [inner sep=0.8pt,outer sep=0.8pt] at (-2,0) (-2) {$\bullet$};
\node [inner sep=0.8pt,outer sep=0.8pt] at (0,0) (-1) {$\bullet$};
\node [inner sep=0.8pt,outer sep=0.8pt] at (1,0) (1) {$\bullet$};
\node [inner sep=0.8pt,outer sep=0.8pt] at (2,0) (2) {$\bullet$};
\node [inner sep=0.8pt,outer sep=0.8pt] at (3,0) (3) {$\bullet$};
\node [inner sep=0.8pt,outer sep=0.8pt] at (4,0) (4) {$\bullet$};
\node [inner sep=0.8pt,outer sep=0.8pt] at (5,0) (5) {$\bullet$};
\end{tikzpicture}\quad\text{or}\quad 
\begin{tikzpicture}[scale=0.5,baseline=-0.5ex]
\node at (0,0.3) {};
\node [inner sep=0.8pt,outer sep=0.8pt] at (-5,0) (-5) {$\bullet$};
\node [inner sep=0.8pt,outer sep=0.8pt] at (-4,0) (-4) {$\bullet$};
\node [inner sep=0.8pt,outer sep=0.8pt] at (-3,0) (-3) {$\bullet$};
\node [inner sep=0.8pt,outer sep=0.8pt] at (-2,0) (-2) {$\bullet$};
\node [inner sep=0.8pt,outer sep=0.8pt] at (0,0) (-1) {$\bullet$};
\node [inner sep=0.8pt,outer sep=0.8pt] at (1,0) (1) {$\bullet$};
\node [inner sep=0.8pt,outer sep=0.8pt] at (2,0) (2) {$\bullet$};
\node [inner sep=0.8pt,outer sep=0.8pt] at (3,0) (3) {$\bullet$};
\node [inner sep=0.8pt,outer sep=0.8pt] at (4,0) (4) {$\bullet$};
\node [inner sep=0.8pt,outer sep=0.8pt] at (5,0) (5) {$\bullet$};
\node at (2,-0.7) {$j$};
\draw [line width=0.5pt,line cap=round,rounded corners,fill=ggrey] (-5.north west)  rectangle (-5.south east);
\draw [line width=0.5pt,line cap=round,rounded corners,fill=ggrey] (-4.north west)  rectangle (-4.south east);
\draw [line width=0.5pt,line cap=round,rounded corners,fill=ggrey] (-3.north west)  rectangle (-3.south east);
\draw [line width=0.5pt,line cap=round,rounded corners,fill=ggrey] (-2.north west)  rectangle (-2.south east);
\draw [line width=0.5pt,line cap=round,rounded corners,fill=ggrey] (-1.north west)  rectangle (-1.south east);
\draw [line width=0.5pt,line cap=round,rounded corners,fill=ggrey] (1.north west)  rectangle (1.south east);
\draw [line width=0.5pt,line cap=round,rounded corners] (2.north west)  rectangle (2.south east);
\draw (-5,0)--(-2,0);
\draw (0,0)--(4,0);
\draw (4,0.07)--(5,0.07);
\draw (4,-0.07)--(5,-0.07);
\draw [style=dashed] (-2,0)--(0,0);
\node [inner sep=0.8pt,outer sep=0.8pt] at (-5,0) (-5) {$\bullet$};
\node [inner sep=0.8pt,outer sep=0.8pt] at (-4,0) (-4) {$\bullet$};
\node [inner sep=0.8pt,outer sep=0.8pt] at (-3,0) (-3) {$\bullet$};
\node [inner sep=0.8pt,outer sep=0.8pt] at (-2,0) (-2) {$\bullet$};
\node [inner sep=0.8pt,outer sep=0.8pt] at (0,0) (-1) {$\bullet$};
\node [inner sep=0.8pt,outer sep=0.8pt] at (1,0) (1) {$\bullet$};
\node [inner sep=0.8pt,outer sep=0.8pt] at (2,0) (2) {$\bullet$};
\node [inner sep=0.8pt,outer sep=0.8pt] at (3,0) (3) {$\bullet$};
\node [inner sep=0.8pt,outer sep=0.8pt] at (4,0) (4) {$\bullet$};
\node [inner sep=0.8pt,outer sep=0.8pt] at (5,0) (5) {$\bullet$};
\end{tikzpicture}
\end{align}
The first digram is eliminated by Lemma~\ref{lem:2}. 
\item If $k<j<n$ then $j$ is even, and as above we have $\{2,4,\ldots,j-2,j\}\subseteq J$. In particular $\{k,k+1\}\subseteq J$ and $k+2\notin J$ (as $k$ is maximum odd node of $J$, and note that $k+2\leq n$). Lemma~\ref{lem:1} implies that there is a non-domestic type $\{1,k+1\}$ simplex. If $k+1=j$ then as above we have the diagrams~(\ref{eq:diags}) and Lemma~\ref{lem:2} eliminates the first of the diagrams. If $k+1<j$ then $k+3\leq j<n$. If $\theta$ is $\{1,k+3\}$-domestic, then since $\theta$ is not $\{k+3\}$-domestic, \cite[Lemma 3.29]{PVM:17a} implies that $\theta$ is $\{1,k+1\}$-domestic, a contradiction. Hence there exists a type $\{1,k+3\}$ simplex mapped onto an opposite. However, considering the $\sA_{k+2}$ residue of the type $k+3$ vertex of this simplex we see that $\theta$ is not $\{k+2\}$-domestic, contradicting the maximality of $k$. 
\end{compactenum}
Hence the result.
\end{proof}

\begin{cor}\label{cor:B1i}
Let $\Delta$ be a building of type $\sB_n$ with thick projective spaces, and let $\theta$ be a collineation and $n\geq i\geq 3$. If $\theta$ is $\{1,i\}$-domestic then $\theta$ is either $\{1\}$-domestic or $\{i\}$-domestic. 
\end{cor}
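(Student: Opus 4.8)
The plan is to bypass a direct geometric argument and instead exploit the dichotomy between capped and uncapped automorphisms, because for capped automorphisms the statement is essentially the defining weak-closure property. First I would record the elementary fact that for a collineation $\theta$ of a $\sB_n$ building the composite $w_0\circ\pi_\theta$ acts trivially on $S$ (so every type is self-opposite), and consequently $\cT(\theta)$ is downward closed: any face of a simplex in $\Opp(\theta)$ again lies in $\Opp(\theta)$, since faces of opposite simplices having mutually opposite types are opposite. The hypotheses ``not $\{1\}$-domestic'' and ``not $\{i\}$-domestic'' say precisely that $\{1\},\{i\}\in\cT(\theta)$, and the desired conclusion is that $\{1,i\}\in\cT(\theta)$ forces one of these to fail. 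So it suffices to prove $\{1\},\{i\}\in\cT(\theta)\Rightarrow\{1,i\}\in\cT(\theta)$.

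\emph{Capped case.} Suppose $\theta$ is capped and $\{1\},\{i\}\in\cT(\theta)$. Since $\Type(\theta)$ is the union of all realised types, $\{1,i\}\subseteq\Type(\theta)$, and by cappedness there is a simplex $\sigma\in\Opp(\theta)$ of type $\Type(\theta)$. Its unique face of type $\{1,i\}$ then lies in $\Opp(\theta)$ by downward closure, so $\theta$ is not $\{1,i\}$-domestic. This already disposes of all large $\sB_n$ buildings, since by \cite[Theorem~1]{PVM:17a} every automorphism of a large building is capped; in particular it covers every $\sB_n$ building whose projective plane residues are not Fano.

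\emph{Uncapped case.} It remains to treat uncapped $\theta$. By \cite[Theorem~1]{PVM:17a} this forces $\Delta$ to be small, i.e.\ $\sB_n(2)$ or $\sB_n(2,4)$, so Proposition~\ref{prop:B} applies and the decorated opposition diagram of $\theta$ is one of the two $\sB_n$ diagrams of Table~\ref{table:1}. In both, $\Type(\theta)=\{1,2,\ldots,j\}$ for some $3\le j\le n$ and the node $1$ is encircled, so $\theta$ is automatically not $\{1\}$-domestic; hence it suffices to show that $\theta$ is $\{i\}$-domestic whenever it is $\{1,i\}$-domestic. I would argue the contrapositive: if $\theta$ is not $\{i\}$-domestic then $i\in\Type(\theta)$, i.e.\ $i\le j$. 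The shading of the diagram records exactly which co-singletons $\Type(\theta)\setminus\{k\}$ lie in $\cT(\theta)$, and in each of the two diagrams there is a shaded node $k\notin\{1,i\}$ (a short counting check using $j\ge 3$ and $i\ge 3$, so that at least two shaded nodes are available). Then $\{1,i\}\subseteq\Type(\theta)\setminus\{k\}\in\cT(\theta)$, and downward closure gives $\{1,i\}\in\cT(\theta)$, as required.

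The main obstacle is precisely the uncapped case: there the weak-closure property genuinely fails, so one cannot avoid invoking the explicit classification of Proposition~\ref{prop:B} (hence the fact that uncapped collineations occur only over $\FF_2$) together with the exact meaning of the shading. A more hands-on alternative — passing to the residue of a non-domestic type-$i$ vertex, where $\theta$ induces a duality of $\sA_{i-1}$ that $\{1,i\}$-domesticity forces to be point-domestic, hence a symplectic polarity by Lemma~\ref{lem:An} — stalls exactly on controlling this symplectic-polarity configuration in the presence of a stray non-domestic point, which is what makes the capped/uncapped dichotomy the cleaner route.
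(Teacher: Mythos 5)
Your proposal is correct and follows essentially the same route as the paper: the capped case is handled by the definition of cappedness together with downward closure of $\mathcal{T}(\theta)$, and the uncapped case is read off from the classification of decorated opposition diagrams in Proposition~\ref{prop:B}. You have merely made explicit the details (in particular, that node $2$ is always shaded and avoids $\{1,i\}$ since $i\geq 3$) that the paper's one-line proof leaves to the reader.
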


\begin{proof}
If $\theta$ is capped then the result is true by definition. If $\theta$ is uncapped then the result follows directly from the classification of uncapped diagrams given above.
\end{proof}

\begin{remark} The assumption $i\geq 3$ cannot be removed from Corollary~\ref{cor:B1i}. For example, consider the exceptional domestic collineation of the generalised quadrangle $\sB_2(2)$ (see \cite[Section~4]{TTM:12b}) . More generally, for each $n\geq 2$ there exists an uncapped collineation of $\sB_n(2)$ with $\Type(\theta)=\{1,2\}$ (see Theorem~\ref{thm:existenceBn(2)}). 
\end{remark}

We now continue with the analysis of buildings of type $\sD_n$. Recall that each building of type $\sD_n$ can be realised as the oriflamme geometry of the space $\mathbb{F}^{2n}$ equipped with an orthogonal form of Witt index $n$, for some field $\mathbb{F}$. The vertices of type $j$ for $j\in\{1,\ldots,n-2\}$ are the totally isotropic spaces of dimension~$j$, and the vertices of type $n-1$ and $n$ are the totally isotropic subspaces of dimension~$n$ (corresponding to the orbits of the action of the associated simple orthogonal group). To each such building $\Delta$ of type $\sD_n$ there is an associated (non-thick) building $\Delta'$ of type~$\sB_n$. The type $j$ vertices of $\Delta'$, for $1\leq j\leq n$, are the totally isotropic subspaces of dimension~$j$. Each type $n-1$ vertex of $\Delta'$ determines a type $\{n-1,n\}$ simplex of $\Delta$, and vice versa, as follows. A type $n-1$ vertex of $\Delta'$ is an $(n-1)$-dimensional totally isotropic space~$W$, and there are precisely two totally isotropic $n$-dimensional subspaces $U,V$ containing $W$ and $(U,V)$ is an $\{n-1,n\}$-simplex of $\Delta$. Conversely, if $(U,V)$ is a type $\{n-1,n\}$ simplex of $\Delta$ then $W=U\cap V$ is a type $n-1$ vertex of $\Delta'$. 

We first recall two facts from \cite{PVM:17a}.

\begin{lemma}[{\cite[Lemma~3.32]{PVM:17a}}]\label{lem:containedB}
Let $\Delta$ be a thick building of type $\sD_n$ with $n$ odd, and let $\Delta'$ be the associated non-thick $\sB_n$ building. A collineation $\theta$ maps a type $\{n-1,n\}$ simplex of $\Delta$ to an opposite simplex if and only if it maps the associated type $n-1$ vertex of $\Delta'$ to an opposite vertex. 
\end{lemma}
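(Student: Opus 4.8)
The plan is to realise everything inside the orthogonal space and reduce the statement to a transversality computation. Work in $\FF^{2n}$ equipped with the orthogonal form of Witt index $n$ defining $\Delta$, write $X^{\perp}$ for the orthogonal complement, and recall that a totally isotropic subspace $X$ satisfies $X\subseteq X^{\perp}$, with equality when $\dim X=n$. Write the type $\{n-1,n\}$ simplex as $(U,V)$ with $\dim U=\dim V=n$; then $W:=U\cap V$ has dimension $n-1$, its perp is $W^{\perp}=U^{\perp}+V^{\perp}=U+V$ of dimension $n+1$, and $U,V$ are exactly the two maximal totally isotropic subspaces containing $W$, namely the two isotropic points of the hyperbolic line $W^{\perp}/W$. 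As a collineation, $\theta$ is a type-preserving automorphism of the polar space; hence it preserves $\perp$ and each of the two families of maximal subspaces, so $U^{\theta}$ and $V^{\theta}$ are the two maximals through $W^{\theta}=U^{\theta}\cap V^{\theta}$, with $U^{\theta}$ in the family of $U$ and $V^{\theta}$ in the family of $V$. Since $w_{0}=-1$ on the associated $\sB_{n}$ diagram, $W$ is mapped to an opposite vertex of $\Delta'$ precisely when $W\cap(W^{\theta})^{\perp}=0$ (the standard description of opposition of equidimensional totally isotropic subspaces). The lemma thus reduces to
\[
(U,V)\in\Opp(\theta)\quad\Longleftrightarrow\quad W\cap(W^{\theta})^{\perp}=0 .
\]

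Because $n$ is odd, the opposition automorphism of the $\sD_{n}$ diagram interchanges the types $n-1$ and $n$, so opposition of the edge $(U,V)$ with its image pairs $U$ (type $n-1$) with $V^{\theta}$ (type $n$) and $V$ with $U^{\theta}$. For the implication ``$\Leftarrow$'' I would start from $W\cap(W^{\theta})^{\perp}=0$: this forces $W\cap W^{\theta}=0$, so $W+W^{\theta}$ has dimension $2n-2$ and, by Witt's theorem, there is a hyperbolic frame $e_{1},\dots,e_{n},f_{1},\dots,f_{n}$ with $W=\langle e_{1},\dots,e_{n-1}\rangle$ and $W^{\theta}=\langle f_{1},\dots,f_{n-1}\rangle$. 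In this frame $U,V$ and $U^{\theta},V^{\theta}$ are the evident maximals through $W$ and $W^{\theta}$ (the family labels being forced), and the two chambers $(\langle e_{1}\rangle\subset\cdots\subset\langle e_{1},\dots,e_{n-2}\rangle\subset U\subset V)$ and $(\langle f_{1}\rangle\subset\cdots\subset U^{\theta}\subset V^{\theta})$ lie in the apartment of the frame and are interchanged by the longest element; hence they are opposite, and therefore so are $(U,V)$ and $(U^{\theta},V^{\theta})$.

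For ``$\Rightarrow$'' I would argue contrapositively. If $(U,V)\in\Opp(\theta)$ then, opposite simplices having opposite corresponding vertices, $U\cap V^{\theta}=0$ and $V\cap U^{\theta}=0$. Suppose, for contradiction, that there is some $0\neq w\in W\cap(W^{\theta})^{\perp}$. Then $w$ is isotropic and lies in $(W^{\theta})^{\perp}$, so its image in the hyperbolic line $(W^{\theta})^{\perp}/W^{\theta}$ is isotropic; as this line has only the two isotropic points $U^{\theta}/W^{\theta}$ and $V^{\theta}/W^{\theta}$, we get $w\in U^{\theta}$ or $w\in V^{\theta}$. Since $w\in W\subseteq U\cap V$, this yields $0\neq w\in V\cap U^{\theta}$ or $0\neq w\in U\cap V^{\theta}$, contradicting the previous sentence. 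Hence $W\cap(W^{\theta})^{\perp}=0$, i.e. $W$ is mapped to an opposite vertex.

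I expect the substance to be bookkeeping rather than difficulty: the one genuinely geometric input is the quotient hyperbolic-line observation in the forward direction, while the reverse direction is a Witt-theorem normal form. The main things to get right are the parity consequences of $n$ being odd — it is exactly this that makes $w_{0}$ nontrivial on the $\sD_{n}$ diagram and forces the cross pairing $U\leftrightarrow V^{\theta}$, $V\leftrightarrow U^{\theta}$ — together with the verification that the two constructed chambers are genuinely $w_{0}$-related in the apartment (equivalently, that the family labels of $U^{\theta},V^{\theta}$ match the evident maximals in the frame). Everything else is the elementary linear algebra of totally isotropic subspaces under $\perp$.
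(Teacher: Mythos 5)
The paper does not actually prove this lemma --- it is imported verbatim from \cite[Lemma~3.32]{PVM:17a} --- so there is no in-text argument to compare yours against. Your proof is correct and self-contained: the reduction of opposition of the edge $\{U,V\}$ to the cross-pairing $U\cap V^{\theta}=0=V\cap U^{\theta}$ (valid because $n$ odd makes $w_0$ swap types $n-1$ and $n$), the observation that the only singular points of $(W^{\theta})^{\perp}/W^{\theta}$ are $U^{\theta}/W^{\theta}$ and $V^{\theta}/W^{\theta}$, and the Witt-frame construction of two $w_0$-related (hence opposite) chambers of an apartment containing $\{U,V\}$ and $\{U^{\theta},V^{\theta}\}$ are all sound, and together they give exactly the equivalence with $W\cap(W^{\theta})^{\perp}=0$, which is the standard criterion for opposition of the type $n-1$ vertices of $\Delta'$.
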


\begin{lemma}[{\cite[Proposition~3.16]{PVM:17a}}]\label{lem:Ddual}
No duality of a thick building of type $\sD_n$ is $\{1\}$-domestic.
\end{lemma}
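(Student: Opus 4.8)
The plan is to pass from the building to the underlying orthogonal space and translate $\{1\}$-domesticity into a linear-algebraic condition. Realise $\Delta$ as the oriflamme geometry of a $2n$-dimensional space $V$ over a field $\KK$ carrying a nondegenerate quadratic form $Q$ of Witt index $n$, with polar bilinear form $B$, and let $\Delta'$ be the associated $\sB_n$ polar space as in the discussion preceding Lemma~\ref{lem:containedB}. Since $\pi_{\theta}$ fixes node~$1$ and interchanges nodes $n-1$ and $n$, the duality $\theta$ induces a type-preserving collineation $\theta'$ of $\Delta'$ that interchanges the two families of maximal totally isotropic subspaces; equivalently, $\theta'$ is induced by an invertible $\sigma$-semisimilarity $g$ of $(V,Q)$ which is \emph{improper} (it swaps the two families). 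The type~$1$ vertices of $\Delta$ and $\Delta'$ coincide, and two points of the polar space are opposite precisely when they are non-perpendicular, so $\theta$ is $\{1\}$-domestic if and only if $B(v,gv)=0$ for every isotropic vector $v$.

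The key observation is that an improper $g$ can never stabilise a maximal totally isotropic subspace (such a subspace and its image lie in opposite families), so it suffices to show that the above vanishing condition forces $g$ to be proper. I would first treat the case where $g$ is a genuine isometry ($\sigma=\mathrm{id}$, multiplier $1$). Here $f(v):=B(v,gv)$ is a quadratic form whose polar form is $(u,v)\mapsto B(u,(g+g^{-1})v)$, and by hypothesis $f$ vanishes on the nondegenerate quadric $\{Q=0\}$. Because this quadric has maximal Witt index it is not contained in any quadric other than the multiples of $Q$, so $f=\mu Q$ for some scalar $\mu$; polarising and using nondegeneracy of $B$ gives $g+g^{-1}=\mu\cdot\mathrm{id}$, that is $g^2-\mu g+1=0$. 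The eigenvalues of $g$ therefore occur in inverse pairs $\alpha,\alpha^{-1}$, whose eigenspaces are totally isotropic and dually paired by $B$, hence of equal dimension; it follows that $\det g=1$ and $g$ is proper, contradicting the improperness of a duality.

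The remaining cases are where the real work lies. When $g$ is a proper similarity with multiplier $\lambda\neq 1$ the same computation yields $g^2-\mu g+\lambda=0$, and one must track the multiplier through the determinant to again conclude properness; when $g$ is genuinely semilinear ($\sigma\neq\mathrm{id}$) the function $v\mapsto B(v,gv)$ is $\sigma$-Hermitian rather than quadratic, and the ``proportional to $Q$'' step must be replaced by its Hermitian analogue. The principal obstacle, however, is characteristic~$2$ --- which is exactly the case of the small buildings $\sD_n(2)$ that are the focus of this paper. There $\det$ no longer distinguishes the two families (properness being governed by the Dickson invariant), so the determinant argument above must be replaced by a direct analysis showing that $B(v,gv)=0$ on the quadric forces $g$ to fix one of the two classes of maximal isotropics. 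This is the step demanding the most care, and it is here that one genuinely exploits that $Q$ has maximal Witt index. Once properness has been established in every case, the contradiction with the improperness of a duality completes the proof.
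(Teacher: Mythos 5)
First, a remark on the comparison itself: the paper does not prove this lemma — it is imported wholesale from \cite[Proposition~3.16]{PVM:17a} — so there is no in-paper argument to measure yours against, and your proposal has to stand on its own. Its overall shape is reasonable: translating $\{1\}$-domesticity into the condition $B(v,gv)=0$ for all singular $v$, where $g$ is a semisimilarity of $(V,Q)$ inducing $\theta$, and deriving a contradiction with the fact that a duality must interchange the two families of maximal singular subspaces. The one case you actually carry out (a linear isometry in characteristic $\neq 2$) is correct: the step ``a quadratic form vanishing on a hyperbolic quadric of Witt index $\geq 2$ is a scalar multiple of $Q$'' is a standard fact valid over every field (restrict to the lines of the quadric to kill the cross terms, then use singular vectors of the form $e_1+f_1+e_2-f_2$ to equate the coefficients), and the eigenvalue analysis of $g^2-\mu g+1=0$ does give $\det g=1$.

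The genuine gap is that the only situation in which this paper ever invokes the lemma is characteristic $2$ — it is applied exclusively to residues of buildings over $\FF_2$ in Propositions~\ref{prop:D1}, \ref{prop:D2} and~\ref{prop:exceptional} — and that is exactly the case you flag as ``the step demanding the most care'' and then leave undone. The difficulty there is real, not cosmetic: over $\FF_2$ every invertible matrix has determinant $1$, improper isometries with quadratic minimal polynomial exist (orthogonal transvections satisfy $g^2=1$), and properness is governed by the Dickson invariant, so nothing in your identity $g+g^{-1}=\mu$ directly yields the conclusion. The argument can be completed, but it needs a different final step: if $\mu\neq 0$ the two eigenspaces of $g$ are $g$-invariant maximal totally singular subspaces, so $g$ cannot swap the families; if $\mu=0$ one has $g^2=1$ and $B(v,gv)=0$ for all $v\in V$, whence $g$ is self-adjoint, the alternating form $\beta(u,w)=B(u,(g+1)w)$ descends to a nondegenerate alternating form on $V/\ker(g+1)$, and therefore $\mathrm{rank}(g+1)$ is even, i.e.\ the Dickson invariant of $g$ is trivial and $g$ is proper. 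Something of this kind must be written down (and, for the lemma in the stated generality over an arbitrary $\KK$, the semilinear case must also be handled, since there $v\mapsto B(v,gv)$ is not a quadratic form and the ``multiple of $Q$'' step fails as stated); as it stands the proposal proves the lemma only in the one case the paper never uses.
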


\begin{lemma}\label{lem:1nDodd}
Let $\Delta$ be a thick building of type $\sD_n$ with $n\geq 5$ odd, and let $\theta$ be a collineation. If $\theta$ is $\{1,n-1,n\}$-domestic then $\theta$ is either $\{1\}$-domestic or $\{n-1,n\}$-domestic.  
\end{lemma}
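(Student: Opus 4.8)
The plan is to transfer the problem to the associated (non-thick) building $\Delta'$ of type $\sB_n$ and then apply Corollary~\ref{cor:B1i}. Recall that the type $j$ vertices of $\Delta'$, for $1\le j\le n$, are the totally isotropic subspaces of dimension $j$, and that a collineation $\theta$ of $\Delta$ induces a (type-preserving) collineation $\theta'$ of $\Delta'$, since $\theta$ preserves each of the two orbits of maximal isotropic subspaces. The projective space residues of $\Delta'$ are thick, so Corollary~\ref{cor:B1i} applies to $\Delta'$ with $i=n-1$ (legitimate since $n-1\ge 4\ge 3$). The whole argument then reduces to matching the relevant opposition relations in $\Delta$ and $\Delta'$.

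First I would record the two easy correspondences. A type $1$ vertex $p$ is opposite $p^{\theta}$ in $\Delta$ if and only if it is opposite $p^{\theta'}$ in $\Delta'$, because point-opposition in the underlying polar space does not depend on the model; hence $\theta$ is $\{1\}$-domestic if and only if $\theta'$ is $\{1\}$-domestic. Next, Lemma~\ref{lem:containedB} states directly that $\theta$ maps a type $\{n-1,n\}$ simplex $(U,V)$ of $\Delta$ to an opposite if and only if $\theta'$ maps the associated type $n-1$ vertex $W=U\cap V$ to an opposite; hence $\theta$ is $\{n-1,n\}$-domestic if and only if $\theta'$ is $\{n-1\}$-domestic.

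The main step is the correspondence for the combined type: I claim that if $\theta$ is $\{1,n-1,n\}$-domestic then $\theta'$ is $\{1,n-1\}$-domestic, and I would prove the contrapositive. Suppose $\{p,W\}$ is a type $\{1,n-1\}$ simplex of $\Delta'$ (so $p\subseteq W$) mapped to an opposite; I must produce a non-domestic type $\{1,n-1,n\}$ simplex of $\Delta$. Let $\rho=(U,V)$ be the unique type $\{n-1,n\}$ simplex of $\Delta$ with $U\cap V=W$; then $p\subseteq W\subseteq U,V$, so $\sigma=\{p\}\cup\rho$ is a type $\{1,n-1,n\}$ simplex. Since the face $W$ of $\{p,W\}$ is non-domestic, Lemma~\ref{lem:containedB} gives $\rho\in\Opp(\theta)$. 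Applying Proposition~\ref{prop:proj} with $\alpha=\rho$ and viewing $p$ as a vertex of $\Res(\rho)$, the simplex $\sigma$ is opposite $\sigma^{\theta}$ in $\Delta$ if and only if $p$ is opposite $p^{\theta_{\rho}}$ in $\Res(\rho)$. By Proposition~\ref{prop:typemap} the residue $\Res_{\Delta}(\rho)\cong\sA_{n-2}$ is the projective space $\PG(n-2,\FF)$ on $W$, on which $\theta_{\rho}$ acts as the duality sending the point $p$ to a hyperplane; the analogous computation in $\Delta'$ shows that $\Res_{\Delta'}(W)\cong\sA_{n-2}\times\sA_1$ carries the same duality on its $\PG(n-2,\FF)$ factor, the $\sA_1$ factor being the thin pair $\{U,V\}$. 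Since $p$ lies in this common $\sA_{n-2}$ factor and its opposition is governed entirely by that factor, the non-domesticity of $\{p,W\}$ forces $p$ opposite $p^{\theta_{\rho}}$, whence $\sigma\in\Opp(\theta)$, as required.

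With these three correspondences the lemma is immediate: if $\theta$ is $\{1,n-1,n\}$-domestic then $\theta'$ is $\{1,n-1\}$-domestic, so Corollary~\ref{cor:B1i} (with $i=n-1$) makes $\theta'$ either $\{1\}$-domestic or $\{n-1\}$-domestic, and translating back yields $\theta$ either $\{1\}$-domestic or $\{n-1,n\}$-domestic. The main obstacle is the combined-type correspondence of the third paragraph: one must verify that the induced maps $\theta_{\rho}$ and $\theta'_{W}$ restrict to the same duality on the shared factor $\PG(n-2,\FF)$, and that the extra thin $\sA_1$ factor of $\Res_{\Delta'}(W)$ plays no role in the opposition of $p$. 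I expect the naturality of the projection maps, together with the type-map computation from Proposition~\ref{prop:typemap}, to render this routine once the identifications of the two residues are made explicit.
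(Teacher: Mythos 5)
Your proof is correct, and it takes a more direct route than the paper's. The paper argues by contradiction: assuming $\theta$ is neither $\{1\}$- nor $\{n-1,n\}$-domestic, it first produces non-domestic vertices of types $2,4,\ldots,n-3$, then uses \cite[Lemma~3.25]{PVM:17a} to force either a non-domestic type $\{n-3,n-2\}$ simplex or a non-domestic type $\{n-3,n-1\}$ simplex of $\Delta'$, and treats these as separate cases (the first by a residue argument at the type $n-2$ vertex, the second by exactly your combination of Corollary~\ref{cor:B1i} and Lemma~\ref{lem:containedB}). You bypass the intermediate types and the fixed-point lemma entirely by setting up the three transfer statements between $\Delta$ and $\Delta'$ up front and invoking Corollary~\ref{cor:B1i} once; this is essentially the paper's case~(ii) made to work unconditionally, and it is cleaner. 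The one genuinely non-trivial point is the same in both arguments: passing from a non-domestic type $\{1,n-1\}$ simplex of $\Delta'$ to a non-domestic type $\{1,n-1,n\}$ simplex of $\Delta$, which strictly exceeds what Lemma~\ref{lem:containedB} literally asserts. The paper makes this leap silently (citing only Lemma~\ref{lem:containedB}); you correctly isolate it and reduce it, via Proposition~\ref{prop:proj}, to checking that $\theta_{\rho}$ and $\theta'_{W}$ induce the same duality on the shared $\PG(n-2,\FF)$ factor of the two residues -- which holds because both projections onto the residue of $W$ are given by the same perp construction in the underlying polar space. So your write-up is, if anything, slightly more careful on the one delicate step. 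Do note that your appeal to Corollary~\ref{cor:B1i} is for the non-thick building $\Delta'$ (whose cotype-$n$ panels are thin); the corollary is stated for $\sB_n$ buildings with thick projective space residues precisely to cover this, and the paper applies it to $\Delta'$ in the same way, so you inherit whatever justification the paper relies on there.
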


\begin{proof} 
Suppose that $\theta$ is neither $\{1\}$-domestic nor $\{n-1,n\}$-domestic. Since $\theta$ maps a type $\{n-1,n\}$-simplex to an opposite, by familiar residue arguments there are vertices of types $2,4,\ldots,n-3$ mapped onto opposite vertices. These vertex types are therefore also mapped onto opposites in the associated non-thick $\sB_n$ building~$\Delta'$. If there are no type $n-2$ or $n-1$ vertices of $\Delta'$ mapped onto opposite vertices, then $\theta$ is $\{n-3,n-2\}$-domestic and $\{n-3,n-1\}$-domestic (on $\Delta'$) and thus since $\theta$ is not $\{n-3\}$-domestic it follows from \cite[Lemma~3.25]{PVM:17a} that every space of vector space dimension at least $n-2$ contains a fixed point. However by Lemma~\ref{lem:containedB} there are $n-1$ dimensional spaces mapped onto opposites, a contradiction. Thus either (i) $\theta$ is not $\{n-3,n-2\}$-domestic, or (ii) $\theta$ is not $\{n-3,n-1\}$-domestic (on $\Delta'$). 

Consider case (i). Let $v$ be the type $n-2$ vertex of a non-domestic type $\{n-3,n-2\}$ simplex. Then $\theta_v$ acts on the upper type $\sA_1\times\sA_1$ residue by permuting the components, and thus $\theta_v$ is non-domestic on this upper residue (see \cite[Lemma~3.7]{PVM:17a}). Moreover $\theta_v$ is a duality on the lower type $\sA_{n-3}$ residue mapping a hyperplane (a type $n-3$ vertex) of this residue onto an opposite, and thus $\theta_v$ also maps a point (a type $1$ vertex) to an opposite. Thus $\theta$ maps a type $\{1,n-1,n\}$ simplex to an opposite, a contradiction. 

Consider case (ii). Since $\theta$ is neither $\{1\}$-domestic nor $\{n-1\}$-domestic on $\Delta'$, and since $n-1\leq 4$, Corollary~\ref{cor:B1i} implies that there exists a type $\{1,n-1\}$ simplex of $\Delta'$ mapped to an opposite. Now Lemma~\ref{lem:containedB} implies that $\theta$ is not $\{1,n-1,n\}$-domestic on $\Delta$. This contradiction establishes the result.
\end{proof}

\begin{prop}\label{prop:D1}
Let $\Delta$ be the building $\sD_n(2)$, $n\geq 4$, and let $\theta$ be a collineation of $\Delta$. If $\theta$ is uncapped then the decorated opposition diagram of $\theta$ is contained in Table~\ref{table:1}.
\end{prop}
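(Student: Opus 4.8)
The plan is to reduce the $\sD_n$ analysis to the $\sB_n$ classification already obtained in Proposition~\ref{prop:B}, using the associated non-thick $\sB_n$ building $\Delta'$ as a bridge, and then to translate the resulting diagram back to $\Delta$ across the fork. Throughout write $J=\Type(\theta)$. Since $\theta$ is uncapped it is in particular domestic: an automorphism that is not domestic maps some chamber to an opposite chamber, forcing $J=S$ together with a type~$S$ simplex in $\Opp(\theta)$, i.e.\ cappedness. As $\theta$ is a collineation it induces a collineation of $\Delta'$, and for every type $i$ with $1\le i\le n-2$ a vertex lies in $\Opp(\theta)$ in $\Delta$ if and only if it does in $\Delta'$, because opposition of \emph{non-maximal} totally isotropic subspaces is governed by the common polar space and does not see the oriflamme splitting. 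Thus the ``linear part'' $J\cap\{1,\dots,n-2\}$ can be read off from $\Delta'$, where Proposition~\ref{prop:B} applies (it was stated for possibly non-thick $\sB_n$ buildings with Fano residues precisely so as to cover $\Delta'$).

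First I would treat the \emph{interior} situation, in which no simplex containing a type $n-1$ or type $n$ vertex of $\Delta$ is non-domestic, so that $J=\{1,2,\dots,m\}$ with $m=\max J\le n-2$. Here the argument mirrors that of Proposition~\ref{prop:B}: one shows $m\ge 3$, that $J$ is an initial segment (via Lemma~\ref{lem:type1}, together with Lemma~\ref{lem:1} to fill in even nodes), and that $J$ contains an odd node (otherwise the residue duality at a non-domestic top vertex is a symplectic polarity by Lemma~\ref{lem:An}, which by Lemma~\ref{lem:sp} forces cappedness). The shading is then pinned down by two inputs. Lemma~\ref{lem:2} shows $\theta$ is $\{1,\dots,m-1\}$-domestic, so the top node $m$ is \emph{unshaded}; and the residue analysis at a non-domestic type $m$ vertex $v$---where, by Propositions~\ref{prop:typemap} and~\ref{prop:proj}, $\theta_v$ is a duality of the lower $\sA_{m-1}$ residue mapping a hyperplane to an opposite, hence strongly exceptional domestic by Theorem~\ref{thm:Asmall}---produces non-domestic simplices of every type $J\setminus\{j'\}$ with $j'\le m-1$, so all lower nodes are \emph{shaded}. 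The parity of $m$ then selects the $2j$ or $2j+1$ row of Table~\ref{table:1}, and the ranges on $j$ come from the requirement $m\le n-2$ and the position of the fork.

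The remaining, and genuinely delicate, situation is when the fork is reached, and here I would split on the parity of $n$. For $n$ odd a collineation maps no single type $n-1$ or type $n$ vertex to an opposite, since opposition interchanges these two types while $\theta$ preserves them; hence the only maximal object at the fork is a type $\{n-1,n\}$ simplex. By Lemma~\ref{lem:containedB} such a simplex is non-domestic in $\Delta$ exactly when the associated type $n-1$ vertex of $\Delta'$ is non-domestic, which re-anchors the problem in $\Delta'$. I would then invoke Lemma~\ref{lem:1nDodd} to show that when the fork is non-domestic the collineation is nonetheless $\{1,\dots,n-2\}$-domestic, so that the paired node $\{n-1,n\}\in\mathcal{J}_{\theta}$ is circled but \emph{unshaded}, matching the second diagram of the two odd-$n$ rows. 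For $n$ even we have $w_0=1$, the two fork nodes are separate members of $\mathcal{J}_{\theta}$, and a type $n$ vertex of $\Delta'$ is non-domestic precisely when a type $n-1$ or type $n$ vertex of $\Delta$ is; transferring the $\sB_n$ diagram then yields the strongly exceptional domestic diagram (all nodes shaded) alongside the interior diagrams of the two even-$n$ rows, again with the top node unshaded in the non-extremal cases by Lemma~\ref{lem:2}.

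The main obstacle is the faithful translation across the fork: reconciling the single terminal node of the $\sB_n$ building $\Delta'$ with the fork $\{n-1,n\}$ of $\Delta$, and in particular determining membership in $K_{\theta}$ there. This is exactly where the even/odd dichotomy becomes unavoidable---for $n$ odd the fork behaves as the single $w_0$-orbit $\{n-1,n\}$ and is controlled by Lemmas~\ref{lem:containedB} and~\ref{lem:1nDodd}, whereas for $n$ even the two nodes are independent and one must separately decide which of them lie in $J$ and in $K_{\theta}$. A secondary point requiring care is the exclusion of non-interval types $J$, and the verification that the admissible parities and ranges of the distinguished node reproduce exactly the entries of Table~\ref{table:1}; these are handled by the initial-segment lemmas (Lemma~\ref{lem:type1}, Lemma~\ref{lem:1}) together with the maximality argument for the largest odd node, as in Proposition~\ref{prop:B}, and for the $\{1,i\}$-domesticity step one uses Corollary~\ref{cor:B1i}.
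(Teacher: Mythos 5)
Your architecture --- an ``interior'' case plus a fork case split by the parity of $n$, driven by Lemmas~\ref{lem:type1}, \ref{lem:2}, \ref{lem:containedB}, \ref{lem:1nDodd} and Corollary~\ref{cor:B1i} --- is essentially the paper's proof, but two steps as written would fail. In the odd-$n$ fork case you have Lemma~\ref{lem:1nDodd} doing the wrong job: that lemma says a $\{1,n-1,n\}$-domestic collineation is $\{1\}$-domestic or $\{n-1,n\}$-domestic, and its role here is the contrapositive --- having first forced $1\in J$ (because $J\setminus\{n-1,n\}$ must contain an odd type, else the residue of a non-domestic fork simplex carries a symplectic polarity and $\theta$ is capped), one concludes that some type $\{1,n-1,n\}$ simplex is non-domestic, and it is the strongly exceptional domestic duality induced on its $\sA_{n-2}(2)$ residue (Theorem~\ref{thm:Asmall}) that shades the nodes $1,\dots,n-2$. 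You instead claim the lemma yields $\{1,\dots,n-2\}$-domesticity, i.e.\ the unshadedness of the fork node; it says nothing of the sort, and used that way you are left with no mechanism for shading the interior nodes. The unshaded fork node actually comes straight from uncappedness: a non-domestic type $\{1,\dots,n-2\}$ simplex has $\sA_1\times\sA_1$ residue on which $\theta$ swaps the two factors, so $\theta$ would be non-domestic and hence capped.

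Second, ``read the linear part off from $\Delta'$, where Proposition~\ref{prop:B} applies'' hides a transfer problem: Proposition~\ref{prop:B} classifies \emph{uncapped} collineations of $\Delta'$, and uncappedness is not known to pass between $\Delta$ and $\Delta'$ (the type sets, hence $\Type(\theta)$ and the meaning of capped, differ at the top). The paper imports only pointwise statements from $\Delta'$ (Corollary~\ref{cor:B1i}, Lemma~\ref{lem:containedB}), never the classification itself. In the even-$n$ fork case the transfer is also too coarse: a non-domestic maximal singular subspace yields a non-domestic vertex of type $n-1$ \emph{or} $n$ of $\Delta$, not both, so one still needs \cite[Proposition~3.12(3)]{PVM:17a} to manufacture the non-domestic $\{n-1,n\}$ or $\{1,n\}$ simplices from which all the shading follows. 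Finally, for a collineation the maximal interior node is necessarily \emph{even}: at an odd-type non-domestic vertex the upper residue is a $\sD_{n-j}$ on which $\theta$ induces a duality, and no such duality is point-domestic (Lemma~\ref{lem:Ddual}), contradicting maximality. Your remark that ``the parity of $m$ selects the $2j$ or $2j+1$ row'' suggests both parities occur for collineations; they do not (the odd rows arise from dualities), though this affects only the sharpness of the row ranges rather than bare containment in Table~\ref{table:1}.
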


\begin{proof}
Let $\theta$ be an uncapped collineation of $\sD_n(2)$, and let $J=\Type(\theta)$. Let $j=\max J$. 
\smallskip

\noindent\textit{Case 1: $j\in\{n-1,n\}$ with $n$ odd}. Then necessarily $\{n-1,n\}\subseteq J$. If $J\backslash\{n-1,n\}$ contains no odd types, then the induced automorphism in every residue of a non-domestic $\{n-1,n\}$-simplex is a symplectic polarity, and hence $\theta$ is capped, a contradiction. Thus $J\backslash \{n-1,n\}$ contains an odd node, and so by Lemma~\ref{lem:type1} we have $1\in J$. Thus by Lemma~\ref{lem:1nDodd} there exists a type $\{1,n-1,n\}$ simplex mapped onto an opposite simplex, and it easily follows that $\theta$ maps simplices of each type $S\backslash\{i\}$ with $i=1,2,\ldots,n-2$ to opposite. Hence the claimed diagram. 
\smallskip

\noindent\textit{Case 2: $j\in\{n-1,n\}$ with $n$ even}. By duality symmetry we may assume that $j=n$. If $n-1\in J$, then by \cite[Proposition~3.12(3)(b)]{PVM:17a} there is a type $\{n-1,n\}$-simplex mapped onto an opposite, and then considering the type $\sA_{n-2}$ residue we easily deduce that there are simplices of each cotype $S\backslash \{i\}$ with $i=1,2,\ldots,n-2$ mapped onto opposites. It then easily follows that there are also simplices of each type $S\backslash \{n-1\}$ and $S\backslash \{n\}$ mapped onto opposite. So suppose that $n-1\notin J$. If $J\backslash\{n-1,n\}$ contains no odd indices, then as above we deduce that $\theta$ is capped. Thus $J\backslash\{n-1,n\}$ contains an odd node, and so $1\in J$ by Lemma~\ref{lem:type1}, and by \cite[Proposition~3.12(3)(a)]{PVM:17a} there is a type $\{1,n\}$ simplex mapped onto an opposite. It now easily follows that $\theta$ is strongly exceptional domestic. 
\smallskip

\noindent\textit{Case 3: $j\notin\{n-1,n\}$.} If $j$ is odd, then considering the upper residue of a type $j$ non-domestic we obtain a duality of a $\sD_{n-j}$, and since every duality of a $\sD_{n-j}$ maps a point to an opposite point (Lemma~\ref{lem:Ddual}) we have $j+1\in J$, a contradiction. Thus $j$ is even. If $j=2$ then $\theta$ is capped (see \cite[Fact 3.22]{PVM:17a}). So $j\geq 4$ (and hence $n\geq 6$). If $J$ has only even types then clearly $\theta$ is capped. Thus $J$ contains an odd node, and hence by Lemma~\ref{lem:type1} we have $1\in J$. Applying Corollary~\ref{cor:B1i} in the non-thick $\sB_n$ building it follows that there is a type $\{1,j\}$-simplex mapped onto an opposite, and the result easily follows, using Lemma~\ref{lem:2} to show that the last node is not shaded. 
\end{proof}

\begin{prop}\label{prop:D2}
Let $\theta$ be a duality of the $\sD_n(2)$ building. If $\theta$ is uncapped then the decorated opposition diagram of $\theta$ is contained in Table~\ref{table:1}. 
\end{prop}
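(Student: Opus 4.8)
The plan is to follow the case analysis of Proposition~\ref{prop:D1}, exploiting two features special to dualities. Write $J=\Type(\theta)$. First, by Lemma~\ref{lem:Ddual} no duality is $\{1\}$-domestic, so $1\in J$ automatically; this removes the need to separately argue that $J$ contains an odd node. Second, since $\pi_{\theta}$ interchanges the nodes $n-1$ and $n$, the set $J$ is invariant under this swap, so either $\{n-1,n\}\subseteq J$ or $\max J\le n-2$. Throughout I would work with the associated non-thick $\sB_n$ building $\Delta'$: a duality of $\Delta$ is induced by an isometry of the underlying orthogonal space interchanging the two families of maximal totally isotropic subspaces, and hence induces a type-preserving collineation $\theta'$ of $\Delta'$. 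For each node $k\le n-2$ the notion of opposition of type $k$ vertices agrees in $\Delta$ and $\Delta'$ (as $w_0$ fixes these nodes in both types), so $k\in J$ if and only if $k\in\Type(\theta')$ for $k\le n-2$, and I can apply Corollary~\ref{cor:B1i} and Lemma~\ref{lem:1} to the collineation $\theta'$ exactly as in Proposition~\ref{prop:D1}. Note also that an uncapped automorphism is necessarily domestic, so Theorem~\ref{thm:Asmall} is available in residues.

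Consider first the case $m:=\max J\le n-2$. I would show $m$ is odd. If $m$ were even, then for a non-domestic type $m$ vertex $v$ the residue has type $\sA_{m-1}\times\sD_{n-m}$ and, by the Example following Proposition~\ref{prop:typemap}, $\theta_v$ restricts to a duality on the $\sD_{n-m}$ factor; when $n-m\ge 4$, Lemma~\ref{lem:Ddual} yields a non-domestic type $1$ vertex of that factor, i.e.\ (via Proposition~\ref{prop:proj}) a non-domestic type $m+1$ vertex of $\Delta$, contradicting maximality of $m$. The small residues $n-m\in\{2,3\}$ must be treated by hand, since there a duality of the $\sD_{n-m}$ factor may be point-domestic. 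With $m$ odd, Lemma~\ref{lem:type1} gives $J=\{1,2,\dots,m\}$, and I set $m=2j+1$. Since $m$ is odd, $w_0\circ\pi_{\theta}\ne 1$ precisely when $n$ is even, so the diagram is drawn bent for $n$ even and straight for $n$ odd, matching the two duality rows of Table~\ref{table:1}; the upper bounds $2j+1\le n-3$ ($n$ even) and $2j+1\le n-2$ ($n$ odd) reflect the parity of the upper $\sD_{n-m}$ residue, which must carry the identity type map in order that $\theta$ remain uncapped with $\max J=m$ (a Lemma~\ref{lem:2} argument). For the shading, Lemma~\ref{lem:1} applied in $\Delta'$ produces a non-domestic type $\{1,m\}$-simplex, and in the $\sA_{m-1}$ residue (with $m-1$ even) the induced domestic duality maps a hyperplane to an opposite, hence is strongly exceptional domestic by Theorem~\ref{thm:Asmall} (a symplectic polarity is excluded by Lemma~\ref{lem:sp}, else $\theta$ would be capped). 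This shades every node below the top, while Lemma~\ref{lem:2} shows $\theta$ is $\{1,\dots,m-1\}$-domestic, leaving the top node $2j+1$ un-shaded.

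In the complementary case $\{n-1,n\}\subseteq J$ I would argue that $\theta$ is exceptional domestic with $J=S$. Take a non-domestic $\{n-1,n\}$-simplex $\sigma$; its residue has type $\sA_{n-2}$ and $\theta_{\sigma}$ is a duality there. It must be domestic, for otherwise $\theta$ would map a chamber to an opposite chamber and hence be capped; then Theorem~\ref{thm:Asmall} together with Lemma~\ref{lem:sp} forces $\theta_{\sigma}$ to be strongly exceptional domestic, and this produces non-domestic simplices of each type $S\setminus\{k\}$ for $k\le n-2$, whence $J=S$. The fork shading splits on parity: for $n$ even the block $\{n-1,n\}$ is un-shaded precisely because $\theta_{\sigma}$ is domestic, so no type $S\setminus\{n-1,n\}$ simplex is non-domestic, whereas for $n$ odd one checks that both type $S\setminus\{n-1\}$ and type $S\setminus\{n\}$ simplices are non-domestic and both fork nodes are shaded. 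This yields the two full diagrams in the duality rows of Table~\ref{table:1}.

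I expect the fork to be the main obstacle. The delicate point is the translation of opposition of maximal totally isotropic subspaces between the oriflamme geometry $\Delta$ and the polar space $\Delta'$, which depends on the parity of $n$: for $n$ even a maximal isotropic and its $\theta$-image lie in opposite families of $\Delta$, so only $\{n-1,n\}$-simplices (rather than single vertices of type $n-1$ or $n$) can be non-domestic, and one needs a duality analogue of Lemma~\ref{lem:containedB} to pass cleanly between the two geometries. Combined with this, the low-rank residue exceptions $n-m\in\{2,3\}$ flagged above, and the boundary case $m=n-2$ where the agreement between $J$ and $\Type(\theta')$ no longer reaches the fork, are where the bookkeeping is most likely to require care.
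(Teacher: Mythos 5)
Your case division (top of $J$ in the fork versus $\max J\le n-2$) matches the paper's, but the paper's actual proof is much more direct than your plan, and your fork case contains a genuine gap. The key simplification you only half-exploit is parity in Theorem~\ref{thm:Asmall}: a domestic duality of $\sA_m(2)$ with $m$ \emph{even} is automatically strongly exceptional domestic, since symplectic polarities require $m$ odd. In the non-fork case ($m=\max J\le n-2$, $m$ odd) the lower $\sA_{m-1}(2)$ residue of a non-domestic type $m$ vertex has $m-1$ even, so its induced duality is either non-domestic (forcing a non-domestic type $J$ simplex, i.e.\ $\theta$ capped) or strongly exceptional domestic — which shades the nodes $1,\dots,m-1$ immediately. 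Your detour through the non-thick $\sB_n$ building and Lemma~\ref{lem:1} to manufacture a non-domestic $\{1,m\}$-simplex is therefore unnecessary, and it is precisely this detour that creates the unresolved issues you flag (the duality analogue of Lemma~\ref{lem:containedB}, the boundary case $m=n-2$). The paper never passes to $\Delta'$ in the duality case.

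The genuine gap is in your fork case for $n$ odd. There $w_0\circ\pi_\theta=1$, so $\{n-1\}$ and $\{n\}$ are separately invariant and $\{n-1,n\}\subseteq J$ only gives non-domestic \emph{vertices} of types $n-1$ and $n$; the existence of a non-domestic type $\{n-1,n\}$ \emph{simplex}, which you take as your starting point, is exactly a cappedness-type assertion that must be proved, not assumed. (For $n$ even it is automatic, since $\{n-1,n\}$ is then a minimal invariant set.) Moreover, even granting such a simplex $\sigma$, its residue is $\sA_{n-2}(2)$ with $n-2$ odd, so Theorem~\ref{thm:Asmall} does \emph{not} exclude the symplectic polarity, and your appeal to Lemma~\ref{lem:sp} does not close this off: that lemma says $\theta_\sigma$ would be capped \emph{as an automorphism of the residue}, which does not contradict $\theta$ being uncapped on $\Delta$. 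The paper avoids both problems by working instead in the $\sA_{n-1}(2)$ residue of a single non-domestic type $n-1$ or $n$ vertex, where $n-1$ is even and strong exceptional domesticity is immediate; this also yields the non-domestic simplices of types $S\setminus\{n-1\}$ and $S\setminus\{n\}$ that you leave as ``one checks''. Finally, your reason for the unshaded fork block when $n$ is even is misattributed: domesticity of $\theta_\sigma$ on $\Res(\sigma)$ says nothing about type $\{1,\dots,n-2\}$ simplices not containing $\sigma$; the correct argument is that a non-domestic type $S\setminus\{n-1,n\}$ simplex $\tau$ would induce on its $\sA_1\times\sA_1$ residue an automorphism swapping the two factors, which is never domestic, contradicting domesticity of $\theta$.
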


\begin{proof}
Let $\theta$ be an uncapped duality of $\sD_n(2)$, and let $J=\Type(\theta)$. Let $j=\max J$. 
\smallskip

\noindent\textit{Case 1: $j\in\{n-1,n\}$ with $n$ even}. Then necessarily $\{n-1,n\}\subseteq J$. In the residue of such a simplex we have an exceptional domestic duality of $\sA_{n-2}(2)$, and and the result easily follows.
\smallskip

\noindent\textit{Case 2: $j\in\{n-1,n\}$ with $n$ odd}. In the residue of a non-domestic type $j$ vertex we obtain an exceptional domestic duality of $\sA_{n-1}(2)$, and again the result easily follows. 
\smallskip

\noindent\textit{Case 3: $j\notin\{n-1,n\}$.} If $j$ is even, then considering the upper residue of a non-domestic type $j$ vertex we obtain a duality of $\sD_{n-j}(2)$, and since every duality of $\sD_{n-j}(2)$ maps a point to an opposite point we have $j+1\in J$, a contradiction. Thus $j$ is odd. If $j=1$ then $\theta$ is obviously capped. So $j\geq 3$ (and hence $n\geq 5$). In the lower residue of a non-domestic type $j$ vertex we obtain an exceptional domestic duality of $\sA_{j-1}(2)$, and hence the result, using Lemma~\ref{lem:2} to see that the last node is not shaded. 
\end{proof}

Propositions~\ref{prop:D1} and~\ref{prop:D2} establish Theorem~\ref{thm:main*}(a) for  buildings of type $\sD_n$. We now consider the exceptional types.

\begin{lemma}\label{lem:F41234}
Let $\Delta$ be the building $\sF_4(2)$, and let $\theta$ be a collineation. If $\Type(\theta)=\{1,2,3,4\}$ then there exists either a non-domestic type $\{1,2\}$ simplex, or a non-domestic type $\{3,4\}$ simplex. 
\end{lemma}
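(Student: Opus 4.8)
The plan is to prove the contrapositive-flavoured statement that $\theta$ cannot be simultaneously $\{1,2\}$-domestic and $\{3,4\}$-domestic once $\Type(\theta)=\{1,2,3,4\}$, and the whole argument runs through residues in the spirit of Propositions~\ref{prop:B}, \ref{prop:D1} and~\ref{prop:D2}. The first observation is that a collineation of an $\sF_4$ building is an oppomorphism, since $w_0$ acts trivially on the $\sF_4$ diagram; hence $\pi_\theta=w_0$ and, by the usual face argument (if $C$ is opposite $C^\theta$ then for every face $\alpha\subseteq C$ the opposite chambers $C\ni\alpha$ and $C^\theta\ni\alpha^\theta$ witness that $\alpha$ is opposite $\alpha^\theta$), every face of a non-domestic simplex is again non-domestic. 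Two consequences will be used repeatedly: first, $\cT(\theta)$ is closed under passage to subsets, so $\Type(\theta)=\{1,2,3,4\}$ is equivalent to the existence of a non-domestic vertex of each of the four types; second, if $\theta$ is non-domestic then a non-domestic chamber already has a non-domestic $\{1,2\}$-face and we are done. We may therefore assume $\theta$ is domestic, so that by Corollary~\ref{cor:proj} every induced residue automorphism $\theta_\sigma$ is domestic as well.

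Assuming for contradiction that $\theta$ is both $\{1,2\}$-domestic and $\{3,4\}$-domestic, I would take a non-domestic type~$2$ vertex $\ell$ (available since $2\in\Type(\theta)$). Its residue has type $\sA_1\times\sA_2$ on the nodes $\{1\}\times\{3,4\}$, and by Proposition~\ref{prop:typemap} the induced automorphism splits as $\theta_\ell=\theta_\ell^{(1)}\times\theta_\ell^{(34)}$ with $\theta_\ell^{(34)}$ a \emph{duality} of $\sA_2(2)$. If $\theta_\ell^{(34)}$ is non-domestic then, by Proposition~\ref{prop:proj}, there is a non-domestic type $\{2,3,4\}$ simplex, whose $\{3,4\}$-face is non-domestic, a contradiction. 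Hence $\theta_\ell^{(34)}$ is a domestic duality of $\sA_2(2)$, and since $n=2$ is even Theorem~\ref{thm:Asmall} forces it to be strongly exceptional domestic; in particular there is a non-domestic type $\{2,3\}$ simplex $\sigma=\{\ell,\pi\}$. The symmetric argument starting from a non-domestic type~$3$ vertex (residue $\sA_2\times\sA_1$ on $\{1,2\}\times\{4\}$, with $\{1,2\}$-domesticity playing the role of $\{3,4\}$-domesticity) either produces a non-domestic $\{1,2\}$-face directly or a non-domestic type $\{2,3\}$ simplex again.

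Next I would extend such a $\sigma=\{\ell,\pi\}$ to a non-domestic $\{1,2,3\}$ or $\{2,3,4\}$ simplex, which by the face property yields the required $\{1,2\}$ or $\{3,4\}$ simplex. The residue of $\sigma$ has type $\sA_1\times\sA_1$ on $\{1\}\times\{4\}$, and the key point is that, since $\theta_\sigma=(\theta_\ell)_\pi$ and taking the residue at $\pi$ only affects the $\{3,4\}$-factor of $\Res(\ell)$, the action of $\theta_\sigma$ on the $\sA_1(1)$-factor coincides with the transverse action $\theta_\ell^{(1)}$, while its action on the $\sA_1(4)$-factor coincides with the transverse action $\theta_\pi^{(4)}$ at the plane. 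Thus if \emph{either} transverse rank-one action is nontrivial we obtain a non-domestic $\{1,2,3\}$ or $\{2,3,4\}$ simplex and finish.

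The remaining, genuinely delicate, case is when every such transverse action is trivial, i.e.\ when $\theta$ fixes the node-$1$ direction at each non-domestic line and the node-$4$ direction at each non-domestic plane; here the local $\sA_2(2)$-data no longer controls the situation and one must invoke the global hypothesis that vertices of all four types are non-domestic. The plan is to analyse the $\sB_3(2)$ residues $\Res(w)$ and $\Res(p)$ of a non-domestic type~$4$ vertex $w$ and a non-domestic type~$1$ vertex $p$: as in Lemma~\ref{lem:type1} and Corollary~\ref{cor:B1i}, the $\{1,2\}$- and $\{3,4\}$-domesticity hypotheses translate into $\theta_w$ being $\{1,2\}$- and $\{3\}$-domestic and $\theta_p$ being $\{3,4\}$- and $\{2\}$-domestic, pinning down $\Type(\theta_w)\subseteq\{1,2\}$ and $\Type(\theta_p)\subseteq\{3,4\}$ inside the respective residues. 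I expect the main obstacle to be precisely this step: ruling out the degenerate configuration by showing that these two constraints, together with the triviality of all transverse actions, cannot be reconciled with non-domestic vertices of all four types existing simultaneously. This is the place where the Fano-plane hypothesis (order exactly~$2$) and the fine structure of strongly exceptional domestic dualities of $\sA_2(2)$ must be used, rather than any property shared with larger buildings.
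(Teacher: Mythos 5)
Your reduction is sound as far as it goes: the observations that a collineation of $\sF_4$ is an oppomorphism, that faces of non-domestic simplices are again non-domestic, that you may assume $\theta$ domestic, and that the induced duality on the $\sA_2(2)$-component of the residue of a non-domestic type~$2$ (or type~$3$) vertex must be the strongly exceptional domestic duality of the Fano plane (else a non-domestic $\{3,4\}$- or $\{1,2\}$-simplex appears immediately, by Theorem~\ref{thm:Asmall}) are all correct. But the argument stops exactly where the lemma actually lives. Your step with the $\sA_1\times\sA_1$ residue of $\sigma=\{\ell,\pi\}$ is vacuous: $\{1,2\}$-domesticity already forces the transverse action $\theta_\ell^{(1)}$ to be trivial at \emph{every} non-domestic line (a moved type~$1$ vertex of $\Res(\ell)$ is opposite its image in the rank-one component, so $\{v,\ell\}$ would be a non-domestic type $\{1,2\}$ simplex), and dually for $\theta_\pi^{(4)}$; hence the alternative ``either transverse rank-one action is nontrivial'' never occurs under your standing assumptions, and the ``remaining, genuinely delicate, case'' is the only case. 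For that case you offer a plan rather than a proof: the translations $\Type(\theta_w)\subseteq\{1,2\}$ with $\theta_w$ also $\{1,2\}$-domestic, and $\Type(\theta_p)\subseteq\{3,4\}$ with $\theta_p$ also $\{3,4\}$-domestic, are correct, but no contradiction is derived from them, and you say yourself that you ``expect the main obstacle to be precisely this step.'' So the proof is incomplete.

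It is worth knowing that the paper does not prove this lemma synthetically either: its proof is a one-line reference to the classification in Theorem~\ref{thm:F4}, which is obtained computationally by running through all $95$ conjugacy classes of $\sF_4(2)$ in its minimal faithful permutation representation and checking domesticity; the lemma is then read off from the decorated diagrams of $\theta_4$, $\theta_5$, $\theta_6$ (in each, either nodes $1,2$ or nodes $3,4$ are shaded, yielding a non-domestic type $\{3,4\}$- or $\{1,2\}$-simplex respectively). Your residue set-up is a genuinely different, and potentially more illuminating, route, but to complete it you must actually rule out the degenerate configuration of a domestic, $\{1,2\}$- and $\{3,4\}$-domestic collineation of $\sF_4(2)$ admitting non-domestic vertices of all four types; the authors' recourse to machine computation is a warning that this final step is not routine.
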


\begin{proof}
This follows from the classification given in Theorem~\ref{thm:F4}. We note that no circular logic is introduced by postponing the proof until Section~\ref{sec:exceptional}. 
\end{proof}

We are now ready to prove Theorem~\ref{thm:main*}(a) for the small exceptional buildings. Before doing so we would like to correct \cite[Main Result 2.2]{HVM:12}, where it is asserted that every domestic duality of an $\sE_6$ building is a symplectic polarity. In fact this result only holds for large $\sE_6$ buildings. The oversight in the proof of \cite[Main Result 2.2]{HVM:12} is in the proof of \cite[Lemma~5.2]{HVM:12}, where the existence of exceptional domestic automorphisms of $\sA_4(2)$ is overlooked.

\begin{prop}\label{prop:exceptional}
If $\theta$ is an uncapped automorphism of a building of exceptional type then the decorated opposition diagram of $\theta$ is contained in Table~\ref{table:2}. 
\end{prop}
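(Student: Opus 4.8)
Since every uncapped automorphism is domestic (if $\Type(\theta)=S$ then the absence of a type-$S$ simplex in $\Opp(\theta)$ says that no chamber is mapped to an opposite, and if $\Type(\theta)\subsetneq S$ then trivially no chamber is), Corollary~\ref{cor:proj} guarantees that the residue automorphisms $\theta_{\sigma}$ used below are again domestic. The plan is to argue separately for $\sF_4$, $\sE_6$, $\sE_7$ and $\sE_8$, in each case setting $J=\Type(\theta)$ and analysing $\theta_{\sigma}$ on the residue of a suitably chosen non-domestic simplex $\sigma$ by means of Propositions~\ref{prop:typemap} and~\ref{prop:proj}. Because every such residue is of classical type $\sA$, $\sB$ or $\sD$, or of smaller exceptional type, this feeds directly into the results already in hand: Theorem~\ref{thm:Asmall} and Lemmas~\ref{lem:An},~\ref{lem:sp} for $\sA$-residues, Propositions~\ref{prop:B},~\ref{prop:D1} and~\ref{prop:D2} for $\sB$- and $\sD$-residues, and, for the larger exceptional cases, the $\sE_6$ and $\sE_7$ conclusions obtained earlier in the induction. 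In each case two things must be checked: that $J$ is one of the node sets drawn in Table~\ref{table:2}, and that the shaded set $K_{\theta}$ is as indicated.

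For $\sF_4(2)$ and $\sF_4(2,4)$, Proposition~\ref{prop:1.1} rules out dualities, so $\theta$ is a collineation. I would first pin down $J$: the residue at an end node is a $\sB_3$ building and the residue at a middle node is of type $\sA_2\times\sA_1$, so Proposition~\ref{prop:B} together with the $\sA_2(2)$ facts of Theorem~\ref{thm:Asmall}, pulled back through Proposition~\ref{prop:proj}, forces $J=\{1,2,3,4\}=S$. The shading is then read off from Lemma~\ref{lem:F41234}: the guaranteed non-domestic $\{1,2\}$- or $\{3,4\}$-simplex, analysed in its $\sB_3(2)$ residue, produces exactly the two half-shaded diagrams and the fully shaded (strongly exceptional domestic) diagram. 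For $\sF_4(2,4)$ the panel residues at the two ends have different orders, so only the end whose $\{1,2\}$-residues are Fano planes can carry the exceptional behaviour, leaving the single asymmetric diagram.

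For $\sE_6(2)$ I would separate collineations (where $w_0\pi_{\theta}=w_0$, so $\mathcal{J}_{\theta}=\{\{2\},\{4\},\{1,6\},\{3,5\}\}$) from dualities (where $w_0\pi_{\theta}=1$, so $\mathcal{J}_{\theta}$ consists of singletons), exploiting the $\sD_5$ residue at a type-$1$ vertex, the $\sA_5$ residue at a type-$2$ vertex and the $\sA_2\times\sA_1\times\sA_2$ residue at a type-$4$ vertex. The essential point, which is also the source of the correction to \cite{HVM:12} recorded above, is that an $\sA_4(2)$ or $\sA_5(2)$ residue duality may be exceptional domestic rather than a symplectic polarity; feeding Theorem~\ref{thm:Asmall} through these residues forces $J=S$ in every uncapped case and yields the fully shaded diagram for dualities and the diagram shaded only on $\{2,4\}$ for collineations.

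For $\sE_7(2)$ and $\sE_8(2)$ every automorphism is a collineation, and I would induct through the residue of type $\sE_{n-1}$ obtained by deleting the long-end node (reducing $\sE_8$ to $\sE_7$ and $\sE_7$ to $\sE_6$), supplemented by the $\sD_{n-1}$- and $\sA_{n-1}$-residues at the other end nodes. Writing $j=\max J$ and examining the residue of a non-domestic type-$j$ vertex, the classical propositions force the familiar ``jumps'' (an odd node of $\sD$-type dragging in its successor, a point of an $\sA$-residue forced opposite once a hyperplane is, and so on); combined with the inductively known $\sE_6$ and $\sE_7$ diagrams and Lemma~\ref{lem:2} to locate the unshaded node, these collapse the possibilities to the node set $\{1,3,4,6\}$ (respectively $\{1,6,7,8\}$) shaded on $\{1,3\}$ (respectively $\{7,8\}$) and to the full, fully shaded diagram. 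The hard part will be exactly this $\sE_7$/$\sE_8$ bookkeeping: there are many subcases according to $j$ and the parity and location of the nodes already forced into $J$, and one must verify both that no spurious node set survives and that the induction on residues stays well-founded. A secondary point to handle carefully is that the $\sF_4$ shading rests on the explicit classification of Theorem~\ref{thm:F4} through Lemma~\ref{lem:F41234}; as the authors note, invoking it here introduces no circularity.
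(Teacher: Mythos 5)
Your overall strategy --- a case analysis by type, residue arguments via Propositions~\ref{prop:typemap} and~\ref{prop:proj} feeding into the classical results and the smaller exceptional cases, with Lemma~\ref{lem:F41234} supplying the $\sF_4(2)$ shading --- is exactly the paper's. But as written the plan has concrete gaps where the claimed ``forcing'' does not follow from the tools you cite. First, in $\sF_4$ the middle-node residues only give the implications $2\in J\Rightarrow\{2,3,4\}\subseteq J$ and $3\in J\Rightarrow\{1,2,3\}\subseteq J$, which leaves $J\in\{\{1\},\{4\},\{1,4\},S\}$; the case $J=\{1,4\}$ is not eliminated by any residue argument, and the paper has to invoke \cite[Lemma~4.5]{PVM:17a} to show such a collineation is capped. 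Second, in $\sE_7$ the analogous implications leave the candidate $J=\{1,3,4,6,7\}$ alive, and ruling it out is genuinely delicate: the paper uses \cite[Proposition~4.3(2)]{PVM:17a} to produce a non-domestic type $\{3,7\}$ simplex whose type~$7$ vertex has an $\sE_6$-residue duality moving a type~$3$ vertex to an opposite, forcing $2,5\in J$, a contradiction. Neither step is ``bookkeeping''; both require inputs beyond the residue machinery you list, so your assertion that the classical propositions ``collapse the possibilities'' to $\{1,3,4,6\}$ or $S$ is not yet justified.

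Two smaller corrections. Lemma~\ref{lem:2} concerns collineations of small $\sB_n/\sD_{n+1}$ buildings with $\Type(\theta)=\{1,\dots,j\}$ and is not what locates the unshaded nodes of the $\sE_7$ and $\sE_8$ diagrams; there the paper argues directly that a non-domestic simplex of the relevant cotype (e.g.\ type $\{1,3,6\}$ or $\{1,3,4\}$ in $\sE_7$, type $\{6,7,8\}$ or $\{1,7,8\}$ in $\sE_8$) would, via a $\sD_5$- or $\sD_6$-type residue, force $\theta$ to be capped. Likewise, for $\sE_6$ collineations you must separately verify that the orbits $\{3,5\}$ and $\{1,6\}$ are \emph{not} shaded; the paper does this by observing that a non-domestic simplex of type $S\backslash\{3,5\}$ or $S\backslash\{1,6\}$ would have an $\sA_1\times\sA_1$ residue whose factors are swapped, hence be non-domestic, again making $\theta$ capped. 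With these items supplied, your outline does become the paper's proof.
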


\begin{proof}
(1) Let $\theta$ be an uncapped collineation of $\sE_6(2)$ and let $J=\Type(\theta)$. Suppose that $J=S$, and so the opposition diagram has the subsets $\{2\}$, $\{4\}$, $\{3,5\}$ and $\{1,6\}$ encircled. Let $\sigma$ be a non-domestic type $\{3,5\}$ simplex. Then $\theta_{\sigma}$ is an automorphism of an $\sA_2\times\sA_1\times\sA_1$ building acting as a duality on the $\sA_2$ component and interchanging the two $\sA_1$ components (by Proposition~\ref{prop:typemap}). Thus $\theta_{\sigma}$ is not domestic on the $\sA_1\times\sA_1$ component (see \cite[Lemma~3.7]{PVM:17a}) and must be exceptional domestic on the $\sA_2$ component (for otherwise $\theta$ is capped). Hence there are non-domestic simplices of types $S\backslash \{2\}$ and $S\backslash \{4\}$, and so the encircled nodes $2$ and $4$ are shaded. Suppose that there is a non-domestic simplex $\sigma'$ either of type $S\backslash\{3,5\}$ or $S\backslash\{1,6\}$. Then $\theta_{\sigma'}$ is an automorphism of an $\sA_1\times\sA_1$ building interchanging the two components (again by Proposition~\ref{prop:typemap}), and hence is not domestic, and hence $\theta$ is capped, a contradiction. Thus the encircled subsets $\{3,5\}$ and $\{1,6\}$ are not shaded.

Suppose that $J\neq S$. Then the first argument of the previous paragraph shows that $\{3,5\}\cap J=\emptyset$. A similar argument shows that $4\notin J$. Thus if $J\neq S$ we have $\{3,4,5\}\cap J=\emptyset$. If $\{1,6\}\subseteq J$ then $2\in J$ (for in the residue of a non-domestic type $\{1,6\}$ simplex we obtain a duality of $\sD_4$, and no duality of $\sD_n$ is point domestic; see \cite[Proposition~3.16]{PVM:17a}), and $\theta$ is capped. If $J=\{2\}$ then $\theta$ is obviously capped. Thus there are no uncapped collineations of $\sE_6$ with $\Type(\theta)\neq S$.

(2) Let $\theta$ be an uncapped duality of an $\sE_6$ building and let $J=\Type(\theta)$. We claim that $J=S$. If $1\in J$ then $6\in J$, and vice versa (since no duality of $\sD_n$ is point domestic), and this argument shows that if $J=\{1,6\}$ then $\theta$ is capped, a contradiction. So $\{2,3,4,5\}\cap J\neq \emptyset $. If $3\in J$ then $\{2,3,4,5,6\}\subseteq J$ (considering the $\sA_4$ component of the residue of a non-domestic type $3$ vertex) and similarly if $5\in J$ then $\{1,2,3,4,5\}\subseteq J$. Thus if either $3\in J$ or $5\in J$ then $J=S$. If $2\in J$ then $\{2,3,5\}\subseteq J$ (considering the $\sA_5$ residue of a non-domestic type $2$ vertex), and thus again $J=S$. If $4\in J$ then $\{1,3,4,5,6\}\subseteq J$ (considering the $\sA_2\times\sA_2$ component of the residue of a non-domestic type $4$ vertex), and so once more $J=S$. 

Thus all nodes are encircled. We claim that $\theta$ is strongly exceptional domestic, and so all nodes are shaded. To prove that there exists a cotype $j$ panels mapped onto opposite panels for each $j\in\{1,3,4,5,6\}$, note first that there exists a non-domestic type $\{2,4\}$ simplex (by considering the $\sA_4$ component of the residue of a non-domestic type $3$ vertex). If $v$ is the type $2$ vertex of such a simplex, then $\theta_v$ is a domestic duality of $\sA_5$ mapping a plane of this projective space onto an opposite, and thus $\theta_v$ is strongly exceptional domestic, and hence the result. Finally, to see that there is a non-domestic cotype $2$ panel, let $v$ be the type $1$ vertex of a non-domestic cotype $4$ panel. Using the classification of uncapped $\sD_5$ diagrams we see that $\theta_v$ is strongly exceptional domestic, and it follows that there exists a cotype $2$ panel of $\sE_6$ mapped onto an opposite. 

(3) Let $\theta$ be an uncapped collineation of an $\sE_7$ building and let $J=\Type(\theta)$. If $J=S$ then $\theta$ is strongly exceptional domestic (considering the $\sA_6$ residue of a non-domestic type $2$ vertex shows that $\theta$ maps simplices of each type $S\backslash\{j\}$ onto opposites for $j=1,3,4,5,6,7$, and considering the $\sE_6$ residue of the type $7$ vertex of a non-domestic type $\{2,7\}$ simplex, and using (2), shows that there is a simplex of type $S\backslash\{2\}$ mapped onto an opposite). 

Suppose that $J\neq S$. Then $2\notin J$ (for otherwise the induced duality of the $\sA_6$ residue is strongly exceptional domestic) and $5\notin J$ (for otherwise the induced dualities of the $\sA_4$ and $\sA_2$ residues are both strongly exceptional domestic). We note the following: If $3\in J$ then $\{3,4,6\}\subseteq J$ (considering the $\sA_5$ component of the residue) and if $4\in J$ then $\{1,3,4,6\}\subseteq J$ (considering the $\sA_2$ and $\sA_3$ components of the residue). Thus if either $3\in J$ or $4\in J$ then $\{1,3,4,6\}\subseteq J$. If $6\in J$ then $\{1,6\}\subseteq J$ (since no duality of the $\sD_5$ component of the residue is point domestic). If $7\in J$ then $\{1,6,7\}\subseteq J$ (since every duality of $\sE_6$ maps both type $1$ and type $6$ vertices to opposites). It follows that either $J=\{1\}$, $J=\{1,6\}$, $J=\{1,6,7\}$, $J=\{1,3,4,6\}$, or $J=\{1,3,4,6,7\}$. In the first, second, and third cases it is clear using the above arguments that $\theta$ is capped, a contradiction. We claim that $J=\{1,3,4,6,7\}$ is impossible (for any collineation, capped or uncapped). For if $J=\{1,3,4,6,7\}$ then by \cite[Proposition~4.3(2)]{PVM:17a} there exists a type $\{3,7\}$ simplex $\sigma$ mapped to an opposite simplex, and if $v$ is the type $7$ vertex of $\sigma$ then $\theta_v$ is a duality of an $\sE_6$ building mapping a type~$3$ vertex to an opposite, thus forcing $2,5\in J$, a contradiction.

The previous paragraph shows that if $\theta$ is uncapped and $J\neq S$ then $J=\{1,3,4,6\}$. Considering the $\sA_2\times\sA_3$ component of the residue of a non-domestic type $4$ vertex shows that there are simplices of types $\{3,4,6\}$ and $\{1,4,6\}$ mapped onto opposites, thus the nodes $1$ and $3$ are shaded. If there exist either type $\{1,3,6\}$ or $\{1,3,4\}$ simplices mapped onto opposite simplices then considering the residue of the type $1$ vertex of such a simplex we deduce that $\theta$ is capped, a contradiction. Thus the nodes $4$ and $6$ are not shaded. 

(4) Let $\theta$ be an uncapped (hence nontrivial) collineation of an $\sE_8$ building and let $J=\Type(\theta)$. If $J=S$ then easy residue arguments show that $\theta$ is strongly exceptional domestic. 

We claim that if $J\neq S$ then $J\subseteq \{1,6,7,8\}$. To see this, note that if $2\in J$ then $\{3,5,7\}\in J$ (considering an $\sA_7$ residue), if $3\in J$ then $\{2,4,5,6,7,8\}\subseteq J$ (considering the $\sA_6$ component of the residue), if $4\in J$ then $\{1,3,5,6,7,8\}\subseteq J$ (considering the $\sA_2\times\sA_4$ component of the residue), and if $5\in J$ then $\{1,2,3,4,7\}\subseteq J$ (considering the $\sA_4\times\sA_3$ residue). Combining these statements it follows that if $\{2,3,4,5\}\cap J\neq \emptyset$ then $J=S$, and hence the claim. 

Suppose that $J\neq S$, and so $J\subseteq \{1,6,7,8\}$. We claim that $J=\{1,6,7,8\}$. For if $1\in J$ then $8\in J$ (since no duality of $\sD_7$ is point domestic), if $6\in J$ then $J=\{1,6,7,8\}$ (considering the $\sD_5\times \sA_2$ residue and recalling that no duality of $\sD_5$ is point domestic), and if $7\in J$ then $6\in J$ (considering the duality of $\sE_6$ and using (2) above) and so again $J=\{1,6,7,8\}$. Thus $J=\{8\},\{1,8\}$ or $\{1,6,7,8\}$. The first two cases are clearly capped, hence the claim. Now considering the residue of a type $6$ non-domestic vertex we see that there are simplices of types $\{1,6,7\}$ and $\{1,6,8\}$ mapped onto opposite simplices (hence the nodes $7$ and $8$ are shaded). If there exists a simplex of type $\{6,7,8\}$ or $\{1,7,8\}$ mapped onto an opposite then considering the $\sD_5$ residue we deduce that $\theta$ is capped, and so the nodes $1$ and $6$ are not shaded.

(5) Let $\theta$ be an uncapped collineation of an $\sF_4$ building and let $J=\Type(\theta)$. If $2\in J$ then $3,4\in J$ (by the duality in the $\sA_2$ component of the residue) and similarly if $3\in J$ then $1,2\in J$. Thus either $J=\{1\}$,  $J=\{4\}$, $J=\{1,4\}$, or $J=\{1,2,3,4\}$. The first and second cases are trivially capped. The third case is capped by \cite[Lemma~4.5]{PVM:17a}. Thus $J=\{1,2,3,4\}$. 

If $\Delta=\sF_4(2)$ then by Lemma~\ref{lem:F41234} there is either a type $\{1,2\}$ or $\{3,4\}$ simplex mapped onto an opposite simplex. In the first case, by considering the residue of the type $2$ vertex, we see that there are panels of cotype $3$ and $4$ mapped onto opposites, and hence the nodes $3$ and $4$ are shaded. The second case is symmetric, with the nodes $1$ and $2$ shaded. Of course both cases may occur simultaneously, and then all nodes are shaded. Finally, note that if either nodes $1$ or $2$ are shaded then both are shaded (if the $i$ node is shaded and $i\in\{1,2\}$ then consider the residue of the type $3$ vertex of a non-domestic cotype $i$ panel). Similarly, if either nodes $3$ or $4$ are shaded then both are shaded. Hence the result for $\sF_4(2)$. 

If $\Delta=\sF_4(2,4)$ then considering the $\sA_2(4)$ component of a type $2$ non-domestic vertex we deduce that there are simplices of type $\{2,3,4\}$ mapped onto opposites. Then considering the $\sA_2(2)$ residue of a type $\{3,4\}$ non-domestic simplex we deduce that there are also simplices of type $\{1,3,4\}$ mapped onto opposites. Thus the nodes $1,2$ are shaded. If there exists a simplex of type $\{1,2,4\}$ or $\{1,2,3\}$ mapped onto an opposite, then considering the type $\sA_2(4)$ residue of the $\{1,2\}$ subsimplex we deduce that $\theta$ is non-domestic, and hence capped, a contradiction. Thus the nodes $3$ and $4$ are not shaded. 
\end{proof}

Theorem~\ref{thm:main*}(a) now follows from Propositions~\ref{prop:1.1}, \ref{prop:1.2}, \ref{prop:B}, \ref{prop:D1}, \ref{prop:D2}, and~\ref{prop:exceptional}.

\subsection{Applications}\label{sec:applications}

This section contains applications and corollaries of Theorem~\ref{thm:main*}(a).

\begin{cor}\label{cor:app1} Let $\theta$ be a an exceptional domestic automorphism of a thick irreducible spherical building~$\Delta$.
\begin{compactenum}[$(a)$]
\item If $\theta$ is an oppomorphism and $\Delta$ is simply laced, then $\theta$ is strongly exceptional domestic.
\item If $\theta$ is not an oppomorphism then $\theta$ is not strongly exceptional domestic. 
\end{compactenum}
\end{cor}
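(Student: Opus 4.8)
The plan is to read both statements off the classification in Theorem~\ref{thm:main*}(a), after observing that an exceptional domestic automorphism is uncapped and translating ``strongly exceptional domestic'' into the shading of its decorated diagram. First, $\theta$ is uncapped: its opposition diagram is full, so were $\theta$ capped there would be a type $S$ simplex in $\Opp(\theta)$, i.e.\ a chamber mapped onto an opposite chamber, contradicting domesticity. Hence $\Delta$ is a small building (in particular of rank at least~$3$), and Theorem~\ref{thm:main*}(a) applies: the decorated opposition diagram of $\theta$ occurs in Table~\ref{table:1} or Table~\ref{table:2}, with every node encircled since $J:=\Type(\theta)=S$.

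Next I would record what the shading means when $J=S$. The orbits $J'\in\mathcal{J}_{\theta}$ of $w_0\pi_{\theta}$ on $S$ partition $S$, the sets $S\backslash J'$ are precisely the maximal proper $w_0\pi_{\theta}$-invariant subsets of $S$, and by definition an orbit $J'$ is shaded (that is, $J'\subseteq K_{\theta}$) if and only if there is a type $S\backslash J'$ simplex in $\Opp(\theta)$. Thus if some orbit $J'$ is unshaded then $\theta$ is $(S\backslash J')$-domestic for the proper invariant set $S\backslash J'$, so $\theta$ is not strongly exceptional domestic; this will give part~(b) once the relevant diagrams are seen to carry an unshaded orbit. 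For part~(a), where $\theta$ is an oppomorphism and so $w_0\pi_{\theta}=1$, the orbits are the singletons $\{s\}$ and $K_{\theta}=S$ says exactly that a panel of each cotype is mapped onto an opposite panel; by the remark following the definition of strong exceptional domesticity (the case $w_0\pi_{\theta}=1$) this is equivalent to $\theta$ being strongly exceptional domestic.

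With this dictionary both parts become an inspection of the tables, using that an oppomorphism ($w_0\pi_{\theta}=1$) is drawn straight while a non-oppomorphism ($w_0\pi_{\theta}\neq1$) is drawn bent. For~(a), the simply laced small buildings are $\sA_n(2),\sD_n(2),\sE_6(2),\sE_7(2),\sE_8(2)$, and among these the straight full diagrams are exactly the fully shaded ones, so $K_{\theta}=S$ and $\theta$ is strongly exceptional domestic. For~(b), the only bent full diagrams occurring in Tables~\ref{table:1}--\ref{table:2} are those of the $\sD_n(2)$ collineations with $n$ odd, the $\sD_n(2)$ dualities with $n$ even, and the first of the two $\sE_6(2)$ diagrams, and in each of these the ``bent'' orbit is encircled but unshaded; hence $K_{\theta}\neq S$ and $\theta$ is not strongly exceptional domestic.

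I expect the only real difficulty to be bookkeeping: one must check, type by type, the correspondence between $\pi_{\theta}$, the triviality of $w_0\pi_{\theta}$, and the straight-versus-bent shape of the diagram, and in particular confirm that among the simply laced types the oppomorphisms account for precisely the fully shaded full diagrams, while no non-oppomorphism in any type produces a fully shaded full diagram (the non-simply-laced oppomorphisms, such as the $\sF_4(2)$ collineations, show that the hypotheses of the two parts cannot be relaxed).
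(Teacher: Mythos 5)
Your proof is correct and takes essentially the same route as the paper: observe that an exceptional domestic automorphism is uncapped, translate strong exceptional domesticity into full shading of the decorated diagram, and read both statements off Tables~\ref{table:1} and~\ref{table:2}; your identification of which full diagrams are fully shaded (the oppomorphism ones in simply laced types) and which carry an unshaded bent orbit (odd-rank $\sD_n(2)$ collineations, even-rank $\sD_n(2)$ dualities, $\sE_6(2)$ collineations) matches the paper's inspection. One small caveat: uncappedness alone does not force rank at least~$3$ (so Theorem~\ref{thm:main*}(a) does not literally apply to generalised polygons), but the rank-$2$ case is immediate from the cited rank-$2$ results and the paper's own proof glosses over it in exactly the same way.
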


\begin{proof}
The first statement follows by noting that in Tables~\ref{table:1} and~\ref{table:2}, if $\theta$ is an oppomorphism and $\Delta$ is simply laced, then whenever all nodes are encircled they are all shaded (see the first, third, sixth rows of Table~\ref{table:1} and the first, second, and third rows of Table~\ref{table:2}). The second statement follows by inspecting the third and fourth rows of Table~\ref{table:1} and the first row of Table~\ref{table:2}. 
\end{proof}

The following lemma is in preparation for our next corollary to Theorem~\ref{thm:main*}(a).

\begin{lemma}\label{lem:orderproj}
Let $\theta$ be an involution of a thick spherical building, and suppose that the simplex $\sigma$ is mapped onto an opposite simplex. Then the induced automorphism $\theta_{\sigma}$ of $\Res(\sigma)$ is either the identity or it is an involution.
\end{lemma}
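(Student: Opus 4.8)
The plan is to prove the statement by showing directly that $\theta_{\sigma}\circ\theta_{\sigma}=\id$ on $\Res(\sigma)$. Since $\theta_{\sigma}=\proj_{\sigma}\circ\,\theta$ is already known to be an automorphism of $\Res(\sigma)$ (it is the composite of the isomorphism $\theta\colon\Res(\sigma)\to\Res(\sigma^{\theta})$ with the isomorphism $\proj_{\sigma}\colon\Res(\sigma^{\theta})\to\Res(\sigma)$ of Tits' theorem), the relation $\theta_{\sigma}^2=\id$ forces $\theta_{\sigma}$ to have order $1$ or $2$, that is, to be the identity or an involution.

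First I would record the two structural facts that make everything telescope. Since $\theta^2=\id$ and opposition is symmetric, $\sigma^{\theta}$ is opposite $\sigma$ and $(\sigma^{\theta})^{\theta}=\sigma$, so $\sigma^{\theta}\in\Opp(\theta)$ as well and both $\proj_{\sigma}\colon\Res(\sigma^{\theta})\to\Res(\sigma)$ and $\proj_{\sigma^{\theta}}\colon\Res(\sigma)\to\Res(\sigma^{\theta})$ are isomorphisms. The key ingredient is that these projections between opposite residues are \emph{mutually inverse}, i.e.\ $\proj_{\sigma^{\theta}}\circ\proj_{\sigma}=\id$ on $\Res(\sigma^{\theta})$ and $\proj_{\sigma}\circ\proj_{\sigma^{\theta}}=\id$ on $\Res(\sigma)$ (a standard fact; see \cite{AB:08} and \cite{Tit:74}). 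The second fact is the equivariance of projection under $\theta$: because $\theta$ preserves the lengths $\ell(\delta(\cdot,\cdot))$, the uniqueness characterisation of $\proj_{\alpha}(B)$ gives $\proj_{\alpha^{\theta}}(B^{\theta})=(\proj_{\alpha}B)^{\theta}$ at the chamber level, and then the intersection formula $\proj_{\alpha}(\beta)=\bigcap_{B}\proj_{\alpha}(B)$ upgrades this to arbitrary simplices, yielding $\proj_{\alpha^{\theta}}(\beta^{\theta})=(\proj_{\alpha}\beta)^{\theta}$ for all $\alpha,\beta$.

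Next I would extract a second expression for $\theta_{\sigma}$. Applying the equivariance identity with $\alpha=\sigma^{\theta}$ (so $\alpha^{\theta}=\sigma$) gives $\proj_{\sigma}(\beta^{\theta})=(\proj_{\sigma^{\theta}}\beta)^{\theta}$ for every $\beta$. Restricting to $\beta\in\Res(\sigma)$, the left-hand side is exactly $\theta_{\sigma}(\beta)=\proj_{\sigma}(\beta^{\theta})$, so on $\Res(\sigma)$ we have $\theta_{\sigma}=\theta\circ\proj_{\sigma^{\theta}}$, where $\theta$ here denotes the induced isomorphism $\Res(\sigma^{\theta})\to\Res(\sigma)$. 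Using this together with the defining formula $\theta_{\sigma}=\proj_{\sigma}\circ\,\theta$ (with $\theta$ the induced isomorphism $\Res(\sigma)\to\Res(\sigma^{\theta})$), I would then compute
\[
\theta_{\sigma}\circ\theta_{\sigma}=(\theta\circ\proj_{\sigma^{\theta}})\circ(\proj_{\sigma}\circ\,\theta)=\theta\circ(\proj_{\sigma^{\theta}}\circ\proj_{\sigma})\circ\theta=\theta\circ\theta=\theta^2|_{\Res(\sigma)}=\id,
\]
where the middle simplification uses the mutual-inverse property on $\Res(\sigma^{\theta})$ and the final step uses $\theta^2=\id$.

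The only delicate points are the two invoked building-theory facts. The equivariance of projections is essentially automatic once one observes that $\theta$ is a Weyl-isometry and hence maps minimal galleries to minimal galleries; the genuine input is the mutual-inverse property of projections between opposite residues, which is precisely where the hypothesis $\sigma\in\Opp(\theta)$ is used. I would therefore flag this as the main step to pin down with an exact reference, as everything else in the argument is formal bookkeeping with the composites.
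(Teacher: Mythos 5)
Your proof is correct and is essentially the same argument as the paper's: both rest on the mutual-inverse property of $\proj_{\sigma}$ and $\proj_{\sigma^{\theta}}$ between opposite residues together with the $\theta$-equivariance of projections, yielding $\theta_{\sigma}^2=\id$. The only cosmetic difference is that you package it as a single telescoping computation where the paper phrases it simplex-by-simplex with a (superfluous) case split.
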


\begin{proof}
Let $\alpha$ be a simplex of $\mathrm{Res}(\sigma)$. If $\alpha^{\theta}=\proj_{\sigma^{\theta}}(\alpha)$ then $\alpha^{\theta_{\sigma}}=\alpha$ (because the projection maps $\mathrm{proj}_{\sigma}:\mathrm{Res}(\sigma^{\theta})\to\mathrm{Res}(\sigma)$ and $\mathrm{proj}_{\sigma^{\theta}}:\mathrm{Res}(\sigma)\to\mathrm{Res}(\sigma^{\theta})$ are mutually inverse bijections). If $\alpha^{\theta}=\mathrm{proj}_{\sigma^{\theta}}(\alpha)$ then $\alpha^{\theta_\sigma}=\alpha$. If $\alpha^{\theta}\neq \proj_{\sigma^{\theta}}(\alpha)$ then, since $\theta$ maps $\alpha^{\theta}$ onto $\alpha$, the projection $\mathrm{proj}_{\sigma}(\alpha^{\theta})$ is mapped onto $\mathrm{proj}_{\sigma^{\theta}}(\alpha)$. Thus $\theta_{\sigma}^2=1$.
\end{proof}

\begin{cor}\label{cor:involutions}
Every involution of a thick irreducible spherical building is capped. 
\end{cor}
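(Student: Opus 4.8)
The plan is to argue by contradiction: suppose $\theta$ is an uncapped involution of a thick irreducible spherical building $\Delta$. The rank~$1$ case is vacuous, and the rank~$2$ case (generalised polygons) reduces to the classification of exceptional domestic automorphisms of generalised polygons in \cite{PTM:15} (the exceptional domestic collineations there are not involutions), together with the fact that a projective plane $\sA_2$ has no symplectic polarity; so I would assume $\mathrm{rank}(\Delta)\geq 3$. Then Theorem~\ref{thm:main*}(a) applies, forcing $\Delta$ to be a small building and the decorated opposition diagram of $\theta$ to occur in Table~\ref{table:1} or Table~\ref{table:2}. The aim is to produce, in each such case, a type $\Type(\theta)$ simplex in $\Opp(\theta)$, contradicting uncappedness. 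The single structural input special to involutions is Lemma~\ref{lem:orderproj}: whenever $\sigma\in\Opp(\theta)$, the induced residue automorphism $\theta_{\sigma}$ is again an involution (or the identity).

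The observation driving the contradiction is that the uncappedness of a small building, as detected throughout the proof of Theorem~\ref{thm:main*}(a), always traces back to an \emph{exceptional domestic duality of a residue of type $\sA_m(2)$ with $m\geq 2$}. Concretely, in each row one locates a non-domestic simplex $\sigma\in\Opp(\theta)$ (for $\sA_n(2)$ take $\sigma$ empty and $\theta$ itself, by Proposition~\ref{prop:1.2}; for $\sB_n$ a non-domestic vertex of maximal type $j=\max\Type(\theta)$, with its $\sA_{j-1}(2)$ residue; for $\sE_6$ a type~$\{3,5\}$ simplex with its $\sA_2(2)$ residue; and so on) such that $\theta_{\sigma}$ preserves a factor $\sA_m(2)$ of $\Res(\sigma)$, acts on it as a duality (via Proposition~\ref{prop:typemap}), and restricts there to a domestic duality mapping a point to an opposite point. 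By Lemma~\ref{lem:orderproj} the map $\theta_{\sigma}$ is an involution, so this restriction is a \emph{polarity} of $\sA_m(2)$. On the other hand, being a domestic duality of $\sA_m(2)$ that maps a point to an opposite, the restriction is strongly exceptional domestic by Theorem~\ref{thm:Asmall} (symplectic polarities are $\{1\}$-domestic by Lemma~\ref{lem:sp}, hence excluded). The contradiction sought is then that a strongly exceptional domestic duality of $\sA_m(2)$ can never be a polarity: a \emph{domestic} polarity of a projective space over $\FF_2$ is forced to be symplectic, hence point-domestic and capped, contrary to the restriction mapping a point to an opposite. One should also note that $\theta_{\sigma}$ is not the identity here (it moves a point of the $\sA_m$ factor to an opposite), so Lemma~\ref{lem:orderproj} genuinely yields an involution and not the trivial alternative.

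The main obstacle is twofold. First, the sharp input that \emph{every domestic polarity of $\PG(m,\FF_2)$ is symplectic} (equivalently, a point-non-domestic polarity is never domestic) must be pinned down; this is exactly where the alternating-versus-non-alternating dichotomy of symmetric bilinear forms in characteristic~$2$ enters, and the cleanest route is to show that a non-alternating symmetric form admits a full non-degenerate flag, i.e. a chamber mapped onto an opposite chamber, so that a non-symplectic polarity is not domestic; alternatively one cites \cite{TTM:11}. Second, one must verify uniformly, row by row through Tables~\ref{table:1} and~\ref{table:2}, that the required $\sA_m(2)$-residue carrying a point-non-domestic domestic duality really does appear in every diagram (including the fully shaded, strongly exceptional domestic cases). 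This amounts to re-reading the residue arguments already assembled in the proofs of Propositions~\ref{prop:B}, \ref{prop:D1}, \ref{prop:D2}, and~\ref{prop:exceptional}, where precisely such $\sA_m(2)$ residues were used to detect uncappedness; I expect this to be pure bookkeeping rather than a source of new difficulty, the only genuinely delicate point being the characteristic-$2$ polarity rigidity above.
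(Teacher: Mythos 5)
Your rank~$\geq 3$ argument is essentially the paper's: the paper also locates, in each uncapped diagram, a residue carrying a domestic duality of a Fano plane and then invokes Lemma~\ref{lem:orderproj}. The only difference is the endgame -- the paper cites \cite{PTM:15} for the fact that the unique domestic duality of $\sA_2(2)$ has order~$8$, whereas you derive the contradiction from the assertion that a domestic polarity of $\PG(m,\FF_2)$ must be symplectic. That assertion is correct (a non-alternating non-degenerate symmetric form over $\FF_2$ admits an orthonormal basis, whose associated full flag is non-degenerate, so a pseudo-polarity is never domestic), and for the $m=2$ residues actually needed it is equivalent to the Fano-plane fact; so this half of your proposal is sound, modulo the bookkeeping you correctly defer to the proofs of Propositions~\ref{prop:B}--\ref{prop:exceptional}.

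The genuine gap is in the rank~$2$ case. In rank~$2$ ``uncapped'' is exactly ``exceptional domestic'', and the corollary is stated for \emph{all} thick irreducible spherical buildings, so you must rule out involutory exceptional domestic automorphisms of arbitrary -- in particular infinite -- thick generalised polygons. The classification in \cite{PTM:15} that you invoke is a classification for \emph{finite} polygons; it says nothing about whether an infinite generalised quadrangle, hexagon, etc.\ could admit an exceptional domestic involution, and your parenthetical about projective planes lacking symplectic polarities only disposes of the case $m=3$. This is precisely why the paper does not cite \cite{PTM:15} here but instead devotes Claims~1 and~2 of its proof to a direct geometric argument: for an involutory collineation of a $2n$-gon one shows the fixed structure would have to be a sub-$2n$-gon with an ``ideal'' point, contradicting \cite[Proposition~1.8.1]{HVM:98}, and for an involutory duality (polarity) of a $(2n-1)$-gon one produces a chamber mapped to an opposite by tracking images along geodesics. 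Without some such argument (or an explicit restriction to finite polygons), your proof of the corollary is incomplete exactly where the paper's proof does most of its work.
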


\begin{proof} The result is of course true for large buildings of rank at least 3 (where all automorphisms are capped by \cite{PVM:17a}), and thus it remains to show that involutions of small buildings and of arbitrary generalised polygons are capped. Let us begin with the former.  We use the decorated opposition diagrams in Tables~\ref{table:1} and~\ref{table:2} to show that every uncapped automorphism has order strictly greater than~$2$. Consider type $\sA_n$, and let $\theta$ be uncapped. By Theorem~\ref{thm:main*}(a) there exists a non-domestic type $\{3,4,\ldots,n\}$ simplex~$\sigma$. Then $\theta_{\sigma}$ is a domestic duality of the Fano plane. However by \cite{PTM:15} the only domestic duality of the Fano plane is the unique exceptional domestic duality, and this has order~$8$. Thus, by Lemma~\ref{lem:orderproj} $\theta$ has order strictly greater than~$2$.

The arguments are similar for all other uncapped diagrams. The key fact is that in some residue one finds a domestic duality of the Fano plane. For example, in the first $\sE_6(2)$ diagram in Table~\ref{table:2} we have a non-domestic type $\{1,3,5,6\}$ simplex $\sigma$ (because, for example, the node $2$ is shaded), and $\theta_{\sigma}$ is a domestic duality of the Fano plane residue.

We now show that every involution of an arbitrary generalised $m$-gon, $m\geq 2$, is capped. Recall that a generalised $m$-gon~$\Delta$ is a bipartite graph with diameter $m$ and girth $2m$. A chamber is a pair of vertices connected by an edge. If $\{x,y\}$ is a chamber we write $x\sim y$ and call $x$ and $y$ adjacent. In particular, if $x\sim y$ then the vertices $x$ and $y$ have different types. Vertices $x$ and $y$ of $\Delta$ are opposite if and only if the distance between them is $m$, and this in turn is equivalent to the existence of a path $x=x_0\sim x_1\sim\cdots \sim x_{m}=y$ with $x_j\neq x_{j+2}$ for all $j=0,\ldots,m-2$. If the distance between vertices $x,y$ is $k<m$ then there is a unique geodesic from $x$ to $y$. In this case, writing $x=z_0\sim z_1\sim\cdots \sim z_k=y$ the vertex $z_1$ (respectively $z_{k-1}$) is the projection of $y$ onto~$x$ (respectively $x$ onto $y$). 
\smallskip

\noindent\textit{Claim 1:} Every involutary collineation of a thick generalised $2n$-gon $\Delta$, $n\geq 1$, is capped. 
\smallskip

\noindent\textit{Proof of Claim 1:} The case $n=1$ is trivial, and so suppose that $\theta$ is an uncapped involutary collineation of a generalised $2n$-gon with $n\geq 2$. Thus $\theta$ is domestic (on chambers), and maps at least one vertex of each type onto an opposite vertex. Let $\Delta'$ denote the fixed elements of $\theta$. Let $x_0$ be a type $1$ vertex mapped onto an opposite vertex $x_{2n}=x_0^{\theta}$, and consider any geodesic path $x_0\sim x_1\sim \cdots \sim x_{2n-1}\sim x_{2n}$. If $x_1^\theta\neq x_{2n-1}$ then the chamber $\{x_0,x_1\}$ is mapped onto an opposite chamber and $\theta$ is capped. Hence $x_1^\theta=x_{2n-1}$, and it follows that $x_i^\theta=x_{2n-i}$, for all $i\in\{0,1,2,\ldots,2n\}$. In particular $x_n^\theta=x_n$ is fixed. Consider another geodesic $x_0\sim y_1\sim\cdots \sim y_{2n-1}\sim x_{2n}$ with $y_1\neq x_1$. Then $y_n^{\theta}=y_n$. By considering the path from $x_n$ to $x_0$ to $y_n$ we see that $x_n$ and $y_n$ are opposite, and thus there is a pair of opposite vertices $x_n,y_n\in\Delta'$.

Similarly, by considering a type 2 vertex $x_0'$ that is mapped onto an opposite vertex we deduce the existence of a pair of opposite vertices $x_n',y_n'\in\Delta'$. Since the vertices $x_n',y_n'$ have different type to the vertices $x_n,y_n$ we conclude that for each type $j\in\{1,2\}$ there are pairs of opposite vertices of type~$j$ in $\Delta'$. It follows that $\Delta'$ is a sub-$2n$-gon (because the fixed structure of an collineation of a $2n$-gon is either empty, consists of pairwise opposite elements, is a tree of diameter at most $2n$, or is a sub-$2n$-gon, and the first three options are impossible from the above considerations). 

Now, the distance from $x_n'$ to $x_n$ is at most $2n-1$ (by types and diameter) and hence the unique geodesic from $x_n'$ to $x_n$ is fixed by~$\theta$. In particular the chamber $\{z,x_n\}$ is fixed, where $z\sim x_n$ is the projection of $x_n'$ onto~$x_n$. Note that $z\neq x_{n-1},x_{n+1}$ because $x_{n-1}^{\theta}=x_{n+1}$ is not fixed. We claim that every vertex $z_1\sim z$ is fixed. With $y_j$ as above, note that $z$ and $y_{n-1}$ are opposite (consider the path from $z$ to $x_0$ to $y_{n-1}$). Hence the distance from $z_1$ to $y_{n-1}$ is $2n-1$, and so there is a unique geodesic $z_1\sim z_2\sim\cdots z_{2n-1}=y_{n-1}$. If $z_1^{\theta}\neq z_1$ then $z_n$ and $z_n^{\theta}$ are opposite (consider the path from $z_n$ to $z_0$ to $z_n^{\theta}$). Similarly, since $y_{n-1}^{\theta}=y_{n+1}$ we have $y_{n-1}\neq y_{n-1}^{\theta}$ and so $z_{n+1}$ and $z_{n+1}^{\theta}$ are opposite. Hence the chamber $\{z_n,z_{n+1}\}$ is mapped onto an opposite chamber, a contradiction. 

It now follows from \cite[Proposition~1.8.1]{HVM:98} that the sub-$2n$-gon $\Delta'$ has the property that whenever $x\in \Delta'$ has the same type as $z$, then all neighbours of $x$ are fixed (and hence are in~$\Delta'$). But $x_n'$ has the same type as~$z$, contradicting the fact that the projection of $x_0'$ onto $x_n'$ is mapped onto the projection of $x_0'^\theta$ onto $x_n'$ and that these projections are distinct. 
This contradiction completes the proof of Claim~1.
\smallskip

\noindent\textit{Claim 2:} Every involutary duality of a thick generalised $(2n-1)$-gon $\Delta$, $n\geq 2$, is capped. 
\smallskip

\noindent\textit{Proof of Claim 2:} Let $\theta$ be a polarity of a generalised $(2n-1)$-gon and suppose that $\theta$ maps some element $x_0$ to an opposite element $x_{2n-1}$. Suppose that $\theta$ is not capped, i.e., $\theta$ does not map any chamber to an opposite chamber. Let $x_1\sim x_0$ be arbitrary. Consider the path $x_0\sim x_1\sim\cdots\sim x_{2n-1}$. In a similar way to the previous proof we deduce that $x_i^\theta=x_{2n-1-i}$ for all $i\in\{0,1,2,\ldots,2n-1\}$. Hence $x_n^\theta=x_{n-1}$. Consider a second path $x_0\sim y_1\sim\cdots \sim y_{2n-2}\sim x_{2n-1}$ with $y_1\neq x_1$. Then also $y_{n-1}^\theta=y_n$. Let $z_0\sim x_n$ be arbitrary but distinct from $x_{n-1}$ and $x_{n+1}$ (using thickness). There is a unique path $z_0\sim z_1\sim\cdots \sim z_{2n-2}=y_{n-1}$ from $z_0$ to $y_{n-1}$. By considering the path $z_{n-2}\sim \cdots \sim z_0\sim x_n\sim x_n^{\theta}\sim z_0^{\theta}\sim\cdots \sim z_{n-2}^{\theta}$ we see that $z_{n-2}$ is mapped onto an opposite vertex. Similarly, since $y_{n-1}^{\theta}=y_n$ we see that $z_{n-1}$ is mapped onto an opposite vertex (consider the path $z_{n-1}\sim\cdots \sim y_{n-1}\sim y_{n-1}^{\theta}\sim \cdots \sim z_{n-1}^{\theta}$). Hence the chamber $\{z_{n-2},z_{n-1}\}$ is mapped onto an opposite chamber, a contradiction. This completes the proof of Claim~$2$.
\smallskip

Finally, we note that no duality of a thick generalised $2n$-gon is domestic and no collineation of a thick generalised $(2n-1)$-gon is domestic (see \cite[Lemmas~3.1 and~3.2]{PTM:15}), completing the proof of the corollary.
\end{proof}

Corollary~\ref{cor:involutions} shows that every uncapped automorphism has order at least~$3$. Since every known example of an uncapped automorphism has order at least~$4$ (see the examples in Sections~\ref{sec:classical} and~\ref{sec:exceptional}, and also the rank~$2$ classification in~\cite{PTM:15}) we are led to make the following conjecture.

\begin{conjecture}
If $\theta$ is an automorphism of a thick irreducible spherical building, and if $\theta$ has order $3$, then $\theta$ is capped. 
\end{conjecture}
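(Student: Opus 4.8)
The plan is to mirror the proof of Corollary~\ref{cor:involutions}, replacing the order-$2$ bookkeeping by order-$3$ bookkeeping, and to isolate the one place where the argument genuinely breaks. First I would reduce to the relevant buildings: by \cite{PVM:17a} every automorphism of a large building is capped, so only the small buildings and the generalised polygons can support uncapped automorphisms. For generalised polygons I would invoke the rank-$2$ classification of domestic automorphisms in \cite{PTM:15} (or adapt the combinatorial arguments in the proof of Corollary~\ref{cor:involutions}) to check directly that no uncapped collineation or duality of a thick generalised $m$-gon has order~$3$. This leaves the small buildings of rank at least~$3$.

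For these I would run the residue argument behind Corollary~\ref{cor:involutions}. By Theorem~\ref{thm:main*}(a) an uncapped $\theta$ has one of the decorated diagrams of Tables~\ref{table:1} and~\ref{table:2}, and in every case there is a non-domestic simplex $\sigma$ such that, after finitely many applications of the projection construction, $\theta$ induces on a rank-$2$ Fano-plane residue $\Res(\rho)$ the unique domestic duality of the Fano plane, which by \cite{PTM:15} has order~$8$. For an involution, Lemma~\ref{lem:orderproj} gives $\theta_\sigma^2=1$ at each step, so the induced Fano duality would have order dividing~$2$, contradicting~$8$; this is precisely how Corollary~\ref{cor:involutions} is proved, and the number-theoretic contradiction ($8$ being coprime to the order) is exactly the mechanism I would reuse.

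The obstacle is the order-$3$ analogue of Lemma~\ref{lem:orderproj}. Since $\theta^3=1$ and $\sigma\in\Opp(\theta)$, the three simplices $\sigma,\sigma^\theta,\sigma^{\theta^2}$ are pairwise opposite (apply powers of $\theta$ to the opposition of $\sigma$ and $\sigma^\theta$, using $\sigma^{\theta^3}=\sigma$), and $\theta$ cycles them. Using the equivariance $\theta\circ\proj_\sigma=\proj_{\sigma^\theta}\circ\theta$ together with $\theta^3=1$, a direct computation gives
\[
\theta_\sigma^3=\proj_\sigma\circ\proj_{\sigma^\theta}\circ\proj_{\sigma^{\theta^2}},
\]
the holonomy of the projection maps around the triangle $\sigma\to\sigma^{\theta^2}\to\sigma^\theta\to\sigma$. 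For two opposite simplices one has $\proj_\sigma\circ\proj_{\sigma^\theta}=\id$, which is exactly what makes the involution case collapse; but for three pairwise opposite simplices this holonomy need not be the identity, so one cannot conclude $\theta_\sigma^3=1$, and the clean divisibility of $\operatorname{ord}(\theta_\sigma)$ by $\operatorname{ord}(\theta)$ fails.

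I expect controlling this triangle holonomy to be the heart of the matter. The natural line of attack is to show that in the relevant Fano residues the holonomy is trivial, or at least of order a power of~$3$: then $\theta_\rho^3=1$ (or $\theta_\rho$ of $3$-power order) would contradict the order-$8$ domestic duality, since $8$ is coprime to~$3$. Establishing such control of the holonomy is a genuinely new phenomenon, absent for involutions; I would try to analyse it inside a single apartment containing the pairwise-opposite triangle, exploiting thickness and the Moufang structure over~$\FF_2$ to pin down the composite projection. This step, rather than the diagram bookkeeping, is where I expect the real difficulty to lie, and its resolution would plausibly also explain why every known uncapped automorphism has order at least~$4$.
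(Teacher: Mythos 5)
This statement appears in the paper only as an open conjecture: the authors give no proof, just the empirical observation that every known uncapped automorphism has order at least~$4$. Your proposal is, by your own admission, a plan rather than a proof, and the step you defer is precisely the one that makes the statement open. The reduction to small buildings and generalised polygons, and the diagram bookkeeping that places a domestic duality of order~$8$ on a Fano plane residue, do faithfully follow the template of Corollary~\ref{cor:involutions}; but the entire weight of that corollary rests on Lemma~\ref{lem:orderproj}, which yields $\theta_\sigma^2=1$ because $\proj_\sigma$ and $\proj_{\sigma^\theta}$ are mutually inverse bijections between the residues of a \emph{pair} of opposite simplices. For order~$3$ you correctly compute that $\theta_\sigma^3$ is the holonomy $\proj_\sigma\circ\proj_{\sigma^\theta}\circ\proj_{\sigma^{\theta^2}}$ around a triangle of pairwise opposite simplices, and you then leave unproved the assertion that this holonomy is trivial or of $3$-power order. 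That is the whole problem, and there is no reason to expect it to hold: compositions of projections around closed paths of pairwise opposite simplices are exactly the (even) self-projectivities of $\Res(\sigma)$, and for thick buildings these generate large groups acting on the residue --- already for a triangle of pairwise opposite points in a Fano plane the induced permutation of the three lines through the first point need not be trivial. So no divisibility relation between $\mathrm{ord}(\theta)$ and $\mathrm{ord}(\theta_\sigma)$ is available for odd order, and the coprimality-with-$8$ mechanism cannot be run. You have correctly located the obstruction, but what you have written is a research programme for an open problem, not a proof of the conjecture.
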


Note that if we remove the shading from the diagrams in Tables~\ref{table:1} and~\ref{table:2} then the diagrams we obtain are contained in \cite[Tables 1--5]{PVM:17a}. Thus Theorem~\ref{thm:main*}(a) has the following immediate corollary.

\begin{cor}
The (undecorated) opposition diagram of any automorphism of a thick irreducible spherical building is contained in  \emph{\cite[Tables 1--5]{PVM:17a}}.
\end{cor}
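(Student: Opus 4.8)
The plan is to reduce immediately to the dichotomy between capped and uncapped automorphisms, since the two cases are fed by entirely different inputs and then recombined. First I would dispose of the trivial automorphism, whose opposite geometry is empty and which therefore has the empty opposition diagram; this is listed among the diagrams of \cite{PVM:17a}. So I may assume that $\theta$ is a nontrivial automorphism of a thick irreducible spherical building $\Delta$.

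For the capped case there is essentially nothing to do: the content of \cite{PVM:17a} is precisely that, for a capped automorphism, the triple $(\Gamma,\Type(\theta),\pi_{\theta})$ is constrained to appear in \cite[Tables 1--5]{PVM:17a}. (Recall that for capped automorphisms the opposition diagram already determines the full poset $\cT(\theta)$, so those tables constitute exactly the admissible list of capped diagrams.) Hence if $\theta$ is capped the corollary holds by citation.

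For the uncapped case I would invoke Theorem~\ref{thm:main*}(a), which places the \emph{decorated} opposition diagram $(\Gamma,\Type(\theta),K_{\theta},\pi_{\theta})$ of $\theta$ in Table~\ref{table:1} or Table~\ref{table:2}. The undecorated diagram is recovered by forgetting the shading set $K_{\theta}$, i.e.\ by passing from the quadruple to the triple $(\Gamma,\Type(\theta),\pi_{\theta})$. The remaining step, already announced in the remark preceding the statement, is to verify that every diagram obtained in this way --- erase the shading from each entry of Tables~\ref{table:1} and~\ref{table:2} --- coincides with a diagram listed in \cite[Tables 1--5]{PVM:17a}. This is a finite inspection carried out row by row over the small-building types $\sA_n(2)$, $\sB_n(2)$, $\sB_n(2,4)$, $\sD_n(2)$, $\sE_6(2)$, $\sE_7(2)$, $\sE_8(2)$, $\sF_4(2)$ and $\sF_4(2,4)$. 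Combining the two cases gives the corollary.

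The only genuine ``obstacle'' lies in the bookkeeping of the uncapped case: one must match each shading-erased diagram against the corresponding entry of the earlier classification, paying attention to the bending of the diagram (governed by $w_0\circ\pi_{\theta}$) and to the fact that $\Type(\theta)$ is recorded as the same encircled node set in both papers. Since Theorem~\ref{thm:main*}(a) has already pinned down the finite list of possibilities, however, this is routine cross-referencing rather than mathematical content, and I expect no surprises beyond confirming that no new (undecorated) shape is produced by deleting the decoration.
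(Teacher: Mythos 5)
Your proposal is correct and matches the paper's argument: the paper derives this corollary immediately from Theorem~\ref{thm:main*}(a) together with the observation that erasing the shading from the diagrams in Tables~\ref{table:1} and~\ref{table:2} yields diagrams already listed in \cite[Tables 1--5]{PVM:17a}, the capped case being covered by the classification in \cite{PVM:17a} itself. Your write-up simply makes the capped/uncapped split and the finite cross-referencing explicit.
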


We now use Theorem~\ref{thm:main*}(a) to determine the partially ordered set $\mathcal{T}(\theta)$ for all automorphisms~$\theta$. We first note that, by the proposition below, it is sufficient to determine the maximal elements of $\mathcal{T}(\theta)$.

\begin{prop}\label{prop:redd}
Let $\mathcal{M}(\theta)$ be the set of maximal elements of $\mathcal{T}(\theta)$. Then 
$$
\mathcal{T}(\theta)=\{J\subseteq S\mid J^{\pi_{\theta} w_0}=J\text{ and }J\subseteq M\text{ for some $M\in\mathcal{M}(\theta)$}\}.
$$
\end{prop}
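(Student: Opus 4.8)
The plan is to prove the asserted equality by double inclusion, working directly from the definition $\cT(\theta)=\{\tau(\sigma)\mid\sigma\in\Opp(\theta)\}$. Throughout I will use only the definition of opposition of simplices (via containment in a pair of opposite chambers) together with the fact that $\theta$ is a simplicial automorphism inducing the type permutation $\pi_{\theta}$, so that $\tau(\alpha^{\theta})=\tau(\alpha)^{\pi_{\theta}}$ and $\theta$ preserves the face order. The main conceptual point I would emphasise is that this proposition is a \emph{downward}-closure statement (passing from a type to its subtypes), which is easy, as opposed to the \emph{upward}-closure expressed by cappedness, which is the genuinely hard phenomenon studied in the rest of the paper.

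First I would treat the inclusion ``$\subseteq$'', which is immediate. Let $J\in\cT(\theta)$, witnessed by some $\sigma\in\Opp(\theta)$ with $\tau(\sigma)=J$. Since $\sigma$ is opposite $\sigma^{\theta}$, the type condition in the definition of opposition gives $\tau(\sigma^{\theta})=\tau(\sigma)^{\op}$, that is $J^{\pi_{\theta}}=J^{w_0}$, which (as $w_0$ is an involution on $S$) is equivalent to $J^{\pi_{\theta}w_0}=J$. Moreover, $S$ being finite, $\cT(\theta)$ is a finite poset, so $J$ lies below some maximal element $M\in\cM(\theta)$. Hence $J$ belongs to the right-hand side.

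The substance is the reverse inclusion ``$\supseteq$''. Let $J\subseteq S$ satisfy $J^{\pi_{\theta}w_0}=J$ and $J\subseteq M$ for some $M\in\cM(\theta)$. As $\cM(\theta)\subseteq\cT(\theta)$, choose $\sigma\in\Opp(\theta)$ with $\tau(\sigma)=M$, and let $\sigma_0\leq\sigma$ be the unique face of $\sigma$ of type $J$ (which exists since $J\subseteq M=\tau(\sigma)$). The key step is to show $\sigma_0\in\Opp(\theta)$, i.e.\ that $\sigma_0$ is opposite $\sigma_0^{\theta}$. The type condition holds because $\tau(\sigma_0^{\theta})=J^{\pi_{\theta}}=J^{w_0}=\tau(\sigma_0)^{\op}$, where the middle equality is exactly the hypothesis $J^{\pi_{\theta}w_0}=J$. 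For the chamber condition, since $\sigma$ is opposite $\sigma^{\theta}$ there exist opposite chambers $C\supseteq\sigma$ and $D\supseteq\sigma^{\theta}$; then $\sigma_0\leq\sigma\leq C$ and, $\theta$ being simplicial, $\sigma_0^{\theta}\leq\sigma^{\theta}\leq D$. Thus $C$ contains $\sigma_0$, $D$ contains $\sigma_0^{\theta}$, and $C,D$ are opposite, so by definition $\sigma_0$ is opposite $\sigma_0^{\theta}$. Therefore $J=\tau(\sigma_0)\in\cT(\theta)$, completing the proof.

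I expect the ``main obstacle'' to be largely illusory: the only nontrivial-looking step is the face claim in the last paragraph, but it dissolves once one invokes the chamber-based definition of opposition rather than trying to argue about projections or residues. The two genuine subtleties to flag are (i) that the stability condition $J^{\pi_{\theta}w_0}=J$ is used in exactly one place, namely to guarantee that $\sigma_0$ and $\sigma_0^{\theta}$ have opposite types (without it $\sigma_0$ could not be opposite its image), and (ii) the trivial edge case $J=\emptyset$, where the empty simplex is opposite itself and lies below any $M$, so it is consistently accounted for on both sides provided $\cT(\theta)\neq\emptyset$, which holds for nontrivial $\theta$ since $\Opp(\theta)\neq\emptyset$ by the Leeb and Abramenko--Brown result cited earlier.
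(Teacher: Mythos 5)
Your proof is correct and follows essentially the same route as the paper: the paper reduces the statement to exactly the two facts you establish — that $\tau(\sigma)$ is $w_0\circ\pi_{\theta}$-stable for any $\sigma\in\Opp(\theta)$, and that any $w_0\circ\pi_{\theta}$-stable subtype of a non-domestic simplex is again realised by a non-domestic face — citing \cite[Lemma~1.3]{PVM:17a} for them, whereas you prove them directly from the chamber-based definition of opposition. The only difference is that you supply inline the short argument the paper outsources to the earlier reference.
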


\begin{proof}
This follows immediately from the facts that if $\sigma$ is a non-domestic type $K$ simplex then (i) $K$ is preserved by $w_0\circ\pi_{\theta}$, and (ii) if $J\subseteq K$ is preserved under $w_0\circ \pi_{\theta}$ then the type $J$ subsimplex of $\sigma$ is also non-domestic (see \cite[Lemma~1.3]{PVM:17a}). 
\end{proof}

Thus it remains to compute the set $\mathcal{M}(\theta)$ of maximal elements of $\mathcal{T}(\theta)$. We do this in the corollary below. Recall that if $\theta$ is uncapped then the decorated opposition diagram of $\theta$ is $(\Gamma,\Type(\theta),K_{\theta},\pi_{\theta})$ where, in particular, $K_{\theta}$ is the set of shaded nodes. 

\begin{cor}\label{cor:maximal}
Let $\theta$ be an automorphism of a spherical building~$\Delta$. 
\begin{compactenum}[$(a)$]
\item If $\theta$ is capped then $\mathcal{M}(\theta)=\{\Type(\theta)\}$.
\item If $\theta$ is uncapped then $\mathcal{M}(\theta)=\{\Type(\theta)\backslash\{k\}\mid k\in K_{\theta}\}$.\end{compactenum}
\end{cor}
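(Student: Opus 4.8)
The plan is to treat the two parts separately, with part $(a)$ being immediate from the definitions and part $(b)$ resting on Theorem~\ref{thm:main*}(a). For part $(a)$, note that $\Type(\theta)=\bigcup\{K\mid K\in\mathcal{T}(\theta)\}$, so every element of $\mathcal{T}(\theta)$ is contained in $\Type(\theta)$. If $\theta$ is capped then, by the definition of cappedness, $\Type(\theta)\in\mathcal{T}(\theta)$, and hence $\Type(\theta)$ is the maximum (not merely a maximal) element of $\mathcal{T}(\theta)$. Thus $\mathcal{M}(\theta)=\{\Type(\theta)\}$, with nothing further to check.

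For part $(b)$ I would first record the standing facts. Since $\theta$ is uncapped we have $\Type(\theta)\notin\mathcal{T}(\theta)$, so every maximal element is a proper subset of $J:=\Type(\theta)$; moreover every element of $\mathcal{T}(\theta)$ is $w_0\circ\pi_\theta$-invariant, and $J$ itself is invariant, being a union of blocks from $\mathcal{J}_\theta$. The crucial structural input, read off from the diagrams classified in Theorem~\ref{thm:main*}(a), is the observation that every node of $K_\theta$ belongs to a singleton block of $\mathcal{J}_\theta$; equivalently, a two-element block of $\mathcal{J}_\theta$ (which occurs only in the bent cases, namely collineations of $\sE_6(2)$ and of $\sD_n(2)$ with $n$ odd, and dualities of $\sD_n(2)$ with $n$ even) is never shaded. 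I would verify this by inspecting Tables~\ref{table:1} and~\ref{table:2}, where in each bent diagram the paired end-nodes are encircled but left unshaded.

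Granting this observation, the inclusion $\{J\setminus\{k\}\mid k\in K_\theta\}\subseteq\mathcal{M}(\theta)$ is then formal. If $k\in K_\theta$ then $k$ lies in some $J'\in\mathcal{J}_\theta$ with $J\setminus J'\in\mathcal{T}(\theta)$, and by the observation $J'=\{k\}$, so $J\setminus\{k\}\in\mathcal{T}(\theta)$. It is maximal because $\{k\}$ is a minimal invariant block, whence the only invariant type properly containing $J\setminus\{k\}$ is $J$ itself, and $J\notin\mathcal{T}(\theta)$.

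The reverse inclusion $\mathcal{M}(\theta)\subseteq\{J\setminus\{k\}\mid k\in K_\theta\}$ is the main obstacle, because a priori a maximal element $M$ could differ from $J$ in more than one minimal block, and the downward-closure of $\mathcal{T}(\theta)$ recorded in Proposition~\ref{prop:redd} does not by itself force the gap $J\setminus M$ to be a single block. Here I would again invoke Theorem~\ref{thm:main*}(a): the residue arguments carried out in Propositions~\ref{prop:B}, \ref{prop:D1}, \ref{prop:D2} and~\ref{prop:exceptional} exhibit, for each uncapped $\theta$, exactly which ``one block below $J$'' simplices are non-domestic, and these are precisely the $J\setminus\{k\}$ with $k$ shaded. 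Thus for maximal $M$ the set $J\setminus M$ is forced to be a single singleton block $\{k\}$ with $k\in K_\theta$, giving $M=J\setminus\{k\}$. The delicate point is exactly the bent diagrams, where one must confirm that no maximal element arises by deleting a genuine two-element block; this is the content of the key observation above, and is where the finer case distinctions (for instance the $\sF_4$ input supplied by Lemma~\ref{lem:F41234}) do the real work.
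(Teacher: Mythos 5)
Your part (a) and the forward inclusion in part (b) are fine, and your observation that every shaded node lies in a singleton block of $\mathcal{J}_\theta$ is a correct (and necessary) reading of Tables~\ref{table:1} and~\ref{table:2}. The gap is in the reverse inclusion $\mathcal{M}(\theta)\subseteq\{\Type(\theta)\setminus\{k\}\mid k\in K_\theta\}$. You acknowledge that a maximal element $M$ could a priori miss more than one minimal block, but then assert that knowing ``exactly which one-block-below-$J$ simplices are non-domestic'' forces $J\setminus M$ to be a single block. It does not: if $M$ contains every shaded node, then $M$ is contained in no $J\setminus\{k\}$ with $k\in K_\theta$, and the corank-one information says nothing about whether $M\in\mathcal{T}(\theta)$. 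Concretely, for the uncapped collineation of $\sE_6(2)$ with $K_\theta=\{2,4\}$ and unshaded blocks $\{3,5\}$ and $\{1,6\}$, the invariant type $M=\{2,4\}$ lies in no $J\setminus\{k\}$, and if a type $\{2,4\}$ simplex were non-domestic then $\{2,4\}$ would be a maximal element not of the claimed form. The same issue arises in all six diagrams in which $J\setminus K_\theta$ is a union of two minimal invariant blocks (the $\sE_6(2)$ collineation diagram, the first $\sE_7(2)$ and $\sE_8(2)$ diagrams, the first two $\sF_4(2)$ diagrams, and the $\sF_4(2,4)$ diagram); five of these are unbent, so the difficulty is not located in the bent diagrams or in two-element blocks, as you suggest.

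What is missing is a separate proof that in each of these six cases $\theta$ is $\{i,j\}$-domestic, where $i$ and $j$ are the two shaded nodes. The paper establishes exactly this as a Claim inside its proof, by fresh residue arguments: for $\sE_6(2)$, a non-domestic type $\{2,4\}$ simplex would induce, on the $\sA_2\times\sA_2$ component of the residue of its type $4$ vertex, an automorphism swapping the two factors, which is never domestic, contradicting domesticity of~$\theta$; for the first $\sF_4(2)$ diagram, a non-domestic type $\{1,2\}$ simplex would force an exceptional domestic duality of a Fano plane residue and hence a non-domestic type $\{1,2,3\}$ simplex, contradicting node $4$ being unshaded. These arguments are not contained in Propositions~\ref{prop:B}, \ref{prop:D1}, \ref{prop:D2} or~\ref{prop:exceptional} (which only establish the shading data), nor in Lemma~\ref{lem:F41234}, so your proposal is incomplete without them.
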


\begin{proof}
The first statement is obvious, so consider the second statement. Let $(\Gamma,J,K,\pi)$ be the decorated opposition diagram, and so $J=\Type(\theta)$. If $J=K$ then there are non-domestic simplices of each type $\Type(\theta)\backslash\{k\}$ with $k\in J$, and these are clearly the maximal types mapped to opposite (otherwise $\theta$ is capped). Suppose now that $J\backslash K$ consists of a single minimal $w_0\circ\pi$ invariant subset~$J'$ (thus $J'$ is either a singleton, or $J'$ consists of a pair, as in the second $\sD_{2n}(2)$ diagram in Table~\ref{table:1}). In this case the only $w_0\circ \pi$ stable strict subset of $J$ that is not contained in an element of $\{J\backslash\{k\}\mid k\in K\}$ is $J\backslash J'$, and since $J'$ is not shaded all simplicies of this type are domestic. Hence the result in this case. 

By Theorem~\ref{thm:main*}(a) the only remaining cases are the $6$ diagrams where $J\backslash K$ consists of precisely $2$ minimal $w_0\circ\pi$ invariant sets. Specifically, these examples are the $\sE_6(2)$ collineation diagram, the first $\sE_7(2)$ and $\sE_8(2)$ diagrams, the first two $\sF_4(2)$ diagrams (these are dual to one another), and the $\sF_4(2,4)$ diagram. In these cases the result is implied by the following claim.
\smallskip

\noindent\textit{Claim:} Suppose that the decorated opposition diagram of $\theta$ is one of the $6$ diagrams listed above. Then $\theta$ is $\{i,j\}$-domestic where $i$ and $j$ are the two shaded nodes.
\smallskip

\noindent\textit{Proof of Claim:} Consider the $\sE_6$ diagram. If there is a non-domestic type $\{2,4\}$ simplex then with $v$ the type $4$ vertex of this simplex the map $\theta_v$ acts on the $\sA_2\times\sA_2$ component of the residue swapping the components (by Proposition~\ref{prop:typemap}). It follows that $\theta$ is not domestic, a contradiction. Similar arguments apply for $\sE_7$ and $\sE_8$, using an $\sA_5$ and $\sE_6$ residue respectively. For the first $\sF_4(2)$ diagram, suppose there is a non-domestic type $\{1,2\}$ simplex $\sigma$. Then $\theta_{\sigma}$ is a domestic duality of $\sA_2(2)$, and hence is the exceptional domestic duality of the Fano plane. It follows that there is non-domestic type $\{1,2,3\}$ simplex, contradicting the node $4$ being unshaded. A dual argument applies to the second $\sF_4(2)$ diagram. The $\sF_4(2,4)$ diagram is similar. Hence the proof of the claim is complete, and the corollary follows.
\end{proof}

\begin{example}
Suppose that $\theta$ has the $\sE_6(2)$ collineation diagram in Table~\ref{table:2}. Then the partially ordered set $\mathcal{T}(\theta)$ is (using Proposition~\ref{prop:redd} and Corollary~\ref{cor:maximal}):
\begin{center}
\begin{tikzpicture}[scale=1,baseline=-0.5ex]
\node at (0,0.3) {};
\node [inner sep=0.8pt,outer sep=0.8pt] at (-3,-1) (1) {$\{2\}$};
\node [inner sep=0.8pt,outer sep=0.8pt] at (-1,-1) (2) {$\{3,5\}$};
\node [inner sep=0.8pt,outer sep=0.8pt] at (1,-1) (3) {$\{1,6\}$};
\node [inner sep=0.8pt,outer sep=0.8pt] at (3,-1) (4) {$\{4\}$};
\node [inner sep=0.8pt,outer sep=0.8pt] at (-4,0) (5) {$\{2,3,5\}$};
\node [inner sep=0.8pt,outer sep=0.8pt] at (-2,0) (6) {$\{1,2,6\}$};
\node [inner sep=0.8pt,outer sep=0.8pt] at (0,0) (7) {$\{1,3,5,6\}$};
\node [inner sep=0.8pt,outer sep=0.8pt] at (2,0) (8) {$\{3,4,5\}$};
\node [inner sep=0.8pt,outer sep=0.8pt] at (4,0) (9) {$\{1,4,6\}$};
\node [inner sep=0.8pt,outer sep=0.8pt] at (-2,1) (10) {$\{1,2,3,5,6\}$};
\node [inner sep=0.8pt,outer sep=0.8pt] at (2,1) (11) {$\{1,3,4,5,6\}$};
\draw (1)--(5);
\draw (1)--(6);
\draw (2)--(5);
\draw (2)--(7);
\draw (2)--(8);
\draw (3)--(6);
\draw (3)--(7);
\draw (3)--(9);
\draw (4)--(8);
\draw (4)--(9);
\draw (5)--(10);
\draw (6)--(10);
\draw (7)--(10);
\draw (7)--(11);
\draw (8)--(11);
\draw (9)--(11);
\end{tikzpicture}
\vspace{-0.3cm}
\end{center}
\medskip
\end{example}

As a final application we will compute the displacement of an arbitrary automorphism~$\theta$ in Corollary~\ref{cor:disp} below. Recall that, by definition, $\disp(\theta)=\max\{d(C,C^{\theta})\mid C\in\mathcal{C}\}$, where $\mathcal{C}$ is the set of chambers of $\Delta$, and $d(C,D)=\ell(\delta(C,D))$ for chambers $C,D\in\mathcal{C}$.

\begin{prop}\label{prop:disp} Let $\theta$ be any automorphism of a thick irreducible spherical building of type~$(W,S)$. Then
$$
\disp(\theta)=\diam(W)-\min \{\mathrm{diam}(W_{S\backslash J})\mid J\in\mathcal{M}(\theta)\}.
$$ 
\end{prop}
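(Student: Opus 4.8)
The engine of the proof is a single decomposition of a chamber's displacement across an opposite face. Fix $\sigma\in\Opp(\theta)$ of type $J=\tau(\sigma)$ and a chamber $C\supseteq\sigma$. Since $\sigma$ and $\sigma^\theta$ are opposite, the gate property gives $d(C,C^\theta)=d(C,\proj_{\sigma^\theta}(C))+d(\proj_{\sigma^\theta}(C),C^\theta)$. For the first summand I would work inside an apartment containing the opposite pair $\sigma,\sigma^\theta$: identifying it with $W$ so that $\sigma$ is the cotype-$(S\setminus J)$ face of the fundamental chamber, the chambers of $\Res(\sigma)$ are the elements of $W_{S\setminus J}$ and those of $\Res(\sigma^\theta)$ are $W_{S\setminus J}w_0$, so the gate distance is $\min\{\ell(xw_0)\mid x\in W_{S\setminus J}\}=\ell(w_0)-\max\{\ell(x)\mid x\in W_{S\setminus J}\}$ via $\ell(xw_0)=\ell(w_0)-\ell(x)$, which equals $\diam(W)-\diam(W_{S\setminus J})$ independently of $C$. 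For the second summand I would use that $\proj_\sigma\colon\Res(\sigma^\theta)\to\Res(\sigma)$ is a (length-preserving) isomorphism inverse to $\proj_{\sigma^\theta}$ and that $\proj_\sigma(C^\theta)=C^{\theta_\sigma}$ by definition of $\theta_\sigma$, giving $d(\proj_{\sigma^\theta}(C),C^\theta)=d_{\Res(\sigma)}(C,C^{\theta_\sigma})$. This yields the key identity
\begin{equation}\label{eq:star}
d(C,C^\theta)=\diam(W)-\diam(W_{S\setminus J})+d_{\Res(\sigma)}(C,C^{\theta_\sigma}),\qquad C\supseteq\sigma,\ \sigma\in\Opp(\theta).
\end{equation}

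The lower bound is then immediate: choosing $M\in\cM(\theta)$ with $\diam(W_{S\setminus M})$ minimal and a type-$M$ simplex $\sigma\in\Opp(\theta)$, any $C\supseteq\sigma$ satisfies $d(C,C^\theta)\ge\diam(W)-\diam(W_{S\setminus M})$ by \eqref{eq:star}, so $\disp(\theta)\ge\diam(W)-\min_{M\in\cM(\theta)}\diam(W_{S\setminus M})$. For the upper bound I would induct on the rank $|S|$. Given an arbitrary chamber $C$, let $\sigma$ be a \emph{maximal} face of $C$ in $\Opp(\theta)$, $J=\tau(\sigma)$. By Proposition~\ref{prop:proj} a nonempty $\theta_\sigma$-opposite face of $C$ in $\Res(\sigma)$ would produce a $\theta$-opposite face of $C$ of type strictly larger than $J$, contradicting maximality; the same proposition shows every $J'\in\cM(\theta_\sigma)$ gives $J\cup J'\in\cT(\theta)$, hence $J\cup J'\subseteq M$ for some $M\in\cM(\theta)$. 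When $\sigma\neq\emptyset$ the residue $\Res(\sigma)$ (of type $(W_{S\setminus J},S\setminus J)$, with $(W_{S\setminus J})_{(S\setminus J)\setminus J'}=W_{S\setminus(J\cup J')}$) has smaller rank, so the inductive hypothesis bounds the residual term in \eqref{eq:star} by $\disp(\theta_\sigma)=\diam(W_{S\setminus J})-\min_{J'\in\cM(\theta_\sigma)}\diam(W_{S\setminus(J\cup J')})$; substituting gives $d(C,C^\theta)\le\diam(W)-\min_{J'}\diam(W_{S\setminus(J\cup J')})\le\diam(W)-\min_{M\in\cM(\theta)}\diam(W_{S\setminus M})$, as needed.

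The remaining case $\sigma=\emptyset$ — a chamber no nonempty face of which is opposite — is where \eqref{eq:star} is vacuous, the rank does not drop, and a genuinely new argument is required; this is the \emph{main obstacle}. When $\theta$ is non-domestic it is trivial, since then $S\in\cM(\theta)$ forces $\min_{M}\diam(W_{S\setminus M})=0$ and the claim is just $d(C,C^\theta)\le\diam(W)$. When $\theta$ is domestic I would translate the opposition criterion into Weyl-group language: the type-$J$ face of $C$ is opposite exactly when $\delta(C,C^\theta)$ lies in the $(W_{S\setminus J},W_{w_0(S\setminus J)})$-double coset of $w_0$. Using the standard analysis of how $\delta(C,C^\theta)$ varies along a panel, one shows that a chamber of maximal displacement always carries a nonempty opposite face; hence the supremum over $\sigma=\emptyset$ chambers is dominated by the case already treated, closing the induction. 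I expect this adjacency/maximality step to be the technical heart of the whole argument, the rest being bookkeeping through \eqref{eq:star}.

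Finally, to confirm that the extremum is pinned to $\cM(\theta)$ rather than to an arbitrary element of $\cT(\theta)$, I would record the following claim: if $\sigma\in\Opp(\theta)$ has \emph{maximal} type $M\in\cM(\theta)$ then $\theta_\sigma=\id$. Indeed, were $\theta_\sigma$ nontrivial, then since $\Res(\sigma)$ is a thick spherical building the theorem of Leeb and of Abramenko--Brown \cite{Lee:00,AB:09} would supply a nonempty $\theta_\sigma$-opposite simplex, which by Proposition~\ref{prop:proj} enlarges $M$ inside $\cT(\theta)$, contradicting maximality. With $\theta_\sigma=\id$ the residual term in \eqref{eq:star} vanishes for such $\sigma$, so the displacement of a chamber through a maximal opposite simplex is exactly $\diam(W)-\diam(W_{S\setminus M})$, matching the lower bound and identifying the relevant minimum with the one over $\cM(\theta)$.
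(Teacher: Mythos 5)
Your gate-property identity $d(C,C^\theta)=\bigl(\diam(W)-\diam(W_{S\setminus J})\bigr)+d_{\Res(\sigma)}(C,C^{\theta_\sigma})$ is correct and yields the lower bound exactly as in the paper, which instead quotes \cite[Lemma~2.5]{PVM:17a} to place $\delta(C,C^{\theta})$ in $W_{S\setminus J}w_0$. The real divergence is in the upper bound: the paper does not induct on rank at all. It takes a chamber $C$ of maximal displacement and invokes the arguments of \cite[Lemma~2.4 and Theorem~4.2]{AB:09} to get $\delta(C,C^{\theta})=w_Iw_0$ with $I^{\pi_{\theta}}=I^{w_0}$; then the type $S\setminus I$ face of $C$ is opposite its image and $\disp(\theta)=\ell(w_Iw_0)=\diam(W)-\diam(W_I)\le \diam(W)-N$ in one line. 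The step you correctly flag as the technical heart --- that a maximal-displacement chamber carries a nonempty opposite face --- is precisely this Abramenko--Brown result, and you do not actually prove it, only gesture at ``the standard analysis along a panel''; moreover, once you do cite it you obtain the stronger statement that the residual term in your identity vanishes at such a chamber, which collapses your entire induction into the paper's direct argument. Two further points if you insist on the inductive route: (i) $\Res(\sigma)$ is thick but in general \emph{reducible}, so your inductive hypothesis (the proposition, stated for irreducible buildings) does not literally apply; you would need to establish the formula for arbitrary thick spherical buildings, including automorphisms that permute isomorphic factors; (ii) your closing claim that $\theta_{\sigma}=\mathrm{id}$ when $\tau(\sigma)\in\mathcal{M}(\theta)$ is correct but unnecessary, since $\min\{\diam(W_{S\setminus J})\mid J\in\mathcal{T}(\theta)\}=\min\{\diam(W_{S\setminus J})\mid J\in\mathcal{M}(\theta)\}$ already follows from monotonicity of $J\mapsto \diam(W_{S\setminus J})$, as the paper records at the start of its proof.
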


\begin{proof}
Let $N=\min \{\mathrm{diam}(W_{S\backslash J})\mid J\in\mathcal{M}(\theta)\}$. We note first that
\begin{align}\label{eq:ineq}
N=\min\{\diam(W_{S\backslash J})\mid \text{there exists a type $J$ simplex in $\Opp(\theta)$}\}
\end{align}
because the minimum is obviously attained at a maximal element of $\mathcal{T}(\theta)$.

Let $J\subseteq \Type(\theta)$ be any subset for which there exists a non-domestic type $J$ simplex. Then for all chambers $C$ containing this simplex we have $\delta(C,C^{\theta})\in W_{S\backslash J}w_0$ (see \cite[Lemma~2.5]{PVM:17a}) and thus 
$$
\disp(\theta)\geq \ell(\delta(C,C^{\theta}))\geq \ell(w_0)-\ell(w_{S\backslash J})=\mathrm{diam}(W)-\mathrm{diam}(W_{S\backslash J}).
$$
Since this inequality holds for all $J$ such that there exists a type $J$ simplex in $\Opp(\theta)$ the formula (\ref{eq:ineq}) gives $\disp(\theta)\geq \diam(W)-N$. 

On the other hand, let $C$ be any chamber with $\ell(\delta(C,C^{\theta}))$ maximal. By the arguments of \cite[Lemma~2.4 and Theorem~4.2]{AB:09} we have $\delta(C,C^{\theta})=w_{I}w_0$ for some $I\subseteq S$ with $I^{\pi_{\theta}}=I^{w_0}$. Hence the type $J=S\backslash I$ simplex of $C$ is mapped onto an opposite simplex. Thus
$$
\mathrm{disp}(\theta)=\ell(\delta(C,C^{\theta}))=\ell(w_0)-\ell(w_{S\backslash J})=\diam(W)-\diam(W_{S\backslash J})\leq \diam(W)-N,
$$
hence the result. 
\end{proof}

\begin{cor}\label{cor:disp}
Let $\theta$ be an automorphism of a thick irreducible spherical building and let $J=\Type(\theta)$. Then
$$
\disp(\theta)=\begin{cases}
\mathrm{diam}(W)-\mathrm{diam}(W_{S\backslash J})&\text{if $\theta$ is capped}\\
\mathrm{diam}(W)-\mathrm{diam}(W_{S\backslash J})-1&\text{if $\theta$ is uncapped.}
\end{cases}
$$ 
In particular, if $\theta$ is exceptional domestic then $\disp(\theta)=\diam(\Delta)-1$. 
\end{cor}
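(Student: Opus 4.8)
The plan is to read off the corollary directly from Proposition~\ref{prop:disp}, which expresses $\disp(\theta)$ through the maximal elements $\cM(\theta)$ of $\cT(\theta)$, combined with the explicit description of $\cM(\theta)$ supplied by Corollary~\ref{cor:maximal}. Write $J=\Type(\theta)$. If $\theta$ is capped then Corollary~\ref{cor:maximal}(a) gives $\cM(\theta)=\{J\}$, so the minimum in Proposition~\ref{prop:disp} runs over a single set and we obtain $\disp(\theta)=\diam(W)-\diam(W_{S\backslash J})$ at once. Hence the real content lies in the uncapped case.

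So suppose $\theta$ is uncapped. By Corollary~\ref{cor:maximal}(b) we have $\cM(\theta)=\{J\backslash\{k\}\mid k\in K_{\theta}\}$, and since $S\backslash(J\backslash\{k\})=(S\backslash J)\cup\{k\}$, Proposition~\ref{prop:disp} reduces the claim to the purely Coxeter-theoretic identity
$$
\min_{k\in K_{\theta}}\diam\bigl(W_{(S\backslash J)\cup\{k\}}\bigr)=\diam(W_{S\backslash J})+1.
$$
First I would prove the inequality ``$\geq$'': for each $k\in K_{\theta}$ the node $k$ lies outside $S\backslash J$, so $W_{S\backslash J}$ is a proper standard parabolic subgroup of $W_{(S\backslash J)\cup\{k\}}$, its longest element is therefore strictly shorter, and as lengths are integers this yields $\diam(W_{(S\backslash J)\cup\{k\}})\geq \diam(W_{S\backslash J})+1$.

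For the reverse inequality I would exhibit a single shaded node $k\in K_{\theta}$ that is \emph{not} joined by an edge of $\Gamma$ to any node of $S\backslash J$. For such a $k$ the generator $s_k$ commutes with every generator of $W_{S\backslash J}$, so $W_{(S\backslash J)\cup\{k\}}=W_{S\backslash J}\times\langle s_k\rangle$ with longest element $w_{S\backslash J}\,s_k$ of length $\diam(W_{S\backslash J})+1$, giving the equality. The existence of such a $k$ is where Theorem~\ref{thm:main*}(a) does the work, and I would verify it by inspecting the decorated opposition diagrams of Tables~\ref{table:1} and~\ref{table:2}. When $J=S$ there is no adjacency constraint and any shaded node serves; otherwise, for the classical types one checks that node~$1$ is always shaded and, being adjacent in $\Gamma$ only to node~$2\in J$, is non-adjacent to $S\backslash J$, while for the two remaining exceptional diagrams with $J\neq S$ an explicit terminal shaded node works (node~$1$ of $\sE_7(2)$, node~$8$ of $\sE_8(2)$).

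I expect the main obstacle to be precisely this existence step. A priori the minimal diameter-increase could exceed $1$: this would happen, for instance, if every shaded minimal $w_0\circ\pi_{\theta}$-invariant subset were attached to $S\backslash J$, or were a pair of mutually non-adjacent fork nodes of a $\sD_n$ diagram (forcing an increase of~$2$). Ruling this out does not seem to follow from a soft argument, so I would rely on the classification in Tables~\ref{table:1} and~\ref{table:2} here. Finally, the ``in particular'' clause is immediate: an exceptional domestic automorphism has full opposition diagram, hence $J=S$, and it is uncapped (a chamber mapped to an opposite would contradict domesticity); then $W_{S\backslash J}=W_{\emptyset}$ has diameter~$0$ and $\diam(W)=\diam(\Delta)$, so $\disp(\theta)=\diam(\Delta)-1$.
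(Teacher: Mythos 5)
Your proof is correct and follows the paper's own route: Corollary~\ref{cor:maximal} identifies $\mathcal{M}(\theta)$, and Proposition~\ref{prop:disp} converts this into the displacement formula. The only differences are in granularity: the paper handles the capped case by citing \cite{PVM:17a} rather than re-deriving it from Proposition~\ref{prop:disp}, and it leaves implicit the Coxeter-theoretic verification that $\min_{k\in K_{\theta}}\diam\bigl(W_{(S\backslash J)\cup\{k\}}\bigr)=\diam(W_{S\backslash J})+1$, which you correctly supply by exhibiting, via the classification in Tables~\ref{table:1} and~\ref{table:2}, a shaded node not adjacent in $\Gamma$ to $S\backslash J$.
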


\begin{proof}
The case of capped automorphisms is \cite[Theorem~5]{PVM:17a}. In the case of an uncapped automorphism we note that by Corollary~\ref{cor:maximal} the maximal elements of $\mathcal{T}(\theta)$ are of the form $\Type(\theta)\backslash\{j\}$ for some $j\in\Type(\theta)$, and then the result follows from Proposition~\ref{prop:disp}.
\end{proof}

\begin{remark}\label{rem:disp}
Corollary~\ref{cor:disp} shows that the set of possible displacements is extremely restricted. For example, consider an $\sE_8$ building~$\Delta$, where a priori there are $\ell(w_0)=120$ potential displacements. However, by Corollary~\ref{cor:disp}, \cite[Theorem~3]{PVM:17a}, and Theorem~\ref{thm:main*}(a) the only possible displacements are:
\begin{align*}
0&=\diam(\sE_8)-\diam(\sE_8) &&\text{for the trivial (hence capped) automorphism}\\
57&=\diam(\sE_8)-\diam(\sE_7)&&\text{for capped automorphisms with $\Type(\theta)=\{8\}$}\\
90&=\diam(\sE_8)-\diam(\sD_6)&&\text{for capped automorphisms with $\Type(\theta)=\{1,8\}$}\\
107&=\diam(\sE_8)-\diam(\sD_4)-1&&\text{for uncapped automorphisms with $\Type(\theta)=\{1,6,7,8\}$}\\
108&= \diam(\sE_8)-\diam(\sD_4)&&\text{for capped automorphisms with $\Type(\theta)=\{1,6,7,8\}$}\\
119&=\diam(\sE_8)-1&&\text{for uncapped automorphisms with $\Type(\theta)=S$}\\
120&= \diam(\sE_8)&&\text{for non-domestic (hence capped) automorphisms}.
\end{align*}
In particular, note that for $\sE_8$ buildings the displacement determines the (decorated) opposition diagram of the automorphism. 
This phenomenon is not true for all types; for example in $\sB_7(\mathbb{F})$ displacement $45$ is obtained by both capped automorphisms with $\Type(\theta)=\{1,2,3,4,5\}$ and capped automorphisms with $\Type(\theta)=\{2,4,6\}$.
\end{remark}

\section{Uncapped automorphisms for classical types}\label{sec:classical}

In this section we prove Theorem~\ref{thm:main*}(b) for classical types. Thus our aim is to construct uncapped automorphisms with each of the diagrams listed in Tables~\ref{table:1} and~\ref{table:2} for the buildings $\sA_n(2)$, $\sB_n(2)$, $\sB_n(2,4)$, and $\sD_n(2)$.

\subsection{The buildings $\sA_n(2)$}

In this section we work with the concrete model $\sA_n(2)=\mathsf{PG}(n,\FF_2)$ for the small building of type $\sA_n$. Thus an $i$-space of $\sA_n(2)$ means a subspace of $\mathbb{F}_2^n$ of  (projective) dimension~$i$, and this corresponds to a type $i+1$ vertex of the building. Let $\theta$ be a duality of $\sA_n(2)$. Recall that a point $p$ of $\sA_n(2)$ is called \textit{absolute} with respect to $\theta$ if $p\in p^{\theta}$ (that is, $p$ is not mapped to an opposite hyperplane). Dually, a hyperplane $\pi$ is absolute if $\pi^{\theta}\in \pi$ (that is, $\pi$ is not mapped to an opposite point).

\begin{lemma}\label{lem:dimension}
Let $\theta$ be a duality of a projective space. Suppose that $U$ is an $m$-space consisting of absolute points of $\theta$, and let $k=\dim(U\cap U^{\theta})$. Then $m-k$ is even. 
\end{lemma}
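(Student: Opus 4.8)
The plan is to pass from the synthetic statement to linear algebra by representing the duality $\theta$ via a sesquilinear form. By the fundamental theorem of projective geometry, a duality of $\mathsf{PG}(n,\KK)=\mathsf{PG}(V)$ (with $n\geq 2$) is induced by a nondegenerate sesquilinear form $f\colon V\times V\to\KK$, relative to an anti-automorphism $\sigma$ of $\KK$, in the sense that $W^\theta=W^\perp=\{v\in V : f(w,v)=0 \text{ for all } w\in W\}$ for every subspace $W$. Under this description a point $\langle u\rangle$ is absolute precisely when $f(u,u)=0$. First I would fix the $(m+1)$-dimensional vector subspace $\hat U\leq V$ underlying the $m$-space $U$ and note that $U^\theta$ corresponds to $\hat U^\perp$, so that $\hat U\cap\hat U^\perp$ is the vector subspace underlying $U\cap U^\theta$ and has vector dimension $k+1$.

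Next I would exploit that every point of $U$ is absolute, i.e.\ $f(u,u)=0$ for all $u\in\hat U$. Since $f$ is additive in each variable, polarising gives $0=f(u+v,u+v)=f(u,v)+f(v,u)$, hence $f(u,v)=-f(v,u)$ for all $u,v\in\hat U$. Writing $g=f|_{\hat U\times\hat U}$, this skew relation makes the left and right radicals of $g$ coincide and equal $\hat U\cap\hat U^\perp$; therefore $\operatorname{rank}(g)=\dim\hat U-\dim(\hat U\cap\hat U^\perp)=(m+1)-(k+1)=m-k$, and it suffices to prove that $\operatorname{rank}(g)$ is even.

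The final step is a case split on $\sigma$. If $\sigma=\mathrm{id}$ then $\KK$ is a field and $g$ is an alternating bilinear form (as $g(u,u)=0$), so its rank is even by the standard normal form for alternating forms. If $\sigma\neq\mathrm{id}$ then I would show $g\equiv 0$: for $\lambda\in\KK$ and $u,v\in\hat U$, linearity in the first argument gives $g(\lambda u,v)=\lambda\,g(u,v)$, while the skew relation together with $\sigma$-semilinearity in the second gives $g(\lambda u,v)=-g(v,\lambda u)=-\lambda^\sigma g(v,u)=\lambda^\sigma g(u,v)$; thus $(\lambda-\lambda^\sigma)\,g(u,v)=0$, and choosing $\lambda$ with $\lambda\neq\lambda^\sigma$ forces $g(u,v)=0$ (since $\KK$ is a division ring). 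Then $\operatorname{rank}(g)=0=m-k$, which is even.

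I expect the only genuine subtlety to be this dichotomy: the ``expected'' situation is the alternating/symplectic one, but one must rule out a nonzero genuinely sesquilinear restriction, which is exactly what the computation $(\lambda-\lambda^\sigma)g(u,v)=0$ achieves (it shows that for $\sigma\neq\mathrm{id}$ a positive-dimensional subspace of absolute points is forced to be totally isotropic, giving $k=m$). A secondary point to handle with care is the degenerate cases ($m=0$, or $n\leq 1$ where the form description needs a word of comment) and the bookkeeping of the projective-versus-vector dimension shift, but these are routine.
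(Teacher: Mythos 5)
Your proof is correct, but it takes a genuinely different route from the paper's. The paper argues synthetically: it passes to a complement $H$ of the $k$-space $U\cap U^{\theta^{-1}}$ inside $U$, checks that $x\mapsto x^{\theta}\cap H$ is a duality of $H$ all of whose points are absolute, and then quotes Lemma~\ref{lem:An} to conclude that $\dim H=m-k-1$ is odd. You instead coordinatize, effectively inlining the content of Lemma~\ref{lem:An}: representing $\theta$ by a nondegenerate $\sigma$-sesquilinear form, you observe that the restriction $g$ to $\hat U$ is alternating with radical $\hat U\cap\hat U^{\perp}$, so that $m-k=\operatorname{rank}(g)$ is even (or, when $\sigma\neq\mathrm{id}$, that $g\equiv 0$ and $k=m$). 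What your approach buys is that you never have to construct the complement $H$ or verify that the induced map on it is a genuine everywhere-absolute duality --- a step the paper passes over rather quickly (the assertion that $H$ meets neither $U^{\theta}$ nor $U^{\theta^{-1}}$ deserves a line of justification); what it costs is redoing the dichotomy that Lemma~\ref{lem:An} would otherwise supply. Two minor points: over a noncommutative $\KK$ the scalar in the semilinear slot exits on the right, so the correct identity is $\lambda\,g(u,v)=g(u,v)\lambda^{\sigma}$ rather than $(\lambda-\lambda^{\sigma})g(u,v)=0$; this still forces $g(u,v)=0$ when $\sigma\neq\mathrm{id}$, since otherwise $\sigma$ would be an inner (hence ordinary) automorphism as well as an anti-automorphism, making $\KK$ commutative and $\sigma=\mathrm{id}$. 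And in the paper's only application $\KK=\FF_2$, so the whole $\sigma\neq\mathrm{id}$ branch is vacuous there.
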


\begin{proof}
The hyperplanes through $\langle U^{\theta},U\rangle$ form a dual space of (projective) dimension~$k$, and the inverse image is a $k$-space contained in $U$. Choose a complementary $(m-k-1)$-space $H$ in $U$, and so $H$ intersects neither $U^{\theta}$ nor $U^{\theta^{-1}}$. Then for each $x\in H$ we have that $x^{\theta}\cap H$ is a hyperplane of $H$ through $x$, and hence is absolute. Thus $\theta$ is a symplectic polarity on $H$, and so $m-k$ is even (see Lemma~\ref{lem:An}). 
\end{proof}

\begin{thm}\label{thm:existenceAn(2)}
For each $n\geq 2$ there exists a unique duality $\theta$ of $\sA_n(2)$ (up to conjugation) with the property that the set of absolute points of $\theta$ is the union of two distinct hyperplanes. This duality is strongly exceptional domestic, with order $8$ if $n$ is even and $4$ if $n$ is odd. 
\end{thm}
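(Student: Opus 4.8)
The plan is to pass to the linear-algebraic model of dualities and treat existence, domesticity, the order, and uniqueness in turn. Over $\FF_2$ there are no nontrivial field automorphisms and the only scalar is $1$, so every duality $\theta$ of $\sA_n(2)=\mathsf{PG}(n,\FF_2)$ is given by a nondegenerate bilinear form $f$ on $V=\FF_2^{n+1}$ via $U^{\theta}=U^{\perp_f}$, and two dualities are conjugate by a collineation precisely when their forms are congruent. A point $\langle x\rangle$ is absolute if and only if $f(x,x)=0$, so the absolute points form the set $\{x:Q(x)=0\}$ with $Q(x)=f(x,x)$. Since $\mathrm{GL}_{n+1}(\FF_2)$ acts transitively on pairs of distinct hyperplanes, the hypothesis that the absolute set be a union of two distinct hyperplanes is, up to congruence, the normalization $Q(x)=x_0x_1$; equivalently the Gram matrix $M$ has zero diagonal, $M_{01}+M_{10}=1$, and $M_{ij}=M_{ji}$ otherwise, so that the symmetrization $B=M+M^{T}$ has rank $2$ with radical $N=\langle e_2,\dots,e_n\rangle=H_1\cap H_2$. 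The key structural observation, which drives everything else, is that on $N$ the form $f$ is \emph{symmetric} (as $f(x,y)+f(y,x)=B(x,y)=0$ when $x\in N$) and satisfies $f(x,x)=Q(x)=0$, so $f|_N$ is \emph{alternating} of dimension $n-1$; thus the parity of $n$ governs whether $f|_N$ can be nondegenerate, which is the root of the $n$-even/$n$-odd dichotomy.

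For existence I would exhibit one nondegenerate $M$ of the above shape for each $n$, of the form $M=E_{01}+A$ with $A$ alternating, and verify invertibility; I have checked the representatives $M=\bigl(\begin{smallmatrix}0&1&1\\0&0&1\\1&1&0\end{smallmatrix}\bigr)$ for $n=2$ and an analogous $M$ for $n=3$. For \emph{domesticity} I would use the criterion that a duality maps a chamber (a complete flag $U_1\subset\cdots\subset U_{n+1}$) to an opposite chamber if and only if $f|_{U_i}$ is nondegenerate for every $i$, equivalently if and only if $f$ admits an ordered basis with all leading principal minors nonzero. I would rule out such a flag by induction on $n$: a nondegenerate hyperplane $U_n$ would carry a complete nondegenerate flag, while $f|_{U_n}$ again has absolute set equal, generically, to two distinct hyperplanes of $U_n$, so the inductive hypothesis applies; the finitely many special positions of $U_n$ relative to $H_1,H_2$ are handled separately, using Lemma~\ref{lem:dimension} to control $\dim(U\cap U^{\theta})$ for the absolute subspaces involved, with the Fano case $n=2$ as base (there one checks directly that for a non-absolute $v_0$ the pairing $f|_{\langle v_0,v_1\rangle}$ is identically degenerate). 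Granting domesticity, $\theta$ has non-absolute points, hence is not a symplectic polarity by Lemma~\ref{lem:An}, so Theorem~\ref{thm:Asmall} forces $\theta$ to be strongly exceptional domestic; in particular $\theta$ maps a simplex of each proper type to an opposite.

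For the order I would use that $\theta^2$ is the collineation with matrix $\sigma=M^{-1}M^{T}=I+M^{-1}B$, and that over $\FF_2$, since odd powers of $\theta$ are dualities (hence nontrivial) and the only scalar is $1$, we have $\mathrm{ord}(\theta)=2\,\mathrm{ord}(\sigma)$. Writing $R=M^{-1}B$ and $B=e_0e_1^{T}+e_1e_0^{T}$, a short computation shows $R$ has rank $2$, that $R^2=0$ exactly when the top-left $2\times2$ block of $M^{-1}$ vanishes, and more generally that $\mathrm{ord}(\sigma)$ is controlled by the nilpotency index of $R$ on a two-dimensional space. On the canonical representative one then checks $R^2=0$, so $\sigma^2=I$ and $\mathrm{ord}(\theta)=4$, exactly when $f|_N$ is nondegenerate, i.e. $n$ odd; and $R^2\neq0=R^4$, so $\mathrm{ord}(\sigma)=4$ and $\mathrm{ord}(\theta)=8$, when $n$ is even. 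The sample computations indeed give order $8$ for $n=2$ and order $4$ for $n=3$, and by uniqueness this value is independent of the chosen representative.

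Finally, uniqueness up to conjugation reduces to showing there is a single congruence class of nondegenerate $f$ with $Q=x_0x_1$. I would prove this by a Witt-type extension argument: after matching the pair $(H_1,H_2)$, hence $Q$, the residual data is the nondegenerate form refining $Q$, which is determined by $f|_N$ (an alternating form, unique up to equivalence once its rank—forced by the nondegeneracy of $f$ and the parity of $n-1$—is fixed) together with the pairing linking $N$ and a complement of $\langle e_0,e_1\rangle$; one assembles an isometry of these pieces that stabilizes $Q$ to produce the conjugating collineation. I expect this classification step to be the main obstacle: because $B$ has rank $2$ the form is neither symmetric nor alternating, so standard Witt machinery does not apply directly, and the delicate point is showing that the linking data between $N$ and the hyperbolic rank-$2$ part carries no further congruence invariant, so that the pair $(B,Q)$ already determines $f$.
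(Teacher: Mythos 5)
Your linear-algebraic framework is genuinely different from the paper's argument, which is synthetic throughout (residues of point--hyperplane flags, the induced symplectic polarity on $\alpha_1$, and a case split according to whether $\alpha_1^{\theta},\alpha_2^{\theta}$ lie on or off $\alpha_1\cup\alpha_2$). Your normalization $Q(x)=x_0x_1$, the criterion that a flag is non-domestic iff every $f|_{U_i}$ is nondegenerate, and the induction over nondegenerate hyperplanes would give a correct proof of domesticity, provided you actually dispose of the three special hyperplanes: $H_1,H_2$ are easy (every point absolute kills $f|_{U_1}$), and $\beta$ is automatically degenerate because $\beta^{\theta}\in\alpha_1\cap\alpha_2\subseteq\beta$ -- a small fact the paper proves and you would need too. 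The deduction of strong exceptional domesticity from Lemma~\ref{lem:An} and Theorem~\ref{thm:Asmall} is exactly right.

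There are, however, two genuine problems. First, the order computation rests on a false equivalence. Your first step is correct: with $R=M^{-1}B$, one has $R^2=0$ iff the top-left $2\times2$ block of $M^{-1}$ vanishes, i.e.\ iff $N^{\perp_f}=M^{-1}\langle e_0,e_1\rangle\subseteq N$. But that condition says the radical $N\cap N^{\perp_f}$ of $f|_N$ is $2$-dimensional -- the \emph{opposite} of nondegeneracy. Concretely, for $n=3$ the paper's matrix $J_3$ has $f|_N\equiv 0$ on $N=\langle e_2,e_3\rangle$, yet $R^2=0$ and the order is $4$; so ``order $4$ iff $f|_N$ nondegenerate'' fails in both directions, and in fact $f|_N$ is never nondegenerate for these dualities. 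The order dichotomy itself is true, but to establish it for all $n$ by your method you must exhibit an explicit family of representatives for every $n$ (you only give $n=2,3$) and verify $N^{\perp_f}\subseteq N$ for $n$ odd and $R^2\neq 0=R^4$ for $n$ even; note also that ``$\mathrm{ord}(\sigma)$ is controlled by the nilpotency index of $R$'' presupposes $R$ is nilpotent, which is not automatic.

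Second, and more seriously, uniqueness -- the half of the theorem on which the paper spends most of its effort, and on which your order argument itself depends (you transfer the computation from one representative to the whole class ``by uniqueness'') -- is not proved. You correctly diagnose that the issue is the congruence classification of nondegenerate $M$ with fixed symmetrization data, and that standard Witt machinery does not apply since $f$ is neither symmetric nor alternating; but you then leave exactly this step open. Showing that the linking data between $N$ and the rank-$2$ part carries no further congruence invariant is real work (it is essentially a special case of the Riehm-type classification of bilinear forms), and until it is done the proposal establishes neither uniqueness nor, consequently, the order statement. The paper sidesteps the congruence problem entirely by showing synthetically that projectively unique data (the hyperplanes $\alpha_1,\alpha_2$, the induced symplectic polarity, and one further point) determine $\theta$ on a basis.
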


\begin{proof}
We first demonstrate the existence of a duality whose absolute points form the union of two hyperplanes. Let $J_1$, $J_2$, and $J_3$ be the matrices
\begin{align*}
J_1&=\begin{bmatrix}
0&1\\
1&0\end{bmatrix},& J_2&=\begin{bmatrix}
0&0&1\\
1&0&1\\
1&1&0
\end{bmatrix}, & J_3&=\begin{bmatrix}
0&0&1&1\\
1&0&0&1\\
1&0&0&0\\
1&1&0&0
\end{bmatrix}
\end{align*}
and let $A$ be the $(n+1)\times (n+1)$ matrix in block diagonal form 
$$
A=\mathrm{diag}(J,J_1,J_1,\ldots,J_1)\quad\text{with $J=J_2$ for even $n$ and $J=J_3$ for odd $n$}.
$$
Let $\theta$ be the duality of $\sA_n(2)$ with matrix $A$. That is, $X^{\theta}=(AX)^{\perp}$ where $X$ is written as a column vector. Then $X$ is absolute if and only if $X\in (AX)^{\perp}$, and hence by direct calculation $X$ is absolute if and only if $X_0X_1=0$. The matrix for the collineation $\theta^2$ is given by $A^{-t}A$, and it follows by calculation that $\theta$ has order $8$ if $n$ is even, and order $4$ if $n$ is odd.

We now prove that there is at most one duality $\theta$ up to conjugation with the given property, and that such a duality is necessarily strongly exceptional domestic. We proceed by induction on $n$, the case $n=2$  being contained in \cite{PTM:15}. 

So let $\theta$ be a duality of $\sA_n(2)$ such that $\alpha_1\cup\alpha_2$ is the set of absolute points for $\theta$ with $\alpha_1\neq\alpha_2$ two hyperplanes of $\sA_n(2)$. Let $\beta$ be the hyperplane containing $\alpha_1\cap\alpha_2$ and different from both $\alpha_1$ and $\alpha_2$. Note that $\alpha_1\cup \alpha_2\cup\beta$ is the entire point set. Let $p_i=\alpha_i^\theta$, $i=1,2$ and $q=\beta^\theta$; then $L=\{p_1,p_2,q\}$ is a line. 

Note that $q$ is absolute (for if $q\in\beta$ we have $q^{\theta}\ni\beta^{\theta}=q$). Thus $q\in \alpha_1\cup \alpha_2$. In fact we claim that $q\in\alpha_1\cap \alpha_2$. For if not we have $\beta^{\theta}=q\notin\beta$ and so $\beta$ is not absolute, contradicting the fact that $\beta=q^{\theta^{-1}}$ is absolute (since $q$ is absolute).

Since $L=\{p_1,p_2,q\}$ is a line and $q\in \alpha_1\cap \alpha_2$ we either have $p_1,p_2\in\beta\backslash(\alpha_1\cup\alpha_2)$ or $p_1,p_2\in\alpha_1\cup\alpha_2$. We treat these two cases below. Before doing this, we observe that in the first case $n$ is necessarily even, and in the second case $n$ is necessarily odd. To see this, note that if $p_1,p_2\in\beta\backslash(\alpha_1\cup\alpha_2)$ then the point $p_1$ is non-absolute and the mapping $\rho_1:z\mapsto z^\theta\cap\alpha_1$, $z\in\alpha_1$, is a duality on $\alpha_1$ every point of which is absolute, forcing $n$ to be even (see Lemma~\ref{lem:An}). On the other hand, if $p_1,p_2\in\alpha_1\cup\alpha_2$ then we have $(\alpha_1\cap\alpha_2)^\theta=\<p_1,p_2\>\subseteq \alpha_1\cap\alpha_2$ and so Lemma~\ref{lem:dimension} implies $(n-2)-1=n-3$ is even, and so $n$ is odd. We also observe that since $\alpha_1$ and $\alpha_2$ are the only two hyperplanes all of whose points are absolute, every even power of $\theta$ preserves the set $\{\alpha_1,\alpha_2\}$, and hence also the set $\{p_1,p_2\}$. It follows that $p_i^\theta\in\{\alpha_1,\alpha_2\}$ for $i=1,2$. 
\smallskip

\noindent \textit{Case 1:} $p_1,p_2\in\beta\backslash(\alpha_1\cup\alpha_2)$. As noted above $n$ is even, and so we may assume $n\geq 4$. Let $\sigma=\{x,\xi\}$ be any non-domestic (point-hyperplane)-flag for $\theta$ (that is, a non-domestic type $\{1,n\}$-simplex of the building). We note that such simplices exist, and indeed they obviously all arise as follows: Since the absolute hyperplanes for $\theta$ are precisely the hyperplanes through one of the points $p_1$ or $p_2$,  if we select any point $x\in\beta\setminus(\alpha_1\cup\alpha_2)$ and any hyperplane $\xi$ through $x$ not containing $p_1$ or $p_2$, then $\sigma=\{x,\xi\}$ is non-domestic.

We claim that the mapping $\theta_\sigma:z\mapsto z^\theta\cap \xi\cap x^\theta$ for $z\in\xi\cap x^\theta$ has exactly two hyperplanes consisting entirely of absolute points. Note that $q\in\xi$ and also $q\in x^\theta$. Note also that, since $p_i^\theta$ contains the absolute point $q_i:=\<p_i,x\>\cap(\alpha_1\cap\alpha_2)$, also $x^\theta$ contains $q_i$, $i=1,2$. Since $\xi$ does not contain $p_i$, but it does contain $x$, it does not contain $q_i$, $i=1,2$. Consequently $x^\theta\cap\alpha_1\cap\alpha_2$ is not contained in $\xi$ and the claim follows. 

Thus for every non-domestic (point-hyperplane)-pair $\sigma=\{x,\xi\}$ the induced duality $\theta_{\sigma}$ on the $\sA_{n-2}(2)$ residue has precisely two hyperplanes of absolute points. Since $n-2$ is even this duality again satisfies the condition of Case 1, and so by induction $\theta$ is domestic. Since $\theta$ has non-domestic points necessarily $\theta$ is strongly exceptional domestic by Theorem~\ref{thm:Asmall}.

We now show that $\theta$ is unique, up to a projectivity (and under the assumptions of Case~1). Let $\rho_1$ be the symplectic polarity on $\alpha_1$ introduced in the paragraph before Case 1. Noting that $q^{\rho_1}=\alpha_1\cap\alpha_2$, we see that the data $\alpha_1,\alpha_2$ and $\rho_1$ are projectively unique. This determines $q$. All choices of $p_1$ outside $\alpha_1\cup\alpha_2$ are projectively equivalent, and then $p_2$ is the third point on the line determined by $p_1$ and $q$. We then know the image of an arbitrary point $x_1$ of $\alpha_1\setminus\alpha_2$, as $x_1^\theta=\<x^{\rho_1}, p_1\>$. This determines the images of all points of $\alpha_1$. Since $p_1^\theta=\alpha_1$, we know the images of a basis, which suffices to determine the whole duality. 
\smallskip

\noindent\textit{Case 2:} $p_1,p_2\in\alpha_1\cup\alpha_2$. As noted above, $n$ is odd. Take an arbitrary point $z\in\beta\setminus(\alpha_1\cup\alpha_2)$ and set $H:= z^\theta$. Then $\varphi:x\mapsto x^\theta\cap H$ is a duality in the $(n-1)$-dimensional projective space $H$ such that its absolute points form two hyperplanes $H\cap\alpha_i$, $i=1,2$. Hence by the previous case is domestic, and since $z$ was arbitrary amongst the non-domestic points for $\theta$ we conclude that~$\theta$ is domestic. Thus by Theorem~\ref{thm:Asmall} $\theta$ is strongly exceptional domestic. 

It remains to show that $\theta$ is unique up to conjugation with a projectivity. Let $D_i=H\cap \alpha_i$, $i=1,2$. Set $\{i,j\}=\{1,2\}$ and $D_i^{\varphi^{-1}}=p_i'$. Then $\{q,p_1',p_2'\}$ is a line in $H\cap\beta$ (since ${p_i'}^{\varphi}=D_i$ it suffices to see that $q^{\varphi}=\beta\cap H$, and this follows from the definition of $\varphi$ as $\beta=q^{\theta}$). It also follows that $D_i^{\theta^{-1}}=\<p_i',z\>$. Since $D_i\subseteq\alpha_i$, we conclude $\alpha_i^{\theta^{-1}}\in\< p_i',z\>$. But $\alpha_i^{\theta^{-1}}\in\{p_1,p_2\}$. We claim that $\alpha_i^{\theta^{-1}}=p_i$. Suppose not. Then $\alpha_i^{\theta^{-1}}=p_j$. Now from $z^\theta=H$ and $p_i^\theta=\alpha_j$ follows that $t_i^\theta=\<D_j,z\>$, with $\{t_i,p_i,z\}$ a line. But $p_j'^\theta$ is a hyperplane through $D_j$ distinct from $\alpha_j$ and $H$ (as $p_j\in H$ and is not absolute); hence $p_j'^\theta=\<D_j,z\>$ and so $t_i=p'_j$. Now $p_j'^{\theta^{-1}}=\<D_i,z\>$ and $p_i^{\theta^{-1}}=\alpha_i$. It follows that $z^{\theta^{-1}}=H$. Hence $z^{\theta^2}=z$, for all $z\in\beta\setminus (\alpha_1\cup\alpha_2)$. It follows that $p_i^{\theta^2}=p_i$, contradicting $p_i^{\theta^2}=\alpha_j^\theta=p_j$. Our claim follows. 

But now, just like in the proof of our previous claim, we have that $\{p_i,p_i',z\}$ is a line and $p_i'^\theta=\<D_j,z\>$. It follows that $p_i^{\theta^2}=p_j$ and so $z^{\theta^2}=z'$, with $\{z,z',q\}$ a line. 

Now, $\alpha_1,\alpha_2,H,z$ and $\varphi$ are unique up to conjugation with a projectivity. But then, given $z^\theta=H$, the duality $\theta$ is completely determined, since $q$ is determined and hence also $z'$ (with the  above notation). This determines the image $x^\theta$ of an arbitrary point in $H$ as $x^\theta=\<x^\varphi,z'\>$. Furthermore, we also have $z^\theta=H$, and so $\theta$ is determined.
%
\end{proof}

\subsection{The buildings $\sB_n(2)$, $\sB_n(2,4)$, and $\sD_n(2)$}

It will be more convenient for us to regard $\sB_n(2)\cong \sC_n(2)$ as a symplectic polar space. We begin by recalling the standard models of the $\sC_n(2)$, $\sD_n(2)$, and $\sB_{n-1}(2,4)$ buildings in the ambient projective space $\mathsf{PG}(2n-1,2)$. Let $V=\FF_2^{2n}$, and let $(\cdot,\cdot)$ be the (symplectic and symmetric) bilinear form on $V=\FF_2^{2n}$ given by
\begin{align}\label{eq:form}
(X,Y)=X_1Y_{2n}+X_2Y_{2n-1}+\cdots+X_{2n}Y_{1}.
\end{align}
The points of the polar space $\mathsf{C}_n(2)$ are the $0$-spaces of $\mathsf{PG}(2n-1,2)$, and points $p=\langle X\rangle$ and $q=\langle Y\rangle$ are collinear (including the case $p=q$) if and only if $(X,Y)=0$. A subspace $U$ of $V$ is \textit{totally isotropic} if $(X,Y)=0$ for all $X,Y\in U$. The totally isotropic subspaces of maximal dimension have projective dimension~$n-1$, and for each $0\leq k\leq n-1$ the $k$-spaces of the polar space $\mathsf{C}_n(2)$ are the totally isotropic subspaces of~$V$ with projective dimension~$k$. To obtain the building of $\mathsf{C}_n(2)$ as a labelled simplicial complex one takes the totally isotropic $(k-1)$-spaces to be the type $k$ vertices of the building for $1\leq k\leq n$, with incidence of vertices given by symmetrised containment of the corresponding spaces. The full collineation group of $\mathsf{C}_n(2)$ is the symplectic group $\mathsf{Sp}_{2n}(2)$ consisting of all matrices $g\in\mathsf{GL}_{2n}(2)$ satisfying $g^TJg=J$, where $J$ is the matrix of the symplectic form $(\cdot,\cdot)$ (see \cite[Corollary~5.9]{Tit:74}).

Let $F^{+}$ and $F^-$ be quadratic forms on $V$ with Witt indices $n$ and $n-1$ respectively. We will fix the specific choices
\begin{align*}
F^+(X)&=X_1X_{2n}+X_2X_{2n-1}+\cdots+X_{n}X_{n+1} \\
F^-(X)&=X_1X_{2n}+X_2X_{2n-1}+\cdots+X_{n}X_{n+1}+X_{n}^2+X_{n+1}^2. 
\end{align*}
For $\epsilon\in\{-,+\}$, a subspace $U\subseteq V$ is \textit{singular} with respect to $F^{\epsilon}$ if $F^{\epsilon}(X)=0$ for all $X\in U$. 
The maximal dimensional singular subspaces of $V$ with respect to $F^{\epsilon}$ have vector space dimension equal to the Witt index of $F^{\epsilon}$. The points of $\mathsf{D}_n(2)$, respectively the polar space $\mathsf{B}_{n-1}(2,4)$, are those points of $\mathsf{PG}(2n-1,2)$ that are singular with respect to $F^+$, respectively $F^-$. In both cases points $p=\langle X\rangle$ and $q=\langle Y\rangle$ are collinear (including the case $p=q$) if and only if $(X,Y)=0$, where $(\cdot,\cdot)$ is as in~(\ref{eq:form}).

Let $\mathsf{GO}^{\epsilon}_{2n}(2)$ be the group of all matrices of $\mathsf{GL}_{2n}(2)$ preserving the quadratic form~$F^{\epsilon}$, and let $\mathsf{O}_{2n}^{\epsilon}(2)$ be the corresponding index~$2$ simple subgroup of $\mathsf{GO}^{\epsilon}_{2n}(2)$ (c.f. \cite[\S2.4]{ATLAS}). Since $\mathsf{GO}_{2n}^{\epsilon}(2)$ preserves collinearity, the group $\mathsf{GO}_{2n}^{+}(2)$ acts on $\mathsf{D}_n(2)$ and the group $\mathsf{GO}_{2n}^-(2)$ acts on $\mathsf{B}_{n-1}(2,4)$. In fact the group $\mathsf{GO}_{2n}^-(2)$ is the full automorphism group of $\mathsf{B}_{n-1}(2,4)$ (see \cite{Tit:74}). In the case of $\mathsf{D}_n(2)$ the maximal singular subspaces are partitioned into two sets of equal cardinality by the action of $\mathsf{O}_{2n}^+(2)$, and an automorphism $\theta$ of $\mathsf{D}_n(2)$ mapping points to points is called a \textit{collineation} if this partition of maximal singular subspaces is preserved by $\theta$, and a \textit{duality} otherwise. Then $\mathsf{O}^+_{2n}(2)$ is the group of all collineations of $\mathsf{D}_n(2)$, and $\mathsf{GO}_{2n}^+(2)\backslash\mathsf{O}_{2n}^+(2)$ is the set of all dualities of $\mathsf{D}_n(2)$ (see \cite{Tit:74}). 

To obtain the building of $\mathsf{B}_{n-1}(2,4)$ as a labelled simplicial complex one takes the singular $(k-1)$-spaces to be the type $k$ vertices of the building for $1\leq k\leq n-1$, with incidence of vertices given by symmetrised containment of the corresponding spaces. The situation for $\mathsf{D}_n(2)$ is slightly different: For $1\leq k\leq n-2$ the singular $(k-1)$-spaces are taken to be the type~$k$ vertices of the building, and the singular $(n-1)$-spaces in one part of the partition mentioned above are taken to be the type $n-1$ vertices of the building, and those in the other part of the partition are taken to be the type $n$ vertices of the building. A type $n-1$ vertex is declared to be incident with a type $n$ vertex if the corresponding $(n-1)$-spaces meet in an $(n-2)$-space. For all other types incidence is given by symmetrised containment of the corresponding spaces. 

Note the index shifts that occur (for example an $\{n\}$-domestic collineation of a $\mathsf{C}_n(2)$ building is a collineation that is domestic on the totally isotropic $(n-1)$-spaces). A point $p$ of a polar space is an \textit{absolute point} of an automorphism $\theta$ if $p^{\theta}$ is collinear with~$p$ (including $p^{\theta}=p$). 

\begin{lemma}\label{lem:Cn(2)} 
Let $\theta$ be a collineation of $\mathsf{C}_n(2)$. 
\begin{compactenum}[$(a)$]
\item If $\theta$ fixes a subspace of $\mathsf{PG}(2n-1,2)$ of projective dimension $k\geq n$ then $\theta$ is $\{j\}$-domestic for each $2n-k\leq j\leq n$.
\item If the set of absolute points of $\theta$ strictly contains the union of two distinct hyperplanes of $\mathsf{PG}(2n-1,2)$ then $\theta$ is $\{1\}$-domestic. 
\end{compactenum}
\end{lemma}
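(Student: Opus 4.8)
The plan is to translate both conditions into linear algebra in the ambient space $V=\FF_2^{2n}$, using that every collineation $\theta$ of $\mathsf{C}_n(2)$ is induced by a linear map $g\in\mathsf{Sp}_{2n}(2)$, so that $\theta$ is linear and preserves the form, hence preserves total isotropy and the relation $\perp$. The single geometric fact I would isolate first is this: if two totally isotropic subspaces $U$ and $W$ of equal dimension share a nonzero vector $x$, then they are not opposite. Indeed $x\in W\subseteq W^{\perp}$, so $x\in U\cap W^{\perp}\neq\{0\}$, which is exactly the obstruction to opposition of two totally isotropic subspaces of equal dimension.

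For part $(a)$, let $F$ denote the (pointwise) fixed subspace of $\theta$, of vector dimension $k+1\geq n+1$. Given a type $j$ vertex, i.e.\ a totally isotropic subspace $U$ of vector dimension $j$ with $2n-k\leq j\leq n$, a dimension count in $V$ gives $\dim(U\cap F)\geq j+(k+1)-2n\geq 1$. I would then pick a nonzero $x\in U\cap F$; since $x^{\theta}=x$ we get $x\in U\cap U^{\theta}$, and by the fact above $U$ is not opposite $U^{\theta}$. As $U$ was an arbitrary type $j$ vertex, this shows $\theta$ is $\{j\}$-domestic for every $j$ in the stated range.

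For part $(b)$, the key step is to recognise absoluteness as a quadratic condition: writing $p=\langle X\rangle$ and $p^{\theta}=\langle gX\rangle$, the point $p$ is absolute precisely when $(X,gX)=0$, so the set of absolute points is the projective zero locus of the function $f\colon V\to\FF_2$, $f(X)=(X,gX)$, which behaves as a quadratic form (it satisfies $f(\lambda X)=\lambda^{2}f(X)$ and $f(0)=0$). I would then show that if this quadric strictly contains $H_1\cup H_2$ for two distinct hyperplanes then $f\equiv 0$, whence every point is absolute and $\theta$ is $\{1\}$-domestic. To do this, choose coordinates so that $H_1=\{x_1=0\}$ and $H_2=\{x_2=0\}$ (possible since distinct hyperplanes correspond to linearly independent functionals over $\FF_2$), and expand $f$ as its unique reduced multilinear polynomial $f=\sum_i c_i x_i+\sum_{i<j}b_{ij}x_i x_j$. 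Vanishing of $f$ on $H_2$ forces every coefficient of a monomial not involving $x_2$ to be zero, and vanishing on $H_1$ kills every monomial not involving $x_1$; together these leave only $f=b_{12}x_1 x_2$. Finally the hypothesised extra absolute point $X_0$ with $x_1(X_0)=x_2(X_0)=1$ forces $b_{12}=0$, so $f\equiv 0$.

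The routine part is the dimension count in $(a)$; the main obstacle is the bookkeeping in $(b)$, specifically making the passage between the classical quadratic form $X\mapsto(X,gX)$ and its reduced multilinear representative over $\FF_2$ rigorous (where squares $x_i^{2}$ collapse to linear terms), and confirming that ``vanishing on a hyperplane'' really does annihilate the corresponding coefficients. Once absoluteness is phrased as a quadric, the remainder is a short and clean argument.
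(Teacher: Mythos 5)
Your proposal is correct and follows essentially the same route as the paper: part (a) is the same dimension count showing every totally isotropic $j$-space meets the fixed subspace and hence its image, and part (b) is the paper's observation that the absolute points form a quadric, which cannot strictly contain the union of two distinct hyperplanes without being all of $\mathsf{PG}(2n-1,2)$. You merely spell out the details (the opposition criterion $U\cap W^{\perp}\neq 0$ and the reduced multilinear polynomial argument over $\FF_2$) that the paper's two-sentence proof leaves implicit, and both fill-ins are sound.
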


\begin{proof} (a) By considering dimensions, each $(j-1)$-space of $\mathsf{PG}(2n-1,2)$ with $j\geq 2n-k$ intersects the subspace of fixed points. In particular, no totally isotropic $(j-1)$-space is mapped onto an opposite and so $\theta$ is $\{j\}$-domestic for all $2n-k\leq j\leq n$. 

(b) A point $X$ is an absolute point of $\theta\in\mathsf{Sp}_4(2)$ if and only if $(X,\theta X)=X^TJ\theta X=0$, where $J$ is the matrix of the symplectic form $(\cdot,\cdot)$. Thus the set of absolute points of $\theta$ is a quadric, and so if it strictly contains the union of two distinct hyperplanes then all points are absolute. 
\end{proof}

In the following proofs we use the standard notations $p\perp q$ if points $p$ and $q$ are collinear (including the case $p=q$), and $p^{\perp}$ for the set of all points collinear to~$p$.

\begin{lemma}\label{lem:fixed-subspace}
Let $\Delta=\sC_n(2)$ with $n\geq 2$ and let $\theta$ be a collineation. 
\begin{compactenum}[$(a)$]
\item If the fixed points of $\theta$ form a $(2n-3)$-space $W$, then the absolute points form a subspace containing $W$. 
\item If the fixed points of $\theta$ form a $(2n-2)$-space $W$, then every absolute point is fixed. \end{compactenum} 
\end{lemma}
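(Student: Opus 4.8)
The plan is to pass to the underlying symplectic module. Since the full collineation group of $\sC_n(2)$ is $\mathsf{Sp}_{2n}(2)$, I identify $\theta$ with a matrix $g\in\mathsf{Sp}_{2n}(2)$ acting on $V=\FF_2^{2n}$, and set $N=g+I$ (note $g+I=g-I$ in characteristic $2$). The fixed points of $\theta$ are precisely the projective points of $U:=\ker N$, so the hypotheses of $(a)$ and $(b)$ read $\dim U=2n-2$ and $\dim U=2n-1$ respectively. Because the form is alternating, $(X,X)=0$, and hence the absolute points are exactly the projective points of the quadric $\{X:Q(X)=0\}$, where $Q(X):=(X,gX)=(X,NX)$. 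Two algebraic identities will drive everything. First, the adjoint of $g$ with respect to $(\cdot,\cdot)$ is $g^{-1}$, so the adjoint of $N$ is $g^{-1}N$, whence $\mathrm{im}(N)^{\perp}=\ker(g^{-1}N)=\ker N=U$, i.e. $\mathrm{im}(N)=U^{\perp}$. Second, expanding $Q(X+Y)$ and using that $g$ preserves the form gives the polarisation $B_Q(X,Y)=(X,NY)+(NX,Y)=(NX,NY)$; in particular $Q|_U=0$, so $W$ always lies in the absolute set.

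For part $(b)$, $\dim U=2n-1$ forces $\mathrm{rank}(N)=1$, so $\mathrm{im}(N)=\langle v\rangle$ for some $v\neq 0$, and the first identity gives $U=\mathrm{im}(N)^{\perp}=v^{\perp}$. Writing $NX=\psi(X)v$ with $\psi$ linear and $\ker\psi=U=v^{\perp}$, the two functionals $\psi$ and $(\cdot,v)$ are nonzero with equal kernel, hence equal over $\FF_2$; thus $NX=(X,v)v$, i.e. $g$ is the symplectic transvection $T_v$. Then $Q(X)=(X,(X,v)v)=(X,v)^2=(X,v)$, so $X$ is absolute if and only if $(X,v)=0$, i.e. if and only if $X\in U$, i.e. if and only if $X$ is fixed. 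This proves $(b)$ (indeed the absolute and fixed points coincide).

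For part $(a)$, $\dim U=2n-2$ forces $\mathrm{rank}(N)=2$, so $P:=\mathrm{im}(N)=U^{\perp}$ is a $2$-space, and the restriction of the alternating form to $P$ has even rank, hence rank $0$ or $2$. If $P$ is totally isotropic then $P\subseteq P^{\perp}=U=\ker N$, so $N^2=0$ and $(NX,NY)=0$ for all $X,Y$ (both arguments lie in the totally isotropic space $\mathrm{im}(N)$); thus $B_Q\equiv 0$, so $Q$ is additive, i.e. a linear functional, and its zero set is a subspace (a hyperplane, or all of $V$) containing $U$. If instead $P$ is nondegenerate then $P\cap P^{\perp}=P\cap U=0$, so $V=U\perp P$ and $g$ acts as the identity on $U$ and as a symplectic map on the hyperbolic plane $P$ with no nonzero fixed vector; the only such elements of $\mathsf{Sp}_2(2)\cong S_3$ have order $3$, and a direct $2\times 2$ computation shows $Q$ is anisotropic on $P$. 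Since $Q(X)$ depends only on the $P$-component of $X$ (as $U=P^{\perp}$ kills the cross term), the absolute set is exactly $U$, again a subspace containing $W$.

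The conceptual core of the argument is the pair of identities $\mathrm{im}(N)=U^{\perp}$ and $B_Q(X,Y)=(NX,NY)$, after which both statements reduce to short rank/isotropy dichotomies. The only genuine obstacle is the nondegenerate subcase of $(a)$: one must recognise that $g$ splits off a fixed-point-free symplectic transformation of a hyperbolic plane, and then verify that the induced binary quadratic form has no nontrivial zero. This last point is the crucial place where the argument could fail for a general quadric, and it is exactly what prevents the absolute set from degenerating into a union of two distinct hyperplanes.
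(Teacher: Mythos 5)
Your proof is correct, and it takes a genuinely different route from the paper's. The paper argues synthetically inside $\PG(2n-1,2)$: for (a) it first shows that an absolute point $p\notin W$ forces the whole hyperplane $\langle W,p\rangle$ to be absolute, and then excludes the configuration ``exactly two of the three hyperplanes through $W$ absolute'' by analysing the centres of the central involutions induced on those hyperplanes; for (b) it observes that $\theta$ is a central elation with centre $W^{\perp}$ and that no line of $\sC_n(2)$ through the centre leaves $W$. You instead work with $N=g+I$ for $g\in\mathsf{Sp}_{2n}(2)$ and the quadric $Q(X)=(X,gX)$, and your two identities $\mathrm{im}(N)=(\ker N)^{\perp}$ and $B_Q(X,Y)=(NX,NY)$ (both of which I checked) reduce everything to the rank of $N$ and the isotropy type of the plane $\mathrm{im}(N)$; the $\mathsf{Sp}_2(2)\cong S_3$ and anisotropy computations in the nondegenerate subcase are also correct. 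Your version is shorter and yields sharper conclusions (in (b) the absolute set equals the fixed set; in (a) it is exactly $U$, a hyperplane, or all of $V$ according to the isotropy dichotomy), and it meshes well with the quadric viewpoint already used in Lemma~\ref{lem:Cn(2)}(b); the paper's argument stays coordinate-free and in the incidence-geometric idiom used throughout that section. No gaps.
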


\begin{proof}
(a) Let $p$ be a point not contained in $W$ and suppose $p$ is absolute. Let $q\in\<W,p\>\setminus W$. We claim that $q$ is absolute. Indeed, let $r:=\<p,q\>\cap W$. If $p\perp q$, then the plane $\pi=\<p,q,p^\theta\>$ contains the triangle $\{p,p^\theta,r\}$ of points collinear in $\sC_n(2)$ and so $
q\perp q^\theta$, as both points belong to~$\pi$.  If $p\notin q^\perp$, then $\pi$ contains the line $\<p,p^\perp\>$, which belongs to $\sC_n(2)$, but also contains the line $\<p,r\>$, which does not belong to $\sC_n(2)$. Also $\<p^\theta, r\>$ does not belong to $\sC_n(2)$, and it follows that the line $\<r,s\>$, where $\{p,p^\theta,s\}$ is the line of $\sC_n(2)$ through $p$ and $p^\theta$, belongs to $\sC_n(2)$. Hence also the line $\{s,q,q^\theta\}$ belongs to $\sC_n(2)$, which proves our claim. 

So, if there are no absolute points besides those in $W$, then $(i)$ holds. If some absolute point $p\notin W$ exists, then there are three possibilities. Either exactly one hyperplane through $W$ consists of absolute points (and then $(i)$ holds), or all three hyperplanes through $W$ consist of absolute points (and then, again, $(i)$ holds), or exactly two hyperplanes $H_1$ and $H_2$ through $W$ consist of absolute points. In this final case, let $H$ be the third hyperplane through $W$. Let $t,t_1,t_2$ be points such that $t^\perp = H$ and $t_i^\perp= H_i$, $i=1,2$. Then, since $\theta$ fixes $H$, we have $t\in W$. Since $t_i\in t_i^\perp= H_i$, $i=1,2$, we deduce $t_i\in W$, $i=1,2$. Hence $\theta$ induces collineations in $H,H_1,H_2$ having a hyperplane $W$ as fixed points. Consequently, these collineations are central involutions. Since all points of $W$ are fixed, all subspaces through $\{t,t_1,t_2\}$ are fixed. Hence the centres of the above collineations are $t,t_1,t_2$. Since the collineations in $H_i$, $i=1,2$, map points to a collinear point, the centers are $t_i$. But then the centre of the collineation in $H$ is $t$ and hence it also maps points to collinear points, a contradiction. This shows~(a).

(b) If the fixed points of $\theta$ form a $(2n-2)$-space $W$, then $\theta$ is a central elation in $\PG(2n-1,2)$, and the centre is necessarily $W^\perp$ since every point of $W$ is fixed, and hence every hyperplane through $W^\perp$ is fixed. No line through $W^\perp$ not contained in $W$ is a line of $\sC_n(2)$, whence~(b).
\end{proof}

\begin{lemma}\label{lem:CBase}
A collineation $\theta$ of the generalised quadrangle $\mathsf{C}_2(2)$ is exceptional domestic if and only if the set of absolute points of $\theta$ equals the union of two distinct hyperplanes in $\mathsf{PG}(3,2)$. 
\end{lemma}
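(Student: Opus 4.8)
The plan is to reduce the statement to combinatorial conditions on the quadric of absolute points, and then to analyse that quadric using the symplectic geometry of $\mathsf{PG}(3,2)$. First I would record the reductions. Since $\theta$ is a collineation (so $w_0\circ\pi_\theta=1$), it is exceptional domestic exactly when it is domestic, not $\{1\}$-domestic, and not $\{2\}$-domestic; that is, $\theta$ maps some point to an opposite point, maps some line to an opposite line, but maps no flag to an opposite flag. In a generalised quadrangle two flags $(p,L)$ and $(q,M)$ are opposite iff $p,q$ are opposite points and $L,M$ are opposite lines, so domesticity is equivalent to the \emph{incidence-avoidance} condition: no non-absolute point lies on a line $L$ with $L\cap L^\theta=\emptyset$. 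As in the proof of Lemma~\ref{lem:Cn(2)}(b), the absolute points form the quadric $\mathcal{Q}=\{\langle X\rangle : (X,X^\theta)=0\}$, the zero set of the quadratic form $X\mapsto(X,X^\theta)$. Because $\theta\in\mathsf{Sp}_4(2)$ commutes with the symplectic polarity, the dual notion is governed by the same kind of condition (via the Klein correspondence $W(3,2)\cong Q(4,2)$), so the absolute lines likewise form a quadric $\mathcal{Q}^\ast$ on the dual quadrangle. Hence not being $\{1\}$-domestic means $\mathcal{Q}\neq\mathsf{PG}(3,2)$, and not being $\{2\}$-domestic means $\mathcal{Q}^\ast$ is not everything.

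For the direction ($\Leftarrow$) I would start from $\mathcal{Q}=H_1\cup H_2$ for two distinct planes. Writing $m=H_1\cap H_2$ and letting $H_3$ be the third plane through $m$, the four non-absolute points are exactly the points of $H_3\setminus m$; this already shows $\theta$ is not $\{1\}$-domestic. The geometric fact I would exploit is that the $W(3,2)$-lines lying inside the plane $H_3=h_3^{\perp}$ are precisely the three lines through its pole $h_3$; this pins down the collinearity pattern among the four non-absolute points and, dually (using that $\theta$ commutes with $\perp$, so $\mathcal{Q}^\ast$ is again a rank-$2$ hyperbolic quadric), the four non-absolute lines. With both sets located I would verify the incidence-avoidance condition directly to obtain domesticity, and note that a non-absolute line exists, so $\theta$ is exceptional domestic.

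For the direction ($\Rightarrow$) I would rule out every other quadric type. The quadrics of $\mathsf{PG}(3,2)$ have $1,3,5,7,9,11$ or $15$ points (elliptic, hyperbolic, cone, single plane, union of two planes, etc.), and $\mathcal{Q}\neq\mathsf{PG}(3,2)$ already excludes $15$. For each remaining type I would combine two ingredients: (i) the incidence count that each of the three lines through a non-absolute point must be absolute (and dually), which bounds $|\mathcal{Q}|$ against $|\mathcal{Q}^\ast|$; and (ii) Lemma~\ref{lem:Cn(2)}(a) together with Lemma~\ref{lem:fixed-subspace}, which forbid $\theta$ from fixing a plane (else $\theta$ is $\{2\}$-domestic) and control how the absolute points sit relative to the fixed space. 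Together with the self-duality, which forces $\mathcal{Q}$ and $\mathcal{Q}^\ast$ to have the same type, these constraints single out the rank-$2$ hyperbolic quadric $\mathcal{Q}=H_1\cup H_2$.

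The main obstacle is the domesticity verification in ($\Leftarrow$) — equivalently, excluding the near-miss quadrics in ($\Rightarrow$) — because whether a given line through a non-absolute point meets its image is not visible from the absolute \emph{point}-set alone. The resolution is the symplectic self-duality: since $\theta$ commutes with the polarity, the line-quadric $\mathcal{Q}^\ast$ is determined by the same data as $\mathcal{Q}$, and the Klein correspondence turns the relation ``$L$ meets $L^\theta$'' into a quadratic condition whose type matches that of $\mathcal{Q}$. Should a fully synthetic treatment of this step prove delicate, I would fall back on the isomorphism $\mathsf{Sp}_4(2)\cong S_6$: there are only eleven conjugacy classes, and a direct inspection of the absolute point-set of a representative of each confirms that the two-planes configuration occurs for exactly the (unique up to conjugacy) exceptional domestic class, in agreement with the classification of domestic collineations of $W(3,2)$ in \cite{PTM:15,TTM:12b}.
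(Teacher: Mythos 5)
Your fallback argument --- inspecting the absolute point-sets of representatives of the eleven conjugacy classes of $\mathsf{Sp}_4(2)\cong S_6$ and invoking the uniqueness of the exceptional domestic collineation of $\mathsf{C}_2(2)$ from \cite{TTM:12b} --- is essentially the proof the paper gives: it cites \cite{TTM:12b} for uniqueness, checks by direct inspection that this collineation has the two-plane absolute point-set, and disposes of the remaining classes via the $\mathbb{ATLAS}$ character table. So the proposal does reach a valid proof. Your opening reductions are also sound: in a generalised quadrangle a chamber $\{p,L\}$ is non-domestic precisely when $p$ is non-absolute and $L\cap L^{\theta}=\emptyset$, the absolute points form the quadric $(X,X^{\theta})=0$, and when that quadric equals $H_1\cup H_2$ the four non-absolute points are indeed the points of $H_3\setminus m$.

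The primary synthetic route, however, hinges on a claim that is false: that the self-duality of $\mathsf{C}_2(2)$, or the fact that $\theta$ commutes with $\perp$, forces the line-quadric $\mathcal{Q}^{\ast}$ to have the same type as the point-quadric $\mathcal{Q}$. The duality of $\mathsf{C}_2(2)$ induces the \emph{outer} automorphism of $\mathsf{Sp}_4(2)\cong S_6$, which does not preserve conjugacy classes, so $\theta$ and its dual collineation need not have absolute sets of the same type. A concrete counterexample is a symplectic transvection $t_v:X\mapsto X+(X,v)v$: its absolute points are exactly $v^{\perp}$ (a single plane, $7$ points), yet \emph{every} totally isotropic line $L$ is absolute --- if $v\in L$ then $L\subseteq v^{\perp}$ is fixed pointwise, and if $v\notin L$ then the unique point of $L\cap v^{\perp}$ is fixed and hence lies in $L\cap L^{t_v}$ --- so $\mathcal{Q}^{\ast}$ consists of all $15$ lines while $\mathcal{Q}$ is a single plane. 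Consequently, in your ($\Leftarrow$) direction the location of the four non-absolute lines does not follow from the stated duality principle and would need a separate derivation before the incidence-avoidance check can be carried out, and in your ($\Rightarrow$) direction the elimination of the other quadric types loses its main tool. The gap is repairable --- either by the conjugacy-class inspection you already propose, or by analysing directly the three lines through each of the four non-absolute points --- but as written the synthetic argument does not close.
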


\begin{proof}
It is known that $\sC_2(2)$ admits a unique exceptional domestic collineation (see \cite{TTM:12b}), and direct inspection shows that the set of absolute points of this collineation forms the union of two distinct hyperplanes in $\mathsf{PG}(3,2)$. It remains to show that no other collineation of $\sC_2(2)$ has such a structure of absolute points. This can be done, for example, using the character tables in the $\mathbb{ATLAS}$, see \cite[p.5]{ATLAS}. We omit the details.
\end{proof}

\begin{lemma}\label{lem:CBase3}
Let $\Delta=\sC_n(2)$ with $n\geq 3$ and let $\theta$ be a collineation. If the absolute points of $\theta$ lie on a union of two hyperplanes, and if the fixed points of $\theta$ form a $(2n-4)$-space $W$, then $\theta$ has decorated opposition diagram \quad
\begin{tikzpicture}[scale=0.5,baseline=-0.5ex]
\node at (0,0.3) {};
\node [inner sep=0.8pt,outer sep=0.8pt] at (-5,0) (-5) {$\bullet$};
\node [inner sep=0.8pt,outer sep=0.8pt] at (-4,0) (-4) {$\bullet$};
\node [inner sep=0.8pt,outer sep=0.8pt] at (-3,0) (-3) {$\bullet$};
\node [inner sep=0.8pt,outer sep=0.8pt] at (-2,0) (-2) {$\bullet$};
\node [inner sep=0.8pt,outer sep=0.8pt] at (-1,0) (-1) {$\bullet$};
\node [inner sep=0.8pt,outer sep=0.8pt] at (1,0) (1) {$\bullet$};
\node [inner sep=0.8pt,outer sep=0.8pt] at (2,0) (2) {$\bullet$};
\node [inner sep=0.8pt,outer sep=0.8pt] at (3,0) (3) {$\bullet$};
\node [inner sep=0.8pt,outer sep=0.8pt] at (4,0) (4) {$\bullet$};
\node [inner sep=0.8pt,outer sep=0.8pt] at (5,0) (5) {$\bullet$};
\draw [line width=0.5pt,line cap=round,rounded corners,fill=ggrey] (-5.north west)  rectangle (-5.south east);
\draw [line width=0.5pt,line cap=round,rounded corners,fill=ggrey] (-4.north west)  rectangle (-4.south east);
\draw [line width=0.5pt,line cap=round,rounded corners] (-3.north west)  rectangle (-3.south east);
\draw (-5,0)--(-0.5,0);
\draw (0.5,0)--(4,0);
\draw (4,0.07)--(5,0.07);
\draw (4,-0.07)--(5,-0.07);
\draw [style=dashed] (-1,0)--(1,0);
\node [inner sep=0.8pt,outer sep=0.8pt] at (-5,0) (-5) {$\bullet$};
\node [inner sep=0.8pt,outer sep=0.8pt] at (-4,0) (-4) {$\bullet$};
\node [inner sep=0.8pt,outer sep=0.8pt] at (-3,0) (-3) {$\bullet$};
\node [inner sep=0.8pt,outer sep=0.8pt] at (-2,0) (-2) {$\bullet$};
\node [inner sep=0.8pt,outer sep=0.8pt] at (-1,0) (-1) {$\bullet$};
\node [inner sep=0.8pt,outer sep=0.8pt] at (1,0) (1) {$\bullet$};
\node [inner sep=0.8pt,outer sep=0.8pt] at (2,0) (2) {$\bullet$};
\node [inner sep=0.8pt,outer sep=0.8pt] at (3,0) (3) {$\bullet$};
\node [inner sep=0.8pt,outer sep=0.8pt] at (4,0) (4) {$\bullet$};
\node [inner sep=0.8pt,outer sep=0.8pt] at (5,0) (5) {$\bullet$};
\end{tikzpicture}
\end{lemma}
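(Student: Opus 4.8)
The plan is to deduce from the two hypotheses that $\theta$ is an uncapped collineation with $\Type(\theta)=\{1,2,3\}$, with nodes $1,2$ shaded and node $3$ unshaded; the displayed diagram then follows from the classification of uncapped $\sC_n$-diagrams in Proposition~\ref{prop:B}. First I would bound the type from above: for $n\ge 4$ the fixed $(2n-4)$-space has projective dimension $k=2n-4\ge n$, so Lemma~\ref{lem:Cn(2)}(a) gives $\{j\}$-domesticity for all $4\le j\le n$ and hence $\Type(\theta)\subseteq\{1,2,3\}$ (this inclusion is automatic when $n=3$). Alongside this I would record the linear-algebra picture that drives everything: the fixed points forming a $(2n-4)$-space means $\ker(\theta-1)$ has vector dimension $2n-3$, so $\mathrm{rank}(\theta-1)=3$ and, since $\theta$ is symplectic, $U:=\mathrm{im}(\theta-1)=W^{\perp}$ is a projective plane (a Fano plane) meeting $W$ in the single radical point $W\cap W^{\perp}$. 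Were $U\subseteq W$ we would get $(\theta-1)^2=0$, making $\theta$ an involution, hence capped by Corollary~\ref{cor:involutions}, contrary to the desired conclusion; this forces $\dim(W\cap U)=1$. Thus all displacement of $\theta$ is confined to the rank-$3$ direction $U$, which is the geometric reason behind $\Type(\theta)\subseteq\{1,2,3\}$.

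Next I would establish $3\in\Type(\theta)$ and set up the residue computation. Using the fixed-space dimension together with the hypothesis, one first upgrades ``absolute points lie on two hyperplanes'' to the statement that the absolute-point quadric $Q=\{\langle X\rangle:(X,\theta X)=0\}$ (as in the proof of Lemma~\ref{lem:Cn(2)}(b)) is \emph{exactly} the degenerate quadric $H_1\cup H_2$, with $W$ lying in $H_1\cap H_2$ (note $W$ has vector dimension $2n-3$ while $H_1\cap H_2$ has vector dimension $2n-2$, so $W$ is a hyperplane of $H_1\cap H_2$). Since $Q\neq\mathsf{PG}(2n-1,2)$ there exist non-absolute, hence non-domestic, points, and exploiting the transverse Fano plane $U=W^{\perp}$ one produces a totally isotropic plane $P$ (a type-$3$ vertex) with $P\cap(P^{\theta})^{\perp}=0$, i.e.\ mapped onto an opposite; this gives $\Type(\theta)=\{1,2,3\}$. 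For such a $v=P$, Proposition~\ref{prop:typemap} shows the induced automorphism $\theta_v$ acts as a duality on the lower $\sA_2(2)$ component of $\Res(v)$, namely on the Fano plane of subspaces of $P$. The crucial computation, clean because $\proj_P(\langle Y\rangle)=P\cap\langle Y\rangle^{\perp}$ for a point $\langle Y\rangle$, is that a point $x\subseteq P$ is absolute for $\theta_v$ if and only if $x\perp x^{\theta}$, i.e.\ iff $x$ is absolute for $\theta$; hence the $\theta_v$-absolute points are $P\cap Q=(P\cap H_1)\cup(P\cap H_2)$, a union of two distinct lines since $P\not\subseteq H_1,H_2$. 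By the $n=2$ case of Theorem~\ref{thm:existenceAn(2)}, a duality of $\sA_2(2)$ whose absolute points form two distinct hyperplanes is the exceptional domestic duality of the Fano plane.

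I would then read off the shading. The exceptional domestic Fano duality $\theta_v$ maps some point and some line of $P$ onto opposites, but no point-line flag of $P$ onto an opposite; pulling these back via Proposition~\ref{prop:proj} produces non-domestic type-$\{1,3\}$ and type-$\{2,3\}$ simplices of $\Delta$, so nodes $1$ and $2$ are shaded. For the converse, any non-domestic type-$\{1,2,3\}$ simplex would have non-domestic type-$3$ part $P'$ and would restrict, in $\Res(P')$, to a non-domestic chamber of the Fano plane of subspaces of $P'$ under $\theta_{P'}$; but by the previous step every non-domestic type-$3$ vertex yields a \emph{domestic} $\theta_{P'}$, so no such simplex exists. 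Hence $\theta$ is $\{1,2,3\}$-domestic, i.e.\ uncapped, node $3$ is unshaded (consistent with Lemma~\ref{lem:2} when $n\ge4$, and handling $n=3$ directly as well). Combined with $\Type(\theta)=\{1,2,3\}$ and Proposition~\ref{prop:B}, this is precisely the asserted decorated opposition diagram.

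The hard part will be the two transfer steps between the building and its residues. First, promoting the hypothesis to $Q=H_1\cup H_2$ with $W\subseteq H_1\cap H_2$, and producing a non-domestic plane $P$ that escapes \emph{both} hyperplanes, so that $P\cap Q$ is genuinely two lines rather than a smaller set; this is exactly what separates the exceptional domestic duality from an ordinary polarity on the Fano residue (an $\sA_2(2)$-polarity has three absolute points, which still lie on a union of two lines, so only the equality $P\cap Q=(P\cap H_1)\cup(P\cap H_2)$ pins down the right residual automorphism). Second, the careful identification of $\theta_v$-absolute points with $\theta$-absolute points via $\theta_v=\proj_v\circ\theta$. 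Both rest on the quadric description of the absolute set and on keeping the incidences between $W$, $H_1\cap H_2$, $U=W^\perp$ and the chosen $P$ exactly right, uniformly in $n$; the smallest residues (the base of the climb) would be anchored by Lemma~\ref{lem:fixed-subspace} and the $\sC_2(2)$ result Lemma~\ref{lem:CBase}.
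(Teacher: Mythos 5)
Your route is genuinely different from the paper's. The paper works in the residue of a non\nobreakdash-domestic \emph{point} $p$: it shows, via a direct analysis of $\xi=p^\perp\cap(p^\theta)^\perp$ and Lemma~\ref{lem:fixed-subspace}, that $\theta_p$ is $\{2\}$-domestic, hence that $\theta$ is $\{1,2\}$-domestic; uncappedness then follows because a capped $\theta$ would be line-domestic and so would fix a geometric hyperplane pointwise (\cite[Theorem~5.1]{TTM:12}), contradicting the hypothesis on $W$; finally Theorem~\ref{thm:main*}(a) forces the whole decorated diagram, so the paper never needs to exhibit a non-domestic plane or any non-domestic $\{1,3\}$ or $\{2,3\}$ simplex. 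You instead work in the residue of a non-domestic \emph{plane} and try to identify the induced Fano duality. That skeleton is viable, but the two steps you yourself flag as ``the hard part'' are the actual content of the lemma, and they are missing. The central problem is the inference that $P\cap Q=(P\cap H_1)\cup(P\cap H_2)$ is a union of two \emph{distinct} lines ``since $P\not\subseteq H_1,H_2$'': that hypothesis only makes each $P\cap H_i$ a line, and the two lines coincide exactly when $\dim(P\cap H_1\cap H_2)=2$, which nothing you have written excludes. This is not cosmetic: if even one non-domestic plane $P'$ had $P'\cap H_1=P'\cap H_2$, then $\theta_{P'}$ would be a Fano duality with only three absolute points, hence non-domestic by Theorem~\ref{thm:Asmall}, and $\theta$ would map a type $\{1,2,3\}$ simplex to an opposite, i.e.\ would be capped — so the lemma would be false. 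Since your proofs of uncappedness and of the unshaded node $3$ quantify over \emph{all} non-domestic planes, this claim must be proved, and it is precisely where the $(2n-4)$-space $W$ has to enter. (It is rescuable: a non-domestic plane contains no fixed point, because a fixed point of $P$ lies in $P\cap P^\theta\subseteq P\cap(P^\theta)^\perp=0$, so $P\cap W=0$; combined with $W$ being a hyperplane of $H_1\cap H_2$ — a containment you assert but do not prove, the parenthetical dimension count only describing its consequence — this forces $\dim(P\cap H_1\cap H_2)\leq 1$. None of this is in your write-up.)

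There are three further gaps. The existence of a non-domestic totally isotropic plane, i.e.\ $3\in\Type(\theta)$, is only asserted (``one produces a totally isotropic plane $P$ with $P\cap(P^\theta)^\perp=0$''); it does not follow from the transversality of $U=W^\perp$ alone, and you cannot borrow it from the classification because your route to uncappedness runs through the planes. For $n=3$, once $\Type(\theta)=\{1,2,3\}=S$ and uncappedness are known, both $\sB_3$ diagrams of Table~\ref{table:1} remain available, so ruling out the fully shaded one requires $\{1,2\}$-domesticity; your plane-residue argument only excludes non-domestic $\{1,2,3\}$ simplices, Lemma~\ref{lem:2} does not apply when $j=n$, and ``handling $n=3$ directly as well'' is not an argument. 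Finally, the justification of $\dim(W\cap U)=1$ is circular: you cannot exclude $U\subseteq W$ by noting it would make $\theta$ a capped involution ``contrary to the desired conclusion''; you would have to show an involution cannot satisfy the hypotheses (which is true, but needs the argument the paper supplies via line-domesticity). This last point is only motivational in your plan, but the first two, together with the two-distinct-lines claim, are essential.
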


\begin{proof} The hypothesis implies that every $3$-space contains a fixed point, and thus $\theta$ is $\{i\}$-domestic for all $4\leq i\leq n$. 

By the hypothesis on the structure of the absolute points of $\theta$ there exist points in $\mathrm{Opp}(\theta)$. Let $p$ be an arbitrary point in $\mathrm{Opp}(\theta)$. We will show below that the induced collineation $\theta_p$ of $\mathsf{C}_{n-1}(2)$ is $\{2\}$-domestic (in the inherited labelling). Hence $\theta$ is $\{1,2\}$-domestic. So if $\theta$ is capped then $\theta$ is $\{2\}$-domestic, however by \cite[Theorem~5.1]{TTM:12} every such collineation fixes a geometric hyperplane pointwise, contrary to our hypothesis that the fixed points form a $(2n-4)$-space. Thus $\theta$ is uncapped, and  then by Theorem~\ref{thm:main*}(a) the decorated opposition diagram of $\theta$ is forced to be as claimed. 

Therefore it only remains to show that $\theta_p$ is $\{2\}$-domestic (that is, point-domestic on $\sC_{n-1}(2)$). We fix some notation. Let $H_i$, $i=1,2$, be the two hyperplanes all points of which are absolute. Set $S=H_1\cap H_2$ and let $H$ be the hyperplane distinct from $H_i$, $i=1,2$, and containing $S$. Note that all points of $\mathrm{Opp}(\theta)$ are contained in $H$ (more precisely they form the set $H\setminus S$).

First we claim that any line in $\mathrm{Opp}(\theta)$ incident to $p$ must necessarily be contained in the hyperplane $H$. Suppose the such a line $L$ is not contained in $H$. Then $L=\{p,q_1,q_2\}$, with $q_i\in H_i$ and hence $q_i^\theta\perp q_i$, $i=1,2$. Since $p$ is not collinear to $p^\theta$, it must be collinear to $q_i^\theta$ for some $i\in\{1,2\}$. But then $q_i^\theta$ is collinear to all points of $L$, and so the line $L^\theta\ni q_i^\theta$ is not opposite the line $L$. Hence the claim.

Consider the subspace $\xi:= p^\perp\cap (p^\theta)^\perp$ of dimension $2n-3$. Then clearly $\xi$ contains the subspace $p^\perp\cap W$. We claim that $\dim(p^\perp\cap W)=2n-5$. Indeed, if not, then $W$ is a hyperplane of $\xi$. By Lemma~\ref{lem:fixed-subspace}(b) and our previous claim, all lines of $\sC_n(2)$ through $p$ are contained in $H$, implying $p^\perp=H$. But since $H$ is fixed by $\theta$ we deduce that $p\in W$, a contradiction. Our claim follows.

Hence $\dim(p^\perp\cap W)=2n-5$. It follows that $\dim(\xi\cap W)=2n-5$ as well, since $p^\perp\cap W=(p^\theta)^\perp\cap W$. Now let $q\in\xi\setminus W$. Suppose $q\notin H$. Then the line $\<p,q\>$ is not mapped to an opposite, as we showed above. Suppose $q\in S\setminus W$. Then $q^\theta\perp q$, and since $p^\theta\perp q$, we deduce that $q$ is collinear to $\<p,q\>^\theta$, implying that $\<p,q\>\notin\mathrm{Opp}(\theta)$.  Hence, if $\theta_p$ is not $\{2\}$-domestic, then $\xi\cap(H\setminus S)\neq\emptyset$. Under that conditon, if $\xi$ is not contained in $H$, then $\xi\cap H_i$ is a hyperplane of $\xi$, $i=1,2$, and this contradicts Lemma~\ref{lem:fixed-subspace}(a). 

Hence we deduce that if $\theta_p$ is not $\{2\}$-domestic, then $\xi\subseteq H$. In this case, since both $p$ and $p^\theta$ are in $H$, we have $p^\perp=\<p,\xi\>=H$ and $(p^{\theta})^{\perp}=\langle p^{\theta},\xi\rangle=H$. However $\perp$ is a symplectic polarity and so $p^\perp=H=(p^{\theta})^{\perp}$ forces $p=p^{\theta}$, a contradiction. The lemma is proved.
\end{proof}

\begin{thm}\label{thm:existenceBn(2)} Let $\theta$ be a collineation of $\mathsf{C}_n(2)$. Suppose that the set of absolute points of $\theta$ equals the union of two distinct hyperplanes of $\mathsf{PG}(2n-1,2)$. Then $\theta$ is domestic. Moreover, if $k$ is the projective dimension of the subspace of points of $\mathsf{PG}(2n-1,2)$ fixed by $\theta$, then
\begin{compactenum}[$(a)$]
\item if $k=n-2$ then $\theta$ is strongly exceptional domestic, and
\item if $k=n-1+j$ for some $0\leq j\leq n-3$ then $\theta$ is uncapped with decorated opposition diagram
\begin{center}
\begin{tikzpicture}[scale=0.5,baseline=-0.5ex]
\node at (0,0.3) {};
\node [inner sep=0.8pt,outer sep=0.8pt] at (-5,0) (-5) {$\bullet$};
\node [inner sep=0.8pt,outer sep=0.8pt] at (-4,0) (-4) {$\bullet$};
\node [inner sep=0.8pt,outer sep=0.8pt] at (-3,0) (-3) {$\bullet$};
\node [inner sep=0.8pt,outer sep=0.8pt] at (-2,0) (-2) {$\bullet$};
\node [inner sep=0.8pt,outer sep=0.8pt] at (-1,0) (-1) {$\bullet$};
\node [inner sep=0.8pt,outer sep=0.8pt] at (1,0) (1) {$\bullet$};
\node [inner sep=0.8pt,outer sep=0.8pt] at (2,0) (2) {$\bullet$};
\node [inner sep=0.8pt,outer sep=0.8pt] at (3,0) (3) {$\bullet$};
\node [inner sep=0.8pt,outer sep=0.8pt] at (4,0) (4) {$\bullet$};
\node [inner sep=0.8pt,outer sep=0.8pt] at (5,0) (5) {$\bullet$};
\node at (2,-0.7) {$n-j$};
\draw [line width=0.5pt,line cap=round,rounded corners,fill=ggrey] (-5.north west)  rectangle (-5.south east);
\draw [line width=0.5pt,line cap=round,rounded corners,fill=ggrey] (-4.north west)  rectangle (-4.south east);
\draw [line width=0.5pt,line cap=round,rounded corners,fill=ggrey] (-3.north west)  rectangle (-3.south east);
\draw [line width=0.5pt,line cap=round,rounded corners,fill=ggrey] (-2.north west)  rectangle (-2.south east);
\draw [line width=0.5pt,line cap=round,rounded corners,fill=ggrey] (-1.north west)  rectangle (-1.south east);
\draw [line width=0.5pt,line cap=round,rounded corners,fill=ggrey] (1.north west)  rectangle (1.south east);
\draw [line width=0.5pt,line cap=round,rounded corners] (2.north west)  rectangle (2.south east);
\draw (-5,0)--(-0.5,0);
\draw (0.5,0)--(4,0);
\draw (4,0.07)--(5,0.07);
\draw (4,-0.07)--(5,-0.07);
\draw [style=dashed] (-1,0)--(1,0);
\node [inner sep=0.8pt,outer sep=0.8pt] at (-5,0) (-5) {$\bullet$};
\node [inner sep=0.8pt,outer sep=0.8pt] at (-4,0) (-4) {$\bullet$};
\node [inner sep=0.8pt,outer sep=0.8pt] at (-3,0) (-3) {$\bullet$};
\node [inner sep=0.8pt,outer sep=0.8pt] at (-2,0) (-2) {$\bullet$};
\node [inner sep=0.8pt,outer sep=0.8pt] at (-1,0) (-1) {$\bullet$};
\node [inner sep=0.8pt,outer sep=0.8pt] at (1,0) (1) {$\bullet$};
\node [inner sep=0.8pt,outer sep=0.8pt] at (2,0) (2) {$\bullet$};
\node [inner sep=0.8pt,outer sep=0.8pt] at (3,0) (3) {$\bullet$};
\node [inner sep=0.8pt,outer sep=0.8pt] at (4,0) (4) {$\bullet$};
\node [inner sep=0.8pt,outer sep=0.8pt] at (5,0) (5) {$\bullet$};
\end{tikzpicture}
\end{center}
\end{compactenum}
Moreover examples exist for each $n-2\leq k\leq 2n-4$.
\end{thm}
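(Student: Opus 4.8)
The plan is to reduce, using parts (a) and (b) already established, to a pure existence question inside the symplectic group, and then to build the required collineations as orthogonal direct sums of a handful of explicit small blocks. Since a point $\langle X\rangle$ is absolute for a collineation $g\in\mathsf{Sp}_{2n}(2)$ precisely when $(X,gX)=0$, the absolute points are the zero set of the quadratic form $Q_g(X)=(X,gX)=X^{T}JgX$ on $\FF_2^{2n}$, and the fixed points form $\mathbb{P}(\ker(g+I))$ (recall $g+I=g-I$ over $\FF_2$). Thus it suffices to produce, for each $k$ with $n-2\le k\le 2n-4$, a symplectic $g$ for which $Q_g$ factors as a product of two distinct linear forms (so that the absolute set is a union of two distinct hyperplanes) and for which $\dim_{\FF_2}\ker(g+I)=k+1$ (so that the fixed subspace has projective dimension $k$); parts (a) and (b) then supply the asserted domesticity and decorated diagram automatically.

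The key structural observation is that if $V=\FF_2^{2n}$ splits as an orthogonal direct sum $V=V_1\perp\cdots\perp V_r$ and $g=g_1\oplus\cdots\oplus g_r$ with each $g_i\in\mathsf{Sp}(V_i)$, then all cross terms vanish and $Q_g(X)=\sum_i Q_{g_i}(X_i)$, while $\ker(g+I)=\ker(g_1+I)\oplus\cdots\oplus\ker(g_r+I)$. In particular a block with $Q_{g_i}\equiv 0$ (an \emph{all-absolute} block), or an identity block, leaves the quadric unchanged but still contributes to the kernel. I would therefore assemble $g$ from three explicit symplectic building blocks over $\FF_2$: a $4$-dimensional block $A$ with $Q_A$ a product of two distinct linear forms and $\dim\ker(A+I)=1$ (for instance the exceptional domestic collineation of $\sC_2(2)$ of Lemma~\ref{lem:CBase}); a $6$-dimensional block $B$ with $Q_B$ again a product of two distinct linear forms and $\dim\ker(B+I)=2$; and a $4$-dimensional all-absolute block $C$ with $Q_C\equiv 0$ and $\dim\ker(C+I)=2$, together with an identity block on the remaining hyperbolic planes.

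For a target value of $k$ one uses exactly one generator block, namely $A$ when the parity to be realised is even and $B$ when it is odd, followed by a suitable number $s\ge 0$ of copies of $C$ and an identity block filling out the dimension. A short computation shows that with $A$ one realises $k=2n-4-2s$ and with $B$ one realises $k=2n-5-2s$, and that letting $s$ range over the admissible values (those keeping the identity block of nonnegative dimension) these two families together cover every integer in $[n-2,2n-4]$, of both parities, with the two ends $k=n-2$ and $k=2n-4$ occurring as the extreme cases. For each resulting $g$ the form $Q_g$ equals the pulled-back quadric of its single generator block, hence is a product of two distinct linear forms, while $\dim\ker(g+I)=k+1$ by additivity; so the absolute set is a union of two distinct hyperplanes and the fixed subspace has projective dimension $k$, as required.

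The main obstacle is the verification of the three building blocks, and it is entirely a matter of the behaviour of quadratic forms over $\FF_2$. For each generator one must check not merely that the bilinear form associated to $Q_{g_i}$ has rank $2$ (equivalently $\mathrm{rank}(g_i+I)^2=2$), but that the resulting rank-$2$ quadric is of \emph{reducible} type, a genuine pair of distinct hyperplanes $\ell_1\ell_2$, rather than the irreducible (elliptic) type $\ell_1^2+\ell_1\ell_2+\ell_2^2$, which has the same bilinear rank; and for $C$ one must check that the all-absolute condition $Q_C\equiv 0$ holds with no residual linear term surviving on the radical. These are finite, explicit computations with small symplectic matrices, but they carry the genuine content of the construction; the parity-and-range bookkeeping of the previous paragraph, and the reduction through parts (a) and (b), are then routine.
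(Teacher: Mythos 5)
Your proposal addresses only the final sentence of the theorem (the existence of examples for each $n-2\le k\le 2n-4$) and treats the rest --- the assertion that $\theta$ is domestic, part (a), and part (b) --- as ``already established''. They are not: these claims are the main content of the theorem and occupy almost all of the paper's proof. The paper proves domesticity by induction with Lemma~\ref{lem:CBase} as base case, using the projection $\theta_p$ onto the residue of a non-domestic point $p$ and checking that the two-hyperplane structure of absolute points is inherited by $\theta_p$ (together with Lemma~\ref{lem:Cn(2)} to handle the degenerate cases); it proves (a) and (b) by further inductions whose base cases are Lemma~\ref{lem:CBase} and Lemma~\ref{lem:CBase3}, and which need a counting argument to produce a non-domestic point $p$ with $p^{\perp}$ meeting the fixed subspace in the correct dimension. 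None of this can be replaced by an appeal to Theorem~\ref{thm:main*}(a): that result only constrains the diagram of an automorphism already known to be uncapped, and says nothing about whether a collineation whose absolute points form two hyperplanes is domestic, nor about which of the admissible $\sB_n$ diagrams it carries as a function of $k$. So the core of the theorem is missing from your argument.

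For the part you do address, your construction is sound in outline and genuinely different from the paper's. You build $g$ as an orthogonal direct sum of one ``generator'' block ($A$ of the paper's $g_2$ type, or $B$ of the $g_3$ type) with all-absolute $4$-dimensional blocks $C$ and an identity block, using additivity of $Q_g(X)=X^{T}Jg X$ and of $\ker(g+I)$ across orthogonal summands; the parity bookkeeping does cover all $k\in[n-2,2n-4]$. The paper instead uses a single one-parameter family $g_n^{(j)}$ obtained by padding a recursively defined $g_{n-j}$ with identity blocks, which realises $k=n-2+j$ directly and avoids your two-family case split. That is a legitimate alternative, but note that you have deferred precisely the computations carrying the content (that $Q_A$ and $Q_B$ are reducible rank-two quadrics rather than the irreducible type, and that $Q_C\equiv 0$); these must actually be exhibited, as the paper does with explicit matrices. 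Even with those checks completed, producing elements with the prescribed absolute and fixed structure establishes nothing about domesticity or the decorated diagrams until the first part of the theorem is proved.
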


\begin{proof} Suppose that $\theta$ is a collineation of $\mathsf{C}_n(2)$ such that the set of absolute points of $\theta$ is the union of two distinct hyperplanes $H_1$ and $H_2$ of $\mathsf{PG}(2n-1,2)$. We show by induction on $n-j$ that $\theta$ is domestic, with Lemma~\ref{lem:CBase} providing the base case $n-j=3$.  

Let $p$ be any point not in $H_1\cup H_2$. Thus $p$ is mapped to an opposite point by $\theta$. Let $\mathrm{Res}(p)$ be the set of totally isotropic subspaces containing $p$. Thus $\mathrm{Res}(p)$ is a $\mathsf{C}_{n-1}(2)$ building, whose points are the lines through $p$, lines are the planes through $p$, and so forth. Let $\theta_p=\mathrm{proj}_{\mathrm{Res}(p)}\circ \theta$, regarded as a collineation of $\mathsf{C}_{n-1}(2)$. Since $p^{\perp}$ and $(p^{\theta})^{\perp}$ are hyperplanes of $\mathsf{PG}(2n-1,2)$ the spaces $H_i'=p^{\perp}\cap(p^{\theta})^{\perp}\cap H_i$ are $(2n-4)$-spaces for $i=1,2$ (as in the proof of Lemma~\ref{lem:CBase}). Let $q\in p^{\perp}\cap (p^{\theta})^{\perp}\cap (H_1\cup H_1)$, and let $L=\langle p,q\rangle$. Similar arguments as those in Lemma~\ref{lem:CBase} show that 
\begin{compactenum}[$(i)$]
\item if $q$ is fixed by $\theta$, then $L$ is fixed by $\theta_p$, and
\item if $q$ is mapped to a distinct collinear point by $\theta$ then $L$ is either fixed by $\theta_p$, or is mapped to a distinct coplanar line by $\theta_p$. 
\end{compactenum}
Thus for all non-domestic points $p$ the induced collineation $\theta_p$ of the $\mathsf{C}_{n-1}(2)$ building $\mathrm{Res}(p)$ has the property that the set of points mapped to collinear points (including fixed points) contains the union of two distinct hyperplanes in $\mathsf{PG}(2n-3,2)$. Thus by Lemma~\ref{lem:Cn(2)} and the induction hypothesis the collineation $\theta_p$ is domestic, and hence $\theta$ is domestic.

Now suppose that the absolute points of $\theta$ form a union of two hyperplanes, and that the fixed point set $F$ of $\theta$ is an $(n-2)$-space of $\mathsf{PG}(2n-1,2)$. We prove by induction on~$n$ that $\theta$ is strongly exceptional domestic, with Lemma~\ref{lem:CBase} providing the base case. The above argument shows that $\theta$ is necessarily domestic, and so it remains to show that there are non-domestic panels of each cotype $1,2,\ldots,n$. We claim that for $n\geq 3$ there exists a non-domestic point $p$ such that the hyperplane $p^{\perp}$ intersects $F$ in an $(n-3)$-space $F'$. To see this it suffices to show that there is a point $p$ with $p\notin H_1\cup H_2$ and $p\notin F^{\perp}$. The number of points in $H_1\cup H_2$ is $3\cdot 2^{2n-2}-1$ and the number of points in $F^{\perp}$ is $2^{n+1}-1$. Thus for $n\geq 3$ there is a point $p\notin H_1\cup H_2$ and $p\notin F^{\perp}$. By the induction hypothesis, there are panels of cotypes $2,3,\ldots,n$ of $\mathrm{Res}(p)$ mapped to an opposite panels by $\theta_p$, and thus there are panels of each cotype $2,3,\ldots,n$ of $\mathsf{C}_n(2)$ mapped to an opposite by $\theta$. It is then easy to see that there is also a non-domestic cotype $1$ panel (by a residue argument) and hence $\theta$ is strongly exceptional domestic. 

Now suppose that the absolute points of $\theta$ form a union of two hyperplanes, and that the fixed point set $F$ of $\theta$ is a $k$-space with $k=n-1+j$ for some $0\leq j\leq n-3$. An argument as in the previous paragraph shows that there is a non-domestic point $p$ such that $p^{\perp}$ intersects $F$ in an $(n-2+j)$-space. By induction, with Lemma~\ref{lem:CBase3} as the base case, the collineation $\theta_p$ of the $\sC_{n-1}(2)$ building $\mathrm{Res}(p)$ has diagram
\begin{center}
\begin{tikzpicture}[scale=0.5,baseline=-0.5ex]
\node at (0,0.3) {};
\node [inner sep=0.8pt,outer sep=0.8pt] at (-5,0) (-5) {$\bullet$};
\node [inner sep=0.8pt,outer sep=0.8pt] at (-4,0) (-4) {$\bullet$};
\node [inner sep=0.8pt,outer sep=0.8pt] at (-3,0) (-3) {$\bullet$};
\node [inner sep=0.8pt,outer sep=0.8pt] at (-2,0) (-2) {$\bullet$};
\node [inner sep=0.8pt,outer sep=0.8pt] at (-1,0) (-1) {$\bullet$};
\node [inner sep=0.8pt,outer sep=0.8pt] at (1,0) (1) {$\bullet$};
\node [inner sep=0.8pt,outer sep=0.8pt] at (2,0) (2) {$\bullet$};
\node [inner sep=0.8pt,outer sep=0.8pt] at (3,0) (3) {$\bullet$};
\node [inner sep=0.8pt,outer sep=0.8pt] at (4,0) (4) {$\bullet$};
\node [inner sep=0.8pt,outer sep=0.8pt] at (5,0) (5) {$\bullet$};
\node at (2,-0.75) {$n-j$};
\node at (-5,-0.75) {$2$};
\node at (-4,-0.75) {$3$};
\node at (5,-0.75) {$n$};
\draw [line width=0.5pt,line cap=round,rounded corners,fill=ggrey] (-5.north west)  rectangle (-5.south east);
\draw [line width=0.5pt,line cap=round,rounded corners,fill=ggrey] (-4.north west)  rectangle (-4.south east);
\draw [line width=0.5pt,line cap=round,rounded corners,fill=ggrey] (-3.north west)  rectangle (-3.south east);
\draw [line width=0.5pt,line cap=round,rounded corners,fill=ggrey] (-2.north west)  rectangle (-2.south east);
\draw [line width=0.5pt,line cap=round,rounded corners,fill=ggrey] (-1.north west)  rectangle (-1.south east);
\draw [line width=0.5pt,line cap=round,rounded corners,fill=ggrey] (1.north west)  rectangle (1.south east);
\draw [line width=0.5pt,line cap=round,rounded corners] (2.north west)  rectangle (2.south east);
\draw (-5,0)--(-0.5,0);
\draw (0.5,0)--(4,0);
\draw (4,0.07)--(5,0.07);
\draw (4,-0.07)--(5,-0.07);
\draw [style=dashed] (-1,0)--(1,0);
\node [inner sep=0.8pt,outer sep=0.8pt] at (-5,0) (-5) {$\bullet$};
\node [inner sep=0.8pt,outer sep=0.8pt] at (-4,0) (-4) {$\bullet$};
\node [inner sep=0.8pt,outer sep=0.8pt] at (-3,0) (-3) {$\bullet$};
\node [inner sep=0.8pt,outer sep=0.8pt] at (-2,0) (-2) {$\bullet$};
\node [inner sep=0.8pt,outer sep=0.8pt] at (-1,0) (-1) {$\bullet$};
\node [inner sep=0.8pt,outer sep=0.8pt] at (1,0) (1) {$\bullet$};
\node [inner sep=0.8pt,outer sep=0.8pt] at (2,0) (2) {$\bullet$};
\node [inner sep=0.8pt,outer sep=0.8pt] at (3,0) (3) {$\bullet$};
\node [inner sep=0.8pt,outer sep=0.8pt] at (4,0) (4) {$\bullet$};
\node [inner sep=0.8pt,outer sep=0.8pt] at (5,0) (5) {$\bullet$};
\end{tikzpicture}
\end{center}
Moreover, for any other non-domestic point $p$ we have that either $\theta_p$ has the above diagram, or $\theta_p$ is domestic on type $n-1-j$ vertices. Thus no simplex $\mathsf{C}_n(2)$ of type $\{1,2,\ldots,n-j-1\}$ is mapped to an opposite by $\theta$, hence the result.

To conclude we prove existence of collineations with each diagram. Recursively define elements $g_n\in\mathsf{Sp}_{2n}(2)$, for $n\geq 2$, by
$$
g_2=\begin{bmatrix}
0&1&0&0\\
1&0&0&0\\
1&0&0&1\\
0&0&1&0
\end{bmatrix},\quad g_3=\begin{bmatrix}
0&0&1&0&0&0\\
0&1&0&0&0&0\\
1&0&0&0&0&0\\
0&1&0&0&0&1\\
1&0&0&0&1&0\\
0&0&0&1&0&0
\end{bmatrix}, \quad g_n=\begin{bmatrix}
1&0&0&0&0\\
1&1&0&0&0\\
0&0&g_{n-2}&0&0\\
0&0&0&1&0\\
0&0&0&1&1
\end{bmatrix}.
$$
Moreover, for each $j\geq 0$ define $g_n^{(j)}\in\mathsf{Sp}_{2n}(2)$ by
$$
g_n^{(j)}=\begin{bmatrix}
I_j&0&0\\
0&g_{n-j}&0\\
0&0&I_j
\end{bmatrix}.
$$
By direct calculation, the absolute points of $g_{2n}$ and $g_{2n}^{(j)}$ are given by $X_{2n-1}X_{2n}=0$ and the collinear points of $g_{2n+1}$ and $g^{(j)}_{2n+1}$ are given by $X_{n-1}(X_{n-2}+X_{n})=0$. Moreover, the fixed points of $g_n$ form an $(n-2)$-space of $\mathsf{PG}(2n-1,2)$, and the fixed points of $g_n^{(j)}$ form an $(n-2+j)$-space of $\mathsf{PG}(2n-1,2)$. Thus, by the arguments above, $g_n$ is a strongly exceptional domestic collineation of $\mathsf{C}_n(2)$ for each $n\geq 2$, and $g_n^{(j+1)}$ diagram as in~(b). 
\end{proof}

Similar theorems hold, with similar proofs, for the $\sB_n(2,4)$ and $\sD_n(2)$ buildings. We will only sketch the details below. Consider first the case $\sB_n(2,4)$. The following lemmas are similar to the $\sC_n(2)$ case.

\begin{lemma}\label{lem:BBase1}
A collineation $\theta$ of the generalised quadrangle $\mathsf{B}_2(2,4)$ is exceptional domestic if and only if the set of absolute points of $\theta$ is the set of points of $\sB_2(2,4)$ lying on the union of two distinct hyperplanes in $\mathsf{PG}(5,2)$. 
\end{lemma}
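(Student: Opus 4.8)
The plan is to follow the template of Lemma~\ref{lem:CBase}, transporting it from the symplectic quadrangle $\sC_2(2)$ to the elliptic quadrangle $\sB_2(2,4)=\mathsf{Q}^-(5,2)$. The first step is the basic geometric translation. Writing $\theta$ as a matrix in the full automorphism group $\mathsf{GO}_6^-(2)$ (which is isomorphic to $\mathsf{U}_4(2).2$), and $J$ for the matrix of the bilinear form $(\cdot,\cdot)$, a point $p=\langle X\rangle$ of $\sB_2(2,4)$ is absolute exactly when $X^TJ\theta X=0$. Precisely as in the proof of Lemma~\ref{lem:Cn(2)}(b), the map $X\mapsto X^TJ\theta X$ is a quadratic form $q_{\theta}$ on $\FF_2^6$, so the set of absolute points is the intersection of the $27$-point set of $\sB_2(2,4)$ with the quadric $\{q_{\theta}=0\}$; and the union of two distinct hyperplanes of $\mathsf{PG}(5,2)$ is the zero set of the reducible quadric $\lambda_1\lambda_2$. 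Thus the lemma becomes a statement about how $q_{\theta}$ cuts the quadrangle.

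For the ``only if'' direction together with the accompanying uniqueness, I would invoke the classification of domestic collineations of thick finite generalised quadrangles from \cite{TTM:12b} (see also \cite{PTM:15}): up to conjugacy $\sB_2(2,4)$ admits exactly one exceptional domestic collineation. I would then exhibit a representative explicitly as an element of $\mathsf{GO}_6^-(2)$ (in the spirit of the matrices $g_2,g_3$ appearing in Theorem~\ref{thm:existenceBn(2)}) and compute $q_{\theta}$ directly, checking that $\{q_{\theta}=0\}$ meets $\sB_2(2,4)$ in precisely the points lying on a union of two hyperplanes. This settles the forward implication for the unique exceptional domestic class.

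The substance of the lemma is the converse, since the absolute point set does not by itself determine $\theta$. Here I would argue as for $\sC_2(2)$: the configuration of absolute points, and in particular whether the restriction of $q_{\theta}$ to the quadrangle is reducible, is a conjugacy invariant computable from the permutation character of $\mathsf{GO}_6^-(2)$ on the $27$ points of $\sB_2(2,4)$, which is available from the character tables of $\mathbb{ATLAS}$ \cite{ATLAS}. I would run through the conjugacy classes and show that absolute points equal to a union of two hyperplanes occur only for the exceptional domestic class. One must also dispose of the two degenerate neighbours of this configuration: if the absolute points \emph{strictly} contained two hyperplanes, then (by the straightforward $\sB_2(2,4)$ analogue of Lemma~\ref{lem:Cn(2)}(b)) every point would be absolute and $\theta$ would be point-domestic, hence not exceptional domestic; the hypothesis that the absolute set \emph{equals} a union of two hyperplanes is exactly the borderline case in which the points on the third hyperplane through $H_1\cap H_2$ (other than those of $H_1\cap H_2$) are mapped to opposite points, so $\theta$ genuinely maps a point to an opposite, and dually a line to an opposite, giving the full opposition diagram required by Theorem~\ref{thm:main*}.

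I expect the main obstacle to be precisely this converse matching: confirming that no collineation class other than the exceptional domestic one yields absolute points forming a union of two hyperplanes. Matching only the cardinality of the absolute set need not suffice, since distinct classes may share that count, so the safe resolution is to verify the finer invariant, namely the reducibility of the quadric $q_{\theta}$ induced on the $27$ points, class by class. As with Lemma~\ref{lem:CBase}, I would carry this out by the explicit $\mathbb{ATLAS}$/computer computation rather than by hand, record the conclusion, and omit the routine verification.
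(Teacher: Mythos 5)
Your proposal follows essentially the same route as the paper: the paper omits the proof of this lemma, stating only that it is ``similar to the $\sC_n(2)$ case,'' and the proof of the analogous Lemma~\ref{lem:CBase} proceeds exactly as you describe --- uniqueness of the exceptional domestic collineation from \cite{TTM:12b}, direct inspection of its absolute point set, and an $\mathbb{ATLAS}$/character-table verification that no other conjugacy class has absolute points forming such a configuration. Your additional observations (the quadric interpretation of absoluteness and the treatment of the borderline/degenerate configurations) are consistent with the surrounding arguments in the paper (Lemma~\ref{lem:Cn(2)} and Lemma~\ref{lem:fixed-subspace}) and fill in details the paper leaves implicit.
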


\begin{lemma}\label{lem:BBase3}
Let $\Delta=\sB_n(2,4)$ with $n\geq 3$ and let $\theta$ be a collineation. If the absolute points of $\theta$ lie on a union of two hyperplanes, and if the fixed points of $\theta$ are the isotropic points of a $(2n-3)$-space in $\mathsf{PG}(2n+1,2)$, then $\theta$ has decorated opposition diagram \quad
\begin{tikzpicture}[scale=0.5,baseline=-0.5ex]
\node at (0,0.3) {};
\node [inner sep=0.8pt,outer sep=0.8pt] at (-5,0) (-5) {$\bullet$};
\node [inner sep=0.8pt,outer sep=0.8pt] at (-4,0) (-4) {$\bullet$};
\node [inner sep=0.8pt,outer sep=0.8pt] at (-3,0) (-3) {$\bullet$};
\node [inner sep=0.8pt,outer sep=0.8pt] at (-2,0) (-2) {$\bullet$};
\node [inner sep=0.8pt,outer sep=0.8pt] at (-1,0) (-1) {$\bullet$};
\node [inner sep=0.8pt,outer sep=0.8pt] at (1,0) (1) {$\bullet$};
\node [inner sep=0.8pt,outer sep=0.8pt] at (2,0) (2) {$\bullet$};
\node [inner sep=0.8pt,outer sep=0.8pt] at (3,0) (3) {$\bullet$};
\node [inner sep=0.8pt,outer sep=0.8pt] at (4,0) (4) {$\bullet$};
\node [inner sep=0.8pt,outer sep=0.8pt] at (5,0) (5) {$\bullet$};
\draw [line width=0.5pt,line cap=round,rounded corners,fill=ggrey] (-5.north west)  rectangle (-5.south east);
\draw [line width=0.5pt,line cap=round,rounded corners,fill=ggrey] (-4.north west)  rectangle (-4.south east);
\draw [line width=0.5pt,line cap=round,rounded corners] (-3.north west)  rectangle (-3.south east);
\draw (-5,0)--(-0.5,0);
\draw (0.5,0)--(4,0);
\draw (4,0.07)--(5,0.07);
\draw (4,-0.07)--(5,-0.07);
\draw [style=dashed] (-1,0)--(1,0);
\node [inner sep=0.8pt,outer sep=0.8pt] at (-5,0) (-5) {$\bullet$};
\node [inner sep=0.8pt,outer sep=0.8pt] at (-4,0) (-4) {$\bullet$};
\node [inner sep=0.8pt,outer sep=0.8pt] at (-3,0) (-3) {$\bullet$};
\node [inner sep=0.8pt,outer sep=0.8pt] at (-2,0) (-2) {$\bullet$};
\node [inner sep=0.8pt,outer sep=0.8pt] at (-1,0) (-1) {$\bullet$};
\node [inner sep=0.8pt,outer sep=0.8pt] at (1,0) (1) {$\bullet$};
\node [inner sep=0.8pt,outer sep=0.8pt] at (2,0) (2) {$\bullet$};
\node [inner sep=0.8pt,outer sep=0.8pt] at (3,0) (3) {$\bullet$};
\node [inner sep=0.8pt,outer sep=0.8pt] at (4,0) (4) {$\bullet$};
\node [inner sep=0.8pt,outer sep=0.8pt] at (5,0) (5) {$\bullet$};
\end{tikzpicture}
\end{lemma}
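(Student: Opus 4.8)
The plan is to transport the proof of Lemma~\ref{lem:CBase3} from the symplectic polar space $\sC_n(2)$ to the orthogonal polar space $\sB_n(2,4)$, working in the ambient $\mathsf{PG}(2n+1,2)$ with the polarity $\perp$ associated to the bilinear form of $F^-$. As preparation I would first record the orthogonal analogues of Lemmas~\ref{lem:Cn(2)} and~\ref{lem:fixed-subspace} (the same statements, with ``absolute'' now meaning collinear with the image and ``point'' meaning a point of the polar space, i.e.\ an isotropic point of $F^-$); the rank-two input is Lemma~\ref{lem:BBase1}, the $\sB_2(2,4)$ analogue of Lemma~\ref{lem:CBase}. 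With these in hand the argument has three steps: (a) show that for every non-domestic point $p$ the induced collineation $\theta_p$ of $\Res(p)\cong\sB_{n-1}(2,4)$ is point-domestic, whence $\theta$ is $\{1,2\}$-domestic; (b) deduce that $\theta$ is uncapped; and (c) read off the diagram from Theorem~\ref{thm:main*}(a).

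The heart of the proof, and the step I expect to be the main obstacle, is (a). Here I would copy the geometric analysis of Lemma~\ref{lem:CBase3}: writing $H_1,H_2$ for the two hyperplanes carrying the absolute points, $S=H_1\cap H_2$, and $H$ for the third hyperplane through $S$, one first shows that every line of $\sB_n(2,4)$ through a non-domestic point $p$ that lies in $\Opp(\theta)$ is forced into $H$, and then studies the subspace $\xi=p^\perp\cap(p^\theta)^\perp$ together with its traces on the fixed $(2n-3)$-space, on $S$, and on $H_1\cup H_2$, to exclude non-domestic lines through $p$ in the residue; the final contradiction uses that $\perp$ is a polarity, so that $p^\perp=(p^\theta)^\perp$ would give $p=p^\theta$. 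The genuine extra difficulty relative to the symplectic case is that in $\sB_n(2,4)$ the points of the polar space are only the isotropic points of $F^-$, not all of $\mathsf{PG}(2n+1,2)$, so at each incidence or intersection step one must check that the relevant meet still lies on the polar space; the two-higher ambient dimension also shifts the dimension bookkeeping throughout, and verifying these counts is where most of the care is needed.

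For step (b), suppose $\theta$ were capped. Since the absolute points are confined to $H_1\cup H_2$ whereas the isotropic locus of $F^-$ is not (for $n\geq 3$), there is a non-domestic point and so $1\in\Type(\theta)$; as $\theta$ is $\{1,2\}$-domestic by (a), a non-domestic simplex of type $\Type(\theta)$ could not contain the subset $\{1,2\}$, forcing $2\notin\Type(\theta)$, i.e.\ $\theta$ is $\{2\}$-domestic. The orthogonal counterpart of \cite[Theorem~5.1]{TTM:12} then forces $\theta$ to fix a geometric hyperplane of $\sB_n(2,4)$ pointwise, contradicting the hypothesis that the fixed points are only the isotropic points of a $(2n-3)$-space. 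Hence $\theta$ is uncapped.

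Finally, for (c), Theorem~\ref{thm:main*}(a) restricts the decorated opposition diagram of the uncapped collineation $\theta$ to the two $\sB_n(2,4)$ diagrams of Table~\ref{table:1}, for which $\Type(\theta)=\{1,2,\dots,j\}$ with $3\leq j\leq n$ and with nodes $1,\dots,j-1$ necessarily shaded. If $j\geq 4$ then node~$3$ is shaded, so there is a non-domestic simplex of type $\Type(\theta)\setminus\{3\}\supseteq\{1,2\}$ and hence a non-domestic type $\{1,2\}$ simplex, contradicting the $\{1,2\}$-domesticity from (a); thus $j=3$. The same $\{1,2\}$-domesticity rules out the fully shaded $j=3$ diagram (which would shade node~$3$ and again exhibit a non-domestic type $\{1,2\}$ simplex), leaving exactly the asserted diagram, with nodes $1,2$ shaded and node~$3$ encircled but unshaded.
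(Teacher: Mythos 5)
Your proposal matches the paper's intended argument: the paper states this lemma without proof, remarking only that it is ``similar to the $\sC_n(2)$ case'', and your outline is precisely that transport of the proof of Lemma~\ref{lem:CBase3} (via the orthogonal analogues of Lemmas~\ref{lem:Cn(2)} and~\ref{lem:fixed-subspace}, with Lemma~\ref{lem:BBase1} as the rank-two input), followed by the same appeal to the line-domesticity result of \cite{TTM:12} and to Theorem~\ref{thm:main*}(a) to pin down the diagram. The dimension bookkeeping and the restriction to isotropic points that you flag are indeed the only points requiring care, and they go through as you describe.
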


\begin{thm}\label{thm:existenceBn(2,4)} Let $\theta$ be a collineation of $\mathsf{B}_{n}(2,4)$. Suppose that the set of absolute points of $\theta$ is the set of isotropic points lying on the union of two hyperplanes of $\mathsf{PG}(2n+1,2)$. Let $k$ be the projective dimension of the subspace of points of $\mathsf{PG}(2n+1,2)$ fixed by~$\theta$. Then $\theta$ is domestic, and
\begin{compactenum}[$(a)$]
\item if $k=n$ then $\theta$ is strongly exceptional domestic, and
\item if $k=n+1+j$ for some $0\leq j\leq n-3$ then $\theta$ is uncapped with decorated diagram
\begin{center}
\begin{tikzpicture}[scale=0.5,baseline=-0.5ex]
\node at (0,0.3) {};
\node [inner sep=0.8pt,outer sep=0.8pt] at (-5,0) (-5) {$\bullet$};
\node [inner sep=0.8pt,outer sep=0.8pt] at (-4,0) (-4) {$\bullet$};
\node [inner sep=0.8pt,outer sep=0.8pt] at (-3,0) (-3) {$\bullet$};
\node [inner sep=0.8pt,outer sep=0.8pt] at (-2,0) (-2) {$\bullet$};
\node [inner sep=0.8pt,outer sep=0.8pt] at (-1,0) (-1) {$\bullet$};
\node [inner sep=0.8pt,outer sep=0.8pt] at (1,0) (1) {$\bullet$};
\node [inner sep=0.8pt,outer sep=0.8pt] at (2,0) (2) {$\bullet$};
\node [inner sep=0.8pt,outer sep=0.8pt] at (3,0) (3) {$\bullet$};
\node [inner sep=0.8pt,outer sep=0.8pt] at (4,0) (4) {$\bullet$};
\node [inner sep=0.8pt,outer sep=0.8pt] at (5,0) (5) {$\bullet$};
\node at (2,-0.7) {$n-j$};
\draw [line width=0.5pt,line cap=round,rounded corners,fill=ggrey] (-5.north west)  rectangle (-5.south east);
\draw [line width=0.5pt,line cap=round,rounded corners,fill=ggrey] (-4.north west)  rectangle (-4.south east);
\draw [line width=0.5pt,line cap=round,rounded corners,fill=ggrey] (-3.north west)  rectangle (-3.south east);
\draw [line width=0.5pt,line cap=round,rounded corners,fill=ggrey] (-2.north west)  rectangle (-2.south east);
\draw [line width=0.5pt,line cap=round,rounded corners,fill=ggrey] (-1.north west)  rectangle (-1.south east);
\draw [line width=0.5pt,line cap=round,rounded corners,fill=ggrey] (1.north west)  rectangle (1.south east);
\draw [line width=0.5pt,line cap=round,rounded corners] (2.north west)  rectangle (2.south east);
\draw (-5,0)--(-0.5,0);
\draw (0.5,0)--(4,0);
\draw (4,0.07)--(5,0.07);
\draw (4,-0.07)--(5,-0.07);
\draw [style=dashed] (-1,0)--(1,0);
\node [inner sep=0.8pt,outer sep=0.8pt] at (-5,0) (-5) {$\bullet$};
\node [inner sep=0.8pt,outer sep=0.8pt] at (-4,0) (-4) {$\bullet$};
\node [inner sep=0.8pt,outer sep=0.8pt] at (-3,0) (-3) {$\bullet$};
\node [inner sep=0.8pt,outer sep=0.8pt] at (-2,0) (-2) {$\bullet$};
\node [inner sep=0.8pt,outer sep=0.8pt] at (-1,0) (-1) {$\bullet$};
\node [inner sep=0.8pt,outer sep=0.8pt] at (1,0) (1) {$\bullet$};
\node [inner sep=0.8pt,outer sep=0.8pt] at (2,0) (2) {$\bullet$};
\node [inner sep=0.8pt,outer sep=0.8pt] at (3,0) (3) {$\bullet$};
\node [inner sep=0.8pt,outer sep=0.8pt] at (4,0) (4) {$\bullet$};
\node [inner sep=0.8pt,outer sep=0.8pt] at (5,0) (5) {$\bullet$};
\end{tikzpicture}
\end{center}
\end{compactenum}
Moreover examples exist for each $n\leq k\leq 2n-2$. 
\end{thm}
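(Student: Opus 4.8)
The plan is to transpose the proof of Theorem~\ref{thm:existenceBn(2)} to the orthogonal setting, the decisive structural observation being that over $\FF_2$ the bilinear form polarising the quadratic form $F^-$ is \emph{precisely} the alternating form $(\cdot,\cdot)$ of~(\ref{eq:form}). Thus collinearity in $\sB_n(2,4)$ and the ambient polarity $\perp$ of $\mathsf{PG}(2n+1,2)$ are governed by the same symplectic form as in the $\sC_n(2)$ case; only the underlying point set is cut down to the $F^-$-singular points. First I would record the orthogonal analogues of Lemmas~\ref{lem:Cn(2)} and~\ref{lem:fixed-subspace}: namely that $\theta$ is $\{j\}$-domestic whenever it fixes a sufficiently large subspace of $\mathsf{PG}(2n+1,2)$, and that the absolute locus $\{p\mid (p,p^\theta)=0\}$ is a quadric, so that if its singular part contains more than the union of two hyperplanes then $\theta$ is $\{1\}$-domestic. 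These transfer essentially verbatim once everything is intersected with the $F^-$-quadric, since the arguments in Lemma~\ref{lem:fixed-subspace} invoke only the symplectic polarity~$\perp$.

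Second, I would establish domesticity by induction on $n$, with Lemma~\ref{lem:BBase1} supplying the base case $\sB_2(2,4)$. Given a singular point $p$ outside the two hyperplanes $H_1,H_2$ (so that $p$ is non-domestic), the residue $\Res(p)$ is a $\sB_{n-1}(2,4)$ building sitting inside $p^\perp/p\cong\mathsf{PG}(2n-1,2)$, and $\theta_p=\proj_{\Res(p)}\circ\,\theta$ is a collineation of it. The two projection facts used in the $\sC_n(2)$ argument --- that a point $q\in p^\perp\cap(p^\theta)^\perp$ fixed by $\theta$ forces the line $\langle p,q\rangle$ to be fixed by $\theta_p$, and that a point moved to a distinct collinear point forces $\langle p,q\rangle$ to be fixed or moved to a coplanar line --- go through unchanged, so the absolute points of $\theta_p$ contain the two hyperplanes $p^\perp\cap(p^\theta)^\perp\cap H_i$ of the residual space. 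The induction hypothesis, together with Proposition~\ref{prop:proj}, then forces $\theta_p$, and hence $\theta$, to be domestic.

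Third, for parts (a) and (b) I would run the same inductive skeleton, now with Lemma~\ref{lem:BBase3} as the base case for (b). For (a), where the fixed space $F$ has projective dimension $k=n$, a cardinality count (comparing the number of singular points to $|H_1\cup H_2|$ and $|F^\perp|$, where $F^\perp$ has projective dimension $n$) produces, for $n\geq 3$, a non-domestic singular point $p$ with $p^\perp$ meeting $F$ in codimension one; induction yields non-domestic panels of each cotype $2,\dots,n$ in $\Res(p)$, these lift to $\sB_n(2,4)$, and a final residue argument supplies a non-domestic cotype $1$ panel, so $\theta$ is strongly exceptional domestic by Theorem~\ref{thm:Asmall}. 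For (b), with $k=n+1+j$, the analogous point $p$ has $p^\perp$ meeting $F$ in a $(n+j)$-space, and induction gives $\theta_p$ the residual diagram of Lemma~\ref{lem:BBase3}; this pins down the shaded nodes $\{1,\dots,n-j-1\}$ of $\theta$, while the observation that every non-domestic $p$ yields $\theta_p$ either with that diagram or domestic on type $n-1-j$ vertices shows that no type $\{1,\dots,n-j-1\}$ simplex is non-domestic, so the node $n-j$ is unshaded, giving the claimed decorated diagram via Theorem~\ref{thm:main*}(a).

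Finally, for existence across the whole range $n\leq k\leq 2n-2$ I would exhibit explicit elements of $\mathsf{GO}^-_{2n+2}(2)$ built by a recursive block pattern mirroring the symplectic matrices $g_n,g_n^{(j)}$, verifying by direct computation that they preserve $F^-$, that their absolute locus is the singular part of a union of two hyperplanes, and that their fixed space has the prescribed projective dimension. The main obstacle I anticipate is twofold and lives in the orthogonal bookkeeping rather than in the overall scheme: re-deriving the central-elation/fixed-subspace arguments of Lemma~\ref{lem:fixed-subspace} so that they hold when one is restricted to $F^-$-singular points (where not every projective point is a point of the polar space, so the counting and the ``third hyperplane'' argument must be handled with care), and producing the explicit examples \emph{inside} the orthogonal group $\mathsf{GO}^-_{2n+2}(2)$ rather than the full symplectic group, while controlling both the fixed-point dimension and the two-hyperplane absolute locus simultaneously.
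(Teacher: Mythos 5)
Your proposal follows essentially the same route as the paper: the authors themselves state that the proof is obtained by transposing the argument of Theorem~\ref{thm:existenceBn(2)} to the orthogonal setting with Lemmas~\ref{lem:BBase1} and~\ref{lem:BBase3} as base cases, and then settle existence by exhibiting explicit recursively defined matrices $g_n, g_n^{(j)}\in\mathsf{GO}^-_{2n+2}(2)$, exactly as you propose. The obstacles you flag (restricting the counting and fixed-subspace arguments to $F^-$-singular points, and building the examples inside the orthogonal rather than the symplectic group) are precisely the details the paper omits, so your outline matches their intended proof.
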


\begin{proof}
The proofs are very similar to Theorem~\ref{thm:existenceBn(2)}, with the base cases given by Lemma~\ref{lem:BBase1} and~\ref{lem:BBase3}, and we omit the details. Thus it only remains to exhibit the existence of collineations of $\mathsf{B}_n(2,4)$ with the desired properties. To this end, define matrices $g_n$, $n\geq 3$ by
\begin{align*}
g_2=\begin{bmatrix}
0&1&0&0&0&0\\
0&0&0&0&0&1\\
0&0&1&0&0&0\\
0&0&0&1&0&0\\
1&0&0&0&0&0\\
0&0&0&0&1&0
\end{bmatrix},\quad 
g_3=\begin{bmatrix}
0&0&1&0&0&0&0&0\\
0&1&1&0&0&0&0&0\\
1&0&0&0&0&0&0&0\\
0&0&0&1&0&0&0&0\\
0&0&0&0&1&0&0&0\\
0&0&0&0&0&0&0&1\\
0&0&0&0&0&0&1&0\\
0&0&0&0&0&1&1&0
\end{bmatrix},\quad g_n=\begin{bmatrix}
1&0&0&0&0\\
1&1&0&0&0\\
0&0&g_{n-2}&0&0\\
0&0&0&1&0\\
0&0&0&1&1
\end{bmatrix}.
\end{align*}
Moreover, for each $j\geq 1$ define $g_n^{(j)}$ by 
$$
g_n^{(j)}=\begin{bmatrix}
I_j&0&0\\
0&g_{n-j}&0\\
0&0&I_j
\end{bmatrix}.
$$
Since $g_n,g_n^{(j)}\in \mathsf{GO}_{2n+2}^-(2)$ these matrices induce collineations of $\sB_n(2,4)$. It is straightforward to check that $g_n$ satisfies the conditions~(a) and $g_n^{(j+1)}$ satisfies the conditions~(b). 
\end{proof}
\newpage

Consider now the case $\sD_n(2)$. 

\begin{thm}\label{thm:existenceDn(2)} Let $\theta$ be an automorphism of $\mathsf{D}_n(2)$. Suppose that the set of absolute points of $\theta$ is the set of points of $\mathsf{D}_n(2)$ lying on the union of two hyperplanes of $\mathsf{PG}(2n-1,2)$. Let $k$ be the projective dimension of the subspace of points of $\mathsf{PG}(2n-1,2)$ fixed by $\theta$. Then $\theta$ is domestic, and
\begin{compactenum}[$(a)$]
\item  if $k=n-1$ and $\theta$ is an oppomorphism then $\theta$ is strongly exceptional domestic, and 
\item if $k=n-1+j$ for some $1\leq j\leq n-3$ and $\theta$ is a non-oppomorphism (for odd $j$) and an oppomorphism (for even $j$) then $\theta$ has diagram

\noindent\begin{minipage}{0.3\textwidth}
\begin{center}
\begin{tikzpicture}[scale=0.5,baseline=-0.5ex]
\node at (0,0.8) {};
\node at (0,-0.8) {};
\phantom{\node [below] at (1,-0.25) {$n-j$};}
\node [inner sep=0.8pt,outer sep=0.8pt] at (-2,0) (-2) {$\bullet$};
\node [inner sep=0.8pt,outer sep=0.8pt] at (-1,0) (-1) {$\bullet$};
\node [inner sep=0.8pt,outer sep=0.8pt] at (1,0) (1) {$\bullet$};
\node [inner sep=0.8pt,outer sep=0.8pt] at (2,0) (2) {$\bullet$};
\node [inner sep=0.8pt,outer sep=0.8pt] at (3,0) (3) {$\bullet$};
\node [inner sep=0.8pt,outer sep=0.8pt] at (4,0) (4) {$\bullet$};
\node [inner sep=0.8pt,outer sep=0.8pt] at (5,0.5) (5a) {$\bullet$};
\node [inner sep=0.8pt,outer sep=0.8pt] at (5,-0.5) (5b) {$\bullet$};
\draw [line width=0.5pt,line cap=round,rounded corners,fill=ggrey] (-2.north west)  rectangle (-2.south east);
\draw [line width=0.5pt,line cap=round,rounded corners,fill=ggrey] (2.north west)  rectangle (2.south east);
\draw [line width=0.5pt,line cap=round,rounded corners,fill=ggrey] (4.north west)  rectangle (4.south east);
\draw [line width=0.5pt,line cap=round,rounded corners,fill=ggrey] (-1.north west)  rectangle (-1.south east);
\draw [line width=0.5pt,line cap=round,rounded corners,fill=ggrey] (1.north west)  rectangle (1.south east);
\draw [line width=0.5pt,line cap=round,rounded corners,fill=ggrey] (3.north west)  rectangle (3.south east);
\draw [line width=0.5pt,line cap=round,rounded corners] (5a.north west)  rectangle (5b.south east);
\draw (-2,0)--(-0.5,0);
\draw (0.5,0)--(4,0);
\draw (4,0) to [bend left] (5,0.5);
\draw (4,0) to [bend right=45] (5,-0.5);
\draw [style=dashed] (-1,0)--(1,0);
\node [inner sep=0.8pt,outer sep=0.8pt] at (-2,0) (-2) {$\bullet$};
\node [inner sep=0.8pt,outer sep=0.8pt] at (-1,0) (-1) {$\bullet$};
\node [inner sep=0.8pt,outer sep=0.8pt] at (1,0) (1) {$\bullet$};
\node [inner sep=0.8pt,outer sep=0.8pt] at (2,0) (2) {$\bullet$};
\node [inner sep=0.8pt,outer sep=0.8pt] at (3,0) (3) {$\bullet$};
\node [inner sep=0.8pt,outer sep=0.8pt] at (4,0) (4) {$\bullet$};
\node [inner sep=0.8pt,outer sep=0.8pt] at (5,0.5) (5a) {$\bullet$};
\node [inner sep=0.8pt,outer sep=0.8pt] at (5,-0.5) (5b) {$\bullet$};
\end{tikzpicture}\newline
\emph{(if $j=1$)}
\end{center}
\end{minipage}
\begin{minipage}{0.3\textwidth}
\begin{center}
\begin{tikzpicture}[scale=0.5,baseline=-0.5ex]
\node at (0,0.8) {};
\node [inner sep=0.8pt,outer sep=0.8pt] at (-2,0) (-2) {$\bullet$};
\node [inner sep=0.8pt,outer sep=0.8pt] at (-1,0) (-1) {$\bullet$};
\node [inner sep=0.8pt,outer sep=0.8pt] at (1,0) (1) {$\bullet$};
\node [inner sep=0.8pt,outer sep=0.8pt] at (2,0) (2) {$\bullet$};
\node [inner sep=0.8pt,outer sep=0.8pt] at (3,0) (3) {$\bullet$};
\node [inner sep=0.8pt,outer sep=0.8pt] at (4,0) (4) {$\bullet$};
\node [inner sep=0.8pt,outer sep=0.8pt] at (5,0.5) (5a) {$\bullet$};
\node [inner sep=0.8pt,outer sep=0.8pt] at (5,-0.5) (5b) {$\bullet$};
\draw [line width=0.5pt,line cap=round,rounded corners,fill=ggrey] (-2.north west)  rectangle (-2.south east);
\draw [line width=0.5pt,line cap=round,rounded corners,fill=ggrey] (-1.north west)  rectangle (-1.south east);
\draw [line width=0.5pt,line cap=round,rounded corners,fill=ggrey] (1.north west)  rectangle (1.south east);
\draw [line width=0.5pt,line cap=round,rounded corners] (2.north west)  rectangle (2.south east);
\node [below] at (2,-0.25) {$n-j$};
\draw (-2,0)--(-0.5,0);
\draw (0.5,0)--(4,0);
\draw (4,0) to (5,0.5);
\draw (4,0) to   (5,-0.5);
\draw [style=dashed] (-1,0)--(1,0);
\node [inner sep=0.8pt,outer sep=0.8pt] at (-2,0) (-2) {$\bullet$};
\node [inner sep=0.8pt,outer sep=0.8pt] at (-1,0) (-1) {$\bullet$};
\node [inner sep=0.8pt,outer sep=0.8pt] at (1,0) (1) {$\bullet$};
\node [inner sep=0.8pt,outer sep=0.8pt] at (2,0) (2) {$\bullet$};
\node [inner sep=0.8pt,outer sep=0.8pt] at (3,0) (3) {$\bullet$};
\node [inner sep=0.8pt,outer sep=0.8pt] at (4,0) (4) {$\bullet$};
\node [inner sep=0.8pt,outer sep=0.8pt] at (5,0.5) (5a) {$\bullet$};
\node [inner sep=0.8pt,outer sep=0.8pt] at (5,-0.5) (5b) {$\bullet$};
\end{tikzpicture}\newline
\emph{(if $j$ is even)}
\end{center}
\end{minipage}
\begin{minipage}{0.3\textwidth}
\begin{center}
\begin{tikzpicture}[scale=0.5,baseline=-0.5ex]
\node at (0,0.8) {};
\node [inner sep=0.8pt,outer sep=0.8pt] at (-2,0) (-2) {$\bullet$};
\node [inner sep=0.8pt,outer sep=0.8pt] at (-1,0) (-1) {$\bullet$};
\node [inner sep=0.8pt,outer sep=0.8pt] at (1,0) (1) {$\bullet$};
\node [inner sep=0.8pt,outer sep=0.8pt] at (2,0) (2) {$\bullet$};
\node [inner sep=0.8pt,outer sep=0.8pt] at (3,0) (3) {$\bullet$};
\node [inner sep=0.8pt,outer sep=0.8pt] at (4,0) (4) {$\bullet$};
\node [inner sep=0.8pt,outer sep=0.8pt] at (5,0.5) (5a) {$\bullet$};
\node [inner sep=0.8pt,outer sep=0.8pt] at (5,-0.5) (5b) {$\bullet$};
\draw [line width=0.5pt,line cap=round,rounded corners,fill=ggrey] (-2.north west)  rectangle (-2.south east);
\draw [line width=0.5pt,line cap=round,rounded corners,fill=ggrey] (-1.north west)  rectangle (-1.south east);
\draw [line width=0.5pt,line cap=round,rounded corners] (1.north west)  rectangle (1.south east);
\node [below] at (1,-0.25) {$n-j$};
\draw (-2,0)--(-0.5,0);
\draw (0.5,0)--(4,0);
\draw (4,0) to [bend left] (5,0.5);
\draw (4,0) to [bend right=45] (5,-0.5);
\draw [style=dashed] (-1,0)--(1,0);
\node [inner sep=0.8pt,outer sep=0.8pt] at (-2,0) (-2) {$\bullet$};
\node [inner sep=0.8pt,outer sep=0.8pt] at (-1,0) (-1) {$\bullet$};
\node [inner sep=0.8pt,outer sep=0.8pt] at (1,0) (1) {$\bullet$};
\node [inner sep=0.8pt,outer sep=0.8pt] at (2,0) (2) {$\bullet$};
\node [inner sep=0.8pt,outer sep=0.8pt] at (3,0) (3) {$\bullet$};
\node [inner sep=0.8pt,outer sep=0.8pt] at (4,0) (4) {$\bullet$};
\node [inner sep=0.8pt,outer sep=0.8pt] at (5,0.5) (5a) {$\bullet$};
\node [inner sep=0.8pt,outer sep=0.8pt] at (5,-0.5) (5b) {$\bullet$};
\end{tikzpicture}\newline
\emph{(if $j>1$ is odd)}
\end{center}
\end{minipage}
\end{compactenum}

\smallskip
\noindent Moreover examples exist for all $n-1\leq k\leq 2n-4$.
\end{thm}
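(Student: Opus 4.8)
The plan is to follow, \emph{mutatis mutandis}, the inductive residue strategy used in the proof of Theorem~\ref{thm:existenceBn(2)}, working in the oriflamme model of $\sD_n(2)$ determined by the form $F^+$ inside $\PG(2n-1,2)$. First I would establish domesticity by induction on $n$. Let $p$ be a point of $\sD_n(2)$ lying on neither of the two hyperplanes carrying the absolute points; then $p$ is mapped to an opposite point, its residue $\Res(p)$ is a $\sD_{n-1}(2)$ building, and $\theta_p=\proj_p\circ\theta$ is the induced automorphism. Arguing exactly as in the proof of Lemma~\ref{lem:CBase} (for a point $q\in p^\perp\cap(p^\theta)^\perp$ lying on one of the two hyperplanes, tracking whether the line $\langle p,q\rangle$ is fixed or sent to a coplanar line by $\theta_p$), I would show that the absolute points of $\theta_p$ again contain the intersection of $\Res(p)$ with a union of two hyperplanes of the ambient $\PG(2n-3,2)$. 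By the induction hypothesis $\theta_p$ is then domestic, with base cases in small rank furnished by $\sD_3(2)\cong\sA_3(2)$ via Theorem~\ref{thm:existenceAn(2)} (and a direct check of $\sD_4(2)$). By Proposition~\ref{prop:proj}, were $\theta$ to map some chamber to an opposite the same would hold in $\Res(p)$ for the point $p$ of that chamber, contradicting domesticity of $\theta_p$; hence $\theta$ is domestic.

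With domesticity in hand, the decorated opposition diagram is already constrained to lie in Table~\ref{table:1} by Theorem~\ref{thm:main*}(a), that is, by Propositions~\ref{prop:D1} and~\ref{prop:D2}, so the remaining work is to pin down $\Type(\theta)$ and the shading $K_\theta$. Here I would run a parallel induction, choosing a non-domestic point $p$ whose polar hyperplane meets the fixed space $F$ in dimension one less; the residue automorphism $\theta_p$ then has fixed space of projective dimension $(n-1)-1+j$, so the parameter $j$ is preserved while $n$ drops. The structural key is Proposition~\ref{prop:typemap}, which gives $\pi_{\theta_p}=w_{S\setminus\{1\}}\circ w_0\circ\pi_\theta$ as a diagram automorphism of $\Res(p)$; since $w_{S\setminus\{1\}}$ is the longest element of the residue, one reads off that $\theta_p$ is an oppomorphism of $\sD_{n-1}(2)$ precisely when $\theta$ is an oppomorphism of $\sD_n(2)$. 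This is exactly what keeps the parity hypothesis on $j$ self-consistent under the recursion, and it explains the straight versus bent fork in the three sub-diagrams (straight iff $w_0\circ\pi_\theta=1$, i.e.\ iff $\theta$ is an oppomorphism, i.e.\ iff $j$ is even). The behaviour at the fork (types $n-1,n$) I would control through the relationship between $\sD_n(2)$ and its associated non-thick $\sB_n$ building (Lemma~\ref{lem:containedB} when $n$ is odd) together with Lemma~\ref{lem:Ddual} to exclude point-domesticity of the induced $\sD$-dualities; dimension counts against $F$ and the two hyperplanes then determine which vertex types are non-domestic, which nodes are shaded, and (by the argument of Lemma~\ref{lem:2}) that the node $n-j$ is left unshaded. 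Case~(a), with $k=n-1$ and $\theta$ an oppomorphism, yields a strongly exceptional domestic automorphism by the same descent, again bottoming out at $\sD_3(2)\cong\sA_3(2)$.

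For existence I would exhibit explicit matrices in $\mathsf{GO}^+_{2n}(2)$, defined recursively in block form in direct analogy with the elements $g_n$ and $g_n^{(j)}$ constructed in Theorems~\ref{thm:existenceBn(2)} and~\ref{thm:existenceBn(2,4)}, and verify by direct computation that their absolute-point locus is a degenerate quadric splitting as two distinct hyperplanes, with $F$ of the prescribed projective dimension $n-1+j$. The one genuinely new verification, and the step I expect to be the main obstacle, is controlling the coset of $\mathsf{O}^+_{2n}(2)$ in which each matrix lies: $\theta$ is a collineation or a duality of $\sD_n(2)$ according to whether the matrix preserves or interchanges the two families of maximal singular subspaces, equivalently according to its Dickson invariant, and this must be forced to agree with the oppomorphism requirement. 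Since an oppomorphism of $\sD_n$ is a collineation when $n$ is even and a duality when $n$ is odd, the prescription (oppomorphism for even $j$, non-oppomorphism for odd $j$) amounts to demanding Dickson invariant equal to the parity of $n+j$; because the point-residue recursion fixes $j$ while decreasing $n$ by one, the recursion for the $g_n$ must flip the Dickson invariant at every step. Getting this flip to come out correctly, and separately checking the boundary sub-case $j=1$ in which the unshaded node coincides with the fork, is the delicate bookkeeping that completes the proof.
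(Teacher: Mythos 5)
Your proposal follows essentially the same route as the paper: the paper's own proof of (a) and (b) is precisely the statement that the induction of Theorem~\ref{thm:existenceBn(2)} carries over ``with an appropriate start to the induction,'' and your residue-at-a-non-domestic-point argument, the appeal to Theorem~\ref{thm:main*}(a) to pin down the decorated diagram, and the tracking of the oppomorphism property via Proposition~\ref{prop:typemap} are exactly the omitted details. The only cosmetic divergence is in the existence step, where the paper does not build new recursive matrices but simply reuses the matrices $g_{n-1}$ from Theorem~\ref{thm:existenceBn(2,4)}, observing that they already lie in $\mathsf{GO}^+_{2n}(2)$ (with $h_3=g_2$ a duality of $\sD_3(2)$ and $h_4=g_3$ a collineation of $\sD_4(2)$), which settles the collineation/duality bookkeeping you rightly flag as the delicate point.
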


\begin{proof}
The proofs of statements (a) and (b) are again analogous to those in Theorem~\ref{thm:existenceBn(2)}, with an appropriate start to the induction. We omit the details. 

To prove existence, note that the matrices $g_{n-1}$, $n\geq 3$, from the proof of Theorem~\ref{thm:existenceBn(2,4)} are also elements of $\mathsf{GO}_{2n}^+(2)$. Let $h_3=g_2$ and $h_4=g_3$. Then $h_3$ induces a duality of $\sD_3(2)$ and $h_4$ induces a collineation of $\sD_4(2)$. Let $h_n=g_{n-1}$, and for each $1\leq j\leq n-3$ let $h_n^{(j)}=g_{n-1}^{(j)}$. It is easy to check that $h_n$ satisfies conditions~(a), and $h_n^{(j)}$ satisfies conditions~(b).
\end{proof}

%
%

\section{Uncapped automorphisms for exceptional types}\label{sec:exceptional}

In this section we prove Theorem~\ref{thm:main*}(b) for the small buildings of exceptional type. Moreover we completely classify the domestic automorphisms of the buildings $\sF_4(2)$, $\sF_4(2,4)$, and $\sE_6(2)$. We begin, in Section~\ref{sec:detect}, by developing a (computationally feasible) method of detecting whether a given automorphism is domestic. In Section~\ref{sec:minimal} we briefly describe the implementation of the minimal faithful permutation representations of the relevant $\mathbb{ATLAS}$ groups into $\mathsf{MAGMA}$, and then in Section~\ref{sec:EF} we give the classification of domestic automorphisms of the buildings $\sF_4(2)$, $\sF_4(2,4)$, and $\sE_6(2)$ making use of these permutation representations. We provide examples of uncapped automorphisms in $\sE_7(2)$, and give conjectures for $\sE_8(2)$ in Section~\ref{sec:E7E8}.

Throughout this section we will use standard notation for Chevalley groups and twisted Chevalley groups~$G$, and we refer to Carter~\cite{Car:89} for details. In particular, the symbols $B$, $H$, $N$, $U$, $W$, $S$, $R$, $x_{\alpha}(a)$, $n_{\alpha}(a)$, etc, have their usual meanings. However we note that in the twisted case we use these symbols for the objects in the twisted group (rather than the untwisted group). Then the quadruple $(B,N,W,S)$ forms a Tits system in $G$, and thus $(\Delta,\delta)$ is a building of type $(W,S)$ where $\Delta=G/B$ and $\delta(gB,hB)=w$ if and only if $g^{-1}h\in BwB$. In the case of graph automorphisms of a simply laced Dynkin diagram we assume that the Chevalley generators are chosen so that \cite[Proposition~12.2.3]{Car:89} holds (in particular $x_{\alpha}(a)^{\sigma}=x_{\sigma(\alpha)}(\pm a)$). 

\subsection{Detecting domesticity}\label{sec:detect}

The following lemma shows that under certain hypotheses, to verify domesticity it is sufficient to show that no chamber opposite a given chamber is mapped onto an opposite. As we see in the remark after the lemma, the hypotheses cannot be removed. 
\newpage

\begin{lemma}\label{lem:red2}
Let $\theta$ be an automorphism of a thick spherical building~$\Delta$, and let $L=\disp(\theta)$. Let $C$ be any chamber. Suppose that either
\begin{compactenum}[$(i)$]
\item each panel of $\Delta$ has at least $4$ chambers, or
\item $\theta$ is an involution, or
\item $\theta$ induces opposition and $L=\ell(w_0)$. 
\end{compactenum}
Then there exists a chamber $D$ with $\delta(C,D)=w_0$ and $\ell(\delta(D,D^{\theta}))=L$. 
\end{lemma}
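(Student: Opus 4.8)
The plan is to deduce the statement from a single \emph{panel claim} and then to verify that claim separately under each hypothesis. Let $\mathcal{D}=\{D\in\cC\mid \ell(\delta(D,D^{\theta}))=L\}$, which is nonempty by definition of $L$. Among all $D\in\mathcal{D}$ I would choose a chamber $E$ for which $\ell(\delta(C,E))$ is maximal, and claim that $E$ is opposite $C$. Suppose not, so that $v:=\delta(C,E)\neq w_0$; then there is $s\in S$ with $\ell(vs)=\ell(v)+1$. In the $s$-panel $\pi$ through $E$ the chamber $E$ is forced to be the gate $\proj_{\pi}(C)$ (otherwise $\ell(vs)=\ell(v)-1$), so every chamber of $\pi\setminus\{E\}$ lies at distance $vs$ from $C$, strictly farther than $E$. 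Hence, once the panel claim below produces a chamber of $\pi\setminus\{E\}$ lying in $\mathcal{D}$, the maximality of $\ell(\delta(C,E))$ is contradicted; therefore $\delta(C,E)=w_0$ and $D=E$ works.

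\emph{Panel claim:} if $E\in\mathcal{D}$ and $\pi\ni E$ is a panel of type $s$, then $\pi\setminus\{E\}$ meets $\mathcal{D}$. Writing $w=\delta(E,E^{\theta})$ and $t=s^{\pi_{\theta}}$ (the type of $\pi^{\theta}$), the two adjacency relations give $\delta(F,F^{\theta})\in\{w,sw,wt,swt\}$ for each $F\sim_s E$, and since $L$ is maximal, $F\in\mathcal{D}$ exactly when $\ell(\delta(F,F^{\theta}))=\ell(w)$. To evaluate these I would invoke the standard dichotomy that the projection $\proj_{\pi^{\theta}}\colon\pi\to\pi^{\theta}$ is either a bijection (Case~A) or constant (Case~B). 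In Case~A the gate distance $\delta(F,\proj_{\pi^{\theta}}(F))$ is a constant $u$, and $\delta(F,F^{\theta})$ equals $u$ or $ut$ according as $F$ is or is not a fixed point of the permutation $\rho$ of $\pi$ that compares $\theta$ with the gate map; the members of $\mathcal{D}$ in $\pi$ are then exactly the non-fixed points of $\rho$ (or all of $\pi$, when $\rho=\mathrm{id}$). Since a permutation with a single non-fixed point cannot exist, the non-fixed point $E$ has a companion, and Case~A is settled using thickness alone.

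It remains to treat Case~B, where $c:=\proj_{\pi}(\pi^{\theta})$ and $c':=\proj_{\pi^{\theta}}(\pi)$ are single chambers. A short computation with the four gallery distances on $\pi$ shows that the displacement is maximal on $\pi$ off the two chambers $c$ and $F^{*}:=\theta^{-1}(c')$, so at most two chambers of $\pi$ fail to lie in $\mathcal{D}$. Under hypothesis~$(i)$ the panel $\pi$ has at least four chambers, so at least two lie in $\mathcal{D}$ and one differs from $E$. Under hypothesis~$(ii)$, projections commute with $\theta$ and $\theta^{2}=\mathrm{id}$ give $\theta(c)=\proj_{\pi^{\theta}}(\pi^{\theta^{2}})=c'$, whence $F^{*}=c$; only one chamber is excluded, leaving a member of $\mathcal{D}$ distinct from $E$. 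Under hypothesis~$(iii)$ I would show Case~B is impossible: here $w=w_0$ and $t=w_0 s w_0$, so $swt=w_0$ and the four values have lengths in $\{\ell(w_0)-1,\ell(w_0)\}$; on the other hand $E$ achieves the Case~B maximum $su_0 t$ of length $\ell(u_0)+2=\ell(w_0)$, forcing $\ell(u_0)=\ell(w_0)-2$, and then the Case~B value $\delta(c,c^{\theta})\in\{u_0,u_0 t\}$ cannot match any of $\{w_0,sw_0,w_0 t\}$ (the element $u_0$ has the wrong length, while $u_0 t=sw_0$ or $u_0 t=w_0 t$ each yields $u_0=w_0$), a contradiction.

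The main obstacle is exactly this degenerate Case~B, the constant-projection configuration between $\pi$ and $\pi^{\theta}$; it is the mechanism behind uncapped (exceptional domestic) behaviour over $\mathbb{F}_2$, and the whole force of the three hypotheses is to exclude or control it. The most delicate point is the element-level (not merely length-level) bookkeeping under~$(iii)$, together with the ancillary facts that justify the set-up: the bijective/constant dichotomy for projections between two panels, and the observation $E\neq c$ that makes the chamber $c$ available to drive the contradiction.
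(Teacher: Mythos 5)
Your proposal is correct and is in substance the same argument as the paper's: the paper walks one panel at a time toward $C$ (choosing $s$ with $\ell(ws)>\ell(w)$) and does the length case analysis on $\ell(sv)$ and $\ell(svs^{\pi_{\theta}})$ directly, which is exactly your bijective/constant projection dichotomy — its critical case $1(b)$ is your Case~B, with the two excluded chambers $E_1$ and $E_{\ell-1}^{\theta^{-1}}$ of its minimal gallery coinciding with your $c=\proj_{\pi}(\pi^{\theta})$ and $F^{*}=\theta^{-1}(\proj_{\pi^{\theta}}(\pi))$. The three hypotheses enter identically in both proofs (four chambers per panel leaves a good one after discarding at most two; involutivity forces the two excluded chambers to coincide; opposition together with $L=\ell(w_0)$ makes the four-element double coset impossible), so your repackaging via an extremal choice of $E$ plus the panel claim is sound.
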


\begin{proof}
Let $E$ be a chamber with $\ell(\delta(E,E^{\theta}))=L$, and write $v=\delta(E,E^{\theta})$. Let $w=\delta(C,E)$, and suppose that $w\neq w_0$. Then there exists $s\in S$ with $\ell(ws)>\ell(w)$. We show that there is a chamber $D$ with $\delta(E,D)=s$ such that $\ell(\delta(D,D^{\theta}))=L$. Consider each case. 

\begin{compactenum}[$(1)$]
\item $\ell(sv)<\ell(v)$. Then either:
\begin{compactenum}[$(a)$]
\item $\ell(svs^{\theta})=\ell(v)$, in which case we choose the unique $D$ with $\delta(E,D)=s$ such that $\delta(D,E^{\theta})=sv$. Since $\delta(E^{\theta},D^{\theta})=s^{\theta}$ and $\ell(svs^{\theta})>\ell(sv)$ we have $\delta(D,D^{\theta})=svs^{\theta}$ and so $\ell(\delta(D,D^{\theta}))=L$.
\item $\ell(svs^{\theta})<\ell(v)$, in which case necessarily $\ell(vs^{\theta})<\ell(v)$, and it follows that there exists a reduced expression for $v$ starting with $s$ and ending with $s^{\theta}$. Thus there exists a minimal length gallery 
$E=E_0\sim_{s_1}E_1\sim_{s_2}\cdots\sim_{s_{\ell-1}}E_{\ell-1}\sim_{s_{\ell}}E_{\ell}=E^{\theta}$
with $s_1=s$ and $s_{\ell}=s^{\theta}$. 
\begin{compactenum}[$(i)$]
\item If every panel of $\Delta$ has at least $4$ chambers then there exists a chamber $D$ with $\delta(E,D)=s$ such that $D\notin \{E_1,E_{\ell-1}^{\theta^{-1}}\}$. Then there is a gallery $D\sim_{s_1}E_1\sim_{s_2}\cdots\sim_{s_{\ell-1}}E_{\ell-1}\sim_{s_{\ell}}D^{\theta}$, and hence $\delta(D,D^{\theta})=v$ has length~$L$. 
\item If $\theta$ is an involution then $\theta$ maps every minimal length gallery from $E$ to $E^{\theta}$ to a minimal length gallery from $E^{\theta}$ to $E$, and it follows by considering types of first and last steps that $E_1^{\theta}=E_{\ell-1}$. Thus for any $D$ with $\delta(E,D)=s$ and $D\neq E_1$ we again have $\delta(D,D^{\theta})=v$. 
\item If $\theta$ induces opposition and $L=\ell(w_0)$ then $v=w_0$, and $svs^{\theta}=sw_0s^{\theta}=w_0s^{\theta}s^{\theta}=w_0$, and so case (1)(b) cannot occur. 
\end{compactenum}
\end{compactenum}
\item $\ell(sv)>\ell(v)$. Then either:
\begin{compactenum}[$(a)$]
\item $\ell(svs^{\theta})>\ell(v)$, in which case every chamber $D$ with $\delta(E,D)=s$ has $\delta(D,D^{\theta})=svs^{\theta}$, contradicting $\ell(v)=\disp(\theta)$. Thus this case cannot occur. 
\item $\ell(svs^{\theta})=\ell(v)$, in which case we choose $D$ to be any chamber with $\delta(E,D)=s$. Then $\delta(D,E^{\theta})=sv$ (since $\ell(sv)>\ell(v)$), and thus $\delta(D,D^{\theta})=sv$ or $\delta(D,D^{\theta})=svs^{\theta}$. The first case is impossible by the definition of displacement, and thus $\delta(D,D^{\theta})=svs^{\theta}$ has length~$L$.
\end{compactenum}
\end{compactenum}
Hence the result.
\end{proof}

\begin{remark}\label{rem:counter}
The following examples illustrate that the conclusion of Lemma~\ref{lem:red2} may fail if the hypotheses of the lemma are not satisfied.  
\begin{compactenum}[$(1)$]
\item The collineation $\theta$ of the Fano plane given by the upper triangular $3\times 3$ matrix with all upper triangular entries equal to $1$ maps no chamber opposite the base chamber $C=(\langle e_1\rangle,\langle e_1+e_2\rangle)$ to an opposite chamber. However this collineation has displacement $\ell(w_0)=3$, since no nontrivial collineation of a projective plane is domestic.
\item The exceptional domestic collineation of the generalised quadrangle $\mathsf{GQ}(2)=\sC_2(2)$ is given by $\theta=x_1(1)x_2(1)$ in Chevalley generators. The chambers opposite the base chamber $B$ of $G/B$ are mapped to distances $s_1s_2$ or $s_2s_1$, however $\theta$ has displacement $3$ (by both $s_1s_2s_1$ and $s_2s_1s_2$). 
\end{compactenum}
\end{remark} 

%
%
%
%
\subsection{Minimal faithful permutation representations}\label{sec:minimal}

Let $\mathcal{G}$ be the following set of $\mathbb{ATLAS}$ groups:
$$\mathcal{G}=\{\sF_4(2),\sF_4(2).2,{^2}\sE_6(2^2),{^2}\sE_6(2^2).2,\sE_6(2),\sE_6(2).2\}.
$$
These groups are, respectively, the collineation group of $\sF_4(2)$, the full automorphism group of $\sF_4(2)$ (including dualities), the ``inner'' automorphism group of $\sF_4(2,4)$, the full automorphism group of $\sF_4(2,4)$, the collineation group of $\sE_6(2)$, and the full automorphism group of $\sE_6(2)$. In the following section we will need an explicit set of conjugacy class representatives for the groups in $\mathcal{G}$. With the exception of perhaps $\sF_4(2)$, these groups appear to be too large for the standard conjugacy class algorithms in $\mathsf{MAGMA}$ (or $\mathsf{GAP}$) when input as matrix groups using the standard adjoint representation (for example $\sE_6(2).2$ has order $429683151044011150540800$, and in any case it is not an entirely trivial task to construct such extensions as a matrix group). However the available algorithms in both $\mathsf{MAGMA}$ and $\mathsf{GAP}$ for permutation groups turn out to be considerably more efficient, and therefore we require faithful permutation representations of the groups in $\mathcal{G}$.

 The degrees $\deg(G)$ of the minimal faithful permutation representations of the groups in $\mathcal{G}$ are well known (see for example \cite{Vas:96,Vas:97,Vas:98}): $\deg(\sF_4(2))=69615$, $\deg(\sF_4(2).2)=139230$, $\deg({^2}\sE_6(2^2))=\deg({^2}\sE_6(2^2).2)=3968055$, $\deg(\sE_6(2))=139503$, and $\deg(\sE_6(2).2)=279006$. In each case the permutation representation can naturally be realised by the action of $G$ on certain maximal parabolic coset spaces (equivalently, on certain vertices of the building). For example, for $G=\sE_6(2).2$ we consider the action on $G/P_1\cup G/P_6$ (the set of type $1$ and type $6$ vertices of the $\sE_6(2)$ building), and for $G={^2}\sE_6(2^2).2$ we consider the action on ${^2}\sE_6(2^2)/P_1$ (the set of type $1$ vertices of the $\sF_4(2,4)$ building), where $P_i$ denotes the maximal parabolic subgroup of type $S\backslash \{s_i\}$.

To our knowledge, at the time of writing these minimal faithful permutation representations were not available in either $\mathsf{GAP}$ or $\mathsf{MAGMA}$. Therefore we have implemented these permutation representations using the above action on vertices of the building and the The Groups of Lie Type package~\cite{CMT:04}. The resulting permutation representations are available on the first author's webpage, where we also provide lists of conjugacy class representatives and code relevant to the computations in the following sections. We would like to thank Bill Unger from the $\mathsf{MAGMA}$ team at Sydney University for helping us generate the conjugacy class representatives from the permutation representations.

\subsection{Domestic automorphism of small buildings of types $\sF_4$ and $\sE_6$}\label{sec:EF}

In this section we classify the domestic automorphisms of the buildings $\sF_4(2)$, $\sF_4(2,4)$, and $\sE_6(2)$. This requires two main steps. We first exhibit a list of $n$ examples of pairwise non-conjugate domestic automorphisms for each building (for some integer $n$). Next, using an explicit set of conjugacy class representatives, we show that all but $n$ of these representatives map some chamber to an opposite and are hence non-domestic. Thus we conclude that our list of $n$ examples is complete.

We make frequent use of both commutator relations, and the formula 
\begin{align}
\label{eq:n}
n_{\alpha}(a)=x_{\alpha}(a)x_{-\alpha}(-a^{-1})x_{\alpha}(a).
\end{align}
 We will also use the following observation: For the buildings $\sE_n(2)$, $n=6,7,8$, the displacement of an automorphism $\theta$ determines the (decorated) opposition diagram of $\theta$ (c.f. Remark~\ref{rem:disp}). For the buildings $\sF_4(2)$ and $\sF_4(2,4)$ the (capped) automorphisms with types $\{1\}$ and $\{4\}$ are not distinguished by displacement, and furthermore in $\sF_4(2)$ the three uncapped diagrams all have displacement~$23$.

Before beginning we outline of a useful technique. Suppose that $\theta\in G$ induces an automorphism of $\Delta=G/B$ such that the hypothesis of Lemma~\ref{lem:red2} holds. Then there exists $gB\in Bw_0B/B$ such that $\disp(\theta)=\ell(\delta(gB,\theta gB))$. Each $gB\in Bw_0B/B$ can be written as $gB=uw_0B$ with $u\in U$, and $\delta(gB,\theta gB)$ is the unique $w\in W$ such that
\begin{align}\label{eq:calcw}
w_0^{-1}u^{-1}\theta uw_0\in BwB.
\end{align}
Thus to determine $\disp(\theta)$ it is sufficient to analyse the terms $w_0^{-1}u^{-1}\theta uw_0$ with $u\in U$. However $|U|=|\mathbb{F}|^{\ell(w_0)}$, and so even for relatively small buildings it is not computationally feasible practical to check each $u\in U$ (for example, in $\sE_6(2)$ we have $|U|=2^{36}$). 

The following idea often provides considerable efficiency. Note that each $u\in U$ can be written as $\prod_{\alpha\in R^+}x_{\alpha}(a_{\alpha})$ with $a_{\alpha}\in\mathbb{F}$ and the product taken in any order (see \cite[Lemma~17]{St:16}; of course the $a_{\alpha}$ depend on the order chosen). Writing 
$A=\{\alpha\in R^+\mid x_{\alpha}(a)\theta\neq \theta x_{\alpha}(a)\text{ for all $a\in\mathbb{F}$}\}
$ 
we can write $u=u_A'u_{A}$ where $u_{A}$ is a product over terms $\alpha\in A$, and $u_A'$ is a product over the remaining positive roots. Then $u_A'$ commutes with $\theta$, and so 
\begin{align}\label{eq:calcw2}
w_0^{-1}u^{-1}\theta uw_0=w_0^{-1}u_A^{-1}\theta u_Aw_0.
\end{align}
There are $|\mathbb{F}|^{|A|}$ such elements, and so the technique works best if a conjugacy class representative for $\theta$ is chosen with the property that it commutes with as many elements $x_{\alpha}(a)$, $\alpha\in R^+$, as possible.

The residue of the type $J$ simplex of the chamber $gB$ is the coset $gP_{S\backslash J}$, and this residue is non-domestic for $\theta$ if and only if $
g^{-1}\theta g\in P_{S\backslash J}w_0P_{S\backslash J}$, and thus if and only if 
\begin{align}\label{eq:para}
\text{$g^{-1}\theta g\in BwB$ for some $w\in w_0W_{S\backslash J}$}
\end{align}
In the following we write $g_1\sim g_2$ to mean that $g_1$ and $g_2$ are conjugate in~$G$.

\begin{thm}\label{thm:F4} Let $G=\sF_4(2)$, and let $\Delta=G/B$ be the associated building. Let $\varphi=(2342)$ and~$\varphi'=(1232)$ be the highest root and highest short root (respectively) of the $\sF_4$ root system. There are precisely $6$ conjugacy classes of domestic collineations of $\Delta$, as follows:
$$
\noindent\begin{tabular}{|l|l|l|l|l|}
\hline
$\theta$&\emph{capped}&\emph{diagram}&\emph{fixed type $1/4$ vertices}&$\mathbb{ATLAS}$\\
\hline\hline
$\theta_1=x_{\varphi}(1)$&\emph{yes}&\begin{tikzpicture}[scale=0.5,baseline=-0.5ex]
\node at (0,0.3) {};
\node [inner sep=0.8pt,outer sep=0.8pt] at (-1.5,0) (1) {$\bullet$};
\node [inner sep=0.8pt,outer sep=0.8pt] at (-0.5,0) (2) {$\bullet$};
\node [inner sep=0.8pt,outer sep=0.8pt] at (0.5,0) (3) {$\bullet$};
\node [inner sep=0.8pt,outer sep=0.8pt] at (1.5,0) (4) {$\bullet$};
\draw (-1.5,0)--(-0.5,0);
\draw (0.5,0)--(1.5,0);
\draw (-0.5,0.07)--(0.5,0.07);
\draw (-0.5,-0.07)--(0.5,-0.07);
\draw [line width=0.5pt,line cap=round,rounded corners] (1.north west)  rectangle (1.south east);
\end{tikzpicture}&$2287/5103$&$2B$\\
\hline
$\theta_2=x_{\varphi'}(1)$&\emph{yes}&\begin{tikzpicture}[scale=0.5,baseline=-0.5ex]
\node at (0,0.3) {};
\node [inner sep=0.8pt,outer sep=0.8pt] at (-1.5,0) (1) {$\bullet$};
\node [inner sep=0.8pt,outer sep=0.8pt] at (-0.5,0) (2) {$\bullet$};
\node [inner sep=0.8pt,outer sep=0.8pt] at (0.5,0) (3) {$\bullet$};
\node [inner sep=0.8pt,outer sep=0.8pt] at (1.5,0) (4) {$\bullet$};
\draw (-1.5,0)--(-0.5,0);
\draw (0.5,0)--(1.5,0);
\draw (-0.5,0.07)--(0.5,0.07);
\draw (-0.5,-0.07)--(0.5,-0.07);
\phantom{\draw [line width=0.5pt,line cap=round,rounded corners] (1.north west)  rectangle (1.south east);}
\draw [line width=0.5pt,line cap=round,rounded corners] (4.north west)  rectangle (4.south east);
\end{tikzpicture}&$5103/2287$&$2A$\\
\hline
$\theta_3=x_{\varphi}(1)x_{\varphi'}(1)$&\emph{yes}&\begin{tikzpicture}[scale=0.5,baseline=-0.5ex]
\node at (0,0.3) {};
\node [inner sep=0.8pt,outer sep=0.8pt] at (-1.5,0) (1) {$\bullet$};
\node [inner sep=0.8pt,outer sep=0.8pt] at (-0.5,0) (2) {$\bullet$};
\node [inner sep=0.8pt,outer sep=0.8pt] at (0.5,0) (3) {$\bullet$};
\node [inner sep=0.8pt,outer sep=0.8pt] at (1.5,0) (4) {$\bullet$};
\draw (-1.5,0)--(-0.5,0);
\draw (0.5,0)--(1.5,0);
\draw (-0.5,0.07)--(0.5,0.07);
\draw (-0.5,-0.07)--(0.5,-0.07);
\draw [line width=0.5pt,line cap=round,rounded corners] (1.north west)  rectangle (1.south east);
\draw [line width=0.5pt,line cap=round,rounded corners] (4.north west)  rectangle (4.south east);
\end{tikzpicture}&$1263/1263$&$2C$\\
\hline
$\theta_4=x_1(1)x_2(1)$&\emph{no}&\begin{tikzpicture}[scale=0.5]
\node at (0,0.3) {};
\node [inner sep=0.8pt,outer sep=0.8pt] at (-1.5,0) (1) {$\bullet$};
\node [inner sep=0.8pt,outer sep=0.8pt] at (-0.5,0) (2) {$\bullet$};
\node [inner sep=0.8pt,outer sep=0.8pt] at (0.5,0) (3) {$\bullet$};
\node [inner sep=0.8pt,outer sep=0.8pt] at (1.5,0) (4) {$\bullet$};
\draw [line width=0.5pt,line cap=round,rounded corners,fill=ggrey] (1.north west)  rectangle (1.south east);
\draw [line width=0.5pt,line cap=round,rounded corners,fill=ggrey] (2.north west)  rectangle (2.south east);
\draw [line width=0.5pt,line cap=round,rounded corners] (3.north west)  rectangle (3.south east);
\draw [line width=0.5pt,line cap=round,rounded corners] (4.north west)  rectangle (4.south east);
\draw (-1.5,0)--(-0.5,0);
\draw (0.5,0)--(1.5,0);
\draw (-0.5,0.07)--(0.5,0.07);
\draw (-0.5,-0.07)--(0.5,-0.07);
\node [inner sep=0.8pt,outer sep=0.8pt] at (-1.5,0) (1) {$\bullet$};
\node [inner sep=0.8pt,outer sep=0.8pt] at (-0.5,0) (2) {$\bullet$};
\node [inner sep=0.8pt,outer sep=0.8pt] at (0.5,0) (3) {$\bullet$};
\node [inner sep=0.8pt,outer sep=0.8pt] at (1.5,0) (4) {$\bullet$};
\end{tikzpicture}&$127/399$&$4D$\\
\hline
$\theta_5=x_4(1)x_3(1)$&\emph{no}&\begin{tikzpicture}[scale=0.5]
\node at (0,0.3) {};
\node [inner sep=0.8pt,outer sep=0.8pt] at (-1.5,0) (1) {$\bullet$};
\node [inner sep=0.8pt,outer sep=0.8pt] at (-0.5,0) (2) {$\bullet$};
\node [inner sep=0.8pt,outer sep=0.8pt] at (0.5,0) (3) {$\bullet$};
\node [inner sep=0.8pt,outer sep=0.8pt] at (1.5,0) (4) {$\bullet$};
\draw [line width=0.5pt,line cap=round,rounded corners] (1.north west)  rectangle (1.south east);
\draw [line width=0.5pt,line cap=round,rounded corners] (2.north west)  rectangle (2.south east);
\draw [line width=0.5pt,line cap=round,rounded corners,fill=ggrey] (3.north west)  rectangle (3.south east);
\draw [line width=0.5pt,line cap=round,rounded corners,fill=ggrey] (4.north west)  rectangle (4.south east);
\draw (-1.5,0)--(-0.5,0);
\draw (0.5,0)--(1.5,0);
\draw (-0.5,0.07)--(0.5,0.07);
\draw (-0.5,-0.07)--(0.5,-0.07);
\node [inner sep=0.8pt,outer sep=0.8pt] at (-1.5,0) (1) {$\bullet$};
\node [inner sep=0.8pt,outer sep=0.8pt] at (-0.5,0) (2) {$\bullet$};
\node [inner sep=0.8pt,outer sep=0.8pt] at (0.5,0) (3) {$\bullet$};
\node [inner sep=0.8pt,outer sep=0.8pt] at (1.5,0) (4) {$\bullet$};
\end{tikzpicture}&$399/127$&$4C$\\
\hline 
$\theta_6=x_2(1)x_3(1)$&\emph{no}&\begin{tikzpicture}[scale=0.5]
\node at (0,0.3) {};
\node [inner sep=0.8pt,outer sep=0.8pt] at (-1.5,0) (1) {$\bullet$};
\node [inner sep=0.8pt,outer sep=0.8pt] at (-0.5,0) (2) {$\bullet$};
\node [inner sep=0.8pt,outer sep=0.8pt] at (0.5,0) (3) {$\bullet$};
\node [inner sep=0.8pt,outer sep=0.8pt] at (1.5,0) (4) {$\bullet$};
\draw [line width=0.5pt,line cap=round,rounded corners,fill=ggrey] (1.north west)  rectangle (1.south east);
\draw [line width=0.5pt,line cap=round,rounded corners,fill=ggrey] (2.north west)  rectangle (2.south east);
\draw [line width=0.5pt,line cap=round,rounded corners,fill=ggrey] (3.north west)  rectangle (3.south east);
\draw [line width=0.5pt,line cap=round,rounded corners,fill=ggrey] (4.north west)  rectangle (4.south east);
\draw (-1.5,0)--(-0.5,0);
\draw (0.5,0)--(1.5,0);
\draw (-0.5,0.07)--(0.5,0.07);
\draw (-0.5,-0.07)--(0.5,-0.07);
\node [inner sep=0.8pt,outer sep=0.8pt] at (-1.5,0) (1) {$\bullet$};
\node [inner sep=0.8pt,outer sep=0.8pt] at (-0.5,0) (2) {$\bullet$};
\node [inner sep=0.8pt,outer sep=0.8pt] at (0.5,0) (3) {$\bullet$};
\node [inner sep=0.8pt,outer sep=0.8pt] at (1.5,0) (4) {$\bullet$};
\end{tikzpicture}&$151/151$&$4E$\\
\hline
\end{tabular}
$$
Moreover, $\theta_{3+i}^2\sim\theta_i$ for $i=1,2,3$, and $\theta_2=\sigma(\theta_1)$, $\theta_3=\sigma(\theta_3)$, $\theta_5=\sigma(\theta_4)$, and $\theta_6=\sigma(\theta_6)$. 
\end{thm}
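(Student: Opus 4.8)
The plan is to prove Theorem~\ref{thm:F4} in the two-step manner outlined at the start of Section~\ref{sec:EF}: first exhibit $\theta_1,\dots,\theta_6$ as pairwise non-conjugate domestic collineations carrying the stated data, and then use an explicit transversal of the conjugacy classes of $G=\sF_4(2)$ to show that every other class (apart from the identity) is non-domestic. The decisive observation, which makes the whole computation tractable, is that in type $\sF_4$ the longest element $w_0$ acts as $-\mathrm{id}$, so the diagram automorphism it induces is trivial; consequently every collineation ``induces opposition'' in the sense of hypothesis~$(iii)$ of Lemma~\ref{lem:red2}, and that hypothesis holds precisely when $\disp(\theta)=\ell(w_0)$, i.e.\ precisely when $\theta$ is \emph{non}-domestic. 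Reading Lemma~\ref{lem:red2} contrapositively therefore gives the clean criterion: a collineation $\theta$ of $\sF_4(2)$ is domestic if and only if no chamber opposite the base chamber $B$ is mapped by $\theta$ to an opposite chamber. Since $\sF_4(2)$ has panels of size $3$, none of the hypotheses $(i)$ or the easy part of $(ii)$ apply generically, so it is exactly this point that must be argued carefully.

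With the criterion in hand, the completeness direction is a finite computation. For a representative $\theta$ I would write a chamber opposite $B$ as $uw_0B$ with $u\in U$ and use (\ref{eq:calcw}) to detect whether its image is opposite, i.e.\ whether $w_0^{-1}u^{-1}\theta uw_0\in Bw_0B$. Because $|U|=2^{24}$ is too large to enumerate directly, I would apply the commuting reduction (\ref{eq:calcw2}): letting $A$ be the set of positive roots $\alpha$ for which no $x_\alpha(a)$ commutes with $\theta$, only the factor $u_A$ matters, so it suffices to test $2^{|A|}$ elements. Running this test over the full list of conjugacy-class representatives produced from the minimal faithful permutation representation of Section~\ref{sec:minimal} eliminates every class except those of $\theta_1,\dots,\theta_6$; applying the same test to the $\theta_i$ themselves confirms they are domestic, and the fact that they lie in the six distinct $\mathbb{ATLAS}$ classes $2A,2B,2C,4C,4D,4E$ shows they are pairwise non-conjugate.

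It then remains to fill in the table. The opposition diagram of each $\theta_i$ is pinned down by computing $\Type(\theta_i)$ through the parabolic test (\ref{eq:para}): for the three involutions $\theta_1,\theta_2,\theta_3$ this yields the capped diagrams of types $\{1\}$, $\{4\}$, $\{1,4\}$, while for $\theta_4,\theta_5,\theta_6$ one checks that the type is full and then Theorem~\ref{thm:main*}(a) forces the decorated diagram to be one of the three $\sF_4(2)$ diagrams of Table~\ref{table:2}; which one occurs is decided by exhibiting a non-domestic type $\{1,2\}$ or $\{3,4\}$ simplex as in Lemma~\ref{lem:F41234} (so that in particular $\theta_6$ realises the fully shaded diagram). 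The counts of fixed type~$1$ and type~$4$ vertices are read off as the numbers of fixed points of $\theta_i$ in the permutation actions on $G/P_1$ and $G/P_4$, and the $\mathbb{ATLAS}$ labels follow by matching element orders and power maps.

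Finally I would verify the stated relations. The identities $\theta_{3+i}^2\sim\theta_i$ are the power maps $4D\mapsto 2B$, $4C\mapsto 2A$, $4E\mapsto 2C$, which can be confirmed either directly from (\ref{eq:n}) together with the Chevalley commutator relations or simply read from the class data. For the duality $\sigma$, which is the graph automorphism interchanging $\alpha_1\leftrightarrow\alpha_4$ and $\alpha_2\leftrightarrow\alpha_3$ and swapping long and short roots, Carter's formula $x_\alpha(a)^\sigma=x_{\sigma(\alpha)}(\pm a)$ from \cite[Proposition~12.2.3]{Car:89} gives everything: $\sigma$ carries the highest root $\varphi$ to the highest short root $\varphi'$, whence $\sigma(\theta_1)=\theta_2$; since no positive combination of $\varphi$ and $\varphi'$ is a root, $x_\varphi(1)$ and $x_{\varphi'}(1)$ commute and $\theta_3$ is $\sigma$-fixed; and $\sigma(\theta_4)=x_4(1)x_3(1)=\theta_5$, with $\theta_6$ likewise $\sigma$-stable (all sign ambiguities being irrelevant over $\FF_2$). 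The main obstacle throughout is not any single step but the reliability and exhaustiveness of the class computation: one must be certain the transversal of conjugacy classes of $\sF_4(2)$ is complete and that the displacement test is implemented correctly, since the entire classification rests on no unlisted class slipping through as domestic.
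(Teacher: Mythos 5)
Your overall strategy coincides with the paper's: the same domesticity criterion extracted from Lemma~\ref{lem:red2}$(iii)$ (valid because $w_0=-1$ in type $\sF_4$, so every collineation induces opposition, and hypothesis $(iii)$ holds exactly when $\theta$ is non-domestic), the same commuting-root reduction of~(\ref{eq:calcw}) to $2^{|A|}$ tests, and the same completeness argument running over the $95$ conjugacy classes obtained from the permutation representation. The detection of domesticity and the elimination of the other $88$ classes are therefore sound as proposed.

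There is, however, a genuine gap in how you pin down the \emph{decorated} diagrams of the uncapped examples. First, invoking Theorem~\ref{thm:main*}(a) and Lemma~\ref{lem:F41234} to restrict $\theta_4,\theta_5,\theta_6$ to the three $\sF_4(2)$ diagrams of Table~\ref{table:2} is circular: Lemma~\ref{lem:F41234} is proved \emph{from} the classification in Theorem~\ref{thm:F4}, and the $\sF_4(2)$ case of Theorem~\ref{thm:main*}(a) in turn relies on Lemma~\ref{lem:F41234}; the paper's remark that ``no circular logic is introduced'' depends precisely on Theorem~\ref{thm:F4} being proved without these results. Second, and more substantively, deciding ``which diagram occurs'' by \emph{exhibiting} a non-domestic type $\{1,2\}$ or $\{3,4\}$ simplex can only ever show that nodes \emph{are} shaded. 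For $\theta_4$ (dually $\theta_5$) one must prove that nodes $3$ and $4$ are \emph{not} shaded, i.e.\ that no cotype~$3$ or cotype~$4$ panel --- equivalently, via the Fano plane residue, no type $\{1,2\}$ simplex --- is mapped to an opposite. This is a non-existence statement requiring an exhaustive verification over all type $\{1,2\}$ simplices; the paper makes this feasible by reducing the $4385745$ cosets of $P_{\{3,4\}}$ to $3885$ representatives $u_w'(a)w$ with $w$ in a transversal of $W/W_{\{3,4\}}$ and the product restricted to $R(w)\cap A$, and then checking these by machine. Your proposal contains no such step, so as written it does not establish the diagrams of $\theta_4$ and $\theta_5$ (nor, strictly, the fact that all four nodes of $\theta_6$ really are shaded rather than merely that some are).
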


\begin{proof}
We first show that the automorphisms have the claimed diagrams. Note that $\theta_1$, $\theta_2$, and $\theta_3$ are involutions, and hence the hypothesis of Lemma~\ref{lem:red2} applies. Consider $\theta_1$. Following the strategy of~(\ref{eq:calcw}) we notice that $\theta_1=x_{\varphi}(1)$ is central in $U$ (by the commutator formulae), and hence, for all $u\in U$, using~(\ref{eq:n}) we have
$$
w_0^{-1}u^{-1}\theta_1 uw_0=w_0^{-1}x_{\varphi}(1)w_0=x_{-\varphi}(1)=x_{\varphi}(1)n_{\varphi}(1)x_{\varphi}(1)\in Bs_{\varphi}B.
$$
Thus $\delta(gB,\theta_1 gB)=s_{\varphi}$ for all $gB\in Bw_0B/B$, and so $\disp(\theta)=\ell(s_{\varphi})=15$ (using Lemma~\ref{lem:red2}). Moreover, note that $s_{\varphi}=w_0w_{\{2,3,4\}}$ (for example, by computing inversion sets), and so there exists a non-domestic type $1$ vertex. All type $2$ or $3$ vertices are domestic, for if, for example, there is a non-domestic type $2$ vertex then there is $g\in G$ with $\delta(gB,\theta gB)\in w_0W_{\{1,3,4\}}$ and hence $\disp(\theta)\geq 24-4>15$. If there exists a non-domestic type $4$ vertex then by \cite[Lemma~4.5]{PVM:17a} there exists a non-domestic type $\{1,4\}$ simplex, which again contradicts the displacement calculation. Thus the diagram for $\theta_1$ is as claimed, and since $\theta_2=\sigma(\theta_1)$ (with $\sigma$ the graph automorphism) the result for $\theta_2$ also follows. 

Consider $\theta_3$. Since $x_{\varphi'}(1)$ is also central in $U$ (this special feature of characteristic~$2$ follows from the commutator relations) we see that $\theta_3$ is central in~$U$.  Thus, using commutator relations and~(\ref{eq:n}) we have
\begin{align*}
w_0^{-1}u^{-1}\theta_3 uw_0&=x_{-\varphi'}(1)x_{-\varphi}(1)\\
&=x_{-\varphi'}(1)x_{\varphi}(1)n_{\varphi}(1)x_{\varphi}(1)\\
&=x_{\varphi}(1)x_{(1110)}(1)x_{-(0122)}(1)x_{-\varphi'}(1)n_{\varphi}(1)x_{\varphi}(1)\\
&\in Bx_{-(0122)}(1)x_{-\varphi'}(1)s_{\varphi}B\\
&= Bs_{\varphi}x_{-(0122)}(1)x_{(1110)}(1)B\\
&=Bs_{\varphi}s_{(0122)}B.
\end{align*}
We have $s_{\varphi}s_{(0122)}=w_0w_{\{2,3\}}$ (for example, by computing the inversion sets), and hence there exists a non-domestic type $\{1,4\}$ simplex (see (\ref{eq:para})). By Lemma~\ref{lem:red2} the above calculation also shows that $\disp(\theta)=\ell(w_0w_{\{2,3\}})=20$, and the diagram of $\theta_3$ follows.

Consider $\theta_4$. We first show that $\theta_4$ is domestic. We will work with the conjugate 
$$
\theta_4'=x_{(1220)}(1)x_{1122}(1)=w^{-1}\theta_4 w\quad\text{where}\quad w=s_{(0110)}s_{(1242)}
$$
because this representative commutes with more elements $x_{\alpha}(1)$ with $\alpha\in R^+$, making~(\ref{eq:calcw}) more effective. Indeed $\theta_4'$ commutes with all $x_{\alpha}(1)$ with $\alpha\in R^+\backslash A$, where 
$$
A=\{(0100),(0001),(0110),(0011),(0120),(1220),(0122),(1122)\}.
$$
Then, as in~(\ref{eq:calcw2}), we have
$
w_0^{-1}u^{-1}\theta_4'uw_0=w_0^{-1}u_A^{-1}\theta_4'u_Aw_0.
$
There are $2^8$ distinct elements $u_A$, and using the Groups of Lie Type package in $\mathsf{MAGMA}$ we can easily verify that $w_0^{-1}u_A^{-1}\theta_4'u_Aw_0\notin Bw_0B$ for all $u_A$ (see the first author's webpage for the code). This implies that $\theta_4'$ is domestic, for if $\theta_4'$ were not domestic then the third hypothesis of Lemma~\ref{lem:red2} holds and hence there exists an element $u_A$ with $w_0^{-1}u_A^{-1}\theta_4'u_Aw_0\in Bw_0B$. 

One may see that $\theta_4'$ maps panels of cotypes $1$ and $2$ to opposites by simply exhibiting such panels (the Groups of Lie Type package is helpful here). Checking that there are no cotype $3$ or $4$ panels mapped to opposite panels is more complicated, and we have resorted to exhaustively verifying this by computation. However some efficiencies must be found to make the search feasible. Firstly, it is sufficient to check that there are no non-domestic type $\{1,2\}$ simplices (by a simple residue argument). Writing $P=P_{\{3,4\}}$, the (residues of the) type $\{1,2\}$ simplices of $\Delta$ are the cosets $gP$, $g\in G$. Let $T\subseteq W$ denote a transversal of minimal length representatives for cosets in $W/W_{\{3,4\}}$. A complete set of representatives for $P$ cosets in $G$ (and hence type $\{1,2\}$ simplices in $\Delta$) is 
$$
\{u_w(a)w \mid w\in T,\,a\in\mathbb{F}_2^{\ell(w)}\}\quad\text{where}\quad u_w(a)=x_{\beta_1}(a_1)\cdots x_{\beta_k}(a_k),
$$
where $R(w)=\{\beta_1,\ldots,\beta_k\}$ is the inversion set of $w$. Thus, using~(\ref{eq:para}), it is sufficient to check that $\delta(g,\theta_4'g)\notin w_0W_{\{3,4\}}$ for all $g=u_w(a)w$ with $w\in T$. However there are $4385745$ such elements $g$ (the cardinality of $G/P$) and this would be computationally expensive. Considerable efficiency can be gained by using the fact that the product $u_w(a)$ can be taken in any order (again, see \cite[Lemma~17]{St:16}). Thus, applying the technique~(\ref{eq:calcw2}), we only need to consider terms $u_w'(a)=x_{\gamma_1}(a_1)\cdots x_{\gamma_{\ell}}(a_{\ell})$ with $\{\gamma_1,\ldots,\gamma_{\ell}\}=R(w)\cap A$. This drastically reduces the number of cases needing checking. In fact it turns out that there are only $3885$ elements to check, and these are very quickly checked by the computer.

Since $\theta_5=\sigma(\theta_4)$ the result for $\theta_5$ follows. 

Consider $\theta_6$. Again we use a different conjugate
$
\theta_6\sim \theta_6'=x_{(1110)}(1)x_{(0122)}(1).
$
This element commutes with all $x_{\alpha}(1)$ with $\alpha\in R^+\backslash A$, where
$$
A=\{(0001),(0011),(0122),(0111),(0121),(1120),(1220),(1110),(1100),(1000)\}.
$$
A similar argument to before, this time checking $2^{10}$ cases, verifies that $\theta_6'$ (and hence $\theta_6)$ is domestic. It is then straightforward to provide panels of each cotype mapped onto opposites, and hence $\theta_6$ has the claimed diagram.

There are $95$ conjugacy classes in the group $\sF_4(2)$ (computed using the permutation representation), and for $88$ of these classes a quick search finds non-domestic chambers. The $7$ remaining classes must therefore be domestic, because the $6$ examples given above are clearly non-conjugate (they have distinct decorated opposition diagrams), and the identity is also trivially domestic. 

The number of fixed type $1$ vertices for each example is easily computed using the permutation representation, and the number of fixed type $4$ vertices is obtained by considering the dual. Finally the $\mathbb{ATLAS}$ classes can be determined by the orders and fixed structures. 
\end{proof}

Since no duality of a thick $\sF_4$ building is domestic the classification of domestic automorphisms of $\sF_4(2)$ is complete (see \cite[Lemma~4.1]{PVM:17a}). We also note that Lemma~\ref{lem:F41234} follows from the above classification.

We now consider the building $\sF_4(2,4)$. The full automorphism group of this building is ${^2}\sE_6(2^2).2$ (that is, ${^2}\sE_6(2^2)$ extended by the diagram automorphism $\sigma$ of $\sE_6$; see \cite[Section~10.4]{Tit:74} and \cite[page~191]{ATLAS}). We write $x_{\alpha}(a)$ for the Chevalley generators in the twisted group ${^2}\sE_6(2^2)$. Thus $a\in\mathbb{F}_2$ (respectively $a\in\mathbb{F}_4$) if $\alpha$ is a long root (respectively short root) of the twisted root system.

\begin{thm}\label{thm:F424}
Let $G= {^2\sE}_6(2^2)$, and let $\Delta=G/B$ be the associated building of type $\sF_4(2,4)$. Let $\varphi$ (respectively~$\varphi'$) be the highest root (respectively highest short root) of the $\sF_4$ root system. There are precisely $4$ classes of nontrivial domestic collineations, as follows (where $\sigma$ is the graph automorphism of $\sE_6$):
$$
\noindent\begin{tabular}{|l|l|l|l|l|}
\hline
$\theta$&\emph{capped}&\emph{diagram}&\emph{fixed points}&$\mathbb{ATLAS}$\\
\hline\hline
$\theta_1=x_{\varphi}(1)$&\emph{yes}&\begin{tikzpicture}[scale=0.5,baseline=-0.5ex]
\node at (0,0.3) {};
\node [inner sep=0.8pt,outer sep=0.8pt] at (-1.5,0) (1) {$\bullet$};
\node  [inner sep=0.8pt,outer sep=0.8pt] at (-0.5,0) (2) {$\bullet$};
\node [inner sep=0.8pt,outer sep=0.8pt] at (0.5,0) (3) {$\bullet$};
\node [inner sep=0.8pt,outer sep=0.8pt] at (1.5,0) (4) {$\bullet$};
\draw (-1.5,0)--(-0.5,0);
\draw (0.5,0)--(1.5,0);
\draw (-0.5,0.07)--(0.5,0.07);
\draw (-0.5,-0.07)--(0.5,-0.07);
\draw (-0.15,0.3) -- (0.08,0) -- (-0.15,-0.3);
\draw [line width=0.5pt,line cap=round,rounded corners] (1.north west)  rectangle (1.south east);
\end{tikzpicture}&$46135$&$2A$\\
\hline
$\theta_2=x_{\varphi'}(1)$&\emph{yes}&\begin{tikzpicture}[scale=0.5,baseline=-0.5ex]
\node at (0,0.3) {};
\node [inner sep=0.8pt,outer sep=0.8pt] at (-1.5,0) (1) {$\bullet$};
\node [inner sep=0.8pt,outer sep=0.8pt] at (-0.5,0) (2) {$\bullet$};
\node [inner sep=0.8pt,outer sep=0.8pt] at (0.5,0) (3) {$\bullet$};
\node [inner sep=0.8pt,outer sep=0.8pt] at (1.5,0) (4) {$\bullet$};
\draw (-1.5,0)--(-0.5,0);
\draw (0.5,0)--(1.5,0);
\draw (-0.5,0.07)--(0.5,0.07);
\draw (-0.5,-0.07)--(0.5,-0.07);
\draw (-0.15,0.3) -- (0.08,0) -- (-0.15,-0.3);
\draw [line width=0.5pt,line cap=round,rounded corners] (1.north west)  rectangle (1.south east);
\draw [line width=0.5pt,line cap=round,rounded corners] (4.north west)  rectangle (4.south east);
\end{tikzpicture}&$20279$&$2B$\\
\hline
$\theta_3=\sigma$&\emph{yes}&\begin{tikzpicture}[scale=0.5,baseline=-0.5ex]
\node [inner sep=0.8pt,outer sep=0.8pt] at (0,0.3) {};
\node [inner sep=0.8pt,outer sep=0.8pt] at (-1.5,0) (1) {$\bullet$};
\node [inner sep=0.8pt,outer sep=0.8pt] at (-0.5,0) (2) {$\bullet$};
\node [inner sep=0.8pt,outer sep=0.8pt] at (0.5,0) (3) {$\bullet$};
\node [inner sep=0.8pt,outer sep=0.8pt] at (1.5,0) (4) {$\bullet$};
\draw (-1.5,0)--(-0.5,0);
\draw (0.5,0)--(1.5,0);
\draw (-0.5,0.07)--(0.5,0.07);
\draw (-0.5,-0.07)--(0.5,-0.07);
\draw (-0.15,0.3) -- (0.08,0) -- (-0.15,-0.3);
\phantom{\draw [line width=0.5pt,line cap=round,rounded corners,fill=ggrey] (1.north west)  rectangle (1.south east);}
\draw [line width=0.5pt,line cap=round,rounded corners] (4.north west)  rectangle (4.south east);
\end{tikzpicture}&$69615$&$2E$\\
\hline
$\theta_4=x_1(1)x_2(1)$&\emph{no}&\begin{tikzpicture}[scale=0.5]
\node at (0,0.3) {};
\node [inner sep=0.8pt,outer sep=0.8pt] at (-1.5,0) (1) {$\bullet$};
\node [inner sep=0.8pt,outer sep=0.8pt] at (-0.5,0) (2) {$\bullet$};
\node [inner sep=0.8pt,outer sep=0.8pt] at (0.5,0) (3) {$\bullet$};
\node [inner sep=0.8pt,outer sep=0.8pt] at (1.5,0) (4) {$\bullet$};
\draw [line width=0.5pt,line cap=round,rounded corners,fill=ggrey] (1.north west)  rectangle (1.south east);
\draw [line width=0.5pt,line cap=round,rounded corners,fill=ggrey] (2.north west)  rectangle (2.south east);
\draw [line width=0.5pt,line cap=round,rounded corners] (3.north west)  rectangle (3.south east);
\draw [line width=0.5pt,line cap=round,rounded corners] (4.north west)  rectangle (4.south east);
\draw (-1.5,0)--(-0.5,0);
\draw (0.5,0)--(1.5,0);
\draw (-0.5,0.07)--(0.5,0.07);
\draw (-0.5,-0.07)--(0.5,-0.07);
\draw (-0.15,0.3) -- (0.08,0) -- (-0.15,-0.3);
\node [inner sep=0.8pt,outer sep=0.8pt] at (-1.5,0) (1) {$\bullet$};
\node [inner sep=0.8pt,outer sep=0.8pt] at (-0.5,0) (2) {$\bullet$};
\node [inner sep=0.8pt,outer sep=0.8pt] at (0.5,0) (3) {$\bullet$};
\node [inner sep=0.8pt,outer sep=0.8pt] at (1.5,0) (4) {$\bullet$};
\end{tikzpicture}&$855$&$4A$\\
\hline
\end{tabular}
$$
Here $x_{\alpha}(a)$ denote the Chevalley generators in the twisted group. Moreover, $\theta_4^2\sim\theta_1$. 
\end{thm}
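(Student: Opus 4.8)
The plan is to follow the two-step strategy of the proof of Theorem~\ref{thm:F4}: first verify that the four listed automorphisms are domestic with exactly the stated decorated diagrams, fixed-vertex counts and $\mathbb{ATLAS}$ labels, and then carry out an exhaustive conjugacy-class computation to show that no further domestic classes occur. Since $\sF_4(2,4)$ has unequal parameters it admits no duality, so every automorphism is a collineation and the relevant group is the full automorphism group ${}^2\sE_6(2^2).2$; the classification is therefore taken up to conjugacy there, with $\theta_1,\theta_2,\theta_4$ lying in the inner group ${}^2\sE_6(2^2)$ and $\theta_3=\sigma$ in the outer coset.

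For the three involutions $\theta_1,\theta_2,\theta_3$, Corollary~\ref{cor:involutions} already guarantees cappedness, so only the type must be identified. For $\theta_1=x_{\varphi}(1)$ I would repeat the computation of Theorem~\ref{thm:F4}: $x_{\varphi}(1)$ is central in $U$, so $w_0^{-1}u^{-1}\theta_1 uw_0=x_{-\varphi}(1)\in Bs_{\varphi}B$ for every $u\in U$ by~(\ref{eq:n}), and since collineations of an $\sF_4$ building are oppomorphisms ($w_0=-1$) Lemma~\ref{lem:red2}(iii) shows this computes the displacement; as $s_{\varphi}=w_0w_{\{2,3,4\}}$ this gives $\disp(\theta_1)=15$ and a non-domestic type-$\{1\}$ vertex. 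For $\theta_2=x_{\varphi'}(1)$ the analogous calculation, now requiring the commutator relations to collect the several negative-root terms produced by the twisted short-root subgroup (as in the treatment of $\theta_3$ in Theorem~\ref{thm:F4}), lands in the cell $Bw_0w_{\{2,3\}}B$, giving $\disp(\theta_2)=20$ and a non-domestic type-$\{1,4\}$ simplex. The automorphism $\theta_3=\sigma$ fixes $B$ and again has displacement $15$; because types $\{1\}$ and $\{4\}$ are \emph{not} separated by displacement in $\sF_4$, I would distinguish $\theta_1$ (type $\{1\}$) from $\theta_3$ (type $\{4\}$) by their fixed-vertex structure, using that the fixed subbuilding of $\sigma$ is $\sF_4(2)$ and so forces the non-domestic vertices of $\sigma$ to have type $4$.

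For $\theta_4=x_1(1)x_2(1)$ (of order $4$, with $\theta_4^2\sim\theta_1$ verified by a direct commutator computation) I would first replace $\theta_4$ by a conjugate $\theta_4'$ commuting with as many root subgroups as possible, so that by~(\ref{eq:calcw2}) the displacement analysis reduces to the terms $w_0^{-1}u_A^{-1}\theta_4'u_Aw_0$ with $u_A$ supported on the non-commuting set $A$. Were $\theta_4'$ non-domestic, Lemma~\ref{lem:red2}(iii) would supply such a $u_A$ with $w_0^{-1}u_A^{-1}\theta_4'u_Aw_0\in Bw_0B$; the Groups of Lie Type package confirms none exists, so $\theta_4'$ is domestic. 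The decorated diagram then follows by exhibiting non-domestic panels of cotypes $1$ and $2$ and checking, via the transversal reduction of~(\ref{eq:para}) applied to type-$\{1,2\}$ simplices, that no simplex of cotype $3$ or $4$ is non-domestic. Completeness is obtained by enumerating the conjugacy classes of ${}^2\sE_6(2^2).2$ in the degree-$3968055$ permutation representation of Section~\ref{sec:minimal}, exhibiting a non-domestic chamber for every class except the identity and the four pairwise non-conjugate examples above; the fixed-point counts and $\mathbb{ATLAS}$ labels are then read directly from the permutation representation.

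The hardest part will be the two computational verifications. For $\theta_4$ the field $\FF_4$ on the short-root subgroups sharply increases both the number of relevant $u_A$ and the number of type-$\{1,2\}$ simplices to be tested, so the efficiency of~(\ref{eq:calcw2}) together with the freedom to reorder the product $u_w(a)$ is essential to keep the search within reach; and the completeness step is delicate because ${}^2\sE_6(2^2).2$ is large, so one must rely on the precomputed class representatives and on the empirical fact that a short random search over $Bw_0B/B$ quickly locates a non-domestic chamber for each non-domestic class.
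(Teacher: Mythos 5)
Your overall strategy is the paper's: verify the four candidates by the displacement/commutator technique of Section~\ref{sec:detect}, then exhaust the $189$ conjugacy classes of ${}^2\sE_6(2^2).2$ via the degree-$3968055$ permutation representation. However, your treatment of $\theta_3=\sigma$ contains a genuine gap. The step ``the fixed subbuilding of $\sigma$ is $\sF_4(2)$ and so forces the non-domestic vertices of $\sigma$ to have type $4$'' does not follow: knowledge of the fixed chamber system does not by itself determine which vertex types occur in $\Opp(\sigma)$, and since $\Type(\theta)=\{1\}$ and $\Type(\theta)=\{4\}$ both give displacement $15$ you cannot appeal to the displacement computation either. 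The paper closes this by a structural argument: $\sigma$ acts on the ambient untwisted building $\sE_6(4)$ as a symplectic polarity, hence is $\{i\}$-domestic for $i\in\{2,3,4,5\}$ by \cite{HVM:12}, and under the folding $\sE_6\to\sF_4$ this yields $\{i\}$-domesticity for $i\in\{1,2,3\}$ on $\sF_4(2,4)$, leaving only type $4$. You should either import that argument or replace your claim by a direct verification (e.g.\ exhibit a non-domestic type-$4$ vertex and check a transversal of type-$1$ vertices).

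Two smaller points. For $\theta_1$ you invoke Lemma~\ref{lem:red2}(iii) to ``compute the displacement'', but hypothesis (iii) presupposes $\disp(\theta)=\ell(w_0)$, which is exactly what is being refuted; the hypothesis that actually lets you read off $\disp(\theta_1)=15$ from the chambers opposite $B$ is (ii), since $\theta_1$ is an involution (hypothesis (iii) is only used contrapositively, and only to conclude domesticity, as you correctly do for $\theta_4$). For the decorated diagram of $\theta_4$, your proposed exhaustive transversal check over type-$\{1,2\}$ simplices would work but is unnecessary here: in $\sF_4(2,4)$ the residue of a type-$\{1,2\}$ simplex is a projective plane of order $4$, which admits no domestic duality, so $\{1,2\}$-domesticity of $\theta_4$ is immediate from its domesticity; combined with $\disp(\theta_4)=23$ (which forces $\Type(\theta_4)=S$ and uncappedness), Theorem~\ref{thm:main*}(a) then pins down the decorated diagram without the second computation. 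This is the route the paper takes, and it spares you the $\FF_4$-inflated search you flag as the hardest part.
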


\begin{proof}
The analysis for $\theta_1$ is similar to the analysis of $\theta_1$ for $\sF_4(2)$. Specifically, this element commutes with all terms $x_{\alpha}(a)$, and the result easily follows.

Consider $\theta_2$. This element commutes with all terms $x_{\alpha}(a)$ with $\alpha\in R^+$ except for $x_{(0010)}(a)$, $x_{(0110)}(a)$ and $x_{(1110)}(a)$ with $a\in\{\xi,\xi^2\}$ (where $\xi$ is a generator of $\FF_4^*$). By commutator relations, if $a\in\{\xi,\xi^2\}$ we have
\begin{align*}
x_{(0010)}(-a)\theta_2x_{(0010)}(a)&=\theta_2x_{\varphi-\alpha_1-\alpha_2}(1)\\
x_{(0110)}(-a)\theta_2x_{(0110)}(a)&=\theta_2x_{\varphi-\alpha_1}(1)\\
x_{(1110)}(-a)\theta_2x_{(1110)}(a)&=\theta_2x_{\varphi}(1),
\end{align*}
and it follows that for all $u\in U$ we have
\begin{align*}
w_0^{-1}u^{-1}\theta_2 uw_0=x_{-\varphi'}(1)x_{-\varphi+\alpha_1+\alpha_2}(a_1)x_{-\varphi+\alpha_1}(a_2)x_{-\varphi}(a_3)\quad\text{with}\quad a_1,a_2,a_3\in\{0,1\}.
\end{align*}
Considering each of the $8$ possibilities for the triple $(a_1,a_2,a_3)\in\FF_2^3$ we see that the maximum length of $w=\delta(uw_0B,\theta_2uw_0B)$ is $20$ with $w=s_{\varphi}s_{(0122)}$, and the result follows.

Consider $\theta_4$. This element is conjugate to $\theta_4'=x_{(1220)}(1)x_{(1122)}(1)$, and then an analysis very similar to the case of $\theta_4$ for $\sF_4(2)$ applies. In particular, with $A$ as in the $\sF_4(2)$ case, we need to check each of the elements $\delta(u_Aw_0B,\theta_4'u_Aw_0B)$. This time there are $2048=4^3\times 2^5$ elements $u_A$ to check (since there are $3$ roots in $A$ whose root subgroup is isomorphic to $\mathbb{F}_4$ and the remaining $5$ root subgroups are isomorphic to $\mathbb{F}_2$). A quick check with the computer shows that the maximum length of $\delta(u_Aw_0B,\theta_4'u_Aw_0B)$ is $23$, and hence $\theta_4'\sim \theta_4$ is domestic. Then necessarily $\theta_4$ maps no panels of cotypes $3$ or $4$ to opposite (by a simple residue argument), and then since $\disp(\theta_4)=23$ it is forced that there are panels of cotypes both $1$ and $2$ mapped onto opposites. 

Consider $\theta_3=\sigma$. This element acts on the untwisted group $\sE_6(4)$ as a symplectic polarity, and thus is $\{i\}$-domestic for $i\in\{2,3,4,5\}$ (see \cite{HVM:12}). It follows that $\sigma$ is $\{i\}$-domestic for $i\in\{1,2,3\}$ on the building $\sF_4(2,4)$, hence the result.

Thus the diagrams of the four automorphisms are as claimed. Next, as in the $\sF_4(2)$ example, we use the permutation representation of ${^2}\sE_6(2^2).2$ to compute a complete list of conjugacy class representatives of this group. It turns out that there are $189$ conjugacy classes, and for $184$ of these classes one can exhibit a chamber mapped onto an opposite chamber. Thus there are at most $4$ classes of nontrivial domestic collineations, and since the examples exhibited above are pairwise non-conjugate (by decorated opposition diagrams) the list is complete.

Finally, the calculation of the numbers of fixed points is immediate from the permutation representation, and the $\mathbb{ATLAS}$ classes can be determined by the orders and fixed structures.
\end{proof}

\begin{thm}
Let $G=\sE_6(2).2$, and let $\Delta=\sE_6(2)/B$ be the associated building of type $\sE_6(2)$. There are precisely $3$ classes of domestic dualities (up to conjugation in the full automorphism group), as follows:
$$
\noindent\begin{tabular}{|l|l|l|l|}
\hline
$\theta$&\emph{capped}&\emph{diagram}&\emph{order}\\
\hline\hline
$\theta_1=\sigma$&\emph{yes}&\begin{tikzpicture}[scale=0.5,baseline=-1.5ex]
\node at (0,0.3) {};
\node at (0,-1.2) {};
\node [inner sep=0.8pt,outer sep=0.8pt] at (-2,0) (1) {$\bullet$};
\node [inner sep=0.8pt,outer sep=0.8pt] at (-1,0) (3) {$\bullet$};
\node [inner sep=0.8pt,outer sep=0.8pt] at (0,0) (4) {$\bullet$};
\node [inner sep=0.8pt,outer sep=0.8pt] at (1,0) (5) {$\bullet$};
\node [inner sep=0.8pt,outer sep=0.8pt] at (2,0) (6) {$\bullet$};
\node [inner sep=0.8pt,outer sep=0.8pt] at (0,-1) (2) {$\bullet$};
\draw (-2,0)--(2,0);
\draw (0,0)--(0,-1);
\draw [line width=0.5pt,line cap=round,rounded corners] (1.north west)  rectangle (1.south east);
\draw [line width=0.5pt,line cap=round,rounded corners] (6.north west)  rectangle (6.south east);
\end{tikzpicture} &$2$\\
\hline
$\theta_2=x_1(1)\sigma$&\emph{no}&\begin{tikzpicture}[scale=0.5,baseline=-2ex]
\node at (0,0.3) {};
\node at (0,-1.3) {};
\node [inner sep=0.8pt,outer sep=0.8pt] at (-2,0) (1) {$\bullet$};
\node [inner sep=0.8pt,outer sep=0.8pt] at (-1,0) (3) {$\bullet$};
\node [inner sep=0.8pt,outer sep=0.8pt] at (0,0) (4) {$\bullet$};
\node [inner sep=0.8pt,outer sep=0.8pt] at (1,0) (5) {$\bullet$};
\node [inner sep=0.8pt,outer sep=0.8pt] at (2,0) (6) {$\bullet$};
\node [inner sep=0.8pt,outer sep=0.8pt] at (0,-1) (2) {$\bullet$};
\draw [line width=0.5pt,line cap=round,rounded corners,fill=ggrey] (1.north west)  rectangle (1.south east);
\draw [line width=0.5pt,line cap=round,rounded corners,fill=ggrey] (2.north west)  rectangle (2.south east);
\draw [line width=0.5pt,line cap=round,rounded corners,fill=ggrey] (3.north west)  rectangle (3.south east);
\draw [line width=0.5pt,line cap=round,rounded corners,fill=ggrey] (4.north west)  rectangle (4.south east);
\draw [line width=0.5pt,line cap=round,rounded corners,fill=ggrey] (5.north west)  rectangle (5.south east);
\draw [line width=0.5pt,line cap=round,rounded corners,fill=ggrey] (6.north west)  rectangle (6.south east);
\draw (-2,0)--(2,0);
\draw (0,0)--(0,-1);
\node [inner sep=0.8pt,outer sep=0.8pt] at (-2,0) (1) {$\bullet$};
\node [inner sep=0.8pt,outer sep=0.8pt] at (-1,0) (3) {$\bullet$};
\node [inner sep=0.8pt,outer sep=0.8pt] at (0,0) (4) {$\bullet$};
\node [inner sep=0.8pt,outer sep=0.8pt] at (1,0) (5) {$\bullet$};
\node [inner sep=0.8pt,outer sep=0.8pt] at (2,0) (6) {$\bullet$};
\node [inner sep=0.8pt,outer sep=0.8pt] at (0,-1) (2) {$\bullet$};
\end{tikzpicture}&$4$\\
\hline
$\theta_3=x_1(1)x_3(1)\sigma$&\emph{no}&\begin{tikzpicture}[scale=0.5,baseline=-2ex]
\node at (0,0.3) {};
\node at (0,-1.3) {};
\node [inner sep=0.8pt,outer sep=0.8pt] at (-2,0) (1) {$\bullet$};
\node [inner sep=0.8pt,outer sep=0.8pt] at (-1,0) (3) {$\bullet$};
\node [inner sep=0.8pt,outer sep=0.8pt] at (0,0) (4) {$\bullet$};
\node [inner sep=0.8pt,outer sep=0.8pt] at (1,0) (5) {$\bullet$};
\node [inner sep=0.8pt,outer sep=0.8pt] at (2,0) (6) {$\bullet$};
\node [inner sep=0.8pt,outer sep=0.8pt] at (0,-1) (2) {$\bullet$};
\draw [line width=0.5pt,line cap=round,rounded corners,fill=ggrey] (1.north west)  rectangle (1.south east);
\draw [line width=0.5pt,line cap=round,rounded corners,fill=ggrey] (2.north west)  rectangle (2.south east);
\draw [line width=0.5pt,line cap=round,rounded corners,fill=ggrey] (3.north west)  rectangle (3.south east);
\draw [line width=0.5pt,line cap=round,rounded corners,fill=ggrey] (4.north west)  rectangle (4.south east);
\draw [line width=0.5pt,line cap=round,rounded corners,fill=ggrey] (5.north west)  rectangle (5.south east);
\draw [line width=0.5pt,line cap=round,rounded corners,fill=ggrey] (6.north west)  rectangle (6.south east);
\draw (-2,0)--(2,0);
\draw (0,0)--(0,-1);
\node [inner sep=0.8pt,outer sep=0.8pt] at (-2,0) (1) {$\bullet$};
\node [inner sep=0.8pt,outer sep=0.8pt] at (-1,0) (3) {$\bullet$};
\node [inner sep=0.8pt,outer sep=0.8pt] at (0,0) (4) {$\bullet$};
\node [inner sep=0.8pt,outer sep=0.8pt] at (1,0) (5) {$\bullet$};
\node [inner sep=0.8pt,outer sep=0.8pt] at (2,0) (6) {$\bullet$};
\node [inner sep=0.8pt,outer sep=0.8pt] at (0,-1) (2) {$\bullet$};
\end{tikzpicture}&$8$\\
\hline
\end{tabular}
$$
\end{thm}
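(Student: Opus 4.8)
The plan mirrors the two-phase strategy already used for Theorems~\ref{thm:F4} and~\ref{thm:F424}: first exhibit three pairwise non-conjugate domestic dualities with the claimed decorated opposition diagrams, and then use the minimal faithful permutation representation of $\sE_6(2).2$ to verify that no further classes of domestic dualities exist. The dualities live in the non-identity coset $\sE_6(2).2\setminus\sE_6(2)$, so every candidate has the form $g\sigma$ with $g\in\sE_6(2)$ and $\sigma$ the graph automorphism; since an $\sE_6$ oppomorphism is a duality (as noted in Section~\ref{sec:1}), each $\theta_i$ is automatically an oppomorphism, and hence $\mathcal{J}_{\theta_i}$ consists of singletons and the minimal $w_0\circ\pi$-invariant sets are $\{1,6\}$, $\{3,5\}$, $\{2\}$, $\{4\}$.

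First I would treat $\theta_1=\sigma$. Here the cleanest route is to invoke the corrected version of \cite[Main Result 2.2]{HVM:12} together with Lemma~\ref{lem:sp}: the graph automorphism $\sigma$ fixes the split real form and acts on the building as a symplectic-type polarity, so it is $\{i\}$-domestic for every node and maps only the type $1$ and type $6$ vertices to opposites (the $\sA_5$ and $\sD_5$ residue arguments of Proposition~\ref{prop:exceptional}(2) show precisely which vertices survive). Concretely, computing $w_0^{-1}u^{-1}\sigma u w_0$ for $u\in U$ via~(\ref{eq:calcw}) and observing that $\sigma$ fixes the Chevalley generators up to sign forces $\delta(uw_0B,\sigma uw_0B)\in w_0 W_{\{2,3,4,5\}}$, giving $\Type(\sigma)=\{1,6\}$ and cappedness, which is the first diagram. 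For $\theta_2=x_1(1)\sigma$ and $\theta_3=x_1(1)x_3(1)\sigma$ I would follow the $\theta_4$ template of Theorem~\ref{thm:F4}: pass to a conjugate representative $\theta_i'$ commuting with as many $x_\alpha(a)$ as possible, isolate the small set $A$ of positive roots on which $\theta_i'$ acts nontrivially, and then—using \cite[Lemma~17]{St:16} to take the product $u_A$ in a convenient order—check via~(\ref{eq:calcw2}) that $w_0^{-1}u_A^{-1}\theta_i' u_A w_0\notin Bw_0B$ for all $2^{|A|}$ choices of $u_A$. Since $\theta_2,\theta_3$ are not involutions I cannot invoke case~(ii) of Lemma~\ref{lem:red2}; but they are oppomorphisms, so case~(iii) applies once I have separately confirmed (by exhibiting one non-domestic vertex) that $\disp(\theta_i)=\ell(w_0)$, legitimising the reduction to chambers opposite the base chamber. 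That these dualities are uncapped and strongly exceptional domestic (all nodes shaded) then follows exactly as in Proposition~\ref{prop:exceptional}(2): produce a non-domestic type $\{2,4\}$ simplex, pass to the $\sA_5$ residue where the induced duality is strongly exceptional domestic by Theorem~\ref{thm:Asmall}, and propagate to panels of every cotype.

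The completeness half is the same enumeration argument used twice already. Using the permutation representation of $\sE_6(2).2$ of degree $279006$ on $G/P_1\cup G/P_6$, I would compute a full set of conjugacy class representatives lying in the duality coset and, for all but three of them, exhibit a chamber mapped to an opposite chamber (equivalently, find $u_A$ with $w_0^{-1}u_A^{-1}\theta u_A w_0\in Bw_0B$), thereby certifying non-domesticity. Since $\theta_1,\theta_2,\theta_3$ are pairwise non-conjugate—they have distinct decorated opposition diagrams, and indeed distinct orders $2,4,8$—the three surviving classes are exactly these, completing the classification. The orders are read off directly from the representation.

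The main obstacle I anticipate is the same computational bottleneck flagged in the $\sF_4(2,4)$ proof: $\sE_6(2)$ has $|U|=2^{36}$, so brute-force over all $u\in U$ is hopeless, and even enumerating $G/P$ for the parabolic residue checks is expensive. Everything therefore hinges on choosing the conjugate representatives $\theta_i'$ so that the active root set $A$ is small enough to make the $2^{|A|}$-element search feasible, and on exploiting the reorderability of $u_A$ to cut the cases further. A secondary subtlety is ensuring the hypothesis of Lemma~\ref{lem:red2} is genuinely met for the higher-order dualities $\theta_2,\theta_3$: because each panel of $\sE_6(2)$ has only $3$ chambers, hypothesis~(i) fails, so I must route the reduction through hypothesis~(iii), which in turn requires the independent verification that these automorphisms attain full displacement before the chamber-opposite-to-base reduction can be trusted.
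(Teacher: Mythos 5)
Your overall architecture --- exhibit the three examples, verify their diagrams, then eliminate every other duality class via the degree-$279006$ permutation representation --- is exactly the paper's, and your treatment of $\theta_1=\sigma$ (symplectic polarity, via \cite{HVM:12}) and of the completeness step matches the paper's proof. However, there is a genuine logical error in how you propose to invoke Lemma~\ref{lem:red2} for $\theta_2$ and $\theta_3$. You say that hypothesis~(iii) ``applies once I have separately confirmed (by exhibiting one non-domestic vertex) that $\disp(\theta_i)=\ell(w_0)$'', and later that the reduction ``requires the independent verification that these automorphisms attain full displacement''. This is backwards: $\disp(\theta)=\ell(w_0)$ is precisely the statement that $\theta$ is \emph{not} domestic, so if you could verify it the theorem would be false; and exhibiting a non-domestic vertex in any case does not establish full displacement. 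Since $\theta_2$ and $\theta_3$ are in fact domestic, hypothesis~(iii) fails for them, and your plan as written stalls at this point. The correct use of the lemma --- the one the paper makes, and spells out explicitly for $\theta_4$ in the proof of Theorem~\ref{thm:F4} --- is by contradiction: \emph{if} $\theta_i$ were not domestic, \emph{then} $L=\ell(w_0)$ and hypothesis~(iii) would hold, so Lemma~\ref{lem:red2} would produce a chamber opposite the base chamber mapped to an opposite, i.e.\ some $u\in U$ with $w_0^{-1}u^{-1}\tilde{\theta}_iu^{\sigma}w_0\in Bw_0B$; the exhaustive computation shows no such $u$ exists, whence $\theta_i$ is domestic.

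A smaller point: for dualities the search-space reduction is not literally the commuting-set trick of~(\ref{eq:calcw2}), because the relevant expression is $w_0^{-1}u^{-1}\tilde{\theta}_iu^{\sigma}w_0$ with $u^{\sigma}\neq u$ in general. The paper instead computes the normal form of $u^{-1}x_{\beta}(1)u^{\sigma}$ directly from the commutator relations and finds that it depends on only $16$ (respectively $22$) binary parameters. Your description of ``isolating the active root set $A$ and checking $2^{|A|}$ elements'' is in the right spirit but needs this adaptation before it goes through.
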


\begin{proof}
As noted in Theorem~\ref{thm:F424}, the element $\theta_1=\sigma$ acts as a symplectic polarity on $\sE_6(2)$, and thus has the diagram claimed (see \cite{HVM:12}). For the remaining cases $\theta_2$ and $\theta_3$ we note that it is easy to find vertices of each type mapped onto opposite vertices. Thus it remains to show that these dualities are domestic. The working here is slightly more complicated than the case of collineations of the $\sF_4$ buildings. Writing $\theta=\tilde{\theta}\sigma$ with $\tilde{\theta}\in G$, we need to show that $w_0^{-1}u^{-1}\tilde{\theta}u^{\sigma}w_0\notin Bw_0B$ for all $u\in U$ (here we are applying Lemma~\ref{lem:red2}). 

Consider $\theta_2$. We use the conjugate $\theta_2'=x_{\beta}(1)\sigma$ with $\beta=(111221)$. 
It turns out, by commutator relations, that if $u\in U$ is arbitrary then $u^{-1}x_{\beta}(1)u^{\sigma}$ can be written in the following form (where we use $\mathsf{MAGMA}$'s built-in lexicographic order on the positive roots $\alpha_1,\ldots,\alpha_{32}$):
\begin{align*}
&x_1(a_1)x_7(a_2)x_{12}(a_3)x_{18}(a_4)x_{23}(0)x_{17}(a_5)x_{22}(a_6)x_{27}(0)x_{26}(a_7)x_{30}(0)x_{29}(a_8)x_{32}(a_9)\\
&\quad x_{33}(a_9+1)x_{34}(a_{10})x_{35}(a_{11})x_{36}(a_{12})x_3(a_{13})x_9(a_{14})x_{13}(a_{15})x_{15}(0)x_{19}(0)x_{21}(a_4)x_{25}(a_6)\\
&\quad x_{24}(0)x_{28}(a_7)x_{31}(a_{16})x_4(0)x_{10}(a_{14})x_8(0)x_{14}(a_{15})x_{16}(a_3)x_{20}(a_5)x_5(a_{13})x_{11}(a_2)x_2(0)x_6(a_1),
\end{align*}
where $a_1,\ldots,a_{16}\in\mathbb{F}_2$. The point is that there are only $2^{16}$ such terms, rather than $2^{36}=|U|$ terms. It is then a quick check on the computer to verify that $\theta_2$ is domestic (and hence strongly exceptional domestic by Corollary~\ref{cor:app1}). 
 
The analysis of $\theta_3$ is slightly more challenging. Using the conjugate $\theta_3'=x_{\beta}(1)x_{\beta'}(1)\sigma$ with $\beta=(010111)$ and $\beta'=(001111)$ we see that $u^{-1}x_{\beta}(1)x_{\beta'}(1)u^{\sigma}$ can be written in a similar way to the $\theta_2$ case above, this time with $2^{22}$ degrees of freedom. The verification is $\theta_3$ is domestic is then a long search with the computer. The details are on the first author's webpage. 

To verify that our list of domestic examples is complete we again use explicit conjugacy class representatives computed from the minimal faithful permutation representation, as in the previous theorems. See the first author's webpage for the relevant code. Note that the character table of $\sE_6(2)$ is not printed in $\mathbb{ATLAS}$, and therefore it is not possible to provide the $\mathbb{ATLAS}$ conjugacy class names. 
\end{proof}

\begin{thm}
Let $G=\sE_6(2)$, and let $\Delta=G/B$ be the associated building of type $\sE_6(2)$. There are precisely $3$ classes of domestic collineations, as follows:
$$
\noindent\begin{tabular}{|l|l|l|l|l|}
\hline
$\theta$&\emph{capped}&\emph{diagram}&\emph{fixed points}&\emph{order}\\
\hline\hline
$\theta_1=x_1(1)$&\emph{yes}&\begin{tikzpicture}[scale=0.5,baseline=-0.5ex]
\node at (0,0.3) {};
\node at (0,0.7) {};
\node at (0,-0.7) {};
\node [inner sep=0.8pt,outer sep=0.8pt] at (-2,0) (2) {$\bullet$};
\node [inner sep=0.8pt,outer sep=0.8pt] at (-1,0) (4) {$\bullet$};
\node [inner sep=0.8pt,outer sep=0.8pt] at (0,-0.5) (5) {$\bullet$};
\node [inner sep=0.8pt,outer sep=0.8pt] at (0,0.5) (3) {$\bullet$};
\node [inner sep=0.8pt,outer sep=0.8pt] at (1,-0.5) (6) {$\bullet$};
\node [inner sep=0.8pt,outer sep=0.8pt] at (1,0.5) (1) {$\bullet$};
\draw (-2,0)--(-1,0);
\draw (-1,0) to [bend left=45] (0,0.5);
\draw (-1,0) to [bend right=45] (0,-0.5);
\draw (0,0.5)--(1,0.5);
\draw (0,-0.5)--(1,-0.5);
\draw [line width=0.5pt,line cap=round,rounded corners] (2.north west)  rectangle (2.south east);
\end{tikzpicture}&$10479$&$2$\\
\hline
$\theta_2=x_1(1)x_2(1)$&\emph{yes}&\begin{tikzpicture}[scale=0.5,baseline=-0.5ex]
\node at (0,0.3) {};
\node at (0,0.7) {};
\node at (0,-0.7) {};
\node [inner sep=0.8pt,outer sep=0.8pt] at (-2,0) (2) {$\bullet$};
\node [inner sep=0.8pt,outer sep=0.8pt] at (-1,0) (4) {$\bullet$};
\node [inner sep=0.8pt,outer sep=0.8pt] at (0,-0.5) (5) {$\bullet$};
\node [inner sep=0.8pt,outer sep=0.8pt] at (0,0.5) (3) {$\bullet$};
\node [inner sep=0.8pt,outer sep=0.8pt] at (1,-0.5) (6) {$\bullet$};
\node [inner sep=0.8pt,outer sep=0.8pt] at (1,0.5) (1) {$\bullet$};
\draw (-2,0)--(-1,0);
\draw (-1,0) to [bend left=45] (0,0.5);
\draw (-1,0) to [bend right=45] (0,-0.5);
\draw (0,0.5)--(1,0.5);
\draw (0,-0.5)--(1,-0.5);
\draw [line width=0.5pt,line cap=round,rounded corners] (2.north west)  rectangle (2.south east);
\draw [line width=0.5pt,line cap=round,rounded corners] (1.north west)  rectangle (6.south east);
\end{tikzpicture}&$2543$&$2$\\
\hline
$\theta_3=x_1(1)x_3(1)$&\emph{no}&\begin{tikzpicture}[scale=0.5,baseline=-0.5ex]
\node at (0,0.3) {};
\node [inner sep=0.8pt,outer sep=0.8pt] at (-2,0) (2) {$\bullet$};
\node [inner sep=0.8pt,outer sep=0.8pt] at (-1,0) (4) {$\bullet$};
\node [inner sep=0.8pt,outer sep=0.8pt] at (0,-0.5) (5) {$\bullet$};
\node [inner sep=0.8pt,outer sep=0.8pt] at (0,0.5) (3) {$\bullet$};
\node [inner sep=0.8pt,outer sep=0.8pt] at (1,-0.5) (6) {$\bullet$};
\node [inner sep=0.8pt,outer sep=0.8pt] at (1,0.5) (1) {$\bullet$};
\draw [line width=0.5pt,line cap=round,rounded corners,fill=ggrey] (2.north west)  rectangle (2.south east);
\draw [line width=0.5pt,line cap=round,rounded corners,fill=ggrey] (4.north west)  rectangle (4.south east);
\draw [line width=0.5pt,line cap=round,rounded corners] (3.north west)  rectangle (5.south east);
\draw [line width=0.5pt,line cap=round,rounded corners] (1.north west)  rectangle (6.south east);
\draw (-2,0)--(-1,0);
\draw (-1,0) to [bend left=45] (0,0.5);
\draw (-1,0) to [bend right=45] (0,-0.5);
\draw (0,0.5)--(1,0.5);
\draw (0,-0.5)--(1,-0.5);
\node at (0,-0.5) {$\bullet$};
\node at (0,0.5) {$\bullet$};
\node at (1,-0.5) {$\bullet$};
\node at (1,0.5) {$\bullet$};
\node [inner sep=0.8pt,outer sep=0.8pt] at (-2,0) (2) {$\bullet$};
\node [inner sep=0.8pt,outer sep=0.8pt] at (-1,0) (4) {$\bullet$};
\node [inner sep=0.8pt,outer sep=0.8pt] at (0,-0.5) (5) {$\bullet$};
\node [inner sep=0.8pt,outer sep=0.8pt] at (0,0.5) (3) {$\bullet$};
\node [inner sep=0.8pt,outer sep=0.8pt] at (1,-0.5) (6) {$\bullet$};
\node [inner sep=0.8pt,outer sep=0.8pt] at (1,0.5) (1) {$\bullet$};
\end{tikzpicture}&$847$&$4$\\
\hline
\end{tabular}
$$
\end{thm}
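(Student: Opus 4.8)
The plan is to mirror the proofs of Theorems~\ref{thm:F4} and~\ref{thm:F424}: first exhibit the three collineations $\theta_1,\theta_2,\theta_3$ and verify that they are domestic with the stated decorated opposition diagrams, and then prove completeness by running through a complete list of conjugacy class representatives of $\sE_6(2)$ and exhibiting, for every class other than those of $1,\theta_1,\theta_2,\theta_3$, a chamber mapped onto an opposite chamber. Throughout I would identify $\Delta$ with $G/B$ and compute $\delta(gB,\theta gB)$ via \eqref{eq:calcw}, using the commutator relations together with \eqref{eq:n}. Since $\theta_1,\theta_2,\theta_3$ are pairwise non-conjugate (they are distinguished by their orders and decorated opposition diagrams) it then follows that the list is exactly these three.

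For $\theta_1=x_1(1)$ and $\theta_2=x_1(1)x_2(1)$ the key point is that both are involutions (the nodes $1$ and $2$ are non-adjacent, so $x_1(1)$ and $x_2(1)$ commute and square to the identity in characteristic~$2$), so Lemma~\ref{lem:red2}(ii) applies and it suffices to analyse $w_0^{-1}u^{-1}\theta uw_0$ for $u\in U$. For $\theta_1$ I would use that $x_1(1)$ is Weyl-conjugate to the highest-root element $x_{\varphi}(1)$, which is central in $U$; exactly as in the treatment of $\theta_1$ for $\sF_4(2)$ one gets $w_0^{-1}u^{-1}x_{\varphi}(1)uw_0=x_{-\varphi}(1)\in Bs_{\varphi}B$ for all $u$, whence $\disp(\theta_1)=\ell(s_{\varphi})=21=\diam(\sE_6)-\diam(\sA_5)$. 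Since $s_{\varphi}=w_0w_{S\backslash\{2\}}$ (the affine node of $\tilde{\sE}_6$ attaches at node~$2$), Corollary~\ref{cor:disp} forces $\Type(\theta_1)=\{2\}$ and $\theta_1$ capped. For $\theta_2$ a short commutator computation of the same flavour as for $\theta_3$ in Theorem~\ref{thm:F4}, together with the efficiency reduction \eqref{eq:calcw2}, should give $\delta(uw_0B,\theta_2 uw_0B)=w_0w_{\{3,4,5\}}$ of length $30=\diam(\sE_6)-\diam(\sA_3)$, so $\Type(\theta_2)=\{1,2,6\}$ and $\theta_2$ is capped.

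The genuine difficulty is $\theta_3=x_1(1)x_3(1)$, for which none of the three hypotheses of Lemma~\ref{lem:red2} hold: it is not an involution (nodes $1,3$ are adjacent, so $\theta_3^2\sim x_1(1)$ and $\theta_3$ has order~$4$), it is a collineation and hence not an oppomorphism (for $\sE_6$ one has $w_0\neq 1$ on the diagram), and every panel of $\sE_6(2)$ carries only $3$ chambers. As Remark~\ref{rem:counter} warns, one therefore cannot reduce to chambers opposite a fixed base chamber, and to prove domesticity I would verify directly that $\delta(u_w(a)w,\theta_3 u_w(a)w)\neq w_0$ for every chamber $u_w(a)w$ with $w\in W$. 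Choosing a conjugate $\theta_3'$ that commutes with as many root subgroups $x_{\alpha}(a)$ as possible and applying the reduction \eqref{eq:calcw2}, the parameters $a$ may be restricted to the roots of the inversion set $R(w)$ lying in the non-commuting set $A$, leaving $\sum_{w\in W}2^{\,|R(w)\cap A|}$ elements to test; this is large but finite and feasible by computer (a long search, as in the $\sE_6(2).2$ dualities). This computation is the main obstacle, precisely because the failure of Lemma~\ref{lem:red2} for an order-$4$ collineation in a building with panels of size~$3$ forces the search to range over all of $W$ rather than over a single coset $uw_0B$. Once $\theta_3$ is known to be domestic, exhibiting via the permutation representation opposite pairs of vertices of each type gives $\Type(\theta_3)=S$; and since $\Type(\theta_3)=S$ the notions capped and non-domestic coincide, so $\theta_3$ is uncapped and by Proposition~\ref{prop:exceptional} it carries the exceptional domestic $\sE_6(2)$ diagram.

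For completeness I would compute a full set of conjugacy class representatives of $\sE_6(2)$ from the minimal faithful permutation representation of degree $139503$ described in Section~\ref{sec:minimal}, and for each class other than those of $1,\theta_1,\theta_2,\theta_3$ locate a single chamber mapped onto an opposite chamber (the cheap direction, which needs no appeal to Lemma~\ref{lem:red2}). This shows the domestic classes form a subset of $\{1,\theta_1,\theta_2,\theta_3\}$, and combined with the verifications above it pins the list down to exactly $\theta_1,\theta_2,\theta_3$. The fixed-point counts and the orders recorded in the table are then read directly off the permutation representation.
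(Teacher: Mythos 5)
Your proposal follows the paper's strategy closely: exhibit the three examples, verify their decorated diagrams, and establish completeness by exhibiting a non-domestic chamber for every other conjugacy class computed from the degree-$139503$ permutation representation. The treatments of $\theta_1$ and $\theta_2$ are essentially the paper's (for $\theta_2$ the paper does not compute $\delta(uw_0B,\theta_2'uw_0B)$ exactly; it shows $w_0^{-1}u^{-1}\theta_2'uw_0\in vBv^{-1}\subseteq BvB\cdot Bv^{-1}B$ with $v=w_0w_{\sD_5}$, giving $\disp(\theta_2)\leq 32$, then invokes the classification of possible displacements to get $\leq 30$ and exhibits the chamber $w_0B$ attaining $w_0w_{\{3,4,5\}}$ --- but this is a cosmetic difference). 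The genuine divergence is the domesticity check for $\theta_3$, where you correctly observe that Lemma~\ref{lem:red2} fails. You propose running the commutation reduction~(\ref{eq:calcw2}) over \emph{all} chambers $u_w(a)wB$ with $w\in W$, some $\sum_{w\in W}2^{|R(w)\cap A|}$ (several million) cases, and then recovering the decorated diagram from domesticity plus $\Type(\theta_3)=S$ via Proposition~\ref{prop:exceptional}; this is sound. The paper instead first applies the residue argument from the claim in the proof of Corollary~\ref{cor:maximal}: any non-domestic chamber contains a non-domestic type $\{2,4\}$ simplex, in whose $\sA_2\times\sA_2$ residue the induced automorphism swaps the factors and hence cannot be domestic, so it suffices to check that no type $\{2,4\}$ simplex is non-domestic. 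Running the same commutation reduction over the transversal of $W/W_{\{1,3,5,6\}}$ leaves only $64158$ cases (checked in under an hour) and simultaneously delivers $\{2,4\}$-domesticity, which fixes the decorated diagram via Theorem~\ref{thm:main*}(a). Your route buys nothing extra and costs roughly two orders of magnitude more computation; the reduction from chambers to a single cotype of simplex is the step that makes the verification cheap.
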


\begin{proof} 
To analyse $\theta_1$ we work with the conjugate $\theta_1\sim x_{\varphi}(1)$, where $\varphi$ is the highest root. Then an analysis very similar to the $\sF_4(2)$ case shows that $\theta_1$ has the diagram claimed. 

The analysis for $\theta_2$ can be done by hand. We work with the conjugate $\theta_2'=x_{\varphi}(1)x_{\varphi'}(1)$ where $\varphi$ is the highest root and $\varphi'=(101111)$ is the highest root of the $\sA_5$ subsystem. Let $u\in U$. By commutator relations and a simple induction we see that $u^{-1}\theta_2'u$ is a product of terms $x_{\alpha}(a)$ with $\alpha\geq \varphi'$ (with $\geq$ being the natural dominance order). In particular, each such $\alpha$ is in $R^+\backslash\sD_5$, where $\sD_5$ is the subsystem generated by $\alpha_2,\ldots,\alpha_6$. Let $v=w_0w_{\sD_5}$, where $w_{\sD_5}$ is the longest element of the parabolic subgroup $\langle s_2,\ldots,s_6\rangle$. Then $R^+\backslash\sD_5=\{\alpha\in R^+\mid v^{-1}\alpha\in-R^+\}$. It follows that $v^{-1}(w_0^{-1}u^{-1}\theta_2'uw_0)v\in B$ for all $u\in U$, and therefore
$$
w_0^{-1}u^{-1}\theta_2'uw_0\in vBv^{-1}\subseteq BvB\cdot Bv^{-1}B.
$$
Hence $w_0^{-1}u^{-1}\theta_2'uw_0\in BwB$ for some $w$ with $\ell(w)\leq 2\ell(v)=2(\ell(w_0)-\ell(w_{\sD_5}))=32$ (in fact we necessarily have strict inequality here by double coset combinatorics). Thus $\disp(\theta)\leq 32$, and it then follows from the classification of diagrams (and hence of possible displacements) that $\disp(\theta)\leq 30$. On the other hand, a quick calculation shows that $w_0^{-1}\theta_2'w_0\in Bs_{\varphi}s_{\varphi'}B$, and by computing inversion sets we have $s_{\varphi}s_{\varphi'}=w_0w_{\sA_3}$ (where $\sA_3$ is the subsystem generated by $\alpha_3,\alpha_4,\alpha_5$). Thus $\theta_2'$ maps the type $\{1,2,6\}$ simplex of the chamber $w_0B$ to an opposite simplex, hence the result. 
%
%

The working for $\theta_3$ is more involved. Here Lemma~\ref{lem:red2} cannot be applied, and it is not practical to directly check every chamber for domesticity (there are $3126356394525$ of them). Instead we argue in a similar fashion as we did for the collineation $\theta_4$ in Theorem~\ref{thm:F4}. First replace $\theta_3$ by the conjugate $\theta_3\sim \theta_3'=x_{(111210)}(1)x_{(011111)}(1)$. Then $\theta_3'$ commutes with all $x_{\alpha}(a)$ with $\alpha\in R^+\backslash A$ where 
$$
A=\{\alpha_1,\alpha_3,\alpha_4,\alpha_6,(000110),(000011),(101100),(101110),(001111),(0111111),(111210)\}.
$$
By a residue argument it is sufficient to show that there are no non-domestic type $\{2,4\}$ simplices (see the claim in the proof of Corollary~\ref{cor:maximal}). Again one cannot feasibly check all type $\{2,4\}$ simplices (there are $7089243525$ of them). However, as in Theorem~\ref{thm:F4}, with $T$ a transversal of minimal length representatives for the cosets in $W/W_{\{1,3,5,6\}}$, it is sufficient to check that $\delta(g,\theta_3'g)\notin w_0W_{\{1,3,5,6\}}$ for all $g=u_w'(a)w$ with $w\in T$ and $u_w'(a)=x_{\gamma_1}(a_1)\cdots x_{\gamma_{\ell}}(a_{\ell})$ with $\{\gamma_1,\ldots,\gamma_{\ell}\}=R(w)\cap A$. It turns out that there are only $64158$ such elements $g$, and they are readily checked by computer in under an hour. 
\end{proof}

\subsection{Automorphisms of small buildings of types $\sE_7$ and $\sE_8$}\label{sec:E7E8}

Consider the $\sE_7$ root system~$R$. Fix the ordering $\alpha_1,\ldots,\alpha_{63}$ of the positive roots according to increasing height, using the natural lexicographic order for roots of the same height (for example, $(1122100)<(1112110)$). Note that this is the inbuilt order in $\mathsf{MAGMA}$. With this order, the roots $\alpha_{44}=(1112111)$, $\alpha_{45}=(0112211)$, and $\alpha_{46}=(1122210)$ play an special role below.

\begin{thm}\label{thm:E72}
Let $\theta_1=x_{44}(1)x_{46}(1)$ and $\theta_2=x_{44}(1)x_{45}(1)x_{46}(1)$ in $\sE_7(2)$. Then $\theta_1$ and $\theta_2$ are uncapped with respective decorated opposition diagrams
\begin{center}
\begin{tikzpicture}[scale=0.5,baseline=-1.5ex]
\node [inner sep=0.8pt,outer sep=0.8pt] at (-2,0) (1) {$\bullet$};
\node [inner sep=0.8pt,outer sep=0.8pt] at (-1,0) (3) {$\bullet$};
\node [inner sep=0.8pt,outer sep=0.8pt] at (0,0) (4) {$\bullet$};
\node [inner sep=0.8pt,outer sep=0.8pt] at (1,0) (5) {$\bullet$};
\node [inner sep=0.8pt,outer sep=0.8pt] at (2,0) (6) {$\bullet$};
\node [inner sep=0.8pt,outer sep=0.8pt] at (3,0) (7) {$\bullet$};
\node [inner sep=0.8pt,outer sep=0.8pt] at (0,-1) (2) {$\bullet$};
\draw [line width=0.5pt,line cap=round,rounded corners,fill=ggrey] (1.north west)  rectangle (1.south east);
\draw [line width=0.5pt,line cap=round,rounded corners,fill=ggrey] (3.north west)  rectangle (3.south east);
\draw [line width=0.5pt,line cap=round,rounded corners] (4.north west)  rectangle (4.south east);
\draw [line width=0.5pt,line cap=round,rounded corners] (6.north west)  rectangle (6.south east);
\node at (0,0) {$\bullet$};
\node at (2,0) {$\bullet$};
\node at (0,-1.3) {};
\node at (0,0.3) {};
\node [inner sep=0.8pt,outer sep=0.8pt] at (-2,0) (1) {$\bullet$};
\node [inner sep=0.8pt,outer sep=0.8pt] at (-1,0) (3) {$\bullet$};
\node [inner sep=0.8pt,outer sep=0.8pt] at (0,0) (4) {$\bullet$};
\node [inner sep=0.8pt,outer sep=0.8pt] at (1,0) (5) {$\bullet$};
\node [inner sep=0.8pt,outer sep=0.8pt] at (2,0) (6) {$\bullet$};
\node [inner sep=0.8pt,outer sep=0.8pt] at (3,0) (7) {$\bullet$};
\node [inner sep=0.8pt,outer sep=0.8pt] at (0,-1) (2) {$\bullet$};
\draw (-2,0)--(3,0);
\draw (0,0)--(0,-1);
\end{tikzpicture}
\qquad \text{and}\qquad\,
\begin{tikzpicture}[scale=0.5,baseline=-1.5ex]
\node [inner sep=0.8pt,outer sep=0.8pt] at (-2,0) (1) {$\bullet$};
\node [inner sep=0.8pt,outer sep=0.8pt] at (-1,0) (3) {$\bullet$};
\node [inner sep=0.8pt,outer sep=0.8pt] at (0,0) (4) {$\bullet$};
\node [inner sep=0.8pt,outer sep=0.8pt] at (1,0) (5) {$\bullet$};
\node [inner sep=0.8pt,outer sep=0.8pt] at (2,0) (6) {$\bullet$};
\node [inner sep=0.8pt,outer sep=0.8pt] at (3,0) (7) {$\bullet$};
\node [inner sep=0.8pt,outer sep=0.8pt] at (0,-1) (2) {$\bullet$};
\draw [line width=0.5pt,line cap=round,rounded corners,fill=ggrey] (1.north west)  rectangle (1.south east);
\draw [line width=0.5pt,line cap=round,rounded corners,fill=ggrey] (3.north west)  rectangle (3.south east);
\draw [line width=0.5pt,line cap=round,rounded corners,fill=ggrey] (4.north west)  rectangle (4.south east);
\draw [line width=0.5pt,line cap=round,rounded corners,fill=ggrey] (6.north west)  rectangle (6.south east);
\draw [line width=0.5pt,line cap=round,rounded corners,fill=ggrey] (2.north west)  rectangle (2.south east);
\draw [line width=0.5pt,line cap=round,rounded corners,fill=ggrey] (5.north west)  rectangle (5.south east);
\draw [line width=0.5pt,line cap=round,rounded corners,fill=ggrey] (7.north west)  rectangle (7.south east);
\node [inner sep=0.8pt,outer sep=0.8pt] at (-2,0) (1) {$\bullet$};
\node [inner sep=0.8pt,outer sep=0.8pt] at (-1,0) (3) {$\bullet$};
\node [inner sep=0.8pt,outer sep=0.8pt] at (0,0) (4) {$\bullet$};
\node [inner sep=0.8pt,outer sep=0.8pt] at (1,0) (5) {$\bullet$};
\node [inner sep=0.8pt,outer sep=0.8pt] at (2,0) (6) {$\bullet$};
\node [inner sep=0.8pt,outer sep=0.8pt] at (3,0) (7) {$\bullet$};
\node [inner sep=0.8pt,outer sep=0.8pt] at (0,-1) (2) {$\bullet$};
\draw (-2,0)--(3,0);
\draw (0,0)--(0,-1);
\end{tikzpicture} 
\end{center}
Moreover $\theta_1^2=\theta_2^2=x_{\varphi}(1)$ where $\varphi$ is the highest root, and hence $\theta_1$ and $\theta_2$ have order~$4$.
\end{thm}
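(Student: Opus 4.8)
The plan is to separate the two assertions: the order statement is a short commutator computation, while the diagram statement rests on the domesticity-detection machinery of Section~\ref{sec:detect} together with the classification in Theorem~\ref{thm:main*}(a). I would dispose of the order claim first. The key observation is that $\alpha_{44}+\alpha_{46}=(2234321)=\varphi$ is the highest root, whereas none of $\alpha_{44}+\alpha_{45}$, $\alpha_{45}+\alpha_{46}$, $\alpha_{45}+\varphi$ is a root. Hence, in the simply-laced commutator relations, $[x_{44}(1),x_{46}(1)]=x_{\varphi}(1)$, the element $x_{45}(1)$ centralises each of $x_{44}(1)$, $x_{46}(1)$ and $x_{\varphi}(1)$, and $x_{\alpha}(1)^2=1$ in characteristic~$2$. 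Expanding $\theta_1^2=x_{44}(1)x_{46}(1)x_{44}(1)x_{46}(1)$ and moving the central factor $x_{\varphi}(1)$ to the right gives $\theta_1^2=x_{\varphi}(1)$; since $x_{45}(1)$ commutes with $\theta_1$ we may write $\theta_2=x_{45}(1)\theta_1$ and obtain $\theta_2^2=x_{45}(1)^2\theta_1^2=x_{\varphi}(1)$. As $x_{\varphi}(1)$ is an involution, both $\theta_i$ have order~$4$.

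For the diagrams, note that $\theta_1,\theta_2$ are collineations, and since $w_0=-1$ for $\sE_7$ they are oppomorphisms; thus $\mathcal{J}_{\theta_i}$ is the set of singletons, and by Theorem~\ref{thm:main*}(a) (concretely Proposition~\ref{prop:exceptional}(3)) it suffices to show each $\theta_i$ is uncapped and to identify which of the two displayed diagrams it carries. The central step is domesticity. Following the strategy of Theorem~\ref{thm:F4} and the $\sE_6(2)$ results, I would argue by contradiction: if $\theta_i$ were non-domestic then $\disp(\theta_i)=\ell(w_0)$ and $\theta_i$ induces opposition, so hypothesis~$(iii)$ of Lemma~\ref{lem:red2} applies and produces a chamber $uw_0B$ opposite the base chamber $B$ with $w_0^{-1}u^{-1}\theta_i uw_0\in Bw_0B$ for some $u\in U$. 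Replacing $\theta_i$ by a conjugate that centralises as many root subgroups as possible, setting $A=\{\alpha\in R^+\mid x_{\alpha}(a)\text{ does not commute with }\theta_i\}$, and using~(\ref{eq:calcw2}) reduces the search to the $2^{|A|}$ elements $u_A$; a verification in $\mathsf{MAGMA}$ that none lands in $Bw_0B$ yields the contradiction, and hence domesticity.

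Granted domesticity, I would read off the type data by residue arguments. Using~(\ref{eq:para}) I would exhibit a non-domestic type~$4$ vertex $v$ for each $\theta_i$; by Proposition~\ref{prop:typemap} and Corollary~\ref{cor:proj} the induced map $\theta_{i,v}$ is a domestic automorphism of the residue of type $\sA_2\times\sA_1\times\sA_3$, so the $\sA_n(2)$ classification (Theorem~\ref{thm:Asmall}), via Proposition~\ref{prop:proj}, produces non-domestic simplices of types $\{3,4,6\}$ and $\{1,4,6\}$, exactly as in Proposition~\ref{prop:exceptional}(3). For $\theta_1$ I would then check, again via~(\ref{eq:para}) or a residue computation, that no vertex of type $2$, $5$ or $7$ is non-domestic, forcing $\Type(\theta_1)=\{1,3,4,6\}$; by Proposition~\ref{prop:exceptional}(3) this type occurs only for an uncapped collineation and its shading is forced to $\{1,3\}$, which is the first diagram (and $\disp(\theta_1)=63-\diam(W_{\{2,5,7\}})-1=59$ by Corollary~\ref{cor:disp}). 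For $\theta_2=x_{45}(1)\theta_1$ I would instead exhibit non-domestic vertices of all seven types, so $\Type(\theta_2)=S$; being domestic with full opposition diagram, $\theta_2$ is exceptional domestic, and as a simply-laced oppomorphism it is strongly exceptional domestic by Corollary~\ref{cor:app1}(a), giving the second, fully shaded diagram (and $\disp(\theta_2)=62$).

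The main obstacle is the domesticity verification of the second paragraph. In $\sE_7(2)$ one has $|U|=2^{63}$, so the reduction~(\ref{eq:calcw2}) is useful only if a conjugate representative can be chosen with $|A|$ small; because $\alpha_{44},\alpha_{46}$ sit at heights $8$ and $9$, a large number of positive roots fail to centralise $\theta_i$ and the naive size of $A$ is far too big. I expect that, exactly as in the treatment of the $\sE_6(2)$ collineation $\theta_3$, the computation must be compressed further by passing to residues and restricting the free parameters to $R(w)\cap A$ over a transversal $T$ of minimal-length representatives for $W/W_{S\backslash J}$, so as to bring the number of cases within computational reach; this, rather than the conceptual structure, is where the real difficulty lies.
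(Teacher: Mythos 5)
Your order computation and your treatment of $\theta_2$ follow essentially the paper's route (commutator relations for $\theta_i^2=x_{\varphi}(1)$; Lemma~\ref{lem:red2} via hypothesis $(iii)$ plus a computer search over the chambers $uw_0B$ opposite the base chamber; then exhibiting non-domestic vertices of every type and invoking the classification). But there is a genuine gap in your argument for $\theta_1$: you never establish that $\theta_1$ is \emph{uncapped}. From domesticity and the vertex data you conclude $\Type(\theta_1)=\{1,3,4,6\}$ and then assert that ``by Proposition~\ref{prop:exceptional}(3) this type occurs only for an uncapped collineation.'' That reverses the implication: Proposition~\ref{prop:exceptional}(3) says that an \emph{uncapped} collineation with $\Type(\theta)\neq S$ must have type $\{1,3,4,6\}$; it does not say that a collineation of type $\{1,3,4,6\}$ must be uncapped. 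A capped collineation with this opposition diagram, i.e.\ one admitting a non-domestic type $\{1,3,4,6\}$ simplex, is perfectly consistent with domesticity and with all of your vertex checks --- indeed this (undecorated) diagram is realised by capped collineations in large $\sE_7$ buildings (it appears in the tables of \cite{PVM:17a}), so nothing in the classification excludes it for $\sE_7(2)$. To force the first decorated diagram one must show that no type $\{1,3,4,6\}$ simplex is mapped to an opposite, and this is precisely what the paper calls ``the remaining difficulty'': the explicit verification that $\theta_1$ is $\{1,3\}$-domestic, done by enumerating type $\{1,3\}$ simplices via a transversal of $W/W_{S\setminus\{1,3\}}$ with parameters restricted to $R(w)\cap A$ ($1141419$ cases, an overnight run). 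Note that once $\{1,3\}$-domesticity is known, domesticity, uncappedness and $\Type(\theta_1)\neq S$ all follow for free (any non-domestic chamber, or non-domestic type-$\Type(\theta_1)$ simplex, would contain a non-domestic type $\{1,3\}$ simplex, and the full diagram is strongly exceptional domestic, hence never $\{1,3\}$-domestic), so your separate checks on vertices of types $2,5,7$ are not what is needed.

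A secondary problem concerns your proposed compression of the domesticity search. The transversal-over-$W/W_{S\backslash J}$ technique (used for $\theta_3$ in $\sE_6(2)$ and $\theta_4$ in $\sF_4(2)$) verifies $J$-domesticity for a \emph{proper} subset $J$; it cannot be used to establish chamber-domesticity of $\theta_2$, because $\theta_2$ is strongly exceptional domestic and hence not $J$-domestic for any proper $J$ --- the only simplices it fails to map to opposites are chambers. The paper's actual reduction for $\theta_2$ is different: by commutator relations, $w_0^{-1}u^{-1}\theta_2 uw_0=\prod_{\beta\in B}x_{-\beta}(a_{\beta})$ where $B$ is the set of the $20$ positive roots dominating one of $\alpha_{44},\alpha_{45},\alpha_{46}$ in the dominance order, and moreover $a_{44}=a_{45}=a_{46}=1$, leaving only $2^{17}$ elements to test (as opposed to the $2^{26}$ cases of~(\ref{eq:calcw2}) applied verbatim, which is expensive but not, as you suggest, hopelessly out of reach).
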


\begin{proof}
Consider $\theta_2$ first. We show that $\theta_2$ is domestic using Lemma~\ref{lem:red2}. Applying~(\ref{eq:calcw2}) verbatim requires us to  check $2^{26}$ elements. The following modification of the theme is more efficient. It follows from commutator relations that
\begin{align*}
w_0^{-1}u^{-1}\theta_2 uw_0&=\prod_{\beta\in B}x_{-\beta}(a_{\beta}),
\end{align*}
where $B=\{\beta\in R^+\mid \beta\geq  \alpha_{44}\text{ or }\beta\geq  \alpha_{45}\text{ or }\beta\geq  \alpha_{46}\}$ (where here $\alpha\geq \beta$ if and only if $\alpha-\beta$ is a nonnegative combination of simple roots). There are $20$ roots in $B$. Moreover $a_{44}=a_{45}=a_{46}=1$ (by commutator relations), and so there remain only $2^{17}$ elements to consider. It is then readily checked by computer that $\theta_2$ is domestic, and we easily find vertices of each type mapped onto opposite vertices. Finally, commutator relations show that $\theta_2^2=x_{\varphi}(1)$. 

For $\theta_1$ we do a similar search to the above to show that $\theta_1$ is domestic. The remaining difficultly is showing that $\theta_1$ is $\{1,3\}$-domestic. Arguing as we did for $\theta_4$ in Theorem~\ref{thm:F4} it turns out that there are $1141419$ elements to check, and this can be done in an overnight run on the computer.
\end{proof}

Thus the proof of Theorem~\ref{thm:main*}(b) is complete. Our computational techniques are not efficient enough to handle the two diagrams for $\sE_8(2)$ due to the formidable size of the group. Thus for these diagrams we provide conjectural examples. For each of these conjectures we have randomly selected $10^5$ chambers and verified that restricted to this subset of the chamber set the structure of the automorphism is as claimed. 

Fix the ordering $\alpha_1,\ldots,\alpha_{120}$ of the positive roots of $\sE_8$ according to increasing height, using the natural lexicographic order for roots of the same height. Then the roots $\alpha_{88}=(11232221)$, $\alpha_{89}=(12243210)$ and $\alpha_{90}=(12233211)$ play a special role below. 

\begin{conjecture}\label{conj:2}
Let $\theta_1=x_{88}(1)x_{90}(1)$ and $\theta_2=x_{88}(1)x_{89}(1)x_{90}(1)$ in $\sE_8(2)$. Then $\theta_1$ and $\theta_2$ are uncapped with respective decorated opposition diagrams 
\begin{center}
\begin{tikzpicture}[scale=0.5,baseline=-1.5ex]
\node at (0,0.3) {};
\node at (0,-1.3) {};
\node [inner sep=0.8pt,outer sep=0.8pt] at (-2,0) (1) {$\bullet$};
\node [inner sep=0.8pt,outer sep=0.8pt] at (-1,0) (3) {$\bullet$};
\node [inner sep=0.8pt,outer sep=0.8pt] at (0,0) (4) {$\bullet$};
\node [inner sep=0.8pt,outer sep=0.8pt] at (1,0) (5) {$\bullet$};
\node [inner sep=0.8pt,outer sep=0.8pt] at (2,0) (6) {$\bullet$};
\node [inner sep=0.8pt,outer sep=0.8pt] at (3,0) (7) {$\bullet$};
\node [inner sep=0.8pt,outer sep=0.8pt] at (4,0) (8) {$\bullet$};
\node [inner sep=0.8pt,outer sep=0.8pt] at (0,-1) (2) {$\bullet$};
\draw [line width=0.5pt,line cap=round,rounded corners] (1.north west)  rectangle (1.south east);
\draw [line width=0.5pt,line cap=round,rounded corners] (6.north west)  rectangle (6.south east);
\draw [line width=0.5pt,line cap=round,rounded corners,fill=ggrey] (7.north west)  rectangle (7.south east);
\draw [line width=0.5pt,line cap=round,rounded corners,fill=ggrey] (8.north west)  rectangle (8.south east);
\draw (-2,0)--(4,0);
\draw (0,0)--(0,-1);
\node at (-2,0) {$\bullet$};
\node at (2,0) {$\bullet$};
\node [inner sep=0.8pt,outer sep=0.8pt] at (-2,0) (1) {$\bullet$};
\node [inner sep=0.8pt,outer sep=0.8pt] at (-1,0) (3) {$\bullet$};
\node [inner sep=0.8pt,outer sep=0.8pt] at (0,0) (4) {$\bullet$};
\node [inner sep=0.8pt,outer sep=0.8pt] at (1,0) (5) {$\bullet$};
\node [inner sep=0.8pt,outer sep=0.8pt] at (2,0) (6) {$\bullet$};
\node [inner sep=0.8pt,outer sep=0.8pt] at (3,0) (7) {$\bullet$};
\node [inner sep=0.8pt,outer sep=0.8pt] at (4,0) (8) {$\bullet$};
\node [inner sep=0.8pt,outer sep=0.8pt] at (0,-1) (2) {$\bullet$};
\end{tikzpicture}\qquad\text{and}\qquad
\begin{tikzpicture}[scale=0.5,baseline=-1.5ex]
\node [inner sep=0.8pt,outer sep=0.8pt] at (-2,0) (1) {$\bullet$};
\node [inner sep=0.8pt,outer sep=0.8pt] at (-1,0) (3) {$\bullet$};
\node [inner sep=0.8pt,outer sep=0.8pt] at (0,0) (4) {$\bullet$};
\node [inner sep=0.8pt,outer sep=0.8pt] at (1,0) (5) {$\bullet$};
\node [inner sep=0.8pt,outer sep=0.8pt] at (2,0) (6) {$\bullet$};
\node [inner sep=0.8pt,outer sep=0.8pt] at (3,0) (7) {$\bullet$};
\node [inner sep=0.8pt,outer sep=0.8pt] at (4,0) (8) {$\bullet$};
\node [inner sep=0.8pt,outer sep=0.8pt] at (0,-1) (2) {$\bullet$};
\draw [line width=0.5pt,line cap=round,rounded corners,fill=ggrey] (8.north west)  rectangle (8.south east);
\draw [line width=0.5pt,line cap=round,rounded corners,fill=ggrey] (7.north west)  rectangle (7.south east);
\draw [line width=0.5pt,line cap=round,rounded corners,fill=ggrey] (6.north west)  rectangle (6.south east);
\draw [line width=0.5pt,line cap=round,rounded corners,fill=ggrey] (5.north west)  rectangle (5.south east);
\draw [line width=0.5pt,line cap=round,rounded corners,fill=ggrey] (4.north west)  rectangle (4.south east);
\draw [line width=0.5pt,line cap=round,rounded corners,fill=ggrey] (3.north west)  rectangle (3.south east);
\draw [line width=0.5pt,line cap=round,rounded corners,fill=ggrey] (2.north west)  rectangle (2.south east);
\draw [line width=0.5pt,line cap=round,rounded corners,fill=ggrey] (1.north west)  rectangle (1.south east);
\draw (-2,0)--(4,0);
\draw (0,0)--(0,-1);
\node [inner sep=0.8pt,outer sep=0.8pt] at (-2,0) (1) {$\bullet$};
\node [inner sep=0.8pt,outer sep=0.8pt] at (-1,0) (3) {$\bullet$};
\node [inner sep=0.8pt,outer sep=0.8pt] at (0,0) (4) {$\bullet$};
\node [inner sep=0.8pt,outer sep=0.8pt] at (1,0) (5) {$\bullet$};
\node [inner sep=0.8pt,outer sep=0.8pt] at (2,0) (6) {$\bullet$};
\node [inner sep=0.8pt,outer sep=0.8pt] at (3,0) (7) {$\bullet$};
\node [inner sep=0.8pt,outer sep=0.8pt] at (4,0) (8) {$\bullet$};
\node [inner sep=0.8pt,outer sep=0.8pt] at (0,-1) (2) {$\bullet$};
\node at (0,0.3) {};
\node at (0,-1.3) {};
\end{tikzpicture} 
\end{center}
\end{conjecture}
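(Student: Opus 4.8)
The plan is to establish Conjecture~\ref{conj:2} by following the template of Theorem~\ref{thm:E72} for $\sE_7(2)$, the only genuinely new ingredient being the sheer size of the computations. Since $w_0=-1$ in $\sE_8$, every automorphism is a collineation and an oppomorphism, so Lemma~\ref{lem:red2}(iii) applies: to prove that $\theta_1$ and $\theta_2$ are domestic it suffices to show that $w_0^{-1}u^{-1}\theta_i uw_0\notin Bw_0B$ for every $u\in U$, because a non-domestic $\theta_i$ would have $\disp(\theta_i)=\ell(w_0)$ and hence, by the lemma with $C$ the base chamber, would map some chamber opposite the base to an opposite. The ``moreover'' claim is immediate and finite: a commutator computation in characteristic $2$, using $x_{\alpha}(1)^2=1$ and the simply-laced commutator formula, shows that the only surviving cross term is $x_{\alpha_{88}+\alpha_{90}}(1)$, and since $\alpha_{88}+\alpha_{90}=(23465432)=\varphi$ is the highest root (whereas $\alpha_{88}+\alpha_{89}$ and $\alpha_{89}+\alpha_{90}$ are not roots), we get $\theta_i^2=x_{\varphi}(1)$ and both $\theta_i$ have order $4$.

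For the domesticity search itself I would reduce the $2^{120}$ elements of $U$ exactly as in~(\ref{eq:calcw2}): by the commutator relations $w_0^{-1}u^{-1}\theta_i uw_0$ is a product of root group elements $x_{-\beta}(a_\beta)$ with $\beta$ ranging over the filter $B=\{\beta\in R^+\mid \beta\geq\alpha_{88}\text{ or }\beta\geq\alpha_{89}\text{ or }\beta\geq\alpha_{90}\}$, and the coefficients $a_{88},a_{89},a_{90}$ are forced to equal $1$. Having certified domesticity, the decorated diagrams would be pinned down using Theorem~\ref{thm:main*}(a): an uncapped collineation of $\sE_8(2)$ has exactly one of the two diagrams in the $\sE_8(2)$ row of Table~\ref{table:2}. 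For $\theta_2$ one exhibits a vertex of each type mapped to an opposite, so $\theta_2$ is exceptional domestic, and then Corollary~\ref{cor:app1}(a) (simply laced, oppomorphism) forces $\theta_2$ to be strongly exceptional domestic, giving the fully shaded diagram. For $\theta_1$ one shows $\Type(\theta_1)=\{1,6,7,8\}\neq S$, so $\theta_1$ falls under the first diagram, the shading $K_{\theta_1}=\{7,8\}$ then being forced by the classification; concretely this amounts to the $\{7,8\}$-domesticity asserted in the Claim of Corollary~\ref{cor:maximal}, i.e. to ruling out non-domestic simplices of types $\{6,7,8\}$ and $\{1,7,8\}$ -- the exact analogue of the $\{1,3\}$-domesticity step for $\theta_1$ in Theorem~\ref{thm:E72}.

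The main obstacle -- and the reason the statement is only a conjecture -- is computational. The filter $B$ above contains on the order of $30$ roots, so the big-cell search alone requires inspecting roughly $2^{30}$ Bruhat positions, already at the edge of feasibility, while the residue search needed to certify the $\{7,8\}$-domesticity of $\theta_1$ is the $\sE_8$ counterpart of the overnight computation over $\sim10^6$ type $\{2,4\}$-style cosets seen in the $\sE_6$ and $\sE_7$ cases; using the transversal and inversion-set reduction behind~(\ref{eq:para}) it still blows up well beyond what is currently tractable. Moreover, unlike the $\sF_4$ and $\sE_6$ cases of Section~\ref{sec:minimal}, no minimal faithful permutation representation of $\sE_8(2)$ is available to cross-check conjugacy classes. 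A complete proof would therefore require either a substantially sharper reduction of the unipotent search (for instance exploiting further commuting symmetries to shrink $B$), a Levi/residue induction replacing the brute-force $\{7,8\}$-domesticity check by a theoretical argument of the kind used in Section~\ref{sec:2}, or simply more computing power; in the meantime the conjecture is supported by verifying the asserted structure on $10^5$ randomly sampled chambers.
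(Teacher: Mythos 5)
This statement is a conjecture that the paper deliberately leaves unproved: the authors only record that $\theta_i^2=x_\varphi(1)$, that $\Type(\theta_1)=\{1,6,7,8\}$ and $\Type(\theta_2)=S$, and that the remaining issue is the $\{7,8\}$-domesticity of $\theta_1$ and the domesticity of $\theta_2$, which the $2^{30}$-element search renders computationally infeasible. Your proposal reconstructs exactly this intended strategy (Lemma~\ref{lem:red2}(iii) with the root filter $B$, the highest-root commutator giving order $4$, and Theorem~\ref{thm:main*}(a) to pin down the decorated diagrams once domesticity is known) and correctly identifies the same obstruction, so it is consistent with the paper — but, like the paper, it is a plan rather than a proof, since the decisive computations are not carried out.
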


We note that $\theta_1^2=\theta_2^2=x_{\varphi}(1)$ where $\varphi$ is the highest root, and hence $\theta_1$ and $\theta_2$ have order~$4$. It is not difficult to verify that $\Type(\theta_1)=\{1,6,7,8\}$ and $\Type(\theta_2)=\{1,2,3,4,5,6,7,8\}$. Thus the difficulty in the above conjecture is to show that $\theta_1$ is $\{7,8\}$-domestic, and that $\theta_2$ is domestic. In principle the approach taken for $\sE_7(2)$ is applicable, however in practice the enormous size of the group $\sE_8(2)$ makes the search impractical. For example, applying the technique of Theorem~\ref{thm:E72} to $\theta_2$ amounts to checking $2^{30}=1073741824$ elements. Each of these checks requires a sequence of commutator relations in the group $\sE_8(2)$, and while $\mathsf{MAGMA}$ has remarkably efficient algorithms implemented for this, the number of cases renders this computational approach unfeasible.

\begin{remark}
The examples of uncapped automorphisms that we have constructed thus far fix a chamber of the building. This is clear for the examples in exceptional types because the representatives are either in the Borel subgroup $B$, or are a composition of an element of $B$ with a standard graph automorphism. For the examples constructed in classical types we note that all examples have either order $4$ or $8$. It follows that they lie in a Sylow $2$-group of the automorphism group, and hence are conjugate to an element of $B$ (or $\langle B,\sigma\rangle$ in the case of an order~$2$ graph automorphism). However there do exist uncapped automorphisms that do not fix a chamber. For example, in $\sC_3(2)=\mathsf{Sp}_{6}(2)$ the element
$$
\theta=x_2(1)x_3(1)n_2=E_{11}+E_{23}+E_{24}+E_{25}+E_{32}+E_{33}+E_{45}+E_{54}+E_{55}+E_{66}
$$ 
is exceptional domestic (in fact strongly exceptional domestic), with order $6$. Thus $\theta$ does not lie in any conjugate of~$B$, and hence $\theta$ fixes no chamber of~$\sC_3(2)$. In fact the fixed structure of $\theta$ consists of three points $p_1$, $p_2$, $p_3$, a line $L$, and three planes $\pi_1$, $\pi_2$, and $\pi_3$ such that $\pi_1$, $\pi_2$ and $\pi_3$ intersect in $L$, $p_i\in\pi_i$ for $i=1,2,3$, and $p_i\notin L$ for $i=1,2,3$. 
\end{remark}

\bibliographystyle{plain}

\end{document}